\newcommand{\lddot}{\mathop{..}}
\let\Im\undefined
\DeclareMathOperator{\Im}{Im}
\DeclareMathOperator{\rank}{rank}
\newcommand{\glob}{{\mathrm{glob}}}
\newcommand{\sstr}{{\text{\rm-str}}}
\newcommand{\nat}{{\mathrm{nat}}}
\renewcommand{\pmod}[1]{\,\left(\mathrm{mod}\,#1\right)}
    \newcommand{\BE}{{\mathbb {E}}} \newcommand{\BF}{{\mathbb {F}}}
     \newcommand{\BP}{{\mathbb {P}}}
    \newcommand{\BQ}{{\mathbb {Q}}} \newcommand{\BR}{{\mathbb {R}}}
     \newcommand{\BZ}{{\mathbb {Z}}}
    \newcommand{\CE}{{\mathcal {E}}}
     \newcommand{\RT}{{\mathrm {T}}}
     \newcommand{\fX}{{\mathfrak{X}}}
    \newcommand{\Aut}{{\mathrm{Aut}}}
    \newcommand{\Alt}{{\mathrm{Alt}}}
    \DeclareMathOperator{\Avg}{Avg}
    \DeclareMathOperator{\corank}{corank}
    \DeclareMathOperator{\diag}{diag}
    \newcommand{\Gal}{{\mathrm{Gal}}} \newcommand{\GL}{{\mathrm{GL}}}
    \DeclareMathOperator{\Hom}{Hom}
    \newcommand{\loc}{{\mathrm{loc}}}\newcommand{\Mat}{{\mathrm{Mat}}}
    \newcommand{\new}{{\mathrm{new}}}
    \DeclareMathOperator{\ord}{ord}
    \DeclareMathOperator{\Prob}{Prob}
    \newcommand{\RRM}{{\mathrm{RM}}}
    \newcommand{\Sel}{{\mathrm{Sel}}}
    \newcommand{\Sp}{{\mathrm{Sp}}}
    \newcommand{\tr}{{\mathrm{tr}}}
    \newcommand{\unr}{{\mathrm{unr}}}
    \theoremstyle{plain}
    \newtheorem{thm}{Theorem}
    \newtheorem{cor}[thm]{Corollary}
    \newtheorem{lem}[thm]{Lemma}
    \newtheorem{prop}[thm]{Proposition}
    \newtheorem{conj}[thm]{Conjecture}
    \theoremstyle{definition}
    \newtheorem{defn}[thm]{Definition}
    \theoremstyle{remark}
    \newtheorem{remark}[thm]{Remark}
    \newtheorem{example}[thm]{Example}
    \numberwithin{thm}{section}
    \numberwithin{equation}{section}
    \newcommand{\lrd}[2]{\left(\frac{#1}{#2}\right)}
     \newcommand{\lrdd}[2]{\left[\frac{#1}{#2}\right]}
\begin{document}

\title{On the Distribution of $2$-Selmer ranks\\
of Quadratic Twists of Elliptic Curves over $\BQ$}

\author{Jinzhao Pan}
\address{School of Mathematical Sciences,
Tongji University, Shanghai, China}
\email{jzpan@tongji.edu.cn}

\author{Ye Tian}
\address{Morningside Center of Mathematics,
Chinese Academy of Sciences, Beijing, China}
\email{ytian@math.ac.cn}

\date{}

\begin{abstract} We characterize the distribution
of $2$-Selmer ranks
of quadratic twists of elliptic curves over $\BQ$
with full rational $2$-torsion.
We propose a new type of random alternating matrix model
$M_{*,\mathbf t}^\Alt(\BF_2)$
over $\BF_2$
with $0$, $1$ or $2$ ``holes'',
with associated Markov chains, described by parameter
$\mathbf t=(t_1,\cdots,t_s)\in\BZ^s$ where $s$ is the number of ``holes''.
We proved that
for each equivalence classes
of quadratic twists of elliptic curves:
\begin{itemize}\item[(1)] The distribution
of $2$-Selmer ranks agrees with the distribution of coranks of matrices in
$M_{*,\mathbf t}^\Alt(\BF_2)$;
\item[(2)]
The moments of $2$-Selmer groups agree with that of
$M_{*,\mathbf t}^\Alt(\BF_2)$, in particular, the average order of essential
$2$-Selmer groups is $3+\sum_i2^{t_i}$.
\end{itemize}
Our work extends the works of Heath-Brown, Swinnerton-Dyer,  Kane, and Klagsbrun-Mazur-Rubin
where the matrix only has $0$ ``holes'',
the matrix model is the usual random alternating matrix model,
and the average order of essential $2$-Selmer groups is $3$.
A new phenomenon is that different equivalence classes in the same quadratic twist family
could have different parameters,
hence have different distribution of $2$-Selmer ranks.
The irreducible property of the Markov chain associated to
$M_{*,\mathbf t}^\Alt(\BF_2)$ gives the positive density results on
the distribution of $2$-Selmer ranks.
\end{abstract}
\maketitle
\tableofcontents

\section{Introduction}

Let $\CE$ be a quadratic twist family of elliptic curves over $\BQ$.
There are two fundamental conjectures on it:
\begin{itemize}
\item
Goldfeld (\cite{Gol79}, Conjecture B)
conjectured that the density of $E\in\CE$ such that
$\ord_{s=1}L(E,s)=r$ is $1/2$ if $r=0, 1$.
\item
Kolyvagin (\cite{Kol90}, Conjecture F)
has a heuristic that for any prime $p$ and $r=0,1$,
there is an elliptic curve $E$ in $\CE$ such that
$\ord_{s=1}L(E,s)=r$ and
the analytic Sha of $E$ is a $p$-adic unit.
\end{itemize}

To study the $L$-values and Mordell-Weil groups for elliptic curves,
one may start with their Selmer groups.
The first significant result on the distribution of Selmer groups
in quadratic twist families of elliptic curves is by
Swinnerton-Dyer \cite{SD08} and Kane \cite{Kane13}.
They studied the distribution of $2$-Selmer groups for
families $\CE$
with full rational $2$-torsion points and satisfying certain additional conditions.
For simplicity let's call them of type (A):
\begin{itemize}
\item[(A)]
families $\CE$
with full rational $2$-torsion points
such that $E\in \CE$ does not have a rational cyclic $4$-isogeny, for example, the congruent number elliptic curves $ny^2=x^3-x$.
\end{itemize}
Their works are inspired by the pioneering work of
Heath-Brown \cite{HB93}, \cite{HB94}
on the distribution of $2$-Selmer groups
for the family $ny^2=x^3-x$.

Meanwhile, Poonen and Rains \cite{PR12},
Bhargava, Kane, Lenstra, Poonen and Rains \cite{BKLPR}
proposed that
the distribution of $2$-Selmer groups for
all elliptic curves over $\BQ$ ordered by height
should follow the model of random alternating matrices over $\BF_2$.
More precisely, let $M_{2k+r}^\Alt(\BF_2)$ be the set of alternating matrices over $\BF_2$
of size $(2k+r)\times(2k+r)$, endowed with counting measure,
then the model defines the probability
$$
P_r^\Alt(d):=
\lim_{k\to\infty}\BP\big(\corank(B)=d\mid B\in M_{2k+r}^\Alt(\BF_2)\big)
$$
for $d\geq 0$ and $r\in\BZ$.
The work of Heath-Brown, Swinnerton-Dyer and Kane
indicates that the distribution of $2$-Selmer groups
in type (A) quadratic twist families coincides with the general $2$-Selmer model.

A crucial feature of the matrix model $M_{2k+r}^\Alt(\BF_2)$
is that there is a Markov chain attached to it,
which was a key ingredient in \cite{SD08}.
More precisely,  let $X_k$ be the random variable $\corank(B)$ as
$B\in M_{2k+r}^\Alt(\BF_2)$,
then the sequence $(X_k)_{k\geq 0}$ form a Markov chain
with transition probability
$$
\BP\big(X_{k+1}=m_\new\mid X_k=m\big)=\begin{cases}
2^{-1-2m},&\text{if }m_\new=m+2, \\
2^{-m}(3-5\cdot 2^{-1-m}),&\text{if }m_\new=m, \\
(1-2^{-m})(1-2^{1-m}),&\text{if }m_\new=m-2, \\
0,&\text{otherwise}.
\end{cases}
$$
Swinnerton-Dyer  \cite{SD08} analyzed the behavior of $2$-Selmer groups of $E^{(n)}$
for type (A) curves
when the number $k$ of prime factors of $n$ increases.
It is described by an ``almost Markov'' process whose transition probability
differs from the above one by a controllable error term as $k\to\infty$.
This ultimately deduces the distribution of $2$-Selmer for type (A)
families.

After the work on type (A) families,
Klagsbrun, Mazur and Rubin \cite{KMR13}, \cite{KMR14} found that
the same matrix model and Markov chain also applies to quadratic twists of elliptic curves
$E$ over $\BQ$ such that $\Gal(\BQ(E[2])/\BQ)\cong S_3$.
To summarize, we have the following result due to Heath-Brown, Swinnerton-Dyer, Kane,
and Klagsbrun-Mazur-Rubin.
\begin{thm}
[Heath-Brown, Swinnerton-Dyer, Kane,
and Klagsbrun-Mazur-Rubin]
\label{main type A}
Let $\CE$ be a quadratic twist family, which of type (A),
or the curves $E$ in $\CE$ satisfies $\Gal(\BQ(E[2])/\BQ)\cong S_3$.
Then for any $d\geq 0$,
$$
\Prob\big(
\dim_{\BF_2}S(E)=d\mid E\in\CE
\big)=\frac{1}{2}\sum_{r=0,1}P_r^\Alt(d),
$$
where $S(E):=\Sel_2(E/\BQ)/E(\BQ)[2]$ is the essential $2$-Selmer group of $E$.
In particular, the average order of $S(E)$
as $E$ runs over $\CE$ is $3$.
\end{thm}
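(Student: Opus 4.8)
The plan is to compute the distribution by induction on the number of prime factors of the twisting parameter, realizing the sequence of $2$-Selmer dimensions as an ``almost Markov'' process and identifying its limit with the stationary distribution of the corank chain of $M_{2k+r}^\Alt(\BF_2)$ recalled above. Order $\CE$ by the size of the twisting integer $n$; since for all but $o(X)$ integers $n\leq X$ the number $\omega(n)$ of prime factors tends to infinity, the distribution is produced by such $n$ after $\omega(n)$ steps of the chain, and the geometric convergence of the chain to stationarity makes $\omega(n)\to\infty$ enough. Fix $E\in\CE$, write $E^{(n)}$ for its quadratic twist by a squarefree $n$, and note that $E^{(n)}[2]\cong E[2]$ as Galois modules, so $\dim_{\BF_2}\Sel_2(E^{(n)}/\BQ)=\dim_{\BF_2}S(E^{(n)})+2$. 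I would first recall the standard description of $S(E^{(n)})$ as the kernel of a map out of a global cohomology group cut out by local conditions at the primes dividing $2n\Delta_E$, whose dimension coincides, up to a bounded correction, with the corank of a certain alternating $\BF_2$-bilinear pairing (a Cassels-type pairing, in the explicit form used by Monsky and Heath-Brown) on an $\BF_2$-space of dimension $\omega(n)+O(1)$. The local condition at $p\mid n$, and the pairing entries involving $p$, depend only on the reduction type of $E$ at $p$ and on the image of $\Frob_p$ in a fixed finite Galois group $\Gal(\BQ(E[2],\dots)/\BQ)$, whose extra generators record square classes of differences of $2$-torsion $x$-coordinates and the like. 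In the type (A) case this group is an elementary abelian $2$-group, and the hypothesis that $E$ has no rational cyclic $4$-isogeny is exactly what makes the pairing ``generic'' (it is this genericity that fails in the presence of ``holes''); in the $\Gal(\BQ(E[2])/\BQ)\cong S_3$ case it is an extension of $S_3$, and one works with primes splitting appropriately, following \cite{KMR13}.

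\emph{The almost-Markov process.} After fixing the part of $n$ supported on the finitely many bad primes, vary the admissible part $n=p_1\cdots p_m$. Adjoining one admissible prime adds a row and column to the pairing, changing $\dim_{\BF_2}S(E^{(n)})$ by $\pm1$; grouping the primes in pairs, the sequence $X_k:=\dim_{\BF_2}S(E^{(p_1\cdots p_{2k})})$ changes by $0$ or $\pm2$ at each step, and which case occurs, together with its conditional probability given the current state and the existing Selmer data, is governed by the image of the new Frobenii in $\Gal(\BQ(E[2],\dots)/\BQ)$ and by the vanishing or not of certain pairing values. By the Chebotarev density theorem these Frobenii equidistribute, so conditionally on $X_k=m$ the law of $X_{k+1}$ approaches
\[
\BP(X_{k+1}=m+2\mid X_k=m)\to 2^{-1-2m},\qquad \BP(X_{k+1}=m\mid X_k=m)\to 2^{-m}(3-5\cdot 2^{-1-m}),
\]
\[
\BP(X_{k+1}=m-2\mid X_k=m)\to (1-2^{-m})(1-2^{1-m}),
\]
with an error that is $o(1)$ uniformly in $m$ as $k\to\infty$; these are exactly the transition probabilities of the corank chain of $M_{2k+r}^\Alt(\BF_2)$. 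I expect the main obstacle to be controlling the accumulation of these error terms: the increments are not independent, and one must bound a sum of Chebotarev errors over the (at most $\omega(n)$) steps and over all $n\leq X$. This is the analytic core, handled by the large sieve and higher-moment estimates of Heath-Brown \cite{HB93,HB94}, Swinnerton-Dyer \cite{SD08} and Kane \cite{Kane13}, and adapted to the $S_3$ case by Klagsbrun-Mazur-Rubin \cite{KMR13,KMR14}.

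\emph{Passage to the limit.} The transition probabilities above define a Markov chain that is irreducible and aperiodic on each parity class and has a unique stationary distribution, so the almost-Markov process converges (geometrically fast) to that stationary law; since the corank process of the random matrix model is literally this chain, the stationary law is $P_r^\Alt(d)=\lim_k\BP(\corank B=d\mid B\in M_{2k+r}^\Alt(\BF_2))$. The relevant parity $r\in\{0,1\}$ is the parity of $\dim_{\BF_2}S(E^{(n)})$, which is governed by the global root number of $E^{(n)}$; a standard local-root-number computation shows this parity equidistributes over $\{0,1\}$ with density $\tfrac12$ each as $n$ varies. Combining, $\Prob(\dim_{\BF_2}S(E)=d\mid E\in\CE)=\tfrac12\sum_{r=0,1}P_r^\Alt(d)$.

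\emph{The average order.} For the average order it is cleanest to rerun the almost-Markov argument directly on $\BE[2^{X_k}]=\BE[\#S(E^{(p_1\cdots p_{2k})})]$, which converges to the unique stationary value of the chain; this equals the limiting first moment $\lim_{N\to\infty}\BE[2^{\corank B}\mid B\in M_N^\Alt(\BF_2)]$ of the matrix model, and since a uniformly random alternating $B$ over $\BF_2$ of size $N$ has $\BE[\#\ker B]=1+(2^N-1)\cdot 2^{1-N}=3-2^{1-N}$, this limit is $3$. (Equivalently, $\sum_{d\geq0}2^d\cdot\tfrac12\sum_{r=0,1}P_r^\Alt(d)=3$, the interchange of limit and sum being justified by uniform super-exponential decay in $d$ of the distributions.) Hence the average order of $S(E)$ over $\CE$ is $3$.
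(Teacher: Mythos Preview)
Your outline is essentially the Swinnerton-Dyer/Klagsbrun--Mazur--Rubin Markov-chain approach, which is exactly what the paper cites for this statement (the paper does not give an independent proof of Theorem~\ref{main type A}; it is quoted as a known result, and the paper's own Theorem~\ref{main theorem} recovers it as the $s=0$ special case via the same strategy: R\'edei matrix $\to$ almost-Markov chain $\to$ stationary distribution $\to$ natural density).

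One small inaccuracy worth fixing: in the full $2$-torsion case, adjoining a single prime to $n$ enlarges the alternating pairing by \emph{two} rows and columns, not one, because $E[2]\cong(\BZ/2)^2$ contributes two basis vectors per prime (see the $(2k+t_\fX)\times(2k+t_\fX)$ shape in Theorem~\ref{p:2-descent-redei-mat}). Thus one step $k\to k+1$ of the Markov chain already corresponds to one prime, and the corank moves by $0$ or $\pm 2$; no ``grouping in pairs'' is needed. Your listed transition probabilities are nonetheless the correct ones for this chain. In the $S_3$ case of \cite{KMR13,KMR14} the bookkeeping is organized differently (varying local conditions one at a time rather than writing a single alternating matrix), but it produces the same limiting chain.

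A second point of comparison: for the passage from the $\omega$-density (fixed number of prime factors) to the natural density, you invoke large-sieve/moment bounds \`a la Heath-Brown and Kane. The paper instead uses Smith's box method (Theorem~\ref{p:natural-density}) for this step, which gives a cleaner uniform error term and avoids the moment machinery for the distribution statement; the moment estimates are reserved for Theorem~\ref{main average}. Either route works for type~(A), and your average-order computation $\BE[\#\ker B]=3-2^{1-N}\to 3$ is correct and matches Appendix~\ref{average order matrix model}.
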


\subsection{Matrix model, Markov chain, and distribution of $2$-Selmer rank}

For the distribution of $2$-Selmer rank for families $\CE$
with full rational $2$-torsion points
which are not of type (A), no one has even made a conjecture
on the complete description of them.
It turns out that the question is quite subtle:
the distribution has nicer behavior when restricted to equivalence classes,
and can vary drastically among different equivalence classes
even in the same quadratic twist family.
The main goal of this paper is
to study the distribution of $2$-Selmer groups for them (see Theorems \ref{main theorem}
and \ref{main average}).

First we define $\Sigma$-equivalence classes.
Let
$$
\Sigma_0=\Sigma_0(\CE):=\{\text{primes dividing conductors of all curves in }\CE\} \cup \{2, \infty\}.
$$
Note that there exists an elliptic curve $E\in\CE$ with conductor supported in $\Sigma_0(\CE)$.
Let $\Sigma$ be any finite set of places of $\BQ$ containing $\Sigma_0$.
Two elliptic curves in $\CE$ are called $\Sigma$-equivalent
if they are isomorphic over $\BQ_v$ for all $v\in \Sigma$.
The root number $\epsilon(E)\in \{\pm 1\}$ depends only on the
$\Sigma$-equivalence class $\fX$ of $E$, denoted by $\epsilon(\fX)$.
Let $r(\fX):=(1-\epsilon(\fX))/2\in \{0, 1\}$.

Next we define the relevant model of random alternating matrices,
extending the general $2$-Selmer model $M_{2k+r}^\Alt(\BF_2)$.
A matrix of size $(2k+r)\times(2k+r)$ can be considered as
``a block matrix with $2\times 2$ big blocks of size roughly $k\times k$''.
The general model $M_{2k+r}^\Alt(\BF_2)$, which we will call type (A) model,
can be considered as ``all $2\times 2$ big blocks are freely chosen, only subject to
alternating condition''.
We propose type (B) and type (C) model which will add condition that one and two
diagonal blocks are zero (``holes''), respectively.
More precisely, we define
\begin{align*}
\text{type (A) model: }&
M_{2k+r,\varnothing}^\Alt(\BF_2)
:=M_{2k+r}^\Alt(\BF_2), \\
\text{type (B) model: }&
M_{2k+r,(t_1)}^\Alt(\BF_2)
:=\left\{B\in M_{2k+r}^\Alt(\BF_2)
\text{ of form }\left(\begin{smallmatrix}
0 & B_{12} \\ B_{21} & B_{22}
\end{smallmatrix}\right)\right\}, \\
\text{type (C) model: }&
M_{2k+r,(t_1,t_2)}^\Alt(\BF_2)
:=\left\{B\in M_{2k+r}^\Alt(\BF_2)
\text{ of form }\left(\begin{smallmatrix}
0 & B_{12} & B_{13} \\ B_{21} & 0 & B_{23} \\ B_{31} & B_{32} & B_{33}
\end{smallmatrix}\right)\right\},
\end{align*}
with $0$-block of size $k+\frac{t_1+r}2$ for type (B);
$0$-blocks of sizes $k+\frac{t_1+r}2$ and $k+\frac{t_2+r}2$ for type (C).
We say $s=0$, $s=1$, $s=2$ for model of type (A), (B), (C), respectively,
and call $r$, $s$, and $\mathbf t:=(t_1,\cdots,t_s)$ the parameters of the model.
Clearly the parameters are subject to constraints
$t_i\equiv r\pmod 2$, moreover $t_1+t_2\leq 0$ for type (C).

Similar to \cite{SD08},
the Markov chain associated to our model $M_{2k+r,\mathbf t}^\Alt(\BF_2)$
plays a crucial role.
The new feature for types (B) and (C) is that, if let
$X_k$ be the random variable $\corank(B)$ as
$B\in M_{2k+r,\mathbf t}^\Alt(\BF_2)$,
then the sequence $(X_k)_{k\geq 0}$ does not form a Markov chain anymore.
Instead, we need to consider the refinement of $X_k$:
\begin{itemize}
\item
for type (B), there is $B_1':=\left(\begin{smallmatrix}
0 \\ B_{21}
\end{smallmatrix}\right)$ associated to $B=\left(\begin{smallmatrix}
0 & B_{12} \\ B_{21} & B_{22}
\end{smallmatrix}\right)\in M_{2k+r,(t_1)}^\Alt(\BF_2)$;
\item
for type (C), there are $B_1':=\left(\begin{smallmatrix}
0 \\ B_{21} \\ B_{31}
\end{smallmatrix}\right)$ and $B_2':=\left(\begin{smallmatrix}
B_{12} \\ 0 \\ B_{32}
\end{smallmatrix}\right)$ associated to $B=\left(\begin{smallmatrix}
0 & B_{12} & B_{13} \\ B_{21} & 0 & B_{23} \\ B_{31} & B_{32} & B_{33}
\end{smallmatrix}\right)\in M_{2k+r,(t_1,t_2)}^\Alt(\BF_2)$.
\end{itemize}
Let $Y_k$ be the random variable $(X_k,\corank(B_1'),\cdots,\corank(B_s'))$
which is a refinement of $X_k$,
then $(Y_k)_{k\geq 0}$ forms a Markov chain
(see Appendix \ref{s:model}),
moreover, such a Markov chain is irreducible
(see Appendix \ref{s:basic-property-markov}),
which means that any two points in state space are connected by a chain
of positive transition probabilities.

Once we have the model $M_{2k+r,\mathbf t}^\Alt(\BF_2)$ we can define the probability
$$
P_{r,\mathbf t}^\Alt(d):=\lim_{k\to\infty}\BP\big(
\corank(B)=d\mid B\in M_{2k+r,\mathbf t}^\Alt(\BF_2)\big).
$$

Our main result is as follows.

\begin{thm}
\label{main theorem}
Let $\CE$ be a quadratic twist family of elliptic curves with full rational $2$-torsion, and
$\fX\subset \CE$ a $\Sigma$-equivalence class.
Let $r=r(\fX)$.
Then there exists $s\in\{0,1,2\}$ depending only on $\CE$,
a parameter $\mathbf t=\mathbf t_\fX=(t_1,\cdots,t_s)$ depending only on $\fX$,
such that for any $d\geq 0$, we have
$$
\Prob\big(
\dim_{\BF_2}S(E)=d\mid E\in\fX\big)=P_{r,\mathbf t}^\Alt(d).
$$
More precisely,
there exist absolute constants $C_1>0$ and $c>0$, such that for any sufficiently large $N$
(depending on $\fX$),
\begin{equation}
\label{e:main-theorem}
\sum_{d\geq 0}\left|\BP\big(
\dim_{\BF_2}S(E)=d\mid E\in\fX,N(E)<N
\big)-P_{r,\mathbf t}^\Alt(d)\right|<C_1\cdot\exp(-c(\log\log\log N)^{1/2}),
\end{equation}
where $N(E)$ is the conductor of $E$.

Moreover, the parameter $\mathbf t=\mathbf t_{\fX}$
is invariant under the refinement of equivalence classes:
if $\Sigma\subset \Sigma'$ and $\fX'\subset \fX$ is a $\Sigma'$-equivalence class,
then the class $\fX'$ has the same parameter $\mathbf t$ as that of $\fX$.
\end{thm}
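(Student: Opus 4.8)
The plan is to run the strategy of Heath-Brown, Swinnerton-Dyer and Kane, with its classical alternating-matrix Markov chain replaced by the refined chain $(Y_k)$ attached to the models with holes, organized in three layers. In the \emph{algebraic layer} one realizes, for $E/\BQ$ with full rational $2$-torsion and $d$ squarefree with prime divisors in prescribed splitting classes (so that $E^{(d)}\in\fX$), the essential $2$-Selmer group $S(E^{(d)})$ as the kernel of an alternating $\BF_2$-bilinear form $B_d$ on a space $V_d$ of dimension $2k+O(1)$ with a natural two-block decomposition into pieces of size $k+O(1)$ (so that $B_d$ is a $2\times 2$ big-block matrix as in the models), built from two copies of the set of prime divisors of $d$ together with a fixed bad-place contribution, with entries given by local Hilbert symbols and Tate pairings --- this is the full-$2$-torsion formalism of \cite{HB93,SD08,Kane13}. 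In the \emph{ergodic layer}, appending one more prime $\ell$ to $d$ adjoins to $B_d$ one row and column per rational $2$-torsion direction, whose entries are governed by the Frobenius class of $\ell$ and hence equidistribute in $\BF_2$ with discrepancy $O(\ell^{-\delta})$ by effective Chebotarev; thus the refined statistic $Y_k=(\corank B_d,\corank B_1',\dots,\corank B_s')$ is propagated by a transition kernel differing from that of $M_{2k+r,\mathbf t}^\Alt(\BF_2)$ by a small, eventually summable amount. In the \emph{analytic layer}, a Hardy--Ramanujan/Erd\H{o}s--Kac estimate in the relevant Chebotarev classes (as in \cite{Kane13}) reduces counting $E^{(d)}\in\fX$ with conductor $<N$, up to an error well inside \eqref{e:main-theorem}, to counting those with a fixed number $\omega(d)=k$ of prime factors, with $k$ in a window of width $(\log\log N)^{1/2+\epsilon}$ about $\log\log N$.

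The heart of the matter is the algebraic layer, specifically the block structure of $B_d$, which is exactly what encodes $s$ and $\mathbf t$. Among quadratic twist families with full rational $2$-torsion, those of type (A) are precisely the ones for which $B_d$ has no forced-zero diagonal block, recovering the full-$2$-torsion part of Theorem \ref{main type A}. A rational cyclic $4$-isogeny on $E$ forces one diagonal block of $B_d$ to vanish --- because the corresponding descent factors through that isogeny and self-duality of the Weil pairing kills the diagonal term --- and two independent such isogenies force two vanishing blocks; one checks $s\le 2$ in all cases, and $s$ depends only on $\CE$. The hole sizes, i.e.\ the integer vector $\mathbf t$, are then read off from the dimensions of the relevant $\phi$-Selmer groups attached to the $2$-isogenies of $E$, equivalently from discrepancies between the numbers of prime divisors of $d$ lying in the various relevant Chebotarev classes; this is the $\fX$-dependent data, and one verifies it always lands in the range allowed by the model ($t_i\equiv r\pmod 2$, with $t_1+t_2\le 0$ in type (C)).

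With $B_d$ and its block shape in hand, I would feed the almost-Markov chain $(Y_k)$ into the comparison machinery: by Appendices \ref{s:model} and \ref{s:basic-property-markov} the model chain $(Y_k)$ is a genuine Markov chain, irreducible on its state space, hence has a unique stationary law to which $\mathrm{law}(Y_k)$ converges at an exponential rate in $k$ (as in \cite{SD08}); its first marginal is $P_{r,\mathbf t}^\Alt$ by the definition of $P_{r,\mathbf t}^\Alt$, so one obtains $\BP(\dim_{\BF_2}S(E)=d\mid E\in\fX,\ \omega(d)=k)\to P_{r,\mathbf t}^\Alt(d)$. Combining with the analytic reduction and balancing the sieve, Chebotarev and mixing errors as in the optimization of \cite{Kane13} yields the bound $C_1\exp(-c(\log\log\log N)^{1/2})$ of \eqref{e:main-theorem}. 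The invariance of $\mathbf t$ under refinement is then almost formal: if $\Sigma\subset\Sigma'$ and $\fX'\subset\fX$ is a $\Sigma'$-equivalence class, every place of $\Sigma'\setminus\Sigma$ lies outside $\Sigma_0$, so $E$ has good reduction there and passing to $\fX'$ merely imposes additional splitting conditions on the prime divisors of $d$, changing neither the forced-zero block pattern of $B_d$ nor the cohomological invariants that compute the hole sizes, since those are assembled from the $\Sigma_0$-level data common to $\fX$ and $\fX'$; equivalently, $\fX'$ is a positive-density subfamily of $\fX$ converging to the same stationary law, so the first part of the theorem applied to $\fX'$ forces its parameter to equal $\mathbf t$.

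The hard part will be the algebraic layer together with the correct choice of refined statistic: one must show that in the hole cases $\corank B_d$ alone fails to be Markov, that the enlargement to $Y_k$ restores the Markov property with exactly the transition kernel of the model, and that $s$ and $\mathbf t$ admit the arithmetic descriptions above \emph{uniformly} over all quadratic twist families with full rational $2$-torsion and all their $\Sigma$-equivalence classes --- in particular handling the genuinely new case of a curve carrying two independent rational cyclic $4$-isogenies. By contrast, the sieving and the ergodic comparison, though technically demanding, follow the established lines of \cite{SD08,Kane13,KMR13}.
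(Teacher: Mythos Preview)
Your three-layer architecture (algebraic realization of $S(E^{(d)})$ as the kernel of an alternating form with block structure, almost-Markov propagation of the refined statistic $Y_k$, and analytic reduction to fixed $\omega(d)=k$) matches the paper's strategy, and your identification of the key algebraic point --- that the $s$ forced-zero diagonal blocks arise from rational cyclic $4$-isogenies, and that $\corank B_d$ alone is not Markov when $s\ge 1$ but its enlargement $Y_k$ is --- is correct. However, there is a genuine gap in the analytic layer.

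You appeal to ``effective Chebotarev'' and ``the optimization of \cite{Kane13}'' to pass from the $\omega$-distribution on $\Omega_k^{\Sigma,*}$ to natural density over $n<N$, and claim this yields the error $C_1\exp(-c(\log\log\log N)^{1/2})$. This is the step the paper handles most delicately, and Kane's method is \emph{not} what is used. Kane's route in \cite{Kane13} is moments-to-distribution, which the paper explicitly observes (\S1.5) fails for type~(C): the $\xi$-th moment grows like $2^{\xi^2+O(\xi)}$, too fast for the moment problem to determine the distribution. More fundamentally, the equidistribution statement you need is that the \emph{entire} Legendre-symbol matrix $\omega(n)\in\Omega_k^{\Sigma,*}$ is approximately uniform among $n\in S_k(N,\Sigma)$; this matrix has $\sim k^2/2$ off-diagonal entries with $k\approx\log\log N$, and a one-prime-at-a-time Chebotarev argument must be applied in extensions of conductor comparable to $n$ itself, so the error does not sum naively. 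The paper instead invokes Smith's box method \cite{Smith2}: one partitions $S_k(N,\Sigma)$ into ``boxes'' (products of primes in prescribed short intervals), shows that boxes containing a ``good'' integer (comfortably spaced, regular, extravagantly spaced, Siegel-free) enjoy the required Legendre equidistribution, and that good integers have density $1-O(\exp(-c(\log\log\log N)^{1/2}))$. The triple-log error term is the signature of this method, not of \cite{Kane13} or \cite{SD08}.

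Two smaller points. First, the parameter $\mathbf t$ is not quite ``read off from the dimensions of the relevant $\phi$-Selmer groups'' or from Chebotarev discrepancies among the primes of $d$: in the paper it is defined (Definition~\ref{parameter defn 1}) as $t_j = t_{\fX} - \max_{E\in\fX}\rank(B_j''(E))$, where $B_j''$ is a specific \emph{low-rank R\'edei submatrix} whose maximal rank is attained exactly when the vectors $(z_p')_{p\in\Sigma}$ are linearly independent; this is an invariant of the matrix presentation, only a posteriori related to $\phi$-Selmer ranks via Theorem~\ref{main strict}. Second, your invariance argument (``$\fX'$ is a positive-density subfamily converging to the same stationary law, so its parameter must equal $\mathbf t$'') is valid --- and the paper notes it in Remark~\ref{change Sigma remark} --- but it presupposes that $\mathbf t\mapsto P_{r,\mathbf t}^\Alt$ is injective, which requires the irreducibility analysis of the Markov chain; the paper's primary proof (Proposition~\ref{invariance of parameter}) instead computes directly how $B_j''$ changes when a prime is moved into $\Sigma$.
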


We will call $\CE$ is of type (A), (B), (C)
if the above $s$ is $0,1,2$, respectively.
This type (A) coincides with the type that Swinnerton-Dyer and Kane studied
in Theorem \ref{main type A}.
There are also characterizations of type (B) and (C):
\begin{itemize}
\item[(B)]
$E\in \CE$  has a rational cyclic $4$-isogeny,
and  $E[4]\not\subset E(\BQ(\sqrt{-1}))$ for all $E\in \CE$, for example,
twists $ny^2=x(x-1)(x+3)$ of $X_0(24)$.
\item[(C)]
$E\in \CE$  has a rational cyclic $4$-isogeny,
and $E[4]\subset E(\BQ(\sqrt{-1}))$ for some $E\in \CE$, for example,
twists $ny^2=x(x-9)(x-25)$ of $X_0(15)$.
\end{itemize}

\begin{remark}
Types (B) and (C) are not isogeny invariant, for example, the curve
$X_0(15):y^2=x(x-9)(x-25)$ which is of type (C) has a $2$-isogenous curve
$y^2=x(x-1)(x+15)$ which is of type (B).
\end{remark}

\subsection{Irreducible property and $2$-Selmer rank}

The Markov chain associated to the matrix model $M_{2k+r,\mathbf t}^\Alt(\BF_2)$
satisfies the irreducible property, that is,
any two points in state space are connected by a chain
of positive transition probabilities.
This implies that for $d\geq 0$,
$P_{r,\mathbf t}^\Alt(d)>0$ if and only if $d\equiv r\pmod 2$
and $d\geq\max_i(t_i)$.
In this case, Theorem \ref{main theorem} tells us that the
$E\in\fX$ such that $\dim_{\BF_2}S(E)=d$ is of positive density.
Conversely,
if $P_{r,\mathbf t}^\Alt(d)=0$,
then Proposition \ref{selmer lower bound} implies that
there are only finitely many $E\in\fX$
(in fact, no $E\in\fX$ with bad prime outside $\Sigma_0$) such that
$\dim_{\BF_2}S(E)=d$.
As an application, we have the following result.

\begin{prop}\label{main irred}
Let $\CE$ be a family with full rational $2$-torsion, and
$\fX\subset \CE$ a $\Sigma$-equivalence class.
Let $E_0\in\fX$ be a curve which has at least one bad prime outside $\Sigma_0$.
Let $d$ be any integer with $d\equiv \dim_{\BF_2}S(E_0)\pmod 2$ and $d\geq \dim_{\BF_2}S(E_0)$.
Then
$$
\Prob\big(\dim_{\BF_2}S(E)=d\mid E\in\fX\big)>0.
$$
\end{prop}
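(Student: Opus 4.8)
The plan is to deduce this statement directly from Theorem \ref{main theorem} together with the irreducibility of the Markov chain associated to $M_{2k+r,\mathbf t}^\Alt(\BF_2)$. First I would fix the $\Sigma$-equivalence class $\fX$ containing $E_0$ and let $r=r(\fX)$, and let $\mathbf t=\mathbf t_\fX=(t_1,\dots,t_s)$ be the parameter furnished by Theorem \ref{main theorem}, so that $\Prob\big(\dim_{\BF_2}S(E)=d\mid E\in\fX\big)=P_{r,\mathbf t}^\Alt(d)$ for every $d\geq 0$. Thus it suffices to show $P_{r,\mathbf t}^\Alt(d)>0$ under the hypotheses on $d$. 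The key input is the assertion stated in the text just before the proposition: by irreducibility of the Markov chain $(Y_k)_{k\geq 0}$ (proved in Appendix \ref{s:basic-property-markov}), one has $P_{r,\mathbf t}^\Alt(d)>0$ if and only if $d\equiv r\pmod 2$ and $d\geq\max_i(t_i)$. So the real content is to verify, from the hypothesis that $E_0$ has a bad prime outside $\Sigma_0$, that $d_0:=\dim_{\BF_2}S(E_0)$ already satisfies $d_0\equiv r\pmod2$ and $d_0\geq\max_i(t_i)$; granting these, any $d$ with $d\equiv d_0\pmod2$ and $d\geq d_0$ automatically satisfies $d\equiv r\pmod 2$ and $d\geq\max_i(t_i)$, hence $P_{r,\mathbf t}^\Alt(d)>0$.

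For the congruence $d_0\equiv r\pmod2$: this is the parity of the $2$-Selmer rank, which is controlled by the root number $\epsilon(\fX)$ — indeed $r=r(\fX)=(1-\epsilon(\fX))/2$ was defined precisely so that $\dim_{\BF_2}\Sel_2(E/\BQ)\equiv \dim_{\BF_2}E(\BQ)[2]+r\pmod2$ for $E\in\fX$, and since $E(\BQ)[2]=(\BZ/2)^2$ has even dimension for a curve with full rational $2$-torsion, the essential Selmer group $S(E)=\Sel_2(E/\BQ)/E(\BQ)[2]$ also has dimension $\equiv r\pmod2$; in particular $d_0\equiv r\pmod 2$. (This parity statement is the $2$-parity theorem; for curves with full rational $2$-torsion it is elementary and presumably recorded earlier in the paper.) For the inequality $d_0\geq\max_i(t_i)$: this is exactly the lower bound referred to in the text as Proposition \ref{selmer lower bound}. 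The hypothesis that $E_0$ has a bad prime outside $\Sigma_0$ is what forces $E_0$ to be a "generic" member of $\fX$ in the sense that its essential $2$-Selmer dimension achieves (or exceeds) the structural lower bound $\max_i(t_i)$ coming from the holes in the matrix model — the point being that a curve whose bad primes all lie in $\Sigma_0$ could be sporadically smaller, which is why that edge case is excluded.

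The main obstacle — really the only one, since everything else is assembling cited results — is making sure the two ingredients $d_0\equiv r\pmod2$ and $d_0\geq\max_i(t_i)$ are genuinely available in the form needed. The parity statement is standard but one should cite the precise form used (the relation between $\dim_{\BF_2}S(E)$ and $r(\fX)$ for the full-$2$-torsion case). The inequality $d_0\ge\max_i(t_i)$ is the substantive point: I would invoke Proposition \ref{selmer lower bound}, checking that its hypotheses are met for $E_0$ — in particular that "bad prime outside $\Sigma_0$" is exactly the condition under which that proposition gives $\dim_{\BF_2}S(E_0)\geq \max_i(t_i)$ rather than a weaker conclusion. Once both are in hand, the argument is: $d\ge d_0\ge\max_i(t_i)$ and $d\equiv d_0\equiv r\pmod2$, so $P_{r,\mathbf t}^\Alt(d)>0$ by the irreducibility criterion, and then Theorem \ref{main theorem} gives $\Prob\big(\dim_{\BF_2}S(E)=d\mid E\in\fX\big)=P_{r,\mathbf t}^\Alt(d)>0$. $\square$
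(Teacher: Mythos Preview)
Your approach is essentially the paper's: invoke Theorem \ref{main theorem} to reduce to $P_{r,\mathbf t}^\Alt(d)>0$, then verify $d\geq\max_i(t_i)$ via Proposition \ref{selmer lower bound} and $d\equiv r\pmod 2$. One small correction on the parity step: you need not invoke a 2-parity theorem as external input --- within the paper's own framework, $d_0=\corank(\widetilde B(E_0))$ for an \emph{alternating} matrix $\widetilde B(E_0)$ of size $2(k-1)+t_\fX$ (Corollary \ref{selmer mat unrestricted version}), and since alternating matrices have even rank one gets $d_0\equiv t_\fX\equiv r(\fX)\pmod 2$, the last congruence being part of Theorem \ref{p:2-descent-redei-mat}.
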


\begin{proof}
We only need to prove for families of type (B) and (C).
Let $\mathbf t=(t_i)$ be the parameter of $\fX$.
Then by Proposition \ref{selmer lower bound},
$\dim_{\BF_2}S(E_0)\geq\max_i(t_i)$.
Hence $\Prob\big(\dim_{\BF_2}S(E)=d\mid E\in\fX\big)=P_{r,\mathbf t}^\Alt(d)>0$
by $d\geq \dim_{\BF_2}S(E_0)\geq\max_i(t_i)$.
\end{proof}

The same question can be asked for the whole quadratic twist family $\CE$.
Inspired by the $p=2$ case of a conjecture by Kolyvagin
(\cite{Kol90}, Conjecture F),
we have the following conjecture.

\begin{conj}\label{conj1}
Let $\CE$ be a family of type (B) or (C).
Then for $r=0$ and $1$,
there exists a $\Sigma$-equivalence class $\fX$
with $r(\fX)=r$ and
whose parameter $\mathbf t_\fX=(t_i)$ satisfies $\max_i(t_i)\leq r$.

As a consequence, let $\CE$ be a quadratic twist family of elliptic curves over $\BQ$ with full rational $2$-torsion points. Then for any integer $d\geq 0$,
$$\Prob\left (\dim_{\BF_2}S(E)=d\ \big|\ E\in \CE\right)>0.$$
In particular, there is a positive density of $E\in\CE$
whose $2^\infty$-part of Shafarevich-Tate group is trivial.
\end{conj}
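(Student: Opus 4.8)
The statement has two parts: a core existence assertion — for a type (B) or (C) family $\CE$ and each $r\in\{0,1\}$, there is a $\Sigma$-equivalence class $\fX$ with $r(\fX)=r$ and $\max_i(t_{i,\fX})\leq r$ — and its consequence for the whole family. Only the core assertion is genuinely open; my plan is to deduce the consequence from it using the theorems above, and then to outline how one would try to prove the core assertion and where the main difficulty lies.

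\emph{Deducing the consequence.} I would fix $d\geq 0$ and set $r\in\{0,1\}$ with $r\equiv d\pmod 2$. If $\CE$ is of type (A), Theorem \ref{main type A} already gives $\Prob(\dim_{\BF_2}S(E)=d\mid E\in\CE)=\tfrac12\sum_{r'=0,1}P_{r'}^\Alt(d)$, which is positive since $P_{r'}^\Alt(d)>0$ precisely when $r'\equiv d\pmod 2$. If $\CE$ is of type (B) or (C), I would apply the core assertion to get a class $\fX\subset\CE$ with $r(\fX)=r$ and $\max_i(t_{i,\fX})\leq r$; then Theorem \ref{main theorem} identifies the density of $E\in\fX$ with $\dim_{\BF_2}S(E)=d$ as $P_{r,\mathbf t_\fX}^\Alt(d)$, and the irreducibility of the associated Markov chain (as recalled just before Proposition \ref{main irred}) makes this positive, because $d\equiv r\pmod 2$ and $d\geq r\geq\max_i(t_{i,\fX})$. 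Combining this with the standard fact that every $\Sigma$-equivalence class has positive density in $\CE$ — it is carved out by finitely many congruence and sign conditions on the twisting parameter — gives $\Prob(\dim_{\BF_2}S(E)=d\mid E\in\CE)>0$. For the ``in particular'', I would take $d=0$: then $S(E)=0$ means $\Sel_2(E/\BQ)=E(\BQ)[2]$, and feeding this into $0\to E(\BQ)/2E(\BQ)\to\Sel_2(E/\BQ)\to\Sha(E/\BQ)[2]\to 0$ together with $\dim_{\BF_2}E(\BQ)/2E(\BQ)=\rank E(\BQ)+\dim_{\BF_2}E(\BQ)[2]$ forces $\rank E(\BQ)=0$ and $\Sha(E/\BQ)[2]=0$, whence $\Sha(E/\BQ)[2^\infty]=0$ (any nonzero element would give, after multiplying by a power of $2$, a nonzero $2$-torsion element).

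\emph{Attacking the core assertion.} Here the plan would be: first, make explicit the formula — implicit in the construction of $M_{2k+r,\mathbf t}^\Alt(\BF_2)$ — expressing $\mathbf t_\fX$ through the local conditions that define $\fX$, the ``hole'' parameters $t_i$ encoding the rational cyclic $4$-isogeny and, in type (C), whether $E[4]\subset E(\BQ(\sqrt{-1}))$; then, show that one can arrange $\max_i(t_i)\leq r$ while prescribing the root number, for instance by refining $\Sigma$ at one auxiliary prime to flip $\epsilon(\fX)$ without increasing $\mathbf t$ (legitimate since $\mathbf t$ is refinement-invariant, by the last part of Theorem \ref{main theorem}). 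The hard part will be this last step: a priori the smallest value of $\max_i(t_i)$ among classes with a fixed root number might exceed $r$, and ruling that out seems to need a delicate understanding of how the root number interacts with the Selmer-rank discrepancies measured by $\mathbf t$ — or explicit computations in sample families like the twists of $X_0(24)$ and $X_0(15)$ above. This is the $p=2$, $2$-Selmer analogue of Kolyvagin's Conjecture F, which is why the assertion is left as a conjecture.
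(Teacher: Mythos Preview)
Your handling is correct and matches the paper's intent. The statement is a \emph{conjecture} in the paper, not a proved theorem: the core existence assertion (a class $\fX$ with $r(\fX)=r$ and $\max_i(t_i)\leq r$) is left open, and the paper only remarks afterward that the type (A) case is already known by Heath--Brown, Swinnerton-Dyer, and Kane. Your deduction of the consequence from the core assertion via Theorem~\ref{main theorem} and the irreducibility criterion $P_{r,\mathbf t}^\Alt(d)>0\iff d\equiv r\pmod 2$ and $d\geq\max_i(t_i)$ is exactly the reasoning the paper sets up in the paragraph preceding Proposition~\ref{main irred}, and your observation that each $\Sigma$-equivalence class has positive density in $\CE$ is the missing link the paper leaves implicit. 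Your treatment of the ``in particular'' (that $S(E)=0$ forces $\Sha(E/\BQ)[2^\infty]=0$) is also correct. Your outline for attacking the core assertion is reasonable speculation, but as you rightly note, this is the genuinely open part and the paper offers no proof.
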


For families of type (A) this is known by Heath-Brown, Swinnerton-Dyer and Kane
(Theorem \ref{main type A}).

\subsection{Moments of $2$-Selmer groups}

We are also interested in the average order
and the ``essential'' average order of the essential
$2$-Selmer group $S(E)$ for $E\in\fX$.
Here we define the average order of $S(E)$ to be
$$
\Avg\big(\#S(E)\mid E\in\fX\big)
:=\lim_{N\to\infty}\BE\big(\#S(E)\mid E\in\fX,N(E)<N\big),
$$
and the ``essential'' average order of $S(E)$ to be
$$
\Avg_0\big(\#S(E)\mid E\in\fX\big)
:=\lim_{N\to\infty}\BE\big(\#S(E)\mid E\in\fX,N(E)<N,\#\Sigma_1(E)\sim\log\log N\big),
$$
where $\Sigma_1(E)$ is the set of bad primes of $E$ outside $\Sigma$,
and for positive real numbers $r$ and $N$, the notation $r\sim\log\log N$
indicates that $|r-\log\log N|\leq(\log\log N)^{2/3}$.

By Theorem \ref{main theorem},
$\Prob\big(\#S(E)=2^d\mid E\in\fX\big)=P_{r,\mathbf t}^\Alt(d)$,
where $\mathbf t$ is the parameter of $\fX$.
Hence it is reasonable to expect that the above average order
and ``essential'' average order are also equal to
the average order given by $P_{r,\mathbf t}^\Alt(d)$,
that is,
$\sum_{d=0}^\infty P_{r,\mathbf t}^\Alt(d)2^d$.

\begin{thm}\label{main average}
Let $\CE$ be a family with full rational $2$-torsion, and
$\fX\subset \CE$ a $\Sigma$-equivalence class with parameter $\mathbf t$.
Let $\xi\geq 1$ be an integer.
Then the $\xi$-th moment of essential $2$-Selmer groups for $\fX$ is
equal to the $\xi$-th moment of the corresponding matrix model:
\begin{equation}
\label{e:average-moment}
\Avg\big(\#S(E)^\xi\ |\ E\in \fX\big)= \Avg_0 \big(\#S(E)^\xi\ |\ E\in \fX\big)=\sum_{d=0}^\infty P_{r, \mathbf{t}}^{\Alt}(d) 2^{\xi d}.
\end{equation}
In particular, for average order
\begin{equation}
\label{e:average-order}
\Avg\big(\#S(E)\mid E\in\fX\big)
=\Avg_0\big(\#S(E)\mid E\in\fX\big)
=\sum_{d=0}^\infty P_{r,\mathbf t}^\Alt(d)2^d
=3+\sum_i2^{t_i}.
\end{equation}
Moreover, the $\xi$-th moment has error term
that there exists $C>0$ depending only on $\fX$ and $\xi$,
such that for any sufficiently large $N$ (depending on $\fX$)
\begin{equation}
\label{e:average-order-1}
\left|\BE\big(\#S(E)^\xi\mid E\in\fX,N(E)<N\big)-\sum_{d=0}^\infty P_{r, \mathbf{t}}^{\Alt}(d) 2^{\xi d}\right|
<C\cdot\frac{(\log\log N)^{2^{2\xi+1}}}{(\log N)^{1/2^{2\xi}}}.
\end{equation}
The ``essential'' $\xi$-th moment has similar error term
\begin{equation}
\label{e:essential-average-order-1}
\left|\BE\big(\#S(E)^\xi\mid E\in\fX,N(E)<N,\#\Sigma_1(E)\sim\log\log N\big)-\sum_{d=0}^\infty P_{r, \mathbf{t}}^{\Alt}(d) 2^{\xi d}\right|
<C\cdot\frac{\log\log\log N}{(\log\log N)^{1/3}}.
\end{equation}
\end{thm}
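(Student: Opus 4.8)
The plan is to deduce the moment identities and their error terms from the distributional statement Theorem \ref{main theorem}, but this cannot be done naively because the moments $\sum_d P_{r,\mathbf t}^\Alt(d)2^{\xi d}$ are "tail-sensitive": contributions from large $d$ are amplified by $2^{\xi d}$, while Theorem \ref{main theorem} only controls the total-variation distance, not the tails. So the first step is to establish a uniform upper bound on the tail of $\dim_{\BF_2}S(E)$ within $\fX$: I would show that for each $j\geq 0$,
\begin{equation*}
\BP\big(\dim_{\BF_2}S(E)\geq d_0+2j\mid E\in\fX, N(E)<N\big)\ \ll\ 2^{-cj^2}
\end{equation*}
uniformly in $N$, where $d_0=\max(r,\max_i t_i)$ is the minimal possible rank. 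This should follow from the "almost Markov" analysis underlying Theorem \ref{main theorem}: the transition probabilities of $Y_k$ push the corank up by $2$ with probability $\asymp 2^{-2m}$, so the distribution has Gaussian-type decay in the rank, with an error term that is negligible compared to $2^{-cj^2}$ once $k$ (equivalently $\#\Sigma_1(E)$) is large. I would also need the matching bound for the model itself, $P_{r,\mathbf t}^\Alt(d_0+2j)\ll 2^{-cj^2}$, which comes from the stationary behavior of the irreducible Markov chain of Appendix \ref{s:basic-property-markov}. Combined with Proposition \ref{selmer lower bound} (which rules out $\dim_{\BF_2}S(E)<d_0$ except for finitely many $E$), these two tail bounds let me truncate both sums at $d\leq d_0 + C\sqrt{\log\log N}$ with an error of the required shape, and on that range the pointwise estimate \eqref{e:main-theorem} converts into \eqref{e:average-order-1} after multiplying by $2^{\xi d}$ and summing; the polynomial-in-$\log\log N$ and inverse-power-of-$\log N$ shape in \eqref{e:average-order-1} arises precisely from balancing the truncation point against the exponential-in-$(\log\log\log N)^{1/2}$ error of Theorem \ref{main theorem} versus the $2^{\xi d}$ weight, with the exponents $2^{2\xi+1}$ and $1/2^{2\xi}$ tracking the number of times $d$ is doubled.

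For the "essential" moment \eqref{e:essential-average-order-1} the argument is the same but cleaner: restricting to $\#\Sigma_1(E)\sim\log\log N$ means $k$ lies in a narrow window around $\log\log N$, so the "almost Markov" error is controlled directly in terms of $k\asymp\log\log N$ rather than having to be integrated over all $k\leq\log\log N$; this is why the error improves to $\frac{\log\log\log N}{(\log\log N)^{1/3}}$. Concretely I would run the Markov-chain convergence estimate (distance of $Y_k$ from the limiting distribution decays geometrically in $k$, up to the error term from the arithmetic side) at $k\sim\log\log N$, truncate the rank at $d_0+C\sqrt{\log\log N}$ using the tail bound above, and sum against $2^{\xi d}$.

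The two equalities $\Avg=\Avg_0=\sum_d P_{r,\mathbf t}^\Alt(d)2^{\xi d}$ then follow by letting $N\to\infty$ in \eqref{e:average-order-1} and \eqref{e:essential-average-order-1}, and the closed form $3+\sum_i 2^{t_i}$ in \eqref{e:average-order} is the special case $\xi=1$: it is a finite computation with the stationary distribution of the Markov chain, most efficiently done by computing $\sum_d P_{r,\mathbf t}^\Alt(d)2^d$ as $\lim_{k\to\infty}\BE(2^{\corank(B)}\mid B\in M_{2k+r,\mathbf t}^\Alt(\BF_2))$ and evaluating that expectation directly from the block structure — for type (A) one recovers the classical value $3$, and each "hole" $B_{ii}=0$ of relative size governed by $t_i$ contributes an extra $2^{t_i}$ via the expected size of the kernel coming from that block. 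I expect the \textbf{main obstacle} to be the first step: proving the uniform super-exponential tail bound on $\dim_{\BF_2}S(E)$ \emph{with} an explicit error term compatible with the truncation, since this requires re-examining the "almost Markov" process of Theorem \ref{main theorem} in a regime (large rank) where the events have very small probability and the arithmetic error terms are most delicate. Once that tail bound is in hand, the passage from distribution to moments is a routine — if bookkeeping-heavy — truncation argument.
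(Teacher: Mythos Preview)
Your plan has a genuine gap, and the paper in fact explicitly warns against exactly this route: Theorem \ref{main average} is \emph{not} a consequence of Theorem \ref{main theorem}, because the error term in \eqref{e:main-theorem} is far too weak. Compare: the claimed error in \eqref{e:average-order-1} is $(\log N)^{-1/2^{2\xi}}(\log\log N)^{2^{2\xi+1}}$, which decays like $\exp(-c'\log\log N)$, whereas the input from Theorem \ref{main theorem} is only $\exp(-c(\log\log\log N)^{1/2})$. No truncation-and-balance argument can upgrade the latter to the former. Concretely, if you truncate at $d\leq d_0+C\sqrt{\log\log N}$ as you propose, the weighted sum of the per-term errors already contains a factor $2^{\xi C\sqrt{\log\log N}}\cdot\exp(-c(\log\log\log N)^{1/2})$, which diverges as $N\to\infty$ rather than producing the power-of-$\log N$ saving you need. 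The obstruction is structural: the passage from $\omega$-density to natural density via Smith's box method (Theorem \ref{p:natural-density}) is what introduces the $\exp(-c(\log\log\log N)^{1/2})$ error, and that method is agnostic to the random variable being measured, so no amount of tail control on $\dim_{\BF_2}S(E)$ will beat it.

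The paper's proof bypasses Theorem \ref{main theorem} entirely for the moment computation. It observes that $\#S(E)^\xi=2^{\corank(B^{\oplus\xi}(n))-2\xi}$ where $B^{\oplus\xi}=\diag(B,\ldots,B)$ is again a high-rank R\'edei matrix, now of parameters $a=c=2\xi$; it then applies Heath-Brown's character-sum method (Theorem \ref{p:natural-average-order}, proved in \S\ref{s:natural average}) directly to $B^{\oplus\xi}$, which gives the natural average order with the sharp error $(\log\log N)^{2^{c+1}}/(\log N)^{1/2^a}=(\log\log N)^{2^{2\xi+1}}/(\log N)^{1/2^{2\xi}}$ --- this is where those exponents actually come from, not from any truncation bookkeeping. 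For \eqref{e:essential-average-order-1} it uses Kane's method (\S\ref{s:essential average}) in the same way. The identification of the resulting $E^{(\infty)}$ with $\sum_d P^\Alt_{r,\mathbf t}(d)2^{\xi d}$ then goes through the Markov-chain estimate \eqref{e:redei-matrix-sum-prob-alt}, and the closed form $3+\sum_i 2^{t_i}$ at $\xi=1$ is a separate finite calculation on the matrix model (Appendix \ref{average order matrix model}). So the key missing ingredient in your proposal is the direct Heath-Brown/Kane analysis of the average of $2^{\corank B(n)}$ over $n<N$ as a character sum, which is what produces the much stronger error terms.
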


In particular, for any equivalence class $\fX$ of type (A), the average order of  $S(E)$  is always $3$, while for any equivalence class $\fX$ of type (B) and (C), the average order of $S(E)$ is strictly greater than $3$.
\begin{remark} The case of non-type (A) curves was also considered by the paper of Yu in 2005
\cite{Yu05},
whose method follows Heath-Brown's,
claimed that as long as
$\CE$ is a family with full rational $2$-torsion points,
there are equivalence classes $\fX\subset\CE$
such that the average order of $S(E)$ in $\fX$ is $3$
(\cite{Yu05}, Theorem 2), but which contradicts with our result.
\end{remark}

\subsection{Relation with other arithmetic problems}

\subsubsection{$\theta$-congruent number problem}

The so called $\theta$-congruent number families are
one-to-one correspondence to
all families of elliptic curves with full rational $2$-torsion points, and the classification of types can be easily formulated.
Therefore Theorems \ref{main theorem} and \ref{main average} apply
to $\theta$-congruent number problem:
for each $\theta$ one can compute the parameter $\mathbf t$
for each equivalence classes of $\theta$-congruent number families,
and according to $\mathbf t$, deduces results on the distribution of $2$-Selmer groups
(in particular, positive density of non-$\theta$-congruent numbers) for them.

For a fixed angle $\theta \in (0, \pi)$, a positive integer $n$ is called a $\theta$-congruent number \cite{Kan00}
if $n\sin \theta$ is the area of a triangle with rational side lengths and with an angle $\theta$ (thus $\cos \theta$ must be rational).
To determine whether $n$ is $\theta$-congruent or not, it is essentially to determine if the elliptic curve (called a $\theta$-congruent number curve)
$$ny^2=(x-\cos \theta)(x^2-1)$$
has positive Mordell-Weil rank.  The $\theta$-congruent number curve family is
of type
\begin{itemize}
\item[(A)]
if $\sin (\theta/2)$, $\cos (\theta/2)$ are non-rational;
\item[(B)]
if $\sin (\theta/2)$, $\cos (\theta/2)$ have one rational and one non-rational;
\item[(C)]
if $\sin (\theta/2)$, $\cos (\theta/2)$ are all rational.
\end{itemize}
The $\theta=\frac{\pi}{2}$ case is the classical congruent number problem
(\cite{HB93}, \cite{HB94}, \cite{T14}, \cite{TYZ}, \cite{Smith} and so on),
corresponding to the family $ny^2=x^3-x$, which is of type (A).

\subsubsection{$X_0(24)$ and tiling number problem}

A positive integer $n$ is called a tiling number,
if an equilateral triangle can be dissected
into $nk^2$ congruent triangles for some integer $k\geq 1$.
Let $E=X_0(24):y^2=x(x-1)(x+3)$.
Then a square-free $n\geq 5$ is a tiling number if any only if one of
the two elliptic curves $E^{(\pm n)}$ defined over $\BQ$ has positive Mordell-Weil rank
\cite{Lac}.
These are families of $\theta$-congruent number curves
of $\theta=\frac{\pi}{2}$ and $\theta=\frac{2\pi}{3}$,
which are of type (B).
In \cite{FLPT} we studied the behaviors of $2$-Selmer groups of them being minimal,
as a consequence, we proved that non-tiling numbers have positive density.
We noticed the differences of them compared to congruent number problem,
which was the beginning of the work in this paper.

\begin{remark}
It's clear that one can put two curves $E^{(\pm n)}$ together, and the followings are equivalent:
\begin{itemize}
\item[(i)]
one of
the two elliptic curves $E^{(\pm n)}$ defined over $\BQ$ has positive Mordell-Weil rank;
\item[(ii)]
the quadratic twist $A^{(n)}$
of the abelian surface $A:=E\times E^{(-1)}$ over $\BQ$ has positive Mordell-Weil rank;
\item[(iii)]
the elliptic curve $E$ over the biquadratic field $\BQ(\sqrt{-1},\sqrt{n})$
has positive Mordell-Weil rank.
\end{itemize}
\end{remark}

\subsubsection{$X_0(15)$ and modularity over imaginary quadratic fields}

Recent work of Caraiani and Newton \cite{CN23} relates
the quadratic twists of $E=X_0(15):y^2=x(x-9)(x-25)$
with modularity over imaginary quadratic fields.
The main theorem in \cite{CN23} states that,
if $n\geq 1$ is such that $E^{(-n)}$
over $\BQ$ is of Mordell-Weil rank $0$,
then all elliptic curves over the imaginary quadratic field
$\BQ(\sqrt{-n})$ is modular.
This family is of type (C), and from the computation of the parameter $\mathbf t_\fX$,
Theorem \ref{main theorem} implies that the set of  such $n$ is of positive density.

\subsection{Idea of proof and organization of this paper}

\subsubsection{Moments}

Theorem \ref{main average} is not a direct consequence of
Theorem \ref{main theorem}, as the error term in
Theorem \ref{main theorem} prevents the terms in
Theorem \ref{main average} from converging.
In Theorem \ref{main average},
the estimation of average order extends Heath-Brown's argument
in \cite{HB93}, \cite{HB94},
which was for the family $ny^2=x^3-x$;
the estimation of the ``essential'' average order extends Kane's argument
in \cite{Kane13}, which was for all type (A) families.
It turns out that in both of these arguments, the algebra and the analysis can be done separately. Moreover, these methods also work for an arbitrary high-rank R\'edei matrix $B$
(Theorem \ref{p:natural-average-order}).
There are following quantities associated to $B$:
\begin{itemize}
\item
a quantity \eqref{e:high-rank-moment} purely in terms of
linear algebra data of $B$,
\item
the ``$\omega$-average order''
which can be think of the limit as $k\to\infty$ of the average order when restricted to
integers with exactly $k$ prime factors,
\item
natural average order
and ``essential'' average order
similar to that in Theorem \ref{main average}.
\end{itemize}
The algebra part
(Proposition \ref{p:high-rank-moment-limit}) asserts that,
the linear algebra quantity \eqref{e:high-rank-moment}
is equal to ``$\omega$-average order''.
The analysis part in \S\ref{s:natural average} and \S\ref{s:essential average},
which is based Heath-Brown's and Kane's work,
asserts that the average order,
as well as the ``essential'' average order,
are also equal to the linear algebra quantity \eqref{e:high-rank-moment},
so they are equal to ``$\omega$-average order''.

Here we emphasize the importance of the existence of the matrix $B$,
which will guide us to the main term of moments computation
when employing the ideas of \cite{HB93}, \cite{HB94} and \cite{Kane13}.
In \cite{Yu05}, the main term and error term are calculated incorrectly due to lack of matrix guidance.

In the case of the $2$-Selmer groups for elliptic curves,
to compare the
``$\omega$-average order''
with the theoretical average order $\sum_{d=0}^\infty P_{r,\mathbf t}^\Alt(d)2^d$
predicted by the matrix model,
we compare the almost Markov chain
for $2$-Selmer groups and the Markov chain for
the matrix model $M_{2k+t,\mathbf t}^\Alt(\BF_2)$.
The estimation of differences of these two transitional probabilities
allows one to deduce that
these two average orders are equal,
similarly for moments
(Theorem \ref{redei matrix main thm}
and \eqref{e:redei limit moment equal}).

We remark that in \cite{HB93}, \cite{HB94} and \cite{Kane13}
which deals with type (A) case,
the ``$\omega$-average order'' and the average order predicted by type (A) matrix model
$M_{2k+t}^\Alt(\BF_2)$, as well as moments, are computed directly, and compared to be equal,
without using Markov chain.
On the other hand, in our arguments, these two are not computed directly
(i.e.~without closed form expression),
but they are compared via Markov chain argument.

The reason to study the ``essential'' average order is that,
among positive square-free integers $<N$,
the ones with number of prime factors $r\sim\log\log N$
are of density $1-O(\exp(-c(\log\log N)^{1/3}))=1-o(1)$
as $N\to\infty$,
hence it is expected that the property of these ``good'' integers
reflects the property of all positive square-free integers.

\subsubsection{From moments to distribution, or from Markov chain to distribution}

Conversely, one may ask if Theorem \ref{main theorem}
(without error term estimation) is a consequence of Theorem \ref{main average}.
This is indeed the case for type (A) families.
For type (A),
in Heath-Brown's \cite{HB94} and Kane's \cite{Kane13} work,
the estimation of moments
give rise to the distribution of rank of $2$-Selmer groups
without error term estimation (Theorem \ref{main type A}).
We expect that
these methods also work for type (B).
However, it's unlikely for type (C) that
they still work without significant modification,
as the moments in Theorem \ref{main average} grows too fast for type (C).

As we will see in Appendix \ref{s:model},
for $\mathbf t=\varnothing=(-\infty,-\infty)$ for type (A),
or $\mathbf t=(t_1)=(t_1,-\infty)$ for type (B),
or $\mathbf t=(t_1,t_2)$ for type (C),
the generating function of $P_{r,\mathbf t}^\Alt(d)$ is
$$
\sum_{d=0}^\infty P_{r,\mathbf t}^\Alt(d)X^d
=F_{r,\mathbf t}^\Alt(X)
:=F_{\mathbf t}^\Alt(X)+(-1)^rF_{\mathbf t}^\Alt(-X)\in\BR[[X]],
$$
where the formal power series
$$
F_{\mathbf t}^\Alt(X):=\frac{\prod_{j=0}^\infty(1+2^{-j}X)}
{\prod_{j=0}^\infty(1+2^{-j})}
\sum_{i_1,i_2\geq 0}2^{i_1t_1+i_2t_2}
\cdot\frac{2^{2i_1i_2}\prod_{j=1}^{i_1+i_2}(2^{1-j}X-1)}
{\prod_{j=1}^{i_1}(2^{2j}-1)\prod_{j=1}^{i_2}(2^{2j}-1)}\in\BR[[X]].
$$
Here if $t_1=-\infty$ or $t_2=-\infty$, then the corresponding $i_1$
or $i_2$ in the sum only takes $0$.
The generating function converges absolutely on the whole complex plane.
Hence it gives formula for the $\xi$-th moment $\sum_{d=0}^\infty P_{r,\mathbf t}^\Alt(d)2^{\xi d}$ for all $\xi\in\BZ_{\geq 0}$ by taking $X=2^\xi$;
in this case, the $i_1$ and $i_2$ only takes
$i_1,i_2\geq 0$, $i_1+i_2\leq\xi$.
This also gives the upper bound and lower bound of moments:
they are $2^{\xi^2/2+O(\xi)}$, $2^{2\xi^2/3+O(\xi)}$
and $2^{\xi^2+O(\xi)}$ for types (A), (B) and (C), respectively.

For type (C) the moments grow too fast (that is, $2^{\xi^2+O(\xi)}$),
the moment-to-distribution methods of Heath-Brown and Kane
do not apply.
We use the following approach to prove the distribution result Theorem \ref{main theorem}
instead, for all types (A), (B) and (C), without using moments.
\begin{itemize}
\item
The $2$-Selmer group $\Sel_2(E^{(n)}/\BQ)$
depends only on the $\Sigma$-equivalence classes and Legendre symbols of prime factors of $n$.
This is well-known in \cite{HB94} and \cite{SD08}.
More importantly, the $2$-Selmer group is in fact the kernel of some matrix over $\BF_2$
whose coefficients are Legendre symbols of prime factors of $n$
arranged in a particularly uniform way (we will call them R\'edei matrices).
This is suggested in the proofs in \cite{HB94} and \cite{SD08}.
This is discussed in \S\ref{s:2-Selmer}.
\item
Thanks to the existence of such matrix whose kernel is $2$-Selmer,
we can analyze the behavior of $\Sel_2(E^{(n)}/\BQ)$ as the number $k$ of prime factors
of $n$ increases.
Swinnerton-Dyer's work \cite{SD08} also did this (without explicit matrix) to some extent.
Fully writing down the matrix allows us to discover the ``holes''
in the matrix, similarly to the matrix model $M_{2k+r,\mathbf t}^\Alt(\BF_2)$.
For type (A), the corank of such matrix already form an ``almost Markov chain'',
as in \cite{SD08}.
For types (B) and (C) they are not: since the existence of ``holes'',
to determine the transition probability one must know the corank of submatrices
involving ``holes''. This is also reflected in the Markov chain corresponding
to the relevant matrix model $M_{2k+r,\mathbf t}^\Alt(\BF_2)$.
By the same refinement method in the matrix model, we consider the joint distribution
of corank of the matrix representing $2$-Selmer, as well as coranks of its submatrices,
then they form an ``almost Markov chain'' whose limit is the Markov chain
corresponding to the relevant matrix model.
The analysis of this process is discussed in \S\ref{s:markov},
and the Markov chain
corresponding to the relevant matrix model is discussed in Appendix \S\ref{s:model}.
\item
Now we know that, if assuming the equidistribution of Legendre symbols,
then the limit as $k\to\infty$ of the distribution of $\Sel_2(E^{(n)}/\BQ)$ with the number $k$ of prime factors of $n$ fixed, coincides with the prediction of
the relevant matrix model $M_{2k+r,\mathbf t}^\Alt(\BF_2)$.
To compare this with the natural density in Theorem \ref{main theorem},
the equidistribution of Legendre symbols need to be established.
This is mainly done in Smith's earlier work \cite{Smith2}.
In fact, one may deduce that, if for each $k$,
a permutation invariant subset of all possible Legendre symbols of $k$ primes is given,
such that as $k\to\infty$ the limit density of these subsets exists,
then it is equal to the natural density of the subset of positive square-free integers
associated to these subsets of all possible Legendre symbols,
and with error term estimations.
This is discussed in \S\ref{s:natural-density}.
\end{itemize}
Finally, one may find details of proof of main results in \S\ref{s:proof}.

\subsection{Joint distribution with $\phi$-Selmer groups}

We also study the joint distribution
of $2$-Selmer group with $\phi$-Selmer groups in quadratic twist families.

Let $\CE$ be a family with full rational $2$-torsion points.
Let $\phi:\CE\to\CE_\phi$ be a compatible family of rational $2$-isogenies.
There are three different choices of $\phi$.
For each $\phi:E\to E_\phi$ in $\CE\to\CE_\phi$, there is
$\phi$-Selmer group $\Sel_\phi(E/\BQ)$
which injects into $2$-Selmer group:
$\Sel_\phi(E/\BQ)\hookrightarrow\Sel_2(E/\BQ)$.
We define essential $2$-Selmer group $S(E)$
and essential $\phi$-Selmer group $S_\phi(E)\hookrightarrow S(E)$ as
\begin{equation}
\label{e:essential-2-Selmer}
S(E):=\Sel_2(E/\BQ)/E[2],\qquad
S_\phi(E):=\Sel_\phi(E/\BQ)/E[\phi].
\end{equation}
On the other hand, the $\phi$
induces the map
$\pi:E[2]\stackrel\phi\twoheadrightarrow\phi(E[2])\cong\BF_2$,
and it is easy to see that the $\phi$-Selmer group $\Sel_\phi(E/\BQ)$
is equal to the following $\pi$-strict $2$-Selmer group:
$$
\Sel_{2,\pi\sstr}(E/\BQ):=
\ker(\Sel_2(E/\BQ)\hookrightarrow H^1(\BQ,E[2])\xrightarrow\pi H^1(\BQ,\BF_2))
\subset\Sel_2(E/\BQ).
$$
In \S\ref{s:2-Selmer} we will see the key point that,
the $2$-Selmer is kernel of some matrix $B$,
while the $\pi$-strict $2$-Selmer is the kernel of certain submatrix of $B$.

We define relevant model of random rectangular matrices,
describing the distribution of $\phi$-Selmer groups.
The model is similar to the random matrix models for class groups in \cite{Ger84}.
For $t\in\BZ\cup\{-\infty\}$ and $d\geq 0$,
define
$$
P_t^\Mat(d):=\lim_{k\to\infty}\BP\big(\corank(B)=d\mid
B\in M_{(k-t)\times k}(\BF_2)\big)
\text{ for }t\in\BZ,
\quad\text{and}\quad
P_{-\infty}^\Mat(d):=\begin{cases}
1,&\text{if }d=0, \\
0,&\text{if }d\geq 1.
\end{cases}
$$

\begin{thm}
\label{main strict}
Let $\CE$ be a family with full rational $2$-torsion, and
$\fX\subset \CE$ a $\Sigma$-equivalence class.
Then for each of the three $\phi$ as above,
there exists a $t_\phi=t_{\phi,\fX}\in\BZ\cup\{-\infty\}$
depending only on $\phi$ and $\fX$,
and absolute constants $C_1>0$ and $c>0$,
such that for any sufficiently large $N$
(depending on $\fX$),
$$
\sum_{d\geq 0}\left|\BP\big(
\dim_{\BF_2}S_\phi(E)=d\mid E\in\fX,N(E)<N
\big)-P_{t_\phi}^\Mat(d)\right|<C_1\cdot\exp(-c(\log\log\log N)^{1/2}).
$$
The number of $\phi$ such that $t_\phi\neq-\infty$ is equal to the $s$
in Theorem \ref{main theorem},
and the $t_\phi$'s not equal to $-\infty$
is equal to $\mathbf t=(t_1,\cdots,t_s)$
in Theorem \ref{main theorem}.
\end{thm}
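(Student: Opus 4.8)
The plan is to run the argument of Theorem \ref{main theorem} for the $\pi$-strict submatrix in place of the full Rédei matrix. The three ingredients are already available: the structure theory of \S\ref{s:2-Selmer} (which realizes $\Sel_\phi$ as the kernel of a submatrix of the Rédei matrix for $\Sel_2$), the equidistribution of Legendre symbols and the density transfer of \S\ref{s:natural-density} (Smith, \cite{Smith2}), and the matrix model bookkeeping of \S\ref{s:model}.

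\textbf{Step 1: the $\phi$-Selmer group as the kernel of a submatrix.} Recall from \S\ref{s:2-Selmer} that for squarefree $n=p_1\cdots p_k$ with $E^{(n)}\in\fX$, the group $\Sel_2(E^{(n)}/\BQ)$ is the kernel of a Rédei matrix $B=B(n)$ over $\BF_2$ of size roughly $(2k+r)\times(2k+r)$, whose entries are Legendre symbols $\left(\frac{p_i}{p_j}\right)$ arranged permutation-equivariantly, and which, once the residues of the $p_i$ modulo $8$ are pinned down by the $\Sigma$-equivalence class, is alternating with $s$ diagonal ``holes''. Since $\Sel_\phi(E^{(n)}/\BQ)=\Sel_{2,\pi\sstr}(E^{(n)}/\BQ)$ is cut out inside $\Sel_2(E^{(n)}/\BQ)$ by the single further linear condition coming from $\pi\colon E[2]\twoheadrightarrow\BF_2$, and this condition is again of Legendre-symbol type and compatible with the block structure, $\Sel_\phi(E^{(n)}/\BQ)$ is the kernel of a rectangular block-column $B'=B_\phi'(n)$ of $B(n)$ --- one of the submatrices $B_i'$ used in the refinement for the matrix model in Theorem \ref{main theorem}. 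After discarding the part of source and target fixed by the local conditions at the places of $\Sigma$, $B'$ has the shape $M_{(k-t_\phi)\times k}(\BF_2)$ for a well-defined $t_\phi=t_{\phi,\fX}\in\BZ\cup\{-\infty\}$, the value $-\infty$ meaning that the $\pi$-strict condition is subsumed by the local conditions, so that $S_\phi(E)=0$ for all but finitely many $E\in\fX$ (in fact for all $E$ with bad prime outside $\Sigma_0$). Crucially, the nonzero entries of $B_\phi'$ are \emph{off-diagonal} entries of the alternating matrix $B$, so quadratic reciprocity imposes no relation among them.

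\textbf{Step 2: limiting distribution via the rectangular model, and transfer to density.} Fix the number $k$ of prime factors of $n$. By Smith's equidistribution of Legendre symbols (\S\ref{s:natural-density}), the entries of $B_\phi'(n)$ are jointly equidistributed like independent uniform elements of $\BF_2$, so the distribution of $\corank(B_\phi'(n))$ over squarefree $n$ with exactly $k$ prime factors and $E^{(n)}\in\fX$ converges, as $k\to\infty$, to the corank distribution of a uniformly random matrix in $M_{(k-t_\phi)\times k}(\BF_2)$, i.e.\ to $P_{t_\phi}^\Mat(d)$; unlike the full $2$-Selmer case, no Markov chain is needed here, since the corank distribution of a random rectangular $\BF_2$-matrix stabilizes geometrically fast in $k$ (when $t_\phi=-\infty$ this is the trivial statement $\dim S_\phi(E)=0$). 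Because $\{n:\corank(B_\phi'(n))=d\}$ depends only on a permutation-invariant subset of the Legendre-symbol data of the primes dividing $n$, the quantitative density transfer of \S\ref{s:natural-density} applies verbatim: the density of $\{E\in\fX:\dim S_\phi(E)=d,\ N(E)<N\}$ converges to $P_{t_\phi}^\Mat(d)$ with error term of shape $\exp(-c(\log\log\log N)^{1/2})$, and summing over $d$ (both the model tail $\sum_{d\geq D}P_{t_\phi}^\Mat(d)$ and the counting tail decay geometrically, uniformly in $N$) gives the displayed inequality.

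\textbf{Step 3: identifying the finite $t_\phi$ with $\mathbf t$, and the expected obstacle.} It remains to match the three submatrices $B_\phi'$, one per rational $2$-isogeny $\phi$, with the refinement data $(B_1',\dots,B_s')$ of the model $M_{2k+r,\mathbf t}^\Alt(\BF_2)$. By the constructions in \S\ref{s:2-Selmer} and \S\ref{s:model}, the ``holes'' of the Rédei matrix $B$ are exactly the directions $\phi$ for which the $\pi$-strict submatrix $B_\phi'$ carries a genuine size defect, giving $\#\{\phi:t_\phi\neq-\infty\}=s$; for such $\phi$, $\corank(B_\phi'(n))$ is governed by the same rectangular block $B_i'$ that defines the model, whence $t_\phi=t_i$, while the remaining $3-s$ isogenies have $\pi$-strict condition lying in the span of the $\Sigma$-local conditions, forcing $t_\phi=-\infty$. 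This last point --- verifying, case by case through the type classification (A)/(B)/(C) and for each of the three isogenies, that the hole structure of $B$ is precisely reflected in the size of $B_\phi'$ --- rests on the local analysis at the places of $\Sigma$ from \S\ref{s:2-Selmer} and is the main obstacle; Steps 1 and 2 are essentially adaptations of machinery already set up for $\Sel_2$. Once this dictionary is established the theorem follows.
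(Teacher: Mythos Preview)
Your Step~2 contains the principal gap. The submatrix $B_\phi'$ is \emph{not} a uniform random rectangular matrix over $\BF_2$: looking at Theorem~\ref{p:2-descent-redei-mat} (or Theorem~\ref{p:2-descent-redei-mat-0}), the block-column $B_j'=\left(\begin{smallmatrix}B_{1j}\\B_{2j}\\B_{3j}\end{smallmatrix}\middle|\begin{smallmatrix}B_{13}\\B_{23}\\B_{33}\end{smallmatrix}\right)$ is a mixture of a high-rank block ($B_{21}=A+D_{e_1q}+\text{low-rank}$, carrying the full Legendre matrix $A$ with its row-sum constraints), a low-rank block $B_{jj}$ of rank bounded independently of $k$, and vector columns $B_{13},B_{23}$ whose entries are $z_d$. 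Even after Smith's equidistribution the entries of $B_j'$ are far from i.i.d.\ uniform, and the corank of this structured matrix does not obviously follow $P_{t_j}^{\Mat}$. The paper establishes this via the Markov analysis: Corollary~\ref{strict redei matrix main thm} shows that $Z_{j,k}=\corank(B_j')$ is an almost Markov chain whose limit transition matrix is $P^{\Mat}_{m'\to m'_\new,t_j}$, deduced by summing the joint transition probabilities from Theorem~\ref{redei matrix trans prob}. So a Markov argument \emph{is} needed, contrary to your claim; your shortcut would require the nontrivial statement that this structured R\'edei matrix has the same corank law as a uniform rectangular matrix, which is precisely what the Markov comparison proves.

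Two further points. First, you silently identify $S_\phi(E)=S_{\pi\sstr}(E)$ with the object governed by the submatrix of Swinnerton-Dyer's alternating matrix, but the latter is actually $S_{\pi\sstr}'(E)$ (Theorem~\ref{p:2-descent-redei-mat}); the paper compares the two via Corollary~\ref{p:strict-Sel-eq} (they agree on the event $F$ that $(z_p')_{p\in\Sigma}$ are linearly independent, of probability $1-O(2^{-k})$), and this comparison enters explicitly in \eqref{e:main-thm-Y'-Y}--\eqref{e:main-thm-Y}. Second, your characterisation of $t_\phi=-\infty$ as ``the $\pi$-strict condition lying in the span of the $\Sigma$-local conditions'' is off: $t_\phi=-\infty$ exactly when the corresponding diagonal block $B_{jj}$ is medium-rank ($d_j\neq1$, equivalently $e_1(e_1-e_2)$ etc.\ is not a square), and in that regime the paper does not use the alternating-matrix submatrix at all but instead applies Proposition~\ref{p:high-rank-non-square} to the matrix of Theorem~\ref{p:2-descent-redei-mat-0} to get $\BP(Z_k\geq1)\leq C\cdot k\cdot(3/4)^k$ directly. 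The density transfer in your Step~2 (Theorem~\ref{p:natural-density}) is then applied as you describe.
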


Now we recall the matrix model $M_{2k+r,\mathbf t}^\Alt(\BF_2)$.
Recall that for type (B), there is a submatrix $B_1'$ associated to $B$,
and for type (C), there are submatrices $B_1',B_2'$ associated to $B$.
Recall that $Y_k=(X_k,\corank(B_1'),\cdots,\corank(B_s'))$
is a refinement of $X_k=\corank(B)$.
The $Y_k$ give rise to a joint probability:
$$
P_{r,\mathbf t}^\Alt(d,d_1',\cdots,d_s'):=\lim_{k\to\infty}\BP\left(
\begin{array}{l}
\corank(B)=d\text{ and} \\
\corank(B_i')=d_i'\text{ for }1\leq i\leq s
\end{array}
\middle|~B\in M_{2k+r,\mathbf t}^\Alt(\BF_2)\right).
$$
It's clear that for $i=1,\cdots,s$,
$$
\lim_{k\to\infty}\BP\big(
\corank(B_i')=d_i'\mid B\in M_{2k+r,\mathbf t}^\Alt(\BF_2)\big)
=P_{t_i}^\Mat(d_i'),
$$
hence $M_{2k+r,\mathbf t}^\Alt(\BF_2)$ also extends the model $M_{(k-t_i)\times k}(\BF_2)$.

\begin{thm}
\label{main theorem refined}
Let $\CE$ be a family with full rational $2$-torsion, and
$\fX\subset \CE$ a $\Sigma$-equivalence class.
Let $r=r(\fX)$ and $\mathbf t=\mathbf t_\fX=(t_1,\cdots,t_s)$ be the parameter,
depending only on $\fX$.
Then there exist absolute constants $C_1>0$ and $c>0$, such that
the joint distribution of $2$-Selmer groups with $\phi$-Selmer groups
satisfies that for any sufficiently large $N$
(depending on $\fX$),
\begin{multline}
\label{e:main-theorem-refined}
\sum_{d,d_1',\cdots,d_s'\geq 0}\left|\BP\left(
\begin{array}{l}
\dim_{\BF_2}S(E)=d\text{ and} \\
\dim_{\BF_2}S_{\phi_i}(E)=d_i'\text{ for }1\leq i\leq s
\end{array}
\middle|~E\in\fX,N(E)<N
\right)-P_{r,\mathbf t}^\Alt(d,d_1',\cdots,d_s')\right| \\
<C_1\cdot\exp(-c(\log\log\log N)^{1/2}).
\end{multline}
\end{thm}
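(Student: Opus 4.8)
The plan is to prove the refined joint distribution by combining the structural description of $2$-Selmer groups as kernels of R\'edei matrices (from \S\ref{s:2-Selmer}) with the almost-Markov-chain analysis of \S\ref{s:markov} and the equidistribution of Legendre symbols of \S\ref{s:natural-density}, exactly paralleling the proof of Theorem \ref{main theorem} but tracking the extra coordinates $\corank(B_i')$. First I would recall from \S\ref{s:2-Selmer} that for $E=E^{(n)}\in\fX$ with $n$ having prime factorization into $k$ primes, there is a R\'edei matrix $B=B(n)$ over $\BF_2$, whose entries are Legendre symbols of the prime factors arranged uniformly, such that $S(E)=\ker B$; moreover, for each $\phi_i$ with $t_{\phi_i}\neq-\infty$, the essential $\phi_i$-Selmer group $S_{\phi_i}(E)$ is the kernel of a distinguished submatrix $B_i'$ of $B$ obtained by deleting the rows/columns corresponding to the ``hole'' (this is precisely the identification $S_{\phi_i}(E)=S_{2,\pi\sstr}(E)$ used in Theorem \ref{main strict}). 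Thus the joint statistic $\big(\dim S(E),\dim S_{\phi_1}(E),\dots,\dim S_{\phi_s}(E)\big)$ equals $\big(\corank B,\corank B_1',\dots,\corank B_s'\big)$, which is the exact analogue of the refinement $Y_k$ in the matrix model $M_{2k+r,\mathbf t}^\Alt(\BF_2)$.

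Next I would run the almost-Markov-chain argument: as the number $k$ of prime factors of $n$ increases by one, the tuple $Y_k(n)=\big(\corank B(n),\corank B_1'(n),\dots,\corank B_s'(n)\big)$ evolves, and the analysis of \S\ref{s:markov} shows that, conditionally on $Y_k$ and averaging over the Legendre symbols of the new prime, the transition probabilities for $Y_{k+1}$ differ from those of the genuine Markov chain associated to $M_{2k+r,\mathbf t}^\Alt(\BF_2)$ (Appendix \S\ref{s:model}) by an error term tending to $0$ as $k\to\infty$. Iterating and using that the matrix-model chain $(Y_k)$ converges to the stationary joint distribution $P_{r,\mathbf t}^\Alt(d,d_1',\dots,d_s')$ (which holds because the chain is irreducible, Appendix \S\ref{s:basic-property-markov}), one obtains that the distribution of $Y_k(n)$, with $k$ fixed and the Legendre symbols assumed equidistributed, converges to $P_{r,\mathbf t}^\Alt(d,d_1',\dots,d_s')$ with a quantitative rate in $k$. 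Then I would invoke the equidistribution results of \S\ref{s:natural-density} (building on Smith \cite{Smith2}): since the event ``$Y_k(n)=(d,d_1',\dots,d_s')$'' is, for each $k$, a permutation-invariant subset of the possible Legendre-symbol tuples of $k$ primes whose limiting density exists, its natural density among square-free $n<N$ with conductor restrictions equals the limiting density, with the stated error term $C_1\exp(-c(\log\log\log N)^{1/2})$; summing the triangle inequality over all tuples $(d,d_1',\dots,d_s')$ and combining with the rate in $k$ (choosing $k\sim\log\log\log N$ or similar) yields \eqref{e:main-theorem-refined}.

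The main obstacle is controlling the joint transition probabilities in \S\ref{s:markov} for types (B) and (C): unlike the type (A) case where $\corank B$ alone is almost Markov, here one must verify that the \emph{refined} tuple $Y_k$ is almost Markov, i.e.\ that knowing the coranks of the submatrices $B_i'$ (in addition to $\corank B$) suffices to pin down the transition law up to a small error — and that this error decays uniformly over the state space. This requires a careful linear-algebra analysis of how adjoining a new prime (a new ``block'') affects simultaneously the rank of $B$ and the ranks of the hole-submatrices $B_i'$, matching it block-by-block against the matrix-model computation of Appendix \S\ref{s:model}; the bookkeeping is heavier for type (C) because the two holes interact. A secondary technical point is ensuring the equidistribution input of \S\ref{s:natural-density} applies to the \emph{joint} events rather than just the marginal events $\{\dim S(E)=d\}$; since those events are still permutation-invariant and depend only on the Legendre-symbol tuple, this is a formal consequence, but it must be stated carefully. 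Once these are in place, the summation and the choice of the cutoff for $k$ are routine, and give the same final bound as in Theorem \ref{main theorem}.
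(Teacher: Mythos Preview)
Your overall architecture is right and matches the paper's: relate the joint statistic to $(\corank B,\corank B_1',\dots,\corank B_s')$, invoke the almost-Markov analysis of Theorem \ref{redei matrix main thm} for $Y_k$, then feed the resulting $\omega$-limit $P_{r,\mathbf t}^\Alt(\mathbf m)$ into Theorem \ref{p:natural-density}. However, there is a genuine gap in your first step, and one factual slip in the last.

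The gap: you assert that $S_{\phi_i}(E)=\ker B_i'$ where $B_i'$ is the submatrix of the Swinnerton-Dyer alternating R\'edei matrix $B$. This is not what the paper proves. By Theorem \ref{p:2-descent-redei-mat} and Corollary \ref{selmer mat unrestricted version}, the submatrix $B_i'$ (resp.\ $\widetilde B_i'$) computes the \emph{modified} $\pi$-strict Selmer group $S_{\pi_i\sstr}'(E)$, not $S_{\phi_i}(E)=S_{\pi_i\sstr}(E)$; the latter is the kernel of a different R\'edei matrix $B_i$ coming from Theorem \ref{p:2-descent-redei-mat-0}, which is \emph{not} a submatrix of $B$ and does not directly enter the almost-Markov analysis of Theorem \ref{redei matrix main thm}. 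Consequently the paper runs the Markov analysis on $Y_k':=(\corank B,\corank B_1',\dots,\corank B_s')$ to obtain \eqref{e:main-thm-Y'}, and then inserts a separate comparison step: by Corollary \ref{p:strict-Sel-eq}, on the event $F=\{(z_p')_{p\in\Sigma}\text{ linearly independent}\}$ one has $S_{\pi_i\sstr}(E)=S_{\pi_i\sstr}'(E)$, and $\BP(\text{not }F)=O(2^{-k})$. Combining this with the tail bound $\BP(X_k=m)\ll 2^{-\alpha m^2}$ from Corollary \ref{redei matrix upper bound}, the paper bounds $\sum_{\mathbf m}2^{\xi m}|\BP(Y_k=\mathbf m)-\BP(Y_k'=\mathbf m)|$ (equations \eqref{e:main-thm-Y'-Y}--\eqref{e:main-thm-Y'-Y-2}) and only then obtains \eqref{e:main-thm-Y} for the genuine $Y_k$. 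Without this comparison your argument proves the joint distribution of $S(E)$ with the \emph{modified} strict Selmer groups, not with the $\phi$-Selmer groups in the statement.

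The slip: the cutoff on the number $k$ of prime factors is $k\sim\log\log N$ (Hardy--Ramanujan, Proposition \ref{p:HR}), not $k\sim\log\log\log N$. The triple-log appears only in the error term. In the paper, the passage from $\omega$-density to natural density is handled by Theorem \ref{p:natural-density} with the explicit choice $I_k=\{(m,m_1',\dots,m_s'):0\le m_j'\le m\le\log k\}$, so that $\#I_k\le(1+\log k)^{s+1}$ and $R_\epsilon(x)$ is controlled via \eqref{e:main-thm-Y} and the tail bound; this is what produces the stated $\exp(-c(\log\log\log N)^{1/2})$.
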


Theorem \ref{main average} also holds
for essential $\phi$-Selmer groups $S_\phi(E)$:
in \eqref{e:average-moment} it is $\sum_{d=0}^\infty P_{t_\phi}^\Mat(d)2^{\xi d}$,
in \eqref{e:average-order} it is $1+2^{t_\phi}$,
and the error term in \eqref{e:average-order-1}
is $C\cdot(\log\log N)^{2^{\xi+1}}/(\log N)^{1/2^{2\xi}}$.

\subsection{Related and future work}

\subsubsection{Chronology} Some results of this paper were announced in the 2022 ICM talk of the second-named author \cite{ICM} and also in \cite{BT}.

Upon completion of this paper, we became aware of the recent preprint \cite{Smith25} of Smith. The latter proves striking results on the distribution of coranks of $2^\infty$-Selmer groups in quadratic twist families. However, it does not pursue the Markov chain model for the distribution of ranks of $2$-Selmer groups. In contrast the Markov chain aspect of the distribution is the focus of this paper, which is bound to lead to a refinement of the results of \cite{Smith25}.

\subsubsection{Propsects} The results and ideas of this paper lead to Markov chain property for the distribution of $2$-Selmer groups in the remaining case that $\BQ(E[2])$ is quadratic, and will appear in a sequel. As alluded to above, they are also relevant to the Selmer counterpart of the Goldfeld conjecture, pertaining to distribution of coranks of $2^\infty$-Selmer groups.

\section{$2$-Selmer groups and R\'edei matrix}
\label{s:2-Selmer}

Let $\Sigma$ be a finite set containing $-1,2$ and some primes of $\BQ$.
For each $k\geq 0$
let $S_k(\Sigma)$ be the set of positive square-free integers coprime to $\Sigma$
with exactly $k$ prime factors,
and let $S(\Sigma):=\bigsqcup_{k\geq 0}S_k(\Sigma)$
be the set of positive square-free integers coprime to $\Sigma$.
Let $\fX$ be a $\Sigma$-equivalence class of elliptic curves over $\BQ$
with full rational $2$-torsion points.
Then any curve in $\fX$ can be written as $E^{(n)}$
for some $n\in S(\Sigma)$ and some elliptic curve $E/\BQ$
depending only on $\fX$,
with full rational $2$-torsion points
and has good reduction outside $\Sigma$.
Recall that the $2$-Selmer group $\Sel_2(E^{(n)}/\BQ)$ is
\begin{equation}
\label{e:2-Selmer}
\Sel_2(E^{(n)}/\BQ):=\ker\left(H^1(G_{S,\BQ},E^{(n)}[2])\to
\bigoplus_{p\in S}\frac{H^1(\BQ_p,E^{(n)}[2])}
{\Im(\kappa_p)}\right),
\end{equation}
here $S$ is any finite set of places of
$\BQ$ containing $2$, $\infty$ and bad places for $E^{(n)}$,
and $\kappa_p:E^{(n)}(\BQ_p)/2E^{(n)}(\BQ_p)\hookrightarrow H^1(\BQ_p,E^{(n)}[2])$
is the local Kummer map.
For $\pi:E^{(n)}[2]\cong E[2]\to\BF_2$
we define the $\pi$-strict $2$-Selmer group
$$
\Sel_{2,\pi\sstr}(E^{(n)}/\BQ):=
\ker(\Sel_2(E^{(n)}/\BQ)\xrightarrow\pi H^1(\BQ,\BF_2))
\subset\Sel_2(E^{(n)}/\BQ).
$$
If $\phi:E\to E_\phi$ is a rational $2$-isogeny,
which also gives $\phi:E^{(n)}\to E_\phi^{(n)}$, then it induces the map
$\pi:E^{(n)}[2]\cong E[2]\stackrel\phi\twoheadrightarrow\phi(E[2])\cong\BF_2$,
and it's easy to see that the usual $\phi$-Selmer group
$\Sel_\phi(E^{(n)}/\BQ)$ is equal to the $\pi$-strict $2$-Selmer group
$\Sel_{2,\pi\sstr}(E^{(n)}/\BQ)$ for such specifically chosen $\pi$.
So our main results can be rephrased by using $\pi$-strict $2$-Selmer groups.
As in \eqref{e:essential-2-Selmer},
we define the essential $2$-Selmer group
$S(E^{(n)}):=\Sel_2(E^{(n)}/\BQ)/E^{(n)}[2]$
and essential $\phi$-Selmer group $S_\phi(E^{(n)})$,
similarly we can define the $\pi$-strict essential $2$-Selmer group
$S_{\pi\sstr}(E^{(n)}):=\Sel_{2,\pi\sstr}(E^{(n)}/\BQ)/\ker(\pi)\subset
S(E^{(n)})$
which is equal to the essential $\phi$-Selmer group $S_\phi(E^{(n)})$.
Later we will see that the following modified
$\pi$-strict $2$-Selmer group $\Sel_{2,\pi\sstr}'(E^{(n)}/\BQ)$
is also useful:
$$
\Sel_{2,\pi\sstr}(E^{(n)}/\BQ)
\subset\Sel_{2,\pi\sstr}'(E^{(n)}/\BQ):=
\ker\left(\Sel_2(E^{(n)}/\BQ)\xrightarrow\pi
H^1(\BQ,\BF_2)\xrightarrow{\loc_{\Sigma_1}}
\bigoplus_{p\in\Sigma_1}H^1(\BQ_p,\BF_2)\right),
$$
where $\Sigma_1$ is the set of bad places of $E^{(n)}$ outside $\Sigma$.
Namely, it only requires that the localization at $\Sigma_1$ is contained in
the kernel of $\pi$.
Similarly define $S_{\pi\sstr}'(E^{(n)})$.

We will see by $2$-descent method (Theorem \ref{p:2-descent-redei-mat},
\ref{p:2-descent-redei-mat-2}, \ref{p:2-descent-redei-mat-0} and
Corollary \ref{selmer mat unrestricted version}) that

\begin{thm}
\label{p:2-Sel-depends-only-on-lrd}
If $n=\ell_1\cdots\ell_k$ and $n'=\ell_1'\cdots\ell_k'$ are two elements
of $S_k(\Sigma)$ such that $\ell_i/\ell_i'\in(\BQ_v^\times)^2$
and $\lrd{\ell_j}{\ell_i}=\lrd{\ell_j'}{\ell_i'}$
for all $i,j$ and $v$,
then under the isomorphism $H^1(G_{S,\BQ},E^{(n)}[2])\cong
\BQ(S,2)^{\oplus 2}\xrightarrow\sim H^1(G_{S',\BQ},E^{(n')}[2])\cong
\BQ(S',2)^{\oplus 2}$, $\ell_i\mapsto\ell_i'$, $p\mapsto p$ for $p\in\Sigma$,
here $S:=\Sigma\cup\{\ell_1,\cdots,\ell_k\}$ and
$S':=\Sigma\cup\{\ell_1',\cdots,\ell_k'\}$,
we have
$\Sel_2(E^{(n)}/\BQ)=\Sel_2(E^{(n')}/\BQ)$,
$\Sel_{2,\pi\sstr}(E^{(n)}/\BQ)=\Sel_{2,\pi\sstr}(E^{(n')}/\BQ)$,
and $\Sel_{2,\pi\sstr}'(E^{(n)}/\BQ)=\Sel_{2,\pi\sstr}'(E^{(n')}/\BQ)$.
Same as essential versions.
\end{thm}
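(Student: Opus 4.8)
The plan is to reduce the statement to the explicit $2$-descent of \S\ref{s:2-Selmer}, after setting up the local--global comparison. Since $E$ has full rational $2$-torsion, $E^{(n)}[2]$ is canonically the trivial Galois module $E[2]\cong(\BZ/2)^{2}$ for every $n$, so Kummer theory gives a canonical identification $H^{1}(G_{S,\BQ},E^{(n)}[2])\cong\BQ(S,2)^{\oplus2}$, and the map in the statement is the evident group isomorphism $\BQ(S,2)^{\oplus2}\xrightarrow{\sim}\BQ(S',2)^{\oplus2}$ sending $\ell_i\mapsto\ell_i'$ and fixing $-1$ and the primes of $\Sigma$; in particular it carries the class of $n=\ell_1\cdots\ell_k$ to that of $n'=\ell_1'\cdots\ell_k'$. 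Consequently it carries the image of $E^{(n)}(\BQ)[2]$ in $\Sel_2$ (generated by classes built from $n$ and the fixed root differences $e_i-e_j$, the latter supported on $\Sigma$) to the image of $E^{(n')}(\BQ)[2]$; it is also compatible with the fixed projection $\pi$, and---once the comparison below is in place---with $\loc_{\Sigma_1}$, noting $\Sigma_1(E^{(n)})=\{\ell_1,\dots,\ell_k\}$ and $\Sigma_1(E^{(n')})=\{\ell_1',\dots,\ell_k'\}$. Hence it suffices to prove $\Sel_2(E^{(n)}/\BQ)=\Sel_2(E^{(n')}/\BQ)$ under the identification, and the $\pi$-strict, modified $\pi$-strict and essential versions then follow by imposing the same linear conditions and passing to the same quotients.

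Next I would check that the local conditions $\Im(\kappa_v)$ correspond place by place, with $S=\Sigma\sqcup\{\ell_1,\dots,\ell_k\}$. For $v\in\Sigma$: the hypothesis $\ell_i/\ell_i'\in(\BQ_v^{\times})^{2}$ forces $n/n'\in(\BQ_v^{\times})^{2}$, so scaling the $y$-coordinate gives a $\BQ_v$-isomorphism $E^{(n)}\xrightarrow{\sim}E^{(n')}$ which is the identity on $2$-torsion; it carries $\Im(\kappa_v^{(n)})$ to $\Im(\kappa_v^{(n')})$ inside $H^1(\BQ_v,E[2])\cong(\BQ_v^{\times}/(\BQ_v^{\times})^{2})^{\oplus2}$, and the localization at $v$ of the global identification is the identity on this group, so the condition at $v$ matches. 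For $v=\ell_j$ (outside $\Sigma$, where $E^{(n)}$ has additive reduction): by the explicit $2$-descent of \S\ref{s:2-Selmer} (Theorems \ref{p:2-descent-redei-mat}, \ref{p:2-descent-redei-mat-2}, \ref{p:2-descent-redei-mat-0} and Corollary \ref{selmer mat unrestricted version}), the subgroup $\Im(\kappa_{\ell_j}^{(n)})\subset(\BQ_{\ell_j}^{\times}/(\BQ_{\ell_j}^{\times})^{2})^{\oplus2}$ is determined by the fixed integral model of $E$ at $\ell_j$ together with the square classes of $\ell_j$ and of $n/\ell_j$ in $\BQ_{\ell_j}^{\times}$, while the restriction of $\loc_{\ell_j}$ to $\BQ(S,2)^{\oplus2}$ is determined by the symbols $\lrd{\ell_i}{\ell_j}$ ($i\neq j$) and $\lrd{q}{\ell_j}$ ($q\in\Sigma$, including $q=2$ and the sign $\lrd{-1}{\ell_j}$). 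All of these equal the corresponding data for $(\ell',\Sigma)$: the symbols $\lrd{\ell_i}{\ell_j}$ by hypothesis, and the $\lrd{q}{\ell_j}$ because $\ell_i/\ell_i'\in(\BQ_2^{\times})^{2}$ gives $\ell_j\equiv\ell_j'\pmod 8$ while $\ell_i/\ell_i'\in(\BQ_q^{\times})^{2}$ gives $\lrd{\ell_j}{q}=\lrd{\ell_j'}{q}$ for odd $q\in\Sigma$, after which quadratic reciprocity does the rest. Therefore, under the isomorphism $\BQ_{\ell_j}^{\times}/(\BQ_{\ell_j}^{\times})^{2}\xrightarrow{\sim}\BQ_{\ell_j'}^{\times}/(\BQ_{\ell_j'}^{\times})^{2}$ matching uniformizers and non-square units, $\loc_{\ell_j}$ of a class goes to $\loc_{\ell_j'}$ of its image and $\Im(\kappa_{\ell_j}^{(n)})$ goes to $\Im(\kappa_{\ell_j'}^{(n')})$, so the condition at $\ell_j$ matches too. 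Assembling over all $v\in S$ yields the equality of Selmer groups.

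The substantive point---and the only real obstacle---is the uniform explicit local $2$-descent at the additive primes $\ell_j$: one must know that $\Im(\kappa_{\ell_j}^{(n)})$ and the localization table depend only on the listed square-class and Legendre-symbol data, uniformly in $\ell_j$. This is precisely the content of the R\'edei-matrix computations of \S\ref{s:2-Selmer}, which express $\Sel_2(E^{(n)}/\BQ)$ as the kernel of an explicit matrix over $\BF_2$ whose entries are the symbols $\lrd{\ell_j}{\ell_i}$ and the fixed local data of $\fX$, with the $\pi$-strict and modified $\pi$-strict groups cut out by distinguished submatrices. Once those results are in hand, the present theorem is an immediate bookkeeping consequence: equal data produces the same matrices, hence the same kernels and submatrix kernels, and passing to the quotients by (the constant-after-identification images of) $E^{(n)}[2]$ and $\ker(\pi)$ handles the essential versions.
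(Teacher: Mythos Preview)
Your proposal is correct and follows essentially the same route as the paper: both deduce the theorem from the R\'edei-matrix descriptions of \S\ref{s:2-Selmer} (Theorems \ref{p:2-descent-redei-mat}, \ref{p:2-descent-redei-mat-2}, \ref{p:2-descent-redei-mat-0} and Corollary \ref{selmer mat unrestricted version}), which express the Selmer groups as kernels of matrices whose entries are exactly the Legendre-symbol data in the hypothesis. The paper's proof is in fact just the one-line citation of those results together with the remark that the case of $\Sel_2$ is classical (Monsky, Swinnerton-Dyer); your second paragraph, which checks the local conditions place by place and derives $\lrdd{q}{\ell_j}=\lrdd{q}{\ell_j'}$ from the square-class hypothesis via reciprocity, is a correct elaboration of why those matrix entries coincide, but is strictly speaking redundant once you invoke the R\'edei-matrix theorems in the third paragraph.
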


For $2$-Selmer groups this result is well-known
(see Monsky's Appendix
in \cite{HB94}, or Swinnerton-Dyer's work \cite{SD08}).
The argument is similar for $\pi$-strict $2$-Selmer groups.
In this paper we want to emphasize the crucial point
that this dependence is ``uniform'' as $k$
varies when restricted to $\Sigma$-equivalence classes
$\fX$ of $\CE$.

For each $k\geq 0$ and each element $n\in S_k(\Sigma)$, we fix an order $\ell_1,\cdots,\ell_k$
of its prime factors,
and define the following matrices and vectors
of coefficients in $\BF_2$ associated to $n$.
\begin{itemize}
\item
Define $A=A(n):=(a_{ij})_{\substack{1\leq i\leq k\\
1\leq j\leq k}}\in M_k(\BF_2)$
where $a_{ij}:=\lrdd{\ell_j}{\ell_i}$ if $i\neq j$,
and $a_{ii}:=\lrdd{n/\ell_i}{\ell_i}$.
Here $\lrdd{\ell_j}{\ell_i}:=\frac{1}{2}\left(1-\lrd{\ell_j}{\ell_i}\right)\in\BF_2$
is the additive Legendre symbol.
\item
For $d\in\BQ(\Sigma,2)$,
define $z_d=z_d(n):=(z_1^{(d)},\cdots,
z_k^{(d)})^{\mathrm T}\in\BF_2^k$,
where $z_i^{(d)}:=\lrdd{d}{\ell_i}$.
The $z_d^{\mathrm T}$ is the transpose of $z_d$.
Define $D_d:=\diag(z_d)\in M_k(\BF_2)$
and $A_d:=D_d+z_dz_d^{\mathrm T}\in M_k(\BF_2)$.
\end{itemize}
Clearly $\sum_{j=1}^ka_{ij}=0$ for all $i$,
also by quadratic reciprocity law,
$a_{ji}=a_{ij}+z_i^{(-1)}z_j^{(-1)}$ for all $i\neq j$;
equivalently, $A+A^{\mathrm T}=A_{-1}$.
This motivates us to define the probability space of Legendre symbols
\begin{equation}
\label{e:Omega-k-Sigma-*}
\Omega_k^{\Sigma,*}:=
\left\{
\big((a_{ij}),(z_p)_{p\in\Sigma}\big)\in
M_k(\BF_2)\times(\BF_2^k)^{\#\Sigma}
\ \middle|\begin{array}{l}
a_{ji}=a_{ij}+z_i^{(-1)}z_j^{(-1)}\text{ for all }i\neq j, \\
\sum_ja_{ij}=0\text{ for all }i
\end{array}
\right\},
\end{equation}
and a map $S_k(\Sigma)\to\Omega_k^{\Sigma,*}$,
$n\mapsto\omega(n)=\big(A(n),(z_p(n))_{p\in\Sigma}\big)$.
The above $A$, $D_d$ and $z_cz_d^\RT$ for $c,d\in\BQ(\Sigma,2)$,
are viewed as maps $\Omega_k^{\Sigma,*}\to M_k(\BF_2)$
for each $k$; $z_d$ are maps $\Omega_k^{\Sigma,*}\to\BF_2^k$
for each $k$.
The Theorem \ref{p:2-Sel-depends-only-on-lrd}
that $2$-Selmer groups depend only on Legendre symbols
is equivalent to say
that the $\Sel_2(E^{(n)}/\BQ)$ as well as $S_{\pi\sstr}(E^{(n)})$
depends only on $\omega=\omega(n)\in\Omega_k^{\Sigma,*}$.
By abuse of notation, we denote these $2$-Selmer groups by
$\Sel_2(E^{(\omega)}/\BQ)$, $S_{\pi\sstr}(E^{(\omega)})$, and so on.

To describe the ``uniform'', we define the concept of
a \emph{R\'edei matrix}, which
is a block matrix whose entries are linear combinations
of $z_d$, $z_d^{\mathrm T}$, $A$, $D_d$ and $z_cz_d^{\mathrm T}$,
where $c,d\in\BQ(\Sigma,2)$.
A R\'edei matrix $B$ is viewed as a ``uniform'' rule,
which maps an element $\omega$ of $\Omega_k^{\Sigma,*}$ for each $k$,
to a matrix $B(\omega)$;
it also maps an element $n$ of $S(\Sigma)$
to a matrix $B(n)$.
The set of R\'edei matrices of size $(ak+b)\times(ck+d)$
is denoted by $\RRM_{(ak+b)\times(ck+d)}$.

For a R\'edei matrix $B$ of size $k\times k$
(namely, linear combinations of $A$, $D_d$ and $z_cz_d^{\mathrm T}$),
write it uniquely as $B=B_H+B_M+B_L$
``the decomposition by ranks'',
where
\begin{itemize}
\item
$B_H$ is the ``high-rank part'' which is linear combination of $A$,
\item
$B_M$ is the ``medium-rank part'' which is linear combination of $D_d$,
\item
$B_L$ is the ``low-rank part'' which is linear combination of $z_cz_d^\RT$.
\end{itemize}
The $B$ is called
of ``high-rank'' if $B_H\neq 0$;
of ``medium-rank'' if $B_H=0$ and $B_M\neq 0$;
of ``low-rank'' if $B_H=B_M=0$.
Here we give a remark but without proof that,
these $3$ types of $k\times k$ R\'edei matrices are classified
according to their behavior of ranks
when viewed as a random variable $\Omega_k^{\Sigma,*}\to M_k(\BF_2)$:
\begin{itemize}
\item
a ``high-rank R\'edei matrix'' is that,
its rank is mostly $k+o(k)$ as $k\to\infty$;
\item
a ``medium-rank R\'edei matrix'' is that,
its rank is mostly $\frac{1}{2}k+o(k)$ as $k\to\infty$;
\item
a ``low-rank R\'edei matrix'' is that,
its rank is mostly $o(k)$ as $k\to\infty$.
\end{itemize}

The main result of this section is as follows,
which is a reinterpretation of Swinnerton-Dyer's work \cite{SD08}.

\begin{thm}
\label{p:2-descent-redei-mat}
Let $\fX$ be a $\Sigma$-equivalence class
of quadratic twist family of elliptic curves over $\BQ$
with full rational $2$-torsion points,
as in the beginning of the introduction.
Then there exists an integer $t_\fX\equiv r(\fX)\pmod 2$ depends only on $\fX$,
and a R\'edei matrix
$$
B=\begin{pmatrix}
B_{11} & B_{12} & B_{13} \\
B_{21} & B_{22} & B_{23} \\
B_{31} & B_{32} & B_{33}
\end{pmatrix}\in\RRM_{(2k+t_\fX)\times(2k+t_\fX)},
$$
depends only on $\fX$, where
\begin{itemize}
\item
$B$ is alternating, namely $B_{ji}=B_{ij}^\RT$
and the diagonal of $B_{ii}$ is $0$,
\item
$B_{11},B_{12},B_{21},B_{22}\in\RRM_{k\times k}$,
$B_{13},B_{23}\in\RRM_{k\times t_\fX}$,
and $B_{33}=0\in M_{t_\fX\times t_\fX}(\BF_2)$,
\item
the entries of $B_{13}$ and $B_{23}$ are $z_d$, $d\in\BQ(\Sigma,2)$,
and $\left(\begin{smallmatrix}
B_{13} \\ B_{23}
\end{smallmatrix}\right)\in\RRM_{2k\times t_\fX}$
is of generically full rank,
which means that, there exists $k\geq 1$ and $\omega\in\Omega_k^{\Sigma,*}$
such that
$\left(\begin{smallmatrix}
B_{13}(\omega) \\ B_{23}(\omega)
\end{smallmatrix}\right)$
is of rank $t_\fX$,
\item
$B_{12}$ and $B_{21}$ are high-rank R\'edei matrices;
the three matrices $B_{11}$, $B_{22}$ and $B_{11}+B_{12}+B_{21}+B_{22}$
are all medium-rank or low-rank R\'edei matrices,
and the number of low-rank R\'edei matrices among them
is $0$, $1$ or $2$ if $\fX$ is of type (A), (B) or (C), respectively,
\end{itemize}
such that for any $E\in\fX$,
define $B(E)$ to be $B(n)$
where $n\in S(\Sigma)$ is the product of bad primes of $E$
outside $\Sigma$,
then
$\dim_{\BF_2}\Sel_2(E/\BQ)=\corank(B(E))$,
and the three modified $\pi$-strict $2$-Selmer groups
$\Sel_{2,\pi\sstr}'(E/\BQ)$ are of dimensions
$\corank(B_j'(E))$, $j=1,2,3$, where
$B_j'\in\RRM_{(2k+t_\fX)\times(k+t_\fX)}$ are
$$
B_j':=\begin{pmatrix}
B_{1j} & B_{13} \\
B_{2j} & B_{23} \\
B_{3j} & B_{33}
\end{pmatrix}\text{ for }j=1,2,
\qquad\text{and}\qquad
B_3':=\begin{pmatrix}
B_{11}+B_{12}+B_{21}+B_{22} & B_{13}+B_{23} \\
B_{21}+B_{22} & B_{23} \\
B_{31}+B_{32} & B_{33}
\end{pmatrix},
$$
in which exactly one $k\times k$ block is high-rank R\'edei matrix
($B_{21}$, $B_{12}$ and $B_{21}+B_{22}$, respectively).
\end{thm}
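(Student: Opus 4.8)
The plan is to carry out an explicit $2$-descent on $E^{(n)}/\BQ$ for $E$ in the $\Sigma$-equivalence class $\fX$, following Swinnerton-Dyer's computation in \cite{SD08} but keeping careful track of the matrix shape. Write $E: y^2 = (x-e_1)(x-e_2)(x-e_3)$ with $e_i \in \BZ$ depending only on $\fX$, so that $E[2] \cong (\BZ/2)^2$ is generated by the $2$-torsion points $(e_i,0)$. The standard $2$-descent identifies $H^1(G_{S,\BQ}, E^{(n)}[2])$ with the subgroup of $\BQ(S,2) \oplus \BQ(S,2)$ cut out by the product-$=$-square condition, and the local images $\Im(\kappa_p)$ for $p \mid n$ are governed, via Hensel's lemma and the standard formulas for the image of the local Kummer map at primes of multiplicative reduction, entirely by the Legendre symbols $\lrdd{\ast}{\ell_i}$ and by whether $e_i - e_j$ is a square mod $\ell_i$. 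First I would choose, for each $\omega \in \Omega_k^{\Sigma,*}$, a basis of $H^1(G_{S,\BQ}, E^{(n)}[2])$ adapted to the filtration by ``supported at $\Sigma$'' versus ``supported at the new primes $\ell_1,\dots,\ell_k$,'' so that the Selmer conditions at the $\ell_i$ become a system of linear equations over $\BF_2$ whose coefficient matrix is manifestly assembled from the blocks $A$, $D_d$, $z_c z_d^{\RT}$, $z_d$. Matching up $\Sel_2(E^{(n)}/\BQ)$ with the kernel of this matrix $B(\omega)$, and its $\pi$-strict variants with the kernels obtained by deleting the row/column corresponding to $\pi$, gives the displayed formulas $\dim \Sel_2 = \corank B$ and $\dim \Sel_{2,\pi\sstr}' = \corank B_j'$.

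The shape of $B$ is then read off from the arithmetic. The block $B_{12}, B_{21}$ being high-rank (linear in $A$) comes from the pairing between the two copies of $\BF_2 \subset E[2]$ under the cup product / Cassels pairing, which is exactly the Rédei symbol matrix $A$; the reciprocity law $A + A^{\RT} = A_{-1}$ forces the alternating structure $B_{ji} = B_{ij}^{\RT}$ after the correction terms $z_i^{(-1)} z_j^{(-1)}$ are absorbed into the diagonal blocks. The diagonal blocks $B_{11}, B_{22}$ (and their sum-combination $B_{11}+B_{12}+B_{21}+B_{22}$, which corresponds to the ``anti-diagonal'' third generator of $E[2]$, or equivalently to the isogeny $\phi_3$) record the self-pairing of each torsion class, which only involves terms $D_d$ and $z_c z_d^{\RT}$: these are medium-rank unless a certain local square class degenerates, in which case the $D_d$ term drops out and the block becomes low-rank. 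The count of low-rank blocks among the three is precisely the invariant $s \in \{0,1,2\}$ distinguishing types (A), (B), (C): a block is low-rank exactly when the relevant $e_i - e_j$ is a square in $\BQ_2$ (equivalently, the cyclic $4$-isogeny / $E[4] \subset E(\BQ(\sqrt{-1}))$ condition), and at most two of the three differences $e_1-e_2, e_2-e_3, e_1-e_3$ can be squares since their product is a square. The trailing block of size $t_\fX$ comes from the part of the Selmer group supported at $\Sigma$ that survives all local conditions; the columns $B_{13}, B_{23}$ with entries $z_d$ record how the new primes $\ell_i$ interact with these $\Sigma$-classes via $\lrdd{d}{\ell_i}$, and generic full rank of $\left(\begin{smallmatrix} B_{13} \\ B_{23}\end{smallmatrix}\right)$ follows from Chebotarev: one can choose the $\ell_i$ so that the symbols $\lrdd{d}{\ell_i}$ realize any prescribed pattern, in particular one making these $t_\fX$ columns independent.

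The main obstacle will be the bookkeeping that makes the dependence \emph{uniform in $k$} — i.e.\ exhibiting one Rédei matrix $B$ that simultaneously works for every $n \in S(\Sigma)$, rather than a $k$-by-$k$ case analysis. Concretely, the delicate point is the choice of a $\BF_2$-basis of $H^1(G_{S,\BQ}, E^{(n)}[2])$ and of the target of the localization map that varies ``linearly'' with the prime factors of $n$: one must show the transition matrices between the naive basis $(\ell_i)$-indexed and the basis in which Tate–Poitou duality / the product formula is diagonal are themselves Rédei matrices, so that composing everything stays within $\RRM$. Here I would lean on the explicit description, already implicit in \cite{HB94} (Monsky's appendix) and \cite{SD08}, of the Selmer conditions at the multiplicative primes $\ell_i$ in terms of the additive Legendre symbols, and then simply verify that the resulting coefficient matrix has exactly the claimed block pattern, that the Poitou–Tate global duality forces it to be alternating of the stated size $2k + t_\fX$, and that the three modified $\pi$-strict Selmer groups correspond to the submatrices $B_j'$ obtained by the indicated row/column operations (the operations $B_{1j} \leftrightarrow B_{1j} + B_{13}$, etc., reflecting the change of $\pi$). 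The identification of $\corank$ with Selmer dimension, and the high/medium/low-rank trichotomy, then follow from the generic rank statements already recorded in the excerpt.
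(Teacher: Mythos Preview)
Your plan conflates two different constructions that the paper keeps separate. Writing the Selmer conditions at the $\ell_i$ as linear equations in a basis of $H^1(G_{S,\BQ},E^{(n)}[2])$ does give a R\'edei matrix whose kernel is $\Sel_2$ --- this is the paper's Theorem~\ref{p:2-descent-redei-mat-0} --- but that matrix is rectangular and \emph{not} alternating. The alternating square shape is the whole point of Theorem~\ref{p:2-descent-redei-mat}, and it does not come for free from quadratic reciprocity or Poitou--Tate applied to the coefficient matrix of the local conditions. The paper (following \cite{SD08}) instead works inside the local symplectic space $V=\bigoplus_{p\in S}H^1(\BQ_p,E[2])$, with its Tate--Weil pairing $e$, where both $U=\Im(\loc_S)$ and $W=\bigoplus_p\Im(\kappa_p)$ are Lagrangians. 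The crucial step you are missing is the construction of an auxiliary Lagrangian $K$ with $V=U\oplus K$, $W=(U\cap W)\oplus(K\cap W)$, and $\phi(K)=0$ for the quadratic form $\phi$ (Lemma~\ref{p:linalg-SD-new-2}); one then defines $\theta(x,y)=e(x_U,y_K)$ on $W/(W\cap K)$, and the vanishing of $\phi$ on $U,W,K$ is exactly what makes $\theta$ alternating. The matrix $B$ is the Gram matrix of $\theta$ in the basis $W_{\Sigma_1}^{(1)}\oplus W_{\Sigma_1}^{(2)}\oplus\overline W_\Sigma$, and the block-by-block computation of $\theta$ in \S\ref{s:gram} is where the specific R\'edei shapes of $B_{ij}$ come from.

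A couple of smaller corrections. The medium/low-rank trichotomy is governed by whether $e_1(e_1-e_2)$, $e_1e_2$, $-e_2(e_1-e_2)$ are perfect squares in $\BQ$ (equivalently, trivial in $\BQ(\Sigma,2)$), not by whether the differences $e_i-e_j$ are squares in $\BQ_2$; these are the $d$'s appearing in the medium-rank parts $D_d$ of $B_{11},B_{22},B_{11}+B_{12}+B_{21}+B_{22}$ (see Theorem~\ref{p:2-descent-redei-mat-2}). Also, the generic full rank of $\left(\begin{smallmatrix}B_{13}\\B_{23}\end{smallmatrix}\right)$ does not require Chebotarev: once you know the columns are $z_{d(x,P)}$ with $d(x,P)\in\BQ(\Sigma,2)^{\oplus 2}$ determined by the Weil pairing, their linear independence is equivalent to the linear independence of the $x_1^{\glob}$ in $H_\Sigma^1$, which holds because $x\mapsto x_1^{\glob}$ is injective on a basis of $U_\Sigma\cap W_\Sigma$.
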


The entries $B_{ij}$ of $B$ has more precise description
if we fix equations of $E$ in $\fX$, see Theorem
\ref{p:2-descent-redei-mat-2}.

Later in \S\ref{s:gram} we will see that for $E\in\fX$,
the $B(E)$ is the Gram matrix of certain alternating form $\theta$ whose kernel is isomorphic to $\Sel_2(E/\BQ)$.
Furthermore, when $E$ has a bad prime outside $\Sigma$, then the $2$-torsion points
$E[2]$ injects into $\Sel_2(E/\BQ)$, which
implies that the sum of columns of $\left(\begin{smallmatrix}
B_{1i} \\ B_{2i} \\ B_{3i}
\end{smallmatrix}\right)$, $i=1,2$,
are linear combinations of columns of $\left(\begin{smallmatrix}
B_{13} \\ B_{23} \\ B_{33}
\end{smallmatrix}\right)$.
For the modified $\pi$-strict $2$-Selmer group it's similar.
Hence we have the following corollary.

\begin{cor}
\label{selmer mat unrestricted version}
For $E\in\fX$ with a bad prime outside $\Sigma$,
the dimension of its essential $2$-Selmer group
\eqref{e:essential-2-Selmer} is
$\dim_{\BF_2}S(E)=\corank(B(E))-2=\corank(\widetilde B(E))$ with
$\widetilde B(E)=\widetilde B(n)=(\widetilde B_{ij})_{\substack{1\leq i\leq 3\\
1\leq j\leq 3}}$,
where $n\in S_{\geq 1}(\Sigma):=\bigsqcup_{k\geq 1}S_k(\Sigma)$
is the product of bad primes of $E$ outside $\Sigma$,
\begin{itemize}
\item
$\widetilde B_{11},\widetilde B_{12},\widetilde B_{21},\widetilde B_{22}$
are the corresponding $B_{ij}$ with last row and column removed,
\item
$\widetilde B_{13},\widetilde B_{23}$
are the corresponding $B_{ij}$ with last row removed,
\item
$\widetilde B_{31},\widetilde B_{32}$
are the corresponding $B_{ij}$ with last column removed,
\item
$\widetilde B_{33}=B_{33}$.
\end{itemize}
Similarly, there are $\widetilde B_j'$
such that the three modified $\pi$-strict essential $2$-Selmer group $S_{\pi\sstr}'(E)$
are of dimensions $\corank(\widetilde B_j'(E))$, $j=1,2,3$.
\end{cor}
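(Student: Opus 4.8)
The statement to prove is Corollary~\ref{selmer mat unrestricted version}, which deduces from Theorem~\ref{p:2-descent-redei-mat} the formula $\dim_{\BF_2}S(E)=\corank(B(E))-2=\corank(\widetilde B(E))$ and the analogous statements for the modified $\pi$-strict essential $2$-Selmer groups.

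The plan is to combine the linear-algebra input promised in \S\ref{s:gram} with the arithmetic fact that rational $2$-torsion sits inside the $2$-Selmer group. First I would recall from Theorem~\ref{p:2-descent-redei-mat} that $\dim_{\BF_2}\Sel_2(E/\BQ)=\corank(B(E))$; since $E$ has full rational $2$-torsion, $E[2]\cong(\BF_2)^2$ and the Kummer map sends $E(\BQ)[2]$ injectively into $\Sel_2(E/\BQ)$ \emph{provided} this subgroup is not killed by the local conditions --- this is exactly where the hypothesis that $E$ has a bad prime outside $\Sigma$ enters (when $E$ is $\Sigma$-minimal the torsion classes could already be captured by the image of $\kappa_p$ at places in $\Sigma$, so the dimension drop need not be $2$). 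Hence $\dim_{\BF_2}S(E)=\dim_{\BF_2}\Sel_2(E/\BQ)-2=\corank(B(E))-2$, giving the first equality.

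For the second equality, the point is to translate ``quotient by a $2$-dimensional subspace of the kernel'' into ``delete one row and one column of the Gram matrix.'' Using the description from \S\ref{s:gram} that $B(E)$ is the Gram matrix of an alternating form $\theta$ whose radical is $\Sel_2(E/\BQ)$, and that the image of $E[2]$ corresponds to the span of the last basis vector together with the vector $(1,1,\dots,1)$ implicit in the Rédei-matrix normalization (recall $\sum_j a_{ij}=0$, so the all-ones vector is always in the kernel of the $A$-block), I would argue that the two torsion classes are represented by the last standard basis vector $e_{2k+t_\fX}$ and by a sum-of-columns relation. Concretely: the hypothesis in the paragraph before the corollary says the sum of columns of $\left(\begin{smallmatrix}B_{1i}\\ B_{2i}\\ B_{3i}\end{smallmatrix}\right)$ is a linear combination of columns of $\left(\begin{smallmatrix}B_{13}\\ B_{23}\\ B_{33}\end{smallmatrix}\right)$; since $B_{33}=0$, after a change of basis that clears these relations one sees the last coordinate is a ``free'' radical direction, so deleting the last row and column of each $B_{ij}$ as described produces a matrix $\widetilde B(E)$ whose corank is exactly $\corank(B(E))-2$. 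The $\pi$-strict versions $\widetilde B_j'$ are obtained by performing the same row/column deletion on the $B_j'$ from Theorem~\ref{p:2-descent-redei-mat}, using the corresponding statement (in the paragraph before the corollary) that for the modified $\pi$-strict $2$-Selmer group the analogous relation holds; here the asymmetry of $B_j'$ (it is $(2k+t_\fX)\times(k+t_\fX)$, not square) means one deletes the last row from the ``tall'' blocks and the last column from the others, consistently with the bulleted recipe.

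The main obstacle I anticipate is making the ``$E[2]\hookrightarrow\Sel_2(E/\BQ)$ accounts for precisely a rank-$2$ drop, realized by deleting the last row/column'' step rigorous: one must check that the two torsion classes are linearly independent in $\Sel_2$, that they are \emph{not} already visible in the quotient by $E^{(n)}[2]$ (i.e.\ the essential Selmer group is genuinely $2$-dimensions smaller), and crucially that under the explicit identification of $\Sel_2(E/\BQ)$ with $\ker(B(E))$ from \S\ref{s:gram} these classes land in the coordinate subspace spanned by the last basis vector and the all-ones relation. This is bookkeeping about which generator of $\BQ(S,2)^{\oplus 2}$ corresponds to which column of the Rédei matrix, and it is precisely the role of the hypothesis ``bad prime outside $\Sigma$'': it guarantees $k\geq 1$ so that the last block of size $t_\fX$ and the extra coordinate are actually present, and it guarantees the torsion is non-trivial in the relevant local quotients. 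Once the Gram-matrix picture of \S\ref{s:gram} is in hand, the deletion of rows/columns is a purely formal consequence of quotienting a quadratic space by an isotropic subspace of its radical, so the remaining work is routine verification of the indexing in the four bullet points, together with the identical argument for the three $B_j'$.
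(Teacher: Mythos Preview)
Your strategy matches the paper's (which consists of the paragraph immediately preceding the corollary), but your identification of the two torsion vectors in $\ker(B)$ is wrong. Neither of them is $e_{2k+t_\fX}$; that coordinate lives in the $\overline W_\Sigma$ block. From the explicit correspondence in \S\ref{s:gram}, the global Kummer image of $P_i$ has $\Sigma_1$-component $(\ord_\ell(x^{(1)}),\ord_\ell(x^{(2)}))_{\ell\in\Sigma_1}$; since $\kappa(P_1)=(e_1m,\,e_1(e_1-e_2))$ and $\kappa(P_2)=(e_1e_2,\,-e_1m)$ with $\ell\mid m$ and $\ell\nmid e_i$, the image of $P_1$ is $v_1=(\mathbf 1;\mathbf 0;c_1)$ and that of $P_2$ is $v_2=(\mathbf 0;\mathbf 1;c_2)$ for some $c_i\in\BF_2^{t_\fX}$. (This also shows directly that $E[2]\hookrightarrow\Sel_2$ when $k\geq1$, since $v_1,v_2$ are visibly independent.) These are precisely the two column relations in the paragraph before the corollary, one for each $i=1,2$; they do \emph{not} follow from the R\'edei constraint $\sum_ja_{ij}=0$ alone, because the blocks $B_{ij}$ contain $D_d$ and $z_cz_d^\RT$ terms beyond $A$. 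Now replace the $k$-th and $2k$-th basis vectors by $v_1,v_2$: this is invertible (each $v_i$ has a $1$ in position $ik$ and $0$ in position $(3-i)k$), the new $k$-th and $2k$-th rows and columns of $B$ vanish since $B$ is alternating, and deleting them leaves exactly $\widetilde B$. The fact $B_{33}=0$ plays no role.

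For $\widetilde B_j'$: since $B_j'$ is a column-submatrix of $B$, both $v_1,v_2$ lie in its \emph{left} kernel, while only the single vector $(\mathbf 1;c_j)\in\BF_2^{k+t_\fX}$ (the image of the generator of $\ker\pi_j$) lies in its right kernel. So the correct deletion is two rows and one column, giving $\widetilde B_j'$ of size $(2(k-1)+t_\fX)\times((k-1)+t_\fX)$ with corank dropped by exactly $1$, matching $\dim S_{\pi_j\sstr}'(E)=\dim\Sel_{2,\pi_j\sstr}'(E/\BQ)-\dim\ker\pi_j$.
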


We introduce crucial invariants for an equivalence class $\fX$ of type (B) and (C).

\begin{defn}
\label{parameter defn 1}
Let $s$ be the number of low-rank R\'edei matrices among
$B_{11}$, $B_{22}$ and $B_{11}+B_{12}+B_{21}+B_{22}$,
namely, $0$, $1$ or $2$ if $\fX$ is of type (A), (B) or (C), respectively.
If $\fX$ is of type (B), we may assume that
$B_{11}$ is low-rank R\'edei matrix;
if $\fX$ is of type (C), we may assume that
$B_{11}$ and $B_{22}$ are low-rank R\'edei matrices.
Define
$B_j'':=\left(\begin{smallmatrix}
B_{jj} & B_{j3} \\
B_{3j} & B_{33}
\end{smallmatrix}\right)$
for $1\leq j\leq s$, which is a submatrix of $B_j'$;
similarly define $\widetilde B_j''$ which is a submatrix of $\widetilde B_j'$.
Note that $\rank(B_j''(E))=\rank(\widetilde B_j''(E))$ for $1\leq j\leq s$
and $E\in\fX$ which has a bad prime outside $\Sigma$.
Define the parameter $\mathbf t=\mathbf t_\fX=(t_1,\cdots,t_s)$ of $\fX$,
which is an unordered tuple of $s$ integers, to be
$$
t_j:=t_\fX-\max_{E\in\fX}\rank(B_j''(E))
=t_\fX-\max_{E\in\fX}\rank(\widetilde B_j''(E)).
$$
\end{defn}

Proposition \ref{p:low-rank-max-rank}
gives us a way to find $E\in\fX$ such that
$\rank(B_j''(E))=\rank(\widetilde B_j''(E))$ is maximal,
hence the parameter $t_j$ is computable.
In \S\ref{s:inv of param} we will see that
if $\Sigma\subset \Sigma'$ and $\fX'\subset \fX$ is a $\Sigma'$-equivalence class,
then the class $\fX'$ has the same parameter $\mathbf t=\mathbf t_\fX$
as that of $\fX$.

There is an obvious lower bound of Selmer rank.

\begin{prop}
\label{selmer lower bound}
Let $\fX$ be a $\Sigma$-equivalence class of type (B) or (C)
with parameter $\mathbf t=(t_i)$.
Then for any $E\in\fX$ with a bad prime outside $\Sigma$,
for any $1\leq i\leq s$, the $\dim_{\BF_2}S_{\pi_i\sstr}'(E)$
and $\dim_{\BF_2}S(E)$ satisfy
$0\leq\dim_{\BF_2}S_{\pi_i\sstr}'(E)\leq \dim_{\BF_2}S(E)
\leq 2\dim_{\BF_2}S_{\pi_i\sstr}'(E)-t_i$.
In particular, we have $\dim_{\BF_2}S_{\pi_i\sstr}'(E)\geq t_i$
and $\dim_{\BF_2}S(E)\geq\max_i(t_i)$.
\end{prop}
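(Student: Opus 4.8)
The plan is to extract everything from the block structure of the Rédei matrix $B$ established in Theorem \ref{p:2-descent-redei-mat} and Corollary \ref{selmer mat unrestricted version}, using only elementary linear algebra over $\BF_2$. Fix $E\in\fX$ with a bad prime outside $\Sigma$, write $n$ for the product of its bad primes outside $\Sigma$, and set $\omega=\omega(n)$. By Corollary \ref{selmer mat unrestricted version}, $\dim_{\BF_2}S(E)=\corank(\widetilde B(E))$ and $\dim_{\BF_2}S_{\pi_i\sstr}'(E)=\corank(\widetilde B_i'(E))$, where, after the normalization in Definition \ref{parameter defn 1} (so that $B_{ii}$ is the low-rank block for the index $i$ in question), $\widetilde B_i'$ is the $(2k+t_\fX-1)\times(k+t_\fX)$ matrix $\left(\begin{smallmatrix}\widetilde B_{1i}&\widetilde B_{13}\\ \widetilde B_{2i}&\widetilde B_{23}\\ \widetilde B_{3i}&\widetilde B_{33}\end{smallmatrix}\right)$, which contains the high-rank block in position $(3-i,i)$ and also contains $\widetilde B_i''=\left(\begin{smallmatrix}\widetilde B_{ii}&\widetilde B_{i3}\\ \widetilde B_{3i}&\widetilde B_{33}\end{smallmatrix}\right)$ as a submatrix.

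The first inequality, $0\le\dim_{\BF_2}S_{\pi_i\sstr}'(E)\le\dim_{\BF_2}S(E)$, is immediate: $S_{\pi_i\sstr}'(E)\subset S(E)$ by construction (it is a kernel of a further localization map imposed on $S(E)$), so its $\BF_2$-dimension is no larger. The substantive inequality is $\dim_{\BF_2}S(E)\le 2\dim_{\BF_2}S_{\pi_i\sstr}'(E)-t_i$. Translated through the Rédei matrix this reads
\[
\corank(\widetilde B(E))\le 2\corank(\widetilde B_i'(E))-t_i .
\]
To prove it I would use the rank–nullity identities relating $\widetilde B$, $\widetilde B_i'$ and the auxiliary matrix $\widetilde B_i''$. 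Writing $\widetilde B_i'$ as the stack of $\widetilde B_i''$ with the extra row block $(\widetilde B_{(3-i)i}\ \ \widetilde B_{(3-i)3})$, and noting that this extra block is (essentially) the high-rank block $B_{(3-i)i}$ glued to a $z$-type block, the key estimate is a bound of the form $\rank(\widetilde B(E))\ge 2\,\rank(\widetilde B_i'(E))-2\,\rank(\widetilde B_i''(E))$, combined with $\rank(\widetilde B_i''(E))\le t_\fX - t_i$ from Definition \ref{parameter defn 1}. The first of these should follow by viewing $\widetilde B(E)$ as a $2\times 2$ block matrix whose off-diagonal block is (conjugate to) the columns of $\widetilde B_i'(E)$ together with the high-rank block, and using that $\rank$ of a symmetric block matrix $\left(\begin{smallmatrix}*&C\\ C^\RT&*\end{smallmatrix}\right)$ is at least $2\,\rank(C)$ minus the rank contributed by the diagonal blocks, the diagonal contribution being controlled precisely by $\widetilde B_i''$ since the other diagonal block is alternating of full block size and the high-rank block is there. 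The last two assertions, $\dim_{\BF_2}S_{\pi_i\sstr}'(E)\ge t_i$ and $\dim_{\BF_2}S(E)\ge\max_i(t_i)$, are then formal consequences: from $\corank(\widetilde B_i'(E))\ge (k+t_\fX)-\rank(\widetilde B_i'(E))$ and the bound $\rank(\widetilde B_i'(E))\le k + (t_\fX-t_i)$ — the latter because $\widetilde B_i'$ has at most $k$ "high-rank" columns plus at most $\rank(\widetilde B_i'')\le t_\fX-t_i$ further columns contributing rank — we get $\corank(\widetilde B_i'(E))\ge t_i$, and then $\dim_{\BF_2}S(E)\ge\dim_{\BF_2}S_{\pi_i\sstr}'(E)\ge t_i$ for each $i$, hence $\ge\max_i(t_i)$.

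The main obstacle I anticipate is bookkeeping the rank contributions correctly: I must verify that in $\widetilde B_i'(E)$ the block $\widetilde B_{(3-i)i}(E)$ really does contribute rank $k+o(k)$ generically but can be controlled to within $\rank(\widetilde B_i''(E))$ of the "expected" value for the inequality to be exact, and that the passage between $B$ and $\widetilde B$ (removing one row and one column, as in Corollary \ref{selmer mat unrestricted version}) does not shift any rank by more than is accounted for. Concretely, one wants the clean identity $\rank(\widetilde B(E)) = 2k+t_\fX - \dim_{\BF_2}S(E)$ and $\rank(\widetilde B_i'(E)) = k+t_\fX - \dim_{\BF_2}S_{\pi_i\sstr}'(E)$ simultaneously, and then the whole statement becomes a single inequality among these three ranks plus $\rank(\widetilde B_i''(E))\le t_\fX-t_i$; proving that inequality is pure $\BF_2$-linear algebra for block-alternating matrices, and it is there — in the interplay of the alternating (symmetric) structure with the "hole" $B_{ii}=0$ — that the factor of $2$ and the $-t_i$ shift arise. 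Once that block-matrix rank inequality is isolated and proved, everything else is immediate substitution.
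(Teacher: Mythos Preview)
Your overall plan---translate everything via Corollary \ref{selmer mat unrestricted version} into a rank inequality among $\widetilde B$, $\widetilde B_i'$, and $\widetilde B_i''$, then feed in $\rank(\widetilde B_i'')\le t_\fX-t_i$ from Definition \ref{parameter defn 1}---is exactly the paper's approach. But the key linear-algebra inequality is misstated: you write $\rank(\widetilde B)\ge 2\rank(\widetilde B_i')-2\rank(\widetilde B_i'')$, whereas what is actually true (and what the paper uses) is
\[
2\,\rank(\widetilde B_i')\ \le\ \rank(\widetilde B)+\rank(\widetilde B_i''),
\]
i.e.\ coefficient $1$, not $2$, on $\rank(\widetilde B_i'')$. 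With your coefficient $2$, converting to coranks yields only $\corank(\widetilde B)\le 2\corank(\widetilde B_i')+t_\fX-2t_i$, which is too weak. The correct inequality is the standard submatrix rank inequality: for any matrix $N$ and index sets $R,C$, one has $\rank(N[R,:])+\rank(N[:,C])\le\rank(N)+\rank(N[R,C])$; apply this with $N=\widetilde B$ and $R=C$ the index set for blocks $i$ and $3$, using that $\widetilde B$ is symmetric so $\rank(\widetilde B_i')=\rank((\widetilde B_i')^\RT)$. No special ``alternating with holes'' structure is needed beyond this symmetry.

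Your separate argument for $\corank(\widetilde B_i')\ge t_i$ via the column-count bound $\rank(\widetilde B_i')\le k+(t_\fX-t_i)$ is also not right as stated: the $t_\fX$ columns from block~$3$ need not have rank bounded by $\rank(\widetilde B_i'')$, since $\widetilde B_i''$ only sees the rows in blocks $i$ and $3$, not block $3{-}i$. You don't need a separate argument here anyway: once you have the correct chain $\corank(\widetilde B_i')\le\corank(\widetilde B)\le 2\corank(\widetilde B_i')-t_i$, subtracting gives $\corank(\widetilde B_i')\ge t_i$ immediately, and then $\corank(\widetilde B)\ge\corank(\widetilde B_i')\ge\max\{t_i,0\}$.
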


\begin{proof}
By Corollary \ref{selmer mat unrestricted version}
we have $\dim_{\BF_2}S_{\pi_j\sstr}'(E)=\corank(\widetilde B_j'(E))$
and $\dim_{\BF_2}S(E)=\corank(\widetilde B(E))$.
By linear algebra,
$$
0\leq\corank(\widetilde B_j'(E))\leq\corank(\widetilde B(E)),
$$
and
$$
2\rank(\widetilde B_j'(E))\leq\rank(\widetilde B(E))+\rank(\widetilde B_j''(E))
\leq\rank(\widetilde B(E))+t_\fX-t_j,
$$
so
$$
\corank(\widetilde B(E))\leq 2\corank(\widetilde B_j'(E))-t_j.
$$
Hence $\corank(\widetilde B(E))\geq\corank(\widetilde B_j'(E))\geq\max\{t_j,0\}$.
\end{proof}

\begin{example}
Let $E:y^2=x(x-1)(x+3)$ of type (B).
Then for any $\fX$ contained in
$\{E^{(-n)}\mid n\equiv 1\pmod{12}\text{ positive square-free}\}$,
its parameter $\mathbf t_\fX=(2)$.
This can be seen from the matrix calculation in \cite{FLPT}.
\end{example}

\subsection{The alternating matrix associated to $2$-Selmer groups}
\label{s:gram}

\subsubsection{$2$-Selmer groups and linear algebra}

Let $E/\BQ$ be an elliptic curve with full rational $2$-torsion points,
$S$ be a finite set of places of $\BQ$ containing $2$, $\infty$ and bad places for $E$.
Besides \eqref{e:2-Selmer}, there is another point of view of
the $2$-Selmer group $\Sel_2(E/\BQ)$:
\begin{itemize}
\item
let $V=\bigoplus_{p\in S}V_p=\bigoplus_{p\in S}H^1(\BQ_p,E[2])$
be the space of local Galois cohomology,
\item
let $U:=\Im\big(\loc_S:H^1(G_{S,\BQ},E[2])\hookrightarrow V\big)\subset V$
be the image of global Galois cohomology,
\item
let $W=\bigoplus_{p\in S}W_p=\bigoplus_{p\in S}\Im(\kappa_p)\subset V$
be the image of local Kummer maps,
\end{itemize}
then $\Sel_2(E/\BQ)
\cong\Im\big(\loc_S:\Sel_2(E/\BQ)\hookrightarrow V\big)
=U\cap W\subset V$.
The space $V$ is endowed with a non-degenerating alternating pairing
$e:V\times V\to\BF_2$ which is induced by Tate local duality and the Weil pairing
$e_E:E[2]\times E[2]\to\BF_2$.
The $U$ and $W$ are two Lagrangian subspaces of the symplectic space
$(V,e)$.

Note the following linear algebra fact:
if $(V,e)$ is a symplectic space over $\BF_2$, and $W,U,K$ are three Lagrangian subspaces
satisfying $V=U\oplus K$, then
$$
\theta:W/(W\cap K)\times W/(W\cap K)\to\BF_2,
\quad
(x,y)\mapsto e(x_U,y_K)
$$
is a symmetric pairing, and the natural projection $W\twoheadrightarrow W/(W\cap K)$
gives $W\cap U\cong\ker(\theta)$;
furthermore, if there exists a ``quadratic form'' $\phi:V\to\BF_2$ associated to $e$
in the sense that $e(x,y)=\phi(x+y)+\phi(x)+\phi(y)$ for all $x,y\in V$,
and such that $\phi(U)=\phi(W)=\phi(K)=0$,
then the $\theta$ is alternating.
Based on this observation, Swinnerton-Dyer \cite{SD08}
constructs systematic $K$ when $E$ varies in a quadratic twist family,
such that $\theta$ is further alternating.
In the following we recall this construction.

\subsubsection{$2$-Selmer groups in quadratic twist family}

Let $\fX$ be a $\Sigma$-equivalence class with full rational $2$-torsion points,
and $E\in\fX$. Let $\Sigma_1=\Sigma_1(E)$ be the set of bad places of $E$ outside $\Sigma$,
and take $S=\Sigma\sqcup\Sigma_1$.
Then the image $W_p:=\Im(\kappa_p)$ of the local Kummer map has the following properties:
\begin{itemize}
\item
for $p\in\Sigma$, $W_p$ depends only on $\fX$;
\item
for $p\in\Sigma_1$, $W_p=\kappa_p(E[2])$.
\end{itemize}

The ``quadratic form'' $\phi:V\to\BF_2$ is the sum of local
``quadratic forms'' $\phi_p:V_p\to\BF_2$ for $p\in S$,
where each $\phi_p$ is associated to $e_p:V_p\times V_p\to\BF_2$ and satisfies
$\phi_p(W_p)=0$.
The definition of $\phi_p$ is invariant under quadratic twists:
suppose $\CE$ is the quadratic twists of $y^2=x(x-e_1)(x-e_2)$
and $E\in\fX$ is $y^2=x(x-e_1m)(x-e_2m)$,
then $\phi_p$ can be written down explicitly as
$$
\phi_p\begin{pmatrix}
b_1 \\ b_2
\end{pmatrix}:=[e_1e_2b_1,e_1(e_1-e_2)b_2]_p,
$$
independent of $m$.
Here $[\ ,\ ]_p$ is additive Hilbert symbol,
and we used $V_p\cong\big(\BQ_p^\times/(\BQ_p^\times)^2\big)^{\oplus 2}$
according to the basis $\{(e_1m,0),(0,0)\}$ of $E[2]$.

The subspace $K$ of $V$ is constructed as $K:=K_\Sigma\oplus K_{\Sigma_1}$,
where $K_{\Sigma_1}=\bigoplus_{p\in\Sigma_1}H_\unr^1(\BQ_p,E[2])$
is the unramified Galois cohomology.
The $K_\Sigma$ is a subspace of $V_\Sigma$, depending only on $\fX$,
satisfying the following properties:
\begin{itemize}
\item
$V_\Sigma=U_\Sigma\oplus K_\Sigma$.
\item
$K_\Sigma$ is a Lagrangian subspace of $(V_\Sigma,e_\Sigma)$.
\item
$\phi_\Sigma(K_\Sigma)=0$.
\item
$W_\Sigma=(U_\Sigma\cap W_\Sigma)\oplus(K_\Sigma\cap W_\Sigma)$.
\end{itemize}
The existence of $K_\Sigma$ is guaranteed by the following linear algebra result.

\begin{lem}[\cite{SD08}, Lemma 2]
\label{p:linalg-SD-new-2}
Let $(V,e)$ be a finite dimensional symplectic space over $\BF_2$.
Let $U$ and $W$ be two Lagrangian subspaces.
Let $\phi:V\to\BF_2$ be a map which is a ``quadratic form'' associated to $e$,
and such that $\phi(U)=\phi(W)=0$.
Then there exists a Lagrangian subspace $K$ of $V$
such that $V=U\oplus K$, $\phi(K)=0$
and $W=(U\cap W)\oplus(K\cap W)$.
\end{lem}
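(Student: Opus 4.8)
The plan is to prove Lemma~\ref{p:linalg-SD-new-2} by induction on $\dim V$, producing $K$ one hyperbolic plane at a time while maintaining all three constraints ($V = U \oplus K$, $\phi(K)=0$, and $W = (U\cap W)\oplus(K\cap W)$). The base case $V=0$ is vacuous. For the inductive step, the key observation is that if $W \subset U$ then $W = U$ (both Lagrangian), and one may take $K$ to be any Lagrangian complement of $U$ with $\phi(K)=0$; such a $K$ exists because over $\BF_2$ a ``quadratic form'' $\phi$ associated to a symplectic form, vanishing on one Lagrangian $U$, is the quadratic form of a split quadric and hence vanishes on a transverse Lagrangian (this is the standard fact that $O(V,\phi)$ acts transitively on maximal totally isotropic subspaces when $\phi$ is split, and $\phi$ is split precisely because it has a Lagrangian zero locus $U$). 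So we may assume $W \not\subset U$, pick $w \in W \setminus U$, and note $\phi(w)=0$.

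\emph{First I would} choose a vector to pair with $w$. Since $e$ is non-degenerate and $w \neq 0$, there is $u$ with $e(w,u)=1$; I want to arrange $u \in U$ and $\phi(u)=0$. Because $U$ is Lagrangian and $w\notin U^\perp = U$, the functional $e(w,-)$ is nonzero on $U$, so some $u\in U$ has $e(w,u)=1$; replacing $u$ by $u + u'$ for a suitable $u'\in U$ with $e(w,u')=0$ we can also kill $\phi(u)$, using that $\phi$ restricted to the hyperplane $\{u' \in U : e(w,u')=0\}$ — wait, more carefully: on the affine coset $\{u : e(w,u)=1\}\cap U$, the function $u\mapsto \phi(u)$ differs from a linear functional by the identity $\phi(u+u')=\phi(u)+\phi(u')+e(u,u')=\phi(u)+\phi(u')$ since $U$ is isotropic; so $\phi$ is affine-linear on that coset, and it takes the value $0$ somewhere unless it is constantly $1$, in which case I instead replace $w$ by $w + k$ for a correcting vector — this is the delicate point. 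Set $H := \langle w, u\rangle$, a hyperbolic plane with $\phi(H)=0$, and let $V' := H^\perp$, a symplectic space of dimension $\dim V - 2$. Put $U' := (U\cap H^\perp)$ projected appropriately and $W' := (W \cap H^\perp)$ similarly; the standard reduction shows $U', W'$ are Lagrangian in $V'$ with $\phi|_{V'}$ associated to $e|_{V'}$ and $\phi(U')=\phi(W')=0$. Apply the inductive hypothesis to get $K' \subset V'$, and set $K := \langle u\rangle \oplus K'$ (or $\langle w + (\text{something}), \dots\rangle$ — the exact generator in $H$ must be chosen so that $K\cap W$ picks up $w$). Then verify $V = U\oplus K$, $\phi(K)=0$ and the Witt-type decomposition of $W$.

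\emph{The hard part will be} the bookkeeping that ties the three constraints together simultaneously through the induction — in particular ensuring that after splitting off $H$, the relation $W = (U\cap W)\oplus (K\cap W)$ is inherited by $W'$ relative to $U', K'$, and conversely that re-assembling $K'$ with the chosen generators in $H$ restores it for $W$. Choosing the generator of $K\cap H$: it should be $w$ itself when $w\in W$, so that $w \in K\cap W$; but then one needs $u \in U$, $\phi$ vanishing on $\langle w \rangle$, and $H = \langle w,u\rangle$ with $K\cap H = \langle w\rangle$, $U\cap H = \langle u\rangle$ — consistent. The subtlety is whether $\phi(u)=0$ can always be achieved; if not, one adjusts by adding to $u$ an element of $U\cap H^\perp$, i.e.\ of $U\cap w^\perp$, on which $\phi$ is linear, and one uses non-degeneracy of $e$ on $U/(U\cap w^\perp)$ versus the value $\phi(u)$. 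If even this fails (a genuine edge case when $\phi\equiv 1$ on the relevant coset), one instead shifts $w$ within its class $w + (U\cap W\cap\cdots)$, which does not change membership in $W$ modulo $U\cap W$. I expect that a clean way to organize all this is to first handle the orthogonal-direct-sum reduction abstractly (reduce to the case $V$ hyperbolic of dimension $2$, or to $W\cap U$ of codimension $1$ in $W$), and only then do the two-dimensional computation by hand; this is precisely the content of \cite{SD08}, Lemma~2, so I would follow that argument and simply transcribe it, flagging the $\BF_2$-specific points (existence of $\phi$-isotropic Lagrangian complements, affine-linearity of $\phi$ on isotropic cosets) where they are used.
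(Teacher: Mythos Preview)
The paper does not give its own proof of this lemma; it simply states it and attributes it to \cite{SD08}, Lemma~2. So there is no in-paper argument to compare your proposal against.

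Your inductive strategy (split off a hyperbolic plane $H=\langle w,u\rangle$ with $w\in W\setminus U$ and $u\in U$, then recurse on $H^\perp$) is the natural one and is essentially what Swinnerton-Dyer does. However, you tie yourself in knots over a non-issue: you worry at length about arranging $\phi(u)=0$ for $u\in U$, but this is \emph{automatic} from the hypothesis $\phi(U)=0$. Once you remember that, the ``delicate point'' evaporates: any $u\in U$ with $e(w,u)=1$ already has $\phi(u)=0$, and the plane $H=\langle w,u\rangle$ satisfies $\phi(w)=\phi(u)=0$ (though $\phi(w+u)=1$, which is fine since you only need $\phi$ to vanish on $K\cap H=\langle w\rangle$ and $U\cap H=\langle u\rangle$, not on all of $H$). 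With that correction, the reduction to $V'=H^\perp$ with $U'=U\cap H^\perp$, $W'=W\cap H^\perp$, $K=\langle w\rangle\oplus K'$ goes through cleanly, and the three conditions are easily verified. The case $W=U$ (your ``base case'') requires finding some $w\notin U$ with $\phi(w)=0$; this exists because $\phi$ is split (Arf invariant zero) once it vanishes on a Lagrangian, and can be shown by an easy direct argument.
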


\subsubsection{Determining the entries of the alternating matrix}

The choice of $K$ makes $W/(W\cap K)=W_{\Sigma_1}\oplus\overline W_\Sigma$
where $\overline W_\Sigma:=W_\Sigma/(W_\Sigma\cap K_\Sigma)\cong U_\Sigma\cap W_\Sigma$
which only depends on $\fX$.
The $t_\fX$ in Theorem
\ref{p:2-descent-redei-mat}
is just the dimension of $\overline W_\Sigma$.
Fix basis $\{P_1,P_2\}$ of $E[2]$ uniformly for $E\in\fX$,
then
$$
W/(W\cap K)
=W_{\Sigma_1}\oplus\overline W_{\Sigma}
=W_{\Sigma_1}^{(1)}\oplus W_{\Sigma_1}^{(2)}\oplus\overline W_{\Sigma}
$$
where $W_{\Sigma_1}^{(i)}:=\bigoplus_{p\in\Sigma_1}\kappa_p(\langle P_i\rangle)$.
The matrix $B=(B_{ij})$ in Theorem \ref{p:2-descent-redei-mat}
is just the Gram matrix $B$ of $\theta$ under this decomposition;
note that the dimensions of $W_{\Sigma_1}^{(1)}$,
$W_{\Sigma_1}^{(2)}$ and $\overline W_{\Sigma}$ are $k$, $k$ and $t_\fX$,
respectively, where $k=\#\Sigma_1(E)$.

It's easy to write down the correspondence of elements of $\Sel_2(E/\BQ)$
and elements of $\ker(B)$ explicitly.
Suppose $x=(x^{(1)};x^{(2)})\in\Sel_2(E/\BQ)\subset H^1(G_{S,\BQ},E[2])
\cong\BQ(S,2)^{\oplus 2}$ under our choice of basis of $E[2]$.
Then the corresponding $y=\overline{\loc_S(x)}\in\ker(B)\subset W/(W\cap K)
=W_{\Sigma_1}^{(1)}\oplus W_{\Sigma_1}^{(2)}\oplus\overline W_{\Sigma}$
is $(y_{\Sigma_1}^{(1)};y_{\Sigma_1}^{(2)};y_\Sigma)$,
where $y_{\Sigma_1}^{(i)}=\loc_{\Sigma_1}(x^{(i)})
=(\ord_\ell(x^{(i)}))_{\ell\in\Sigma_1}\in W_{\Sigma_1}^{(i)}\cong\BF_2^k$
is obvious to describe, while
$y_\Sigma=\overline{\loc_\Sigma(x)}\in\overline W_\Sigma$ is
the image of $x$ under the map
$\Sel_2(E/\BQ)\xrightarrow{\loc_\Sigma}W_\Sigma\twoheadrightarrow
\overline W_\Sigma$, which is not so obvious to describe,
since it depends on the choice of $\overline W_\Sigma$.

From this it's easy to give a description of modified $\pi$-strict
$2$-Selmer groups in terms of the kernel of matrix.
For example, if $\pi:E[2]\to\BF_2$ is $P_1\mapsto 0$, $P_2\mapsto 1$,
then an element $x=(x^{(1)};x^{(2)})\in\Sel_2(E/\BQ)$ is contained in
$\Sel_{2,\pi\sstr}'(E/\BQ)$ if and only if
$y_{\Sigma_1}^{(2)}=\loc_{\Sigma_1}(x^{(2)})=0$.
Hence the corresponding $(y_{\Sigma_1}^{(1)};y_\Sigma)$
is contained in $\ker(B_1')$. For other $\pi$ it's similar.

To prove Theorem \ref{p:2-descent-redei-mat} as well as getting more
precise description of $B_{ij}$, it's better to fix equations of $E$ in $\fX$,
and we have the following result.

\begin{thm}
\label{p:2-descent-redei-mat-2}
Let $E:y^2=x(x-e_1)(x-e_2)$ be an elliptic curve, where $e_1,e_2\in\BZ$.
Let $\Sigma$ be containing $-1$ all primes dividing $2e_1e_2(e_1-e_2)$.
Let $\fX$ be a $\Sigma$-equivalence class of square-free integers,
and $q\in\BQ(\Sigma,2)$ be the $\Sigma$-part of $\fX$.
Then Theorem \ref{p:2-descent-redei-mat}
holds for $\{E^{(m)}\mid m\in\fX\}$, namely,
for any $m=qn\in\fX$, where
$n\in S(\Sigma)$ the prime-to-$\Sigma$ part of $m$,
we have
$\dim_{\BF_2}\Sel_2(E^{(m)}/\BQ)=\corank(B(n))$.
Moreover, the $B_{11},B_{21},B_{22}$ have form
\begin{align*}
B_{11}&=A_{e_1(e_1-e_2)}+B_{11,L}, \\
B_{21}&=A+D_{e_1q}+B_{21,L}, \\
B_{22}&=A_{e_1e_2}+B_{22,L},
\end{align*}
where
$B_{11,L},B_{21,L},B_{22,L}$ are low-rank R\'edei matrices,
namely, linear combinations of $z_cz_d^\RT$.
\end{thm}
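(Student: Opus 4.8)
This statement is the explicit form of Swinnerton-Dyer's $2$-descent \cite{SD08} adapted to the linear-algebra package of \S\ref{s:gram}; establishing it simultaneously proves Theorem \ref{p:2-descent-redei-mat} for the family $\{E^{(m)}\}$. The plan is the following. First I fix, uniformly for all $m=qn\in\fX$ (with $n\in S(\Sigma)$ the prime-to-$\Sigma$ part), the model $E^{(m)}:y^2=x(x-e_1m)(x-e_2m)$, the basis $\{(e_1m,0),(0,0)\}$ of $E^{(m)}[2]$, and the resulting identifications $H^1(\BQ_p,E^{(m)}[2])\cong(\BQ_p^\times/(\BQ_p^\times)^2)^{\oplus 2}$, $e_p$ the induced pair of Hilbert symbols, and $\phi_p\left(\begin{smallmatrix}b_1\\b_2\end{smallmatrix}\right)=[e_1e_2b_1,\,e_1(e_1-e_2)b_2]_p$ as recorded in \S\ref{s:gram}; the vanishing of this $\phi_p$ on $W_p$, together with $\phi(U)=0$ and $\phi(K)=0$ (the latter built into the choice of $K_\Sigma$), makes the Gram form $\theta$ alternating. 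With $S=\Sigma\sqcup\Sigma_1$ and $\Sigma_1=\{\ell_1,\dots,\ell_k\}$ the primes dividing $n$, Lemma \ref{p:linalg-SD-new-2} and the linear-algebra fact preceding it reduce the equality $\dim_{\BF_2}\Sel_2(E^{(m)}/\BQ)=\corank(B(n))$ to identifying $B(n)$ with the Gram matrix of $\theta$ on $W/(W\cap K)=W_{\Sigma_1}^{(1)}\oplus W_{\Sigma_1}^{(2)}\oplus\overline W_\Sigma$, whose dimension is $k+k+t_\fX$; the $\pi$-strict and modified $\pi$-strict statements then follow from the kernel-of-submatrix description in \S\ref{s:gram}.

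Next I compute the local conditions at the primes $\ell=\ell_i\mid n$. Since $\ell$ is coprime to $2e_1e_2(e_1-e_2)$ we have $v_\ell(e_1m)=v_\ell(e_2m)=v_\ell((e_1-e_2)m)=1$, so $E^{(m)}$ has additive, potentially good reduction at $\ell$, and the explicit $2$-descent homomorphism sends the $2$-torsion points to classes in $(\BQ_\ell^\times/(\BQ_\ell^\times)^2)^{\oplus 2}$ ramified in exactly one coordinate, with residual unit governed by the residues of $e_1q,e_2q,(e_1-e_2)q$ modulo $\ell$. Hence each $W_\ell^{(j)}:=\kappa_\ell(\langle P_j\rangle)$ is a line, the spaces $W_{\Sigma_1}^{(j)}\cong\BF_2^k$ acquire canonical bases indexed by $\ell_1,\dots,\ell_k$, and each basis vector carries an explicit valuation/unit datum. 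I then evaluate $\theta$ block by block: using $\theta(x,y)=e(x_U,y_K)$ together with global reciprocity to rewrite this as a sum of local pairings over $S$, the entries between basis vectors at distinct primes $\ell_i\neq\ell_j$ collapse to a single Hilbert symbol that quadratic reciprocity converts into $\lrdd{\ell_j}{\ell_i}$ (assembling $A$, resp.\ $A^{\RT}$, for the cross block, and $z_dz_d^{\RT}$-type contributions for the diagonal blocks), while the entries attached to a single $\ell_i$ collapse to $\lrdd{\ast}{\ell_i}$ with $\ast$ the unit datum at $\ell_i$. Reading $\ast$ off from the descent images of $P_1$ and $P_2$ and from the forms $\phi_p$ --- which is exactly where the weights $e_1e_2$, $e_1(e_1-e_2)$ and $e_1q$ enter --- yields the skeleta $A_{e_1(e_1-e_2)}$, $A_{e_1e_2}$ and $A+D_{e_1q}$ for $B_{11}$, $B_{22}$ and $B_{21}$ respectively, up to the corrections collected below; the same computation gives that the entries of $B_{13},B_{23}$ are the vectors $z_d$ and that $B_{33}=0$.

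Finally I account for the remaining terms. Every discrepancy between the clean formulas above and the true Gram matrix is supported at $\Sigma$: it comes either from $W_p$ for $p\in\Sigma$ (which depends only on $\fX$), from the Lagrangian $K_\Sigma$ and the section $W_\Sigma\twoheadrightarrow\overline W_\Sigma$, or from completing a class supported at a single $\ell_i$ to a global class, whose auxiliary localizations all lie in $\Sigma\cup\{\ell_i\}$. Each such contribution to the $(i,j)$-entry of a $k\times k$ block has the shape $\lrdd{c}{\ell_i}\lrdd{d}{\ell_j}$ with $c,d\in\BQ(\Sigma,2)$, i.e.\ is an entry of some $z_cz_d^{\RT}$; summing them produces the low-rank R\'edei matrices $B_{11,L},B_{21,L},B_{22,L}$. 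I expect the main obstacle to be precisely this last bookkeeping: one must verify, uniformly in $k$, that the cumulative effect of all choices made at $\Sigma$ is a linear combination of rank-one blocks $z_cz_d^{\RT}$ with $c,d\in\BQ(\Sigma,2)$ --- this is what separates the high/medium-rank skeleton $A_{e_1(e_1-e_2)}$, $A+D_{e_1q}$, $A_{e_1e_2}$ from the genuinely low-rank corrections, and it is where the hypothesis that $\Sigma$ contains $-1$ and all primes dividing $2e_1e_2(e_1-e_2)$ is used (so that $e_1$, $e_2$, $e_1-e_2$ and $q$ are $\Sigma$-units and all wild behavior is concentrated at $\Sigma$). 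Granting this, the asserted shape of $B$ and the $\corank$ identity both follow, and Theorem \ref{p:2-descent-redei-mat} for $\{E^{(m)}\}$ is immediate.
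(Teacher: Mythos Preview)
Your proposal is correct and follows essentially the same approach as the paper: set up Swinnerton-Dyer's alternating form $\theta$ on $W/(W\cap K)$ via Lemma~\ref{p:linalg-SD-new-2}, identify $B(n)$ as its Gram matrix on $W_{\Sigma_1}^{(1)}\oplus W_{\Sigma_1}^{(2)}\oplus\overline W_\Sigma$, and compute $\theta$ block by block by splitting each element $x=\kappa_\ell(P)$ as $x_1=\loc_S(x_{1,\Sigma_1}^\glob)+\loc_S(x_{1,\Sigma}^\glob)$, where the first piece produces the high/medium-rank skeleton $D_d+h_{st}(A^\RT+D_{-q})$ and the second the low-rank $z_cz_d^\RT$ corrections. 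The paper makes this decomposition explicit through the isomorphism $\lambda_\ell=\loc_{\ell,\tr}^{-1}\circ\kappa_{\ell,\tr}$ (so that $x_{1,\Sigma_1}^\glob=\lambda_\ell(P)$), which is precisely your ``completing a class supported at a single $\ell_i$ to a global class''; the only point to watch is that the verification that $x_{1,\Sigma}^\glob$ contributes only $z_cz_d^\RT$ is done by factoring the map $\ell\mapsto x_{1,\Sigma}^\glob$ through a fixed (i.e.\ $\fX$-dependent, $n$-independent) matrix times $Z_\Sigma$, which is exactly the uniform bookkeeping you flagged as the main obstacle.
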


In the above theorem,
the medium-rank part of
$B_{11}$, $B_{22}$ and $B_{11}+B_{12}+B_{21}+B_{22}$
are $D_{e_1(e_1-e_2)}$, $D_{e_1e_2}$
and $D_{-e_2(e_1-e_2)}$, respectively.
It turns out that the quadratic twist family
containing $E:y^2=x(x-e_1)(x-e_2)$
is of type (A), (B) or (C)
if and only if there are $0$, $1$ or $2$ perfect squares
among $e_1(e_1-e_2)$, $e_1e_2$
and $-e_2(e_1-e_2)$.
This justifies the assertion on the type in Theorem \ref{p:2-descent-redei-mat}.

In the following we prove Theorem \ref{p:2-descent-redei-mat-2},
fixing a choice $P_1=(e_1m,0)$ and $P_2=(0,0)$
of the basis of $E[2]$ where $E:y^2=x(x-e_1m)(x-e_2m)$
(which is the $E^{(m)}$ in Theorem \ref{p:2-descent-redei-mat-2},
and is the $E$ in Theorem \ref{p:2-descent-redei-mat}).
We use the following notation: if $T\subset S$,
let $H_T^1:=H^1(G_{T,\BQ},E[2])$
and let $U_T$ be the image of $\loc_T:H_T^1\hookrightarrow V_T$.

\subsubsection*{The $\theta|_{\overline W_\Sigma\times\overline W_\Sigma}$}

We have $\theta|_{\overline W_\Sigma\times\overline W_\Sigma}=0$.
In fact, if $x\in W_\Sigma\subset W$,
then there are unique $x_1\in U$, $x_2\in K$,
$x_1'\in U_\Sigma\cap W_\Sigma$ and $x_2'\in K_\Sigma\cap W_\Sigma$
such that $x=x_1+x_2=x_1'+x_2'$.
Let $x_1^\glob\in H_\Sigma^1\subset H_S^1$ be the unique element
such that $x_1'=\loc_\Sigma(x_1^\glob)$.
Then it's easy to see that $x_1=\loc_S(x_1^\glob)
=x_1'+\loc_{\Sigma_1}(x_1^\glob)$
and $x_2=x_2'+\loc_{\Sigma_1}(x_1^\glob)$,
both of them are contained in $W_\Sigma\oplus K_{\Sigma_1}$,
which is a Lagrangian subspace of $V$,
since $W_\Sigma$ and $K_{\Sigma_1}$
are Lagrangian subspaces of $V_\Sigma$ and $V_{\Sigma_1}$, respectively.
In particular,
if $x,y\in W_\Sigma$, then
$\theta(x,y)=e(x_1,y_2)=0$.

\subsubsection*{The $\theta|_{\overline W_\Sigma\times W_{\Sigma_1}}$}

For $\ell\in\Sigma_1$, the local Kummer map $\kappa_\ell$ under our chosen basis
$$
\kappa_\ell:\BF_2\cdot\{P_1,P_2\}=E[2]\hookrightarrow V_\ell\cong\big(\BQ_\ell^\times/(\BQ_\ell^\times)^2\big)^{\oplus 2}
$$
is represented by the matrix
$\left(\begin{smallmatrix}
e_{11}m & e_{12} \\ e_{21} & e_{22}m
\end{smallmatrix}\right)$
where
$\left(\begin{smallmatrix}
e_{11} & e_{12} \\ e_{21} & e_{22}
\end{smallmatrix}\right)
:=\left(\begin{smallmatrix}
e_1 & e_1e_2 \\ e_1(e_1-e_2) & -e_1
\end{smallmatrix}\right)$.

The Weil pairing $e_E:E[2]\times E[2]\to\BF_2$ induces a bilinear map
$d:H_\Sigma^1\times E[2]\to\BQ(\Sigma,2)$ independent of $m$,
such that for any $m\in\fX$, any $\ell\in\Sigma_1$,
any $x\in H_\Sigma^1$ and any $P\in E[2]$, we have
$$
e_\ell\big(\loc_\ell(x),\kappa_\ell(P)\big)=\lrdd{d(x,P)}{\ell}.
$$
In fact, let $H=(h_{ij})=\left(\begin{smallmatrix}
0 & 1 \\ 1 & 0
\end{smallmatrix}\right)$ be the matrix representing $e_E$
under the basis of $E[2]$,
if $x=(b_1,b_2)^{\mathrm T}\in\BQ(\Sigma,2)^{\oplus 2}\cong H_\Sigma^1$
and $P=P_t$, $t=1,2$,
then by the formula of the cup product $e_\ell$ and the local Kummer map
$\kappa_\ell$,
it's easy to see that
$d(x,P)=\sum_ih_{it}b_i$, here the group operation on $\BQ(\Sigma,2)$
is written additively.
Namely, the $d$ is represented by the same matrix as the Weil pairing $e_E$.

Suppose $0\neq x\in U_\Sigma\cap W_\Sigma$
and $0\neq P\in E[2]$.
Consider $\theta|_{\langle x\rangle\times W_{\Sigma_1}^{(t)}}$.
Say $\ell\in\Sigma_1$ and $y=\kappa_\ell(P)$.
We have $x=x_1'=\loc_\Sigma(x_1^\glob)$,
hence $x_1=\loc_S(x_1^\glob)$
and
$$
\theta(x,y)=e(x_1,y)=e\big(\loc_S(x_1^\glob),\kappa_\ell(P)\big)
=e_\ell\big(\loc_\ell(x_1^\glob),\kappa_\ell(P)\big)
=\lrdd{d(x_1^\glob,P)}{\ell}.
$$
That is to say,
$\theta|_{\langle x\rangle\times W_{\Sigma_1}^{(t)}}$
is represented by the R\'edei matrix
$z_d^{\mathrm T}\in\RRM_{1\times k}$
with $d=d(x_1^\glob,P)\in\BQ(\Sigma,2)$.
When $x$ runs over a basis of $U_\Sigma\cap W_\Sigma$
and $P$ runs over a basis of $E[2]$, we obtain that
$\theta|_{\overline W_\Sigma\times W_{\Sigma_1}}$
is represented by a R\'edei matrix
in $\RRM_{\dim\overline W_\Sigma\times 2k}$,
which is the submatrix $(B_{31},B_{32})$
of $B$ in Theorem \ref{p:2-descent-redei-mat}.

We claim that this R\'edei matrix
is generically full rank.
Equivalently, when $x$ runs over a basis of $U_\Sigma\cap W_\Sigma$,
the $\big(d(x_1^\glob,P_t)\big)_{t=1,2}\in\BQ(\Sigma,2)^{\oplus 2}$
are linearly independent.
By the formula of $d$ and that $H=(h_{ij})$ is invertible,
this is equivalent to that, when $x$ runs over a basis of $U_\Sigma\cap W_\Sigma$,
the $x_1^\glob\in H_\Sigma^1\cong\BQ(\Sigma,2)^{\oplus 2}$
are linearly independent.
But we know that such $x$ satisfies $x=x_1'=\loc_\Sigma(x_1^\glob)$,
so $x_1^\glob$ must be linearly independent.

\subsubsection*{The $\theta|_{W_{\Sigma_1}\times W_{\Sigma_1}}$}

For $\ell\in\Sigma_1$,
define $Z_{\ell,\tr}^1$
to be the subspace $\langle\ell\rangle^{\oplus 2}$ of $(\BQ_\ell^\times/(\BQ_\ell^\times)^2)^{\oplus 2}\cong V_\ell$.
Then clearly $V_\ell=K_\ell\oplus Z_{\ell,\tr}^1$.
Define $\kappa_{\ell,\tr}$ to be
$E[2]\xrightarrow{\kappa_\ell}V_\ell\to Z_{\ell,\tr}^1$
and define $\loc_{\ell,\tr}$ to be
$H_\ell^1\xrightarrow{\loc_\ell}V_\ell\to Z_{\ell,\tr}^1$.
Then it's easy to see that these two maps are isomorphisms.
So we obtain an isomorphism $\lambda_\ell:=\loc_{\ell,\tr}^{-1}
\circ\kappa_{\ell,\tr}:E[2]\xrightarrow\sim H_\ell^1$,
in fact,
under our chosen basis it is just represented by the identity matrix.
For $x=\kappa_\ell(P)\in W_\ell$ write
$x=x_1+x_2$
according to $V=U\oplus K$.
Let $x_1^\glob\in H_S^1$ be such that
$x_1=\loc_S(x_1^\glob)$.
Write $x_1^\glob=x_{1,\Sigma}^\glob
+x_{1,\Sigma_1}^\glob$
according to $H_S^1=H_\Sigma^1\oplus H_{\Sigma_1}^1$.
Write $x_2=x_{2,\Sigma}
+x_{2,\Sigma_1}$
according to $K=K_\Sigma\oplus K_{\Sigma_1}$.
Then it's easy to see that
$$
x_{1,\Sigma_1}^\glob=\lambda_\ell(P)
\qquad\text{and}\qquad
x_{1,\Sigma}^\glob=(\loc_\Sigma|_{H_\Sigma^1})^{-1}
(\loc_\Sigma(\lambda_\ell(P))_1'),
$$
here $\loc_\Sigma(\lambda_\ell(P))=\loc_\Sigma(\lambda_\ell(P))_1'
+\loc_\Sigma(\lambda_\ell(P))_2'$ according to $V_\Sigma=U_\Sigma\oplus K_\Sigma$.

Suppose $x=\kappa_\ell(P)$ and $y=\kappa_{\ell'}(Q)$
with $P,Q\in E[2]$ and $\ell,\ell'\in\Sigma_1$.
Then
$$
\theta(x,y)
=e(x_1,y)
=e_{\ell'}\big(\loc_{\ell'}(x_1^\glob),y\big)
=e_{\ell'}\big(\loc_{\ell'}(x_{1,\Sigma}^\glob),y\big)
+e_{\ell'}\big(\loc_{\ell'}(\lambda_\ell(P)),y\big).
$$
We have $\loc_{\ell'}(x_{1,\Sigma}^\glob)\in K_{\ell'}$,
while $\loc_{\ell'}(\lambda_\ell(P))\in K_{\ell'}$
if $\ell'\neq\ell$,
and $\loc_{\ell'}(\lambda_\ell(P))\in Z_{\ell',\tr}^1$
if $\ell'=\ell$.
Therefore, if we suppose $P=P_s$ and $Q=P_t$ for $s,t\in\{1,2\}$,
then we have
\begin{equation}
\label{e:SD-new2-mat1}
e_{\ell'}\big(\loc_{\ell'}(x_{1,\Sigma}^\glob),y\big)
=\sum_ih_{it}\lrdd{\pi_i(x_{1,\Sigma}^\glob)}{\ell'},
\end{equation}
where $\pi_i:H_\Sigma^1\to\BQ(\Sigma,2)$ is the projection to $i$-th component
according to our chosen basis; similarly,
\begin{equation}
\label{e:SD-new2-mat2}
e_{\ell'}\big(\loc_{\ell'}(\lambda_\ell(P)),y\big)
=\sum_jh_{sj}[\ell,e_{jt}]_{\ell'}
+h_{st}[\ell,m]_{\ell'}
=\begin{cases}
h_{st}\lrdd{\ell}{\ell'},&\text{if }\ell'\neq\ell, \\
\sum_jh_{sj}\lrdd{e_{jt}}{\ell}
+h_{st}\lrdd{-m/\ell}{\ell},&\text{if }\ell'=\ell,
\end{cases}
\end{equation}
recall that $e_{ij}\in\BQ(\Sigma,2)$ describes the local Kummer map $\kappa_\ell$.

We fix $s,t$, let $\ell,\ell'\in\Sigma_1$ varies,
and study the behavior of \eqref{e:SD-new2-mat1}
and \eqref{e:SD-new2-mat2}.
Clearly, the \eqref{e:SD-new2-mat2} is represented by the
R\'edei matrix $D_d+h_{st}(A^{\mathrm T}+D_{-q})\in\RRM_{k\times k}$
with $d=\sum_jh_{sj}e_{jt}\in\BQ(\Sigma,2)$,
here the group operation on $\BQ(\Sigma,2)$ is written additively.

In the following we prove that
the \eqref{e:SD-new2-mat1} is represented by a R\'edei matrix
whose entries are linear combinations of $z_cz_d^\RT$ independent of $n$,
where $c,d\in\BQ(\Sigma,2)$.
First we consider the map $\BQ(\Sigma_1,2)\to H_\Sigma^1$
given by $\ell\mapsto x_{1,\Sigma}^\glob
=(\loc_\Sigma|_{H_\Sigma^1})^{-1}
(\loc_\Sigma(\lambda_\ell(P))_1')$ for $\ell\in\Sigma_1$.
It is the composition of the map
$\BQ(\Sigma_1,2)\to V_\Sigma$,
$\ell\mapsto\loc_\Sigma(\lambda_\ell(P))
=\loc_\Sigma((1,\cdots,1,\ell,1,\cdots,1)^{\mathrm T})$
where the $\ell$ is at the $s$-th row,
and the map $V_\Sigma=U_\Sigma\oplus K_\Sigma\twoheadrightarrow U_\Sigma
\xrightarrow\sim H_\Sigma^1$,
$z\mapsto(\loc_\Sigma|_{H_\Sigma^1})^{-1}(z_1')$.
Clearly the first map is represented by a R\'edei matrix
in $\RRM_{4\#\Sigma\times k}$,
and the second map is represented by a matrix in $M_{2\#\Sigma\times 4\#\Sigma}(\BF_2)$
independent of $\Sigma_1$.
Hence the map $\BQ(\Sigma_1,2)\to H_\Sigma^1$,
$\ell\mapsto x_{1,\Sigma}^\glob$ is given by a R\'edei matrix
in $\RRM_{2\#\Sigma\times k}$, and for $i=1,2$,
the map $\BQ(\Sigma_1,2)\to\BQ(\Sigma,2)$,
$\ell\mapsto\pi_i(x_{1,\Sigma}^\glob)$ is given by a R\'edei matrix
$B_i\in\RRM_{\#\Sigma\times k}$.
From this it's easy to that the bilinear map
$\BQ(\Sigma_1,2)\times\BQ(\Sigma_1,2)\to\BF_2$,
$(\ell,\ell')\mapsto
e_{\ell'}\big(\loc_{\ell'}(x_{1,\Sigma}^\glob),y\big)$ given by
\eqref{e:SD-new2-mat1}
is represented by the R\'edei matrix
$\sum_ih_{it}B_i^{\mathrm T}Z_\Sigma^{\mathrm T}\in\RRM_{k\times k}$
where $Z_\Sigma:=(z_p)_{p\in\Sigma}\in\RRM_{k\times\#\Sigma}$.
It is clearly a linear combination of $z_cz_d^\RT$.

\subsection{The $\pi$-strict $2$-Selmer groups}

Swinnerton-Dyer's alternating matrix approach in \cite{SD08}
is not so good to handle $\pi$-strict $2$-Selmer groups,
but only modified $\pi$-strict $2$-Selmer groups.
In this section we compare them.
To do this, we use the interpretation \eqref{e:2-Selmer} directly.
The result is similar to Theorem \ref{p:2-descent-redei-mat-2},
with a R\'edei matrix which is not alternating,
but with simpler low-rank parts.

\begin{thm}
\label{p:2-descent-redei-mat-0}
Let $E:y^2=x(x-e_1)(x-e_2)$ be an elliptic curve, where $e_1,e_2\in\BZ$.
Fix a choice $P_1=(e_1,0)$ and $P_2=(0,0)$
of the basis of $E[2]$.
Let $\Sigma$ be containing $-1$ all primes dividing $2e_1e_2(e_1-e_2)$.
Let $\fX$ be a $\Sigma$-equivalence class of square-free integers,
and $q\in\BQ(\Sigma,2)$ be the $\Sigma$-part of $\fX$.
Then there exists an integer $a$ and a R\'edei matrix
$B\in\RRM_{(2k+a)\times(2k+2\cdot\#\Sigma)}$ depending only on $E$ and $\fX$,
of form
$$
B=\left(\begin{array}{c|c}
P & Q \\
\hline
R & S
\end{array}\right)
=\left(\begin{array}{cc|cc}
A+D_{e_1q} & D_{e_1e_2} & Z_\Sigma \\
D_{e_1(e_1-e_2)} & A+D_{-e_1q} & & Z_\Sigma \\
\hline
R_1 & R_2 & S_1 & S_2
\end{array}\right),
$$
where $Z_\Sigma:=(z_p)_{p\in\Sigma}\in\RRM_{k\times\#\Sigma}$,
such that
for any $m=qn\in\fX$, where
$n=\ell_1\cdots\ell_k\in S(\Sigma)$ the prime-to-$\Sigma$ part of $m$,
under the map $\Sel_2(E^{(m)}/\BQ)\to(\BF_2^k)^{\oplus 2}
\oplus(\BF_2^{\#\Sigma})^{\oplus 2}$
which maps
$x=(x^{(1)};x^{(2)})$ to $((\ord_{\ell_i}(x^{(1)}))_{i=1}^k;
(\ord_{\ell_i}(x^{(2)}))_{i=1}^k;
(\ord_p(x^{(1)}))_{p\in\Sigma};
(\ord_p(x^{(2)}))_{p\in\Sigma})$,
we have
$\Sel_2(E^{(m)}/\BQ)=\ker(B(n))$.

If $\pi:E[2]\to\BF_2$
is given by $P_1=(e_1,0)\mapsto 0$ and $P_2=(0,0)\mapsto 1$,
then there are $B_1\in\RRM_{(2k+a)\times(k+\#\Sigma)}$
and $B_1'\in\RRM_{(2k+a)\times(k+2\cdot\#\Sigma)}$
such that for any above $m=qn$,
$\Sel_{2,\pi\sstr}(E^{(m)}/\BQ)=\ker(B_1(n))$
and $\Sel_{2,\pi\sstr}'(E^{(m)}/\BQ)=\ker(B_1'(n))$,
where $B_1$ and $B_1'$ are of form
$$
B_1=\left(\begin{array}{c|c}
A+D_{e_1q} & Z_\Sigma \\
D_{e_1(e_1-e_2)} \\
\hline
R_1 & S_1
\end{array}\right),\qquad
B_1'=\left(\begin{array}{c|cc}
A+D_{e_1q} & Z_\Sigma \\
D_{e_1(e_1-e_2)} & & Z_\Sigma \\
\hline
R_1 & S_1 & S_2
\end{array}\right).
$$
For other $\pi$ the results are similar.
\end{thm}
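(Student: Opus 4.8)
The plan is to run the $2$-descent of \eqref{e:2-Selmer} directly, without passing through the symplectic reformulation used for Theorem \ref{p:2-descent-redei-mat-2}: since we no longer insist on an alternating Gram matrix, the local conditions can be read off verbatim, and this is exactly what produces the simpler low-rank parts. Fix the basis $P_1=(e_1,0)$, $P_2=(0,0)$ of $E[2]\cong E^{(m)}[2]$ once and for all (compatibly for all $m\in\fX$), so that $H^1(G_{S,\BQ},E^{(m)}[2])\cong\BQ(S,2)^{\oplus 2}$ with $S=\Sigma\sqcup\Sigma_1$ and $\Sigma_1=\{\ell_1,\dots,\ell_k\}$ the set of primes dividing $n$; an element $x=(x^{(1)};x^{(2)})$ is then recorded faithfully by the valuation vector $\big((\ord_{\ell_i}(x^{(j)}))_{i,j},(\ord_p(x^{(j)}))_{p\in\Sigma,j}\big)$ of the statement. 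By \eqref{e:2-Selmer}, $\Sel_2(E^{(m)}/\BQ)$ consists of those $x$ with $\loc_v(x)\in W_v:=\Im(\kappa_v)$ for every $v\in S$, so it suffices to express each of these conditions as a bounded number of $\BF_2$-linear equations in the valuation coordinates whose coefficients are $n$-uniform (Legendre symbols arranged as $A$, $D_d$, $Z_\Sigma$ blocks); stacking them yields $B(n)$, and the $\pi$-strict variants will be obtained by deleting columns.

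At a bad prime $\ell=\ell_i\in\Sigma_1$ the curve $E^{(m)}$ has multiplicative reduction, $\ell$ is odd and prime to $2e_1e_2(e_1-e_2)$, and $\kappa_\ell$ is represented under our basis by $\left(\begin{smallmatrix}e_1m & e_1e_2\\ e_1(e_1-e_2) & -e_1m\end{smallmatrix}\right)$ exactly as in the proof of Theorem \ref{p:2-descent-redei-mat-2}. Since $\ord_\ell(m)=1$ and $e_1,e_2,e_1-e_2\in\BZ_\ell^\times$, the image $W_\ell$ is $2$-dimensional and equals the span of $(\ell u_1,u_2)$ and $(u_3,\ell u_4)$ in $(\BQ_\ell^\times/(\BQ_\ell^\times)^2)^{\oplus 2}$ with $u_1=e_1(m/\ell)$, $u_2=e_1(e_1-e_2)$, $u_3=e_1e_2$, $u_4=-e_1(m/\ell)$; as the valuations $\ord_\ell(x^{(1)}),\ord_\ell(x^{(2)})$ are read off from $x$, the condition $\loc_\ell(x)\in W_\ell$ says exactly that the unit parts satisfy $\mathrm{unit}(x^{(1)})\equiv u_1^{\ord_\ell x^{(1)}}u_3^{\ord_\ell x^{(2)}}$ and $\mathrm{unit}(x^{(2)})\equiv u_2^{\ord_\ell x^{(1)}}u_4^{\ord_\ell x^{(2)}}$ modulo $(\BZ_\ell^\times)^2$. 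Applying $\lrdd{\cdot}{\ell_i}$, using $m/\ell_i=q\cdot(n/\ell_i)$, quadratic reciprocity to turn $\lrdd{\ell_j}{\ell_i}$ into $a_{ij}$, and $\lrdd{n/\ell_i}{\ell_i}=a_{ii}$, these become the pair
\begin{align*}
\textstyle\sum_{j\ne i}a_{ij}\ord_{\ell_j}(x^{(1)})+\big(a_{ii}+\lrdd{e_1q}{\ell_i}\big)\ord_{\ell_i}(x^{(1)})+\lrdd{e_1e_2}{\ell_i}\ord_{\ell_i}(x^{(2)})+\sum_{p\in\Sigma}\lrdd{p}{\ell_i}\ord_p(x^{(1)})&=0,\\
\textstyle\lrdd{e_1(e_1-e_2)}{\ell_i}\ord_{\ell_i}(x^{(1)})+\sum_{j\ne i}a_{ij}\ord_{\ell_j}(x^{(2)})+\big(a_{ii}+\lrdd{-e_1q}{\ell_i}\big)\ord_{\ell_i}(x^{(2)})+\sum_{p\in\Sigma}\lrdd{p}{\ell_i}\ord_p(x^{(2)})&=0,
\end{align*}
and stacking over $i$ gives precisely the top $2k$ rows $\left(\begin{smallmatrix}A+D_{e_1q} & D_{e_1e_2} & Z_\Sigma & \\ D_{e_1(e_1-e_2)} & A+D_{-e_1q} & & Z_\Sigma\end{smallmatrix}\right)$ of $B$.

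For $p\in\Sigma$ the subgroup $W_p\subset H^1(\BQ_p,E^{(m)}[2])$ depends only on the $\BQ_p$-isomorphism class of $E^{(m)}$, hence only on $\fX$, so $\loc_p(x)\in W_p$ contributes a fixed number $a$ of equations. The only $n$-dependence is through the class $[\ell_i]\in\BQ_p^\times/(\BQ_p^\times)^2$, which for odd $p$ equals $\lrdd{\ell_i}{p}$ and for $p=2$ is a fixed linear function of $\big(\lrdd{-1}{\ell_i},\lrdd{2}{\ell_i}\big)$ by reciprocity (i.e.\ of $\ell_i\bmod 8$); in all cases a constant-coefficient combination of the entries $z_v^{(i)}$, $v\in\Sigma$. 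Hence the coefficients of the $\ord_{\ell_i}(x^{(j)})$ in these equations form matrices of the shape $M\,Z_\Sigma^{\RT}$ with $M$ constant, and the coefficients of the $\ord_p(x^{(j)})$ are absolute constants, giving the bottom rows $(R_1\ R_2\mid S_1\ S_2)$ in the claimed form and $\Sel_2(E^{(m)}/\BQ)=\ker(B(n))$. The $\pi$-strict variants follow at once: for $\pi(P_1)=0$, $\pi(P_2)=1$ an element of $\Sel_2$ lies in $\Sel_{2,\pi\sstr}(E^{(m)}/\BQ)$ iff $x^{(2)}=0$, i.e.\ all $x^{(2)}$-coordinates vanish, and in $\Sel_{2,\pi\sstr}'(E^{(m)}/\BQ)$ iff only $\ord_{\ell_i}(x^{(2)})=0$ for all $i$; deleting from $B$ the $k+\#\Sigma$ columns attached to $x^{(2)}$ gives $B_1$, and deleting only the $k$ columns $\ord_{\ell_i}(x^{(2)})$ gives $B_1'$, in the two displayed shapes, while other $\pi$ come from swapping the two root-coordinates.

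I expect the main obstacle to be the bad-prime step: one must pin down $W_\ell=\Im(\kappa_\ell)$ and carry the reciprocity bookkeeping precisely enough to land on $A+D_{e_1q}$ and $A+D_{-e_1q}$ with the exact medium-rank diagonals $D_{e_1e_2}$, $D_{e_1(e_1-e_2)}$ and no spurious low-rank terms---getting the placement of the $\Sigma$-part $q$, the sign in $u_4=-e_1(m/\ell)$, and the transpose convention ($a_{ij}$ versus $a_{ji}$) right is the whole game, whereas the $\Sigma$-part is soft because there the local condition is frozen across $\fX$. A secondary check is that the $p=2$ (and $p=\infty$, subsumed under the symbol $-1\in\Sigma$) contributions really collapse to $Z_\Sigma$-entries and constants, which is where one invokes that $[\ell]\in\BQ_2^\times/(\BQ_2^\times)^2$ is a linear function of $\ell\bmod 8$.
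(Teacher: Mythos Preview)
Your proposal is correct and follows essentially the same approach as the paper: direct $2$-descent via \eqref{e:2-Selmer}, writing the local Kummer condition at each $\ell_i\in\Sigma_1$ as two $\BF_2$-equations (the paper does this by first exhibiting $\Im(\kappa_\ell)$ as the kernel of an explicit $2\times 4$ matrix and then composing with the matrix for $\loc_\ell$, while you collapse this into a single unit-part comparison, but the computation is identical), and observing that the $\Sigma$-conditions are frozen across $\fX$. One harmless slip: $a_{ij}=\lrdd{\ell_j}{\ell_i}$ is the paper's \emph{definition} for $i\neq j$, so no reciprocity is needed there; reciprocity enters only where you actually use it, namely to express $[\ell_i]\in\BQ_p^\times/(\BQ_p^\times)^2$ for $p\in\Sigma$ (including $p=2$) as a constant-coefficient combination of the $z_i^{(v)}$.
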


\begin{proof}
Write $\Sigma_1=\{\ell_1,\cdots,\ell_k\}$
and $S=\Sigma\sqcup\Sigma_1$,
then $\Sel_2(E^{(m)}/\BQ)$ consists of elements
$x=(x^{(1)};x^{(2)})$ of $\BQ(S,2)^{\oplus 2}$,
such that $\loc_p(x)\in\Im(\kappa_p)$ for all $p\in S$.

When $p\in\Sigma$,
it's easy to see that the map $\loc_p:\BQ(S,2)\to\BQ_p^\times/(\BQ_p^\times)^2$,
under our choice of basis, is represented by a R\'edei matrix
whose entries are $0,1$ or $z_d^\RT$, $d\in\BQ(\Sigma,2)$.
The map $\big(\BQ_p^\times/(\BQ_p^\times)^2\big)^{\oplus 2}
=H^1(\BQ_p,E^{(m)}[2])\to H^1(\BQ_p,E^{(m)}[2])/\Im(\kappa_p)$
depends only on $E$ and $\fX$, hence is represented by a fixed matrix.
Therefore, the condition
that $\loc_p(x)\in\Im(\kappa_p)$ for all $p\in\Sigma$
can be represented by some $R,S$ in $B$.

For $\ell=\ell_i\in\Sigma_1$, recall that
$\kappa_\ell:\BF_2\cdot\{P_1,P_2\}=E[2]
\cong E^{(m)}[2]
\hookrightarrow \big(\BQ_\ell^\times/(\BQ_\ell^\times)^2\big)^{\oplus 2}$
is represented by the matrix
$\left(\begin{smallmatrix}
e_{11}m & e_{12} \\ e_{21} & e_{22}m
\end{smallmatrix}\right)$
where
$\left(\begin{smallmatrix}
e_{11} & e_{12} \\ e_{21} & e_{22}
\end{smallmatrix}\right)
:=\left(\begin{smallmatrix}
e_1 & e_1e_2 \\ e_1(e_1-e_2) & -e_1
\end{smallmatrix}\right)$.
If we let $a\in\BZ$ be any quadratic non-residue modulo $\ell$,
let $\{(\begin{smallmatrix}
a \\ 0
\end{smallmatrix}),
(\begin{smallmatrix}
0 \\ a
\end{smallmatrix}),
(\begin{smallmatrix}
\ell \\ 0
\end{smallmatrix}),
(\begin{smallmatrix}
0 \\ \ell
\end{smallmatrix})\}$
be a basis of $\big(\BQ_\ell^\times/(\BQ_\ell^\times)^2\big)^{\oplus 2}$,
then it's easy to see that
$$
\Im(\kappa_\ell)=\ker
\begin{pmatrix}
1 & 0 & \lrdd{e_{11}m/\ell}{\ell} & \lrdd{e_{12}}{\ell} \\
0 & 1 & \lrdd{e_{21}}{\ell} & \lrdd{e_{22}m/\ell}{\ell}
\end{pmatrix}.
$$
On the other hand, the map
$\loc_\ell:\BQ(S,2)\to\BQ_\ell^\times/(\BQ_\ell^\times)^2
=\BF_2\cdot\{a,\ell\}$ is represented by
$$
\left(\begin{array}{ccccccc|c}
\lrdd{\ell_1}{\ell_i} & \cdots &
\lrdd{\ell_{i-1}}{\ell_i} & 0 & \lrdd{\ell_{i+1}}{\ell_i} &
\cdots & \lrdd{\ell_k}{\ell_i} & \left(\lrdd{p}{\ell_i}\right)_{p\in\Sigma} \\
0 & \cdots & 0 & 1 & 0 & \cdots & 0 & 0
\end{array}\right).
$$
Now it's easy to see that we can put these matrices
together for all $\ell\in\Sigma_1$, and
after reordering rows, we get R\'edei matrices of the form
we want for $P,Q$ in $B$.

For the $\pi$-strict $2$-Selmer groups
for the above $\pi$,
recall that an element $x=(x^{(1)};x^{(2)})\in\Sel_2(E^{(m)}/\BQ)$
is contained in $\Sel_{2,\pi\sstr}(E^{(m)}/\BQ)$ if and only if
$x^{(2)}=0$,
and is contained in
$\Sel_{2,\pi\sstr}'(E^{(m)}/\BQ)$ if and only if
$\loc_{\Sigma_1}(x^{(2)})=0$.
Hence the form of $B_1$ and $B_1'$ is clear.
\end{proof}

\begin{remark}
\label{p:2-descent-redei-mat-general}
By the same method one can prove a similar result for a
general $2$-adic Galois representation
with full rational $2$-torsion points.
Let $W=(\BQ_2/\BZ_2)^a$ be endowed with a continuous $\BZ_2$-linear
$G_\BQ$-action, unramified outside $\Sigma$,
such that the $G_\BQ$-action on $W[2]$ is trivial.
For each $v\in\Sigma$, fix a local condition at $v$
for the family $\{W^{(m)}\mid m\in\fX\}$ (note that they are isomorphic
as $G_{\BQ_v}$-modules), take unramified local condition at $v$ for $v\notin\Sigma$,
define the $2$-Selmer group $\Sel_2(W^{(m)}/\BQ)$ for $m\in\fX$.
Then there exist $b,d\geq 0$,
a R\'edei matrix $B=(B_{ij})_{\substack{1\leq i\leq a+1\\
1\leq j\leq a+1}}\in\RRM_{(ak+b)\times(ak+d)}$, where
$B_{ij}=D_{e_{ij}}$ for $i,j\leq a$, $i\neq j$,
$B_{ii}=A+D_{e_{ii}q}$ for $i\leq a$, such that
$\dim_{\BF_2}\Sel_2(W^{(m)}/\BQ)=\corank(B(n))$ for all $m\in\fX$.
Here $e_{ij}\in\BQ(\Sigma,2)$ is such that
the $G_\BQ$ action $\rho:G_\BQ\to\Aut(W[4])=\GL_a(\BZ/4\BZ)$ on $W[4]$
is given by $\rho=1+(\chi_{e_{ij}})$
where $\chi_{e_{ij}}:G_\BQ\to 2\BZ/4\BZ\cong\{\pm 1\}$
is the quadratic character associated to $\BQ(\sqrt{e_{ij}})/\BQ$.
The above theorem can be viewed as a special case:
$a=2$, $\left(\begin{smallmatrix}
e_{11} & e_{12} \\
e_{21} & e_{22}
\end{smallmatrix}\right)=\left(\begin{smallmatrix}
e_1 & e_1e_2 \\
e_1(e_1-e_2) & -e_1
\end{smallmatrix}\right)$.
\end{remark}

Now it's clear from linear algebra that we have the following comparison
of these two types of $\pi$-strict $2$-Selmer groups.

\begin{cor}
\label{p:strict-Sel-eq}
Under the setup of Theorem \ref{p:2-descent-redei-mat-0},
suppose $\pi:E[2]\to\BF_2$
is $P_1=(e_1,0)\mapsto 0$ and $P_2=(0,0)\mapsto 1$,
then if $e_1(e_1-e_2)$ is a perfect square and
if $m=qn$ is such that $(z_p(n)\in\BF_2^k)_{p\in\Sigma}$
are linearly independent, then
$\Sel_{2,\pi\sstr}(E^{(m)}/\BQ)=\Sel_{2,\pi\sstr}'(E^{(m)}/\BQ)$.
For other $\pi$ there are similar results.
\end{cor}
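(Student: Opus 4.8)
The plan is to read the statement off directly from the matrix description in Theorem~\ref{p:2-descent-redei-mat-0}; essentially no new input is needed. Write $m=qn$ with $n=\ell_1\cdots\ell_k$ the prime-to-$\Sigma$ part of $m$. By that theorem, $\Sel_{2,\pi\sstr}(E^{(m)}/\BQ)=\ker(B_1(n))$ and $\Sel_{2,\pi\sstr}'(E^{(m)}/\BQ)=\ker(B_1'(n))$, where $B_1$ is obtained from $B_1'$ by deleting the last block of $\#\Sigma$ columns, namely the one recording the coordinates $(\ord_p(x^{(2)}))_{p\in\Sigma}$. Splitting a vector $v$ in the column space of $B_1'$ as $v=(v_1;v_2;v_3)$ according to the three column blocks $(\ord_{\ell_i}(x^{(1)}))_i$, $(\ord_p(x^{(1)}))_p$, $(\ord_p(x^{(2)}))_p$, the inclusion $(v_1;v_2)\mapsto(v_1;v_2;0)$ identifies $\ker(B_1(n))$ with $\{v\in\ker(B_1'(n)):v_3=0\}$, and under it the automatic inclusion $\Sel_{2,\pi\sstr}\subseteq\Sel_{2,\pi\sstr}'$ is the evident one. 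So the whole content is to show that, under the two hypotheses, every $v\in\ker(B_1'(n))$ satisfies $v_3=0$.

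First I would use that $e_1(e_1-e_2)$ is a perfect square. Since $\Sigma$ contains every prime dividing $2e_1e_2(e_1-e_2)$, for each prime $\ell$ coprime to $\Sigma$ one has $\lrdd{e_1(e_1-e_2)}{\ell}=0$, so $D_{e_1(e_1-e_2)}$ is the zero R\'edei matrix. Substituting this into the displayed shape of $B_1'$, its middle block-row collapses to $(\,0\mid 0\mid Z_\Sigma(n)\,)$, where $Z_\Sigma=(z_p)_{p\in\Sigma}$. Hence any $v=(v_1;v_2;v_3)\in\ker(B_1'(n))$ satisfies $Z_\Sigma(n)\cdot v_3=0$, that is, $\sum_{p\in\Sigma}(v_3)_p\,z_p(n)=0$ in $\BF_2^k$.

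Next I would invoke the hypothesis that $\{z_p(n):p\in\Sigma\}$ is linearly independent in $\BF_2^k$: this forces $v_3=0$, hence $v\in\ker(B_1(n))$. Therefore $\Sel_{2,\pi\sstr}'(E^{(m)}/\BQ)\subseteq\Sel_{2,\pi\sstr}(E^{(m)}/\BQ)$, so the two coincide. In cohomological terms this is just the statement that, under the two hypotheses, any class $x=(x^{(1)};x^{(2)})\in\Sel_2(E^{(m)}/\BQ)$ whose $\pi$-component $x^{(2)}$ is supported in $\Sigma$ must have $x^{(2)}=0$.

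For the other two choices of $\pi$ the argument is the same after the relabelling of the basis of $E[2]$ in Theorem~\ref{p:2-descent-redei-mat-2}: there the three $\pi$'s correspond to the three medium-rank diagonal blocks $D_{e_1(e_1-e_2)}$, $D_{e_1e_2}$, $D_{-e_2(e_1-e_2)}$, so the relevant hypothesis becomes that $e_1e_2$ (resp.\ $-e_2(e_1-e_2)$) is a perfect square, and the linear independence of $\{z_p(n)\}_{p\in\Sigma}$ is used in exactly the same way. The main (indeed only) thing to be careful about is the bookkeeping: checking which block-row of $B_1'(n)$ decouples once the relevant $D$-block is killed, and verifying that the surviving constraint on $v_3$ is precisely that $v_3$ lies in the kernel of the $k\times\#\Sigma$ matrix with columns $z_p(n)$, so that linear independence is exactly the hypothesis needed to conclude. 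I do not expect any genuine obstacle beyond this.
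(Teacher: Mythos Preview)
Your proposal is correct and is exactly the linear-algebra verification the paper has in mind; the paper itself gives no proof beyond the sentence ``Now it's clear from linear algebra that we have the following comparison of these two types of $\pi$-strict $2$-Selmer groups,'' and your argument spells out precisely that: once $D_{e_1(e_1-e_2)}=0$, the second block-row of $B_1'(n)$ reads $(0\mid 0\mid Z_\Sigma(n))$, so any $v=(v_1;v_2;v_3)\in\ker(B_1'(n))$ satisfies $Z_\Sigma(n)v_3=0$, and linear independence of the columns $z_p(n)$ forces $v_3=0$.
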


By Proposition \ref{p:low-rank-max-rank}
it's expected that for most $n$, the
$(z_p(n)\in\BF_2^k)_{p\in\Sigma}$
are linearly independent,
hence if $e_1(e_1-e_2)$ is a perfect square,
then it's expected that for most $n$, these two types of $\pi$-strict
$2$-Selmer groups are equal, so Theorem \ref{main strict}
should comes from the similar result
for modified $\pi$-strict
$2$-Selmer groups, which can be studied by Theorem
\ref{p:2-descent-redei-mat} using alternating R\'edei matrices.
We can't say much for now when
$e_1(e_1-e_2)$ is not a perfect square; later in \S\ref{s:high-rank-non-square}
we will see that
in this case, these two types of $\pi$-strict
$2$-Selmer groups are expected to be trivial,
corresponding to $t_\pi=-\infty$ case in Theorem \ref{main strict}.

\section{Markov analysis}
\label{s:markov}

Recall that (Theorem \ref{p:2-descent-redei-mat},
\ref{p:2-descent-redei-mat-2}, \ref{p:2-descent-redei-mat-0} and
Corollary \ref{selmer mat unrestricted version})
for a $\Sigma$-equivalence class $\fX$ of quadratic twist family of elliptic curves
over $\BQ$ with full rational $2$-torsion points,
the dimension of $2$-Selmer group of $E\in\fX$
is equal to the corank of $B(E)=B(n)$ where $n\in S(\Sigma)$
is the product of bad primes of $E$ outside $\Sigma$;
similarly for $\pi$-strict $2$-Selmer groups
$\Sel_{2,\pi\sstr}(E/\BQ)$ and $\Sel_{2,\pi\sstr}'(E/\BQ)$.
Later (in Theorem \ref{redei matrix main thm}
and Corollary \ref{strict redei matrix main thm})
we will see that the matrix $\widetilde B$ and $\widetilde B_i'$ in
Corollary \ref{selmer mat unrestricted version}
for essential $2$-Selmer groups
$S(E)$
and modified $\pi$-strict essential $2$-Selmer groups
$S_{\pi\sstr}'(E)$
have better Markov behavior
as the number $k\geq 1$ of the prime factors of $n$ varies.
In this section we will do the Markov analysis on such matrices.

For each $k\geq 1$ and each element $\omega
=\big((a_{ij}),(z_p)_{p\in\Sigma}\big)\in\Omega_k^{\Sigma,*}$
as in \eqref{e:Omega-k-Sigma-*},
we define the following matrices and vectors of coefficients in $\BF_2$
associated to $\omega$.
\begin{itemize}
\item
Define $A'=A'(\omega):=(a_{ij})_{\substack{1\leq i\leq k-1\\
1\leq j\leq k-1}}\in M_{k-1}(\BF_2)$.
\item
For $d\in\BQ(\Sigma,2)$, define $z_d'=z_d'(\omega)
:=(z_1^{(d)},\cdots,z_{k-1}^{(d)})^\RT\in\BF_2^{k-1}$.
The $(z_d')^\RT$ is the transpose of $z_d'$.
Define $D_d':=\diag(z_d')\in M_{k-1}(\BF_2)$
and $A_d':=A'+z_d'(z_d')^\RT\in M_{k-1}(\BF_2)$.
\end{itemize}
In other words, $A'$, $A_d'$ and $z_c'(z_d')^\RT$
are the corresponding $A$, $A_d$ and $z_cz_d^\RT$ with
last row and column removed; $z_d'$ is the corresponding $z_d'$ with
last element removed.
Unlike $A$, the restriction
$\sum_{j=1}^ka_{ij}=0$ does not affect $A'$, hence the $A$, $z_d$, etc.~are
also called \emph{restricted} R\'edei matrices;
the $A'$, $z_d'$, etc.~are
called \emph{unrestricted} R\'edei matrices.
The set of unrestricted R\'edei matrices of size $(a(k-1)+b)\times(c(k-1)+d)$
is denoted by $\RRM'_{(a(k-1)+b)\times(c(k-1)+d)}$.
Similar to restricted R\'edei matrices of size $k\times k$,
the unrestricted R\'edei matrices of size $(k-1)\times(k-1)$
are also divided into $3$ types according to their behavior of ranks,
and also have decompositions by ranks.

When $E$ varies in a $\Sigma$-equivalence class $\fX$,
the product $n\in S(\Sigma)$ of bad primes of $E$ outside $\Sigma$
also varies in a $\Sigma$-equivalence class,
as in Theorem \ref{p:2-descent-redei-mat-2}
and Theorem \ref{p:2-descent-redei-mat-0}.
Here $n_1,n_2\in\BQ^\times/(\BQ^\times)^2$ are called $\Sigma$-equivalent to each
other, if $n_1/n_2\in(\BQ_p^\times)^2$ for all $p\in\Sigma$.
It's clear that to give a $\Sigma$-equivalence class of $S(\Sigma)$
is equivalent to give $\mathbf s=(s_p)_{p\in\Sigma}\in\BF_2^{\#\Sigma}$,
by
\begin{equation}
\label{e:Ss-Sigma}
S^{\mathbf s}(\Sigma):=\left\{n\in S(\Sigma)~\middle|~
\lrdd{p}{n}=s_p\text{ for all }p\in\Sigma\right\}\subset S(\Sigma).
\end{equation}
Correspondingly, define
$$
\Omega_k^{\Sigma,\mathbf s}
:=\left\{
\omega=\big((a_{ij}),(z_p)_{p\in\Sigma}\big)\in\Omega_k^{\Sigma,*}~\middle|~
\sum_{i=1}^kz_i^{(p)}=s_p\text{ for all }p\in\Sigma\right\}
\subset\Omega_k^{\Sigma,*}.
$$
The map $S_k(\Sigma)\to\Omega_k^{\Sigma,*}$, $n\mapsto\omega(n)$
restricts to $S_k^{\mathbf s}(\Sigma)\to\Omega_k^{\Sigma,\mathbf s}$.

In this section we study the distribution of $\corank(B(\omega))$
as $k\geq 1$, $\omega\in\Omega_k^{\Sigma,\mathbf s}$ varies,
for the following unrestricted R\'edei matrix $B$,
which is the abstraction of the matrix $\widetilde B$ in Corollary
\ref{selmer mat unrestricted version}.

\begin{defn}
\label{p:high-rank-alternating-defn}
A ``high-rank alternating R\'edei matrix of level $2$''
is a R\'edei matrix $B=(B_{ij})_{\substack{1\leq i\leq 3\\
1\leq j\leq 3}}\in\RRM'_{(2(k-1)+t)\times(2(k-1)+t)}$ where
\begin{itemize}
\item
$B$ is alternating, namely $B_{ji}=B_{ij}^\RT$
and the diagonal of $B_{ii}$ is $0$,
\item
$B_{11},B_{12},B_{21},B_{22}$ are of size $(k-1)\times(k-1)$,
$B_{13},B_{23}$ are of size $(k-1)\times t$ for some fixed integer $t$ independent of $k$,
and $B_{33}$ is of size $t\times t$,
\item
the entries of $B_{13}$ and $B_{23}$ are $z_d'$, $d\in\BQ(\Sigma,2)$,
and $\left(\begin{smallmatrix}
B_{13} \\ B_{23}
\end{smallmatrix}\right)$
is of generically full rank,
namely, there exists $k\geq 1$ and $\omega\in\Omega_k^{\Sigma,*}$
such that
$\left(\begin{smallmatrix}
B_{13}(\omega) \\ B_{23}(\omega)
\end{smallmatrix}\right)$
is of rank $t$,
\item
$B_{12}$ and $B_{21}$ are high-rank R\'edei matrices;
the three matrices $B_{11}$, $B_{22}$ and $B_{11}+B_{12}+B_{21}+B_{22}$
are all medium-rank or low-rank R\'edei matrices.
\end{itemize}
\end{defn}

Suppose the medium-rank part of $B_{11}$ and $B_{22}$ are $D_{d_1}'$
and $D_{d_2}'$, respectively, $d_1,d_2\in\BQ(\Sigma,2)$.
Then the medium-rank part of $B_{11}+B_{12}+B_{21}+B_{22}$
is $D_{-d_1d_2}'$.
Such $B$ is classified, roughly, according to the number $s$
of the low-rank R\'edei matrices among $B_{11}$, $B_{22}$ and $B_{11}+B_{12}+B_{21}+B_{22}$,
namely, number $s$ of $1$ among $d_1,d_2,-d_1d_2\in\BQ(\Sigma,2)$,
as the following types:
\begin{itemize}
\item[(A)]
$s=0$, $(d_1,d_2)\neq(-1,-1)$;
\item[(A')]
$s=0$, $(d_1,d_2)=(-1,-1)$;
\item[(B)]
$s=1$;
\item[(C)]
$s=2$.
\end{itemize}
Note that the matrix coming from elliptic curves over $\BQ$ in Corollary
\ref{selmer mat unrestricted version}
cannot be of type (A'), since from Theorem \ref{p:2-descent-redei-mat-2},
$d_1=e_1(e_1-e_2)$ and $d_2=e_1e_2$, so $d_1+d_2=e_1^2>0$.
In this paper we only study such $B$ of types (A), (B) or (C),
but let's give a remark on type (A')
compared to type (A): the $\left(\begin{smallmatrix}
B_{11}+B_{12}+B_{21} & B_{11}+B_{22} \\
B_{12}+B_{21}+B_{22} & B_{11}+B_{12}+B_{21}
\end{smallmatrix}\right)$ is a low-rank R\'edei matrix for type (A'),
while it's a medium-rank R\'edei matrix for type (A),
which causes extra difficulty similar to types (B) or (C)
compared to type (A).

We introduce crucial invariants for $B$ of type (B) and (C).
Similar to Definition \ref{parameter defn 1},
if $B$ is of type (B), we may assume that $B_{11}$ is low-rank R\'edei matrix
(namely $d_1=1$ and $d_2\neq 1$);
if $B$ is of type (C), we may assume that $B_{11}$ and $B_{12}$
are low-rank R\'edei matrices
(namely $d_1=d_2=1$).
Define
$$
B_j':=\begin{pmatrix}
B_{1j} & B_{13} \\
B_{2j} & B_{23} \\
B_{3j} & B_{33}
\end{pmatrix}
\qquad\text{and}\qquad
B_j'':=\begin{pmatrix}
B_{jj} & B_{j3} \\
B_{3j} & B_{33}
\end{pmatrix}
\qquad\text{for }1\leq j\leq s,
$$
corresponding to $\widetilde B_j'$ and $\widetilde B_j''$ in
Corollary \ref{selmer mat unrestricted version} and
Definition \ref{parameter defn 1}
(we may also define them for $j=3$ case, similar to that
in Theorem \ref{p:2-descent-redei-mat}),
then we define the parameter $\mathbf t=(t_1,\cdots,t_s)$ of $B$,
which is an unordered tuple of $s$ integers, to be
$$
t_j:=t-\max_{k\geq 1,\omega\in\Omega_k^{\Sigma,\mathbf s}}\rank(B_j''(\omega))
\qquad\text{for }1\leq j\leq s.
$$
The $B_j''$ is a low-rank R\'edei matrix,
whose properties are summarized in \S\ref{s:low-rank},
most importantly, the number $t_j$ is easily computable
by Proposition \ref{p:low-rank-max-rank}.

The main result in this section is the following,
which is related to Theorem \ref{main theorem}.

\begin{thm}
\label{redei matrix main thm}
The random variables
$$
Y_k:\Omega_k^{\Sigma,\mathbf s}\to I:=\BZ_{\geq 0}^{s+1},\qquad
\omega\mapsto\big(\corank(B(\omega)),\corank(B_1'(\omega)),\cdots,\corank(B_s'(\omega))\big)
$$
satisfy that for all $k\geq 1$,
\begin{equation}
\label{e:redei-matrix-transition-alt}
\sum_{\mathbf m_\new}\left|\BP(Y_{k+1}=\mathbf m_\new)
-\sum_{\mathbf m}P_{\mathbf m\to\mathbf m_\new}^\Alt\cdot
\BP(Y_k=\mathbf m)\right|
<C\cdot\alpha^k,
\end{equation}
where $0<\alpha<1$ and $C>0$ are constants independent of
$\mathbf m$, $\mathbf m_\new$ and $k$,
namely, they
form an almost Markov chain, whose limit transition matrix
is $P_{\mathbf m\to\mathbf m_\new}^\Alt
=P_{\mathbf m\to\mathbf m_\new,\mathbf t}^\Alt$
for the model $M_{2k+t,\mathbf t}^\Alt(\BF_2)$
in Appendix \ref{s:model}.
In particular, the random variables $Y_k$ and
$X_k:\Omega_k^{\Sigma,\mathbf s}\to\BZ_{\geq 0}$,
$\omega\mapsto\corank(B(\omega))$ satisfy
\begin{equation}
\label{e:redei-matrix-prob-alt}
\lim_{k\to\infty}\BP(Y_k=\mathbf m)
=P_{t,\mathbf t}^\Alt(\mathbf m)
\qquad\text{and}\qquad
\lim_{k\to\infty}\BP(X_k=m)
=P_{t,\mathbf t}^\Alt(m),
\end{equation}
and moreover, for any $\xi\in\BR$
there exists $C>0$ and $0<\alpha<1$ such that for all $k\geq 1$,
\begin{equation}
\label{e:redei-matrix-sum-prob-alt}
\sum_{m\geq 0}2^{\xi\cdot m}\cdot\left|\BP(X_k=m)
-P_{t,\mathbf t}^\Alt(m)\right|
\leq\sum_{\mathbf m=(m,m_1',\cdots,m_s')\in I}2^{\xi\cdot m}\cdot\left|\BP(Y_k=\mathbf m)
-P_{t,\mathbf t}^\Alt(\mathbf m)\right|
<C\cdot\alpha^k.
\end{equation}
\end{thm}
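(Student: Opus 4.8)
The plan is to realize $Y_{k+1}$ as a ``twisted extension'' of $Y_k$: describe the fibration $\Omega_{k+1}^{\Sigma,\mathbf s}\to\Omega_k^{\Sigma,\mathbf s}$ obtained by forgetting the data of the $(k+1)$-st prime, compute the conditional law of $Y_{k+1}$ given $Y_k$ over a fibre by linear algebra over $\BF_2$, and match the resulting transition kernel with the transition matrix of the matrix-model Markov chain $(Y_k)$ attached to $M_{2k+t,\mathbf t}^\Alt(\BF_2)$ in Appendix \ref{s:model}. The almost-Markov estimate \eqref{e:redei-matrix-transition-alt} then follows by summing the conditional probabilities, and \eqref{e:redei-matrix-prob-alt}, \eqref{e:redei-matrix-sum-prob-alt} follow by combining it with the convergence and tail estimates for the model chain proved in Appendices \ref{s:model}--\ref{s:basic-property-markov}.

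First I would make the fibration explicit. Passing from $\omega\in\Omega_k^{\Sigma,\mathbf s}$ to $\omega_\new\in\Omega_{k+1}^{\Sigma,\mathbf s}$ amounts to choosing the new row $(a_{k+1,1},\dots,a_{k+1,k})$ and the new symbols $(z_{k+1}^{(p)})_{p\in\Sigma}$; the entries $a_{j,k+1}$ are then forced by reciprocity $a_{j,k+1}=a_{k+1,j}+z_j^{(-1)}z_{k+1}^{(-1)}$, and the relations $\sum_j a_{ij}=0$ and $\sum_i z_i^{(p)}=s_p$ only shift the diagonal of $A$ and impose $O(1)$ linear constraints on the new data. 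Thus, conditioned on $\omega$, the new data is uniformly distributed on an affine $\BF_2$-space of dimension $k+O(1)$. On the level of matrices, $B(\omega_\new)$ is $B(\omega)$ enlarged by one new pair of (mutually transposed) rows and columns inside each of the two size-$(k-1)\to k$ diagonal pieces, whose entries are $\BF_2$-linear in this nearly-uniform new data (the high-rank parts $B_{12},B_{21}$ contribute genuinely new ``generic'' bits), while each $B_{i3}$-block (the ``holes'' side) gains only one new entry, a value of some $z_d'$.

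The heart of the argument is the conditional linear-algebra computation. Fix $Y_k(\omega)=\mathbf m=(m,m_1',\dots,m_s')$ and choose bases of $\BF_2^{2(k-1)+t}$ adapted simultaneously to $\ker B(\omega)$ and the $\ker B_j'(\omega)$; then the event $Y_{k+1}(\omega_\new)=\mathbf m_\new$ becomes a system of affine equations over $\BF_2$ in the uniform new data whose coefficients depend only on $\mathbf m$, and its solution count can be read off exactly. Because $B_{12},B_{21}$ are high-rank Rédei matrices, the new rows pair non-degenerately with the kernel directions, so the computation closes up; and since the size-$(k-1)$ diagonal blocks $B_{11},B_{22},B_{11}+B_{12}+B_{21}+B_{22}$ include $s$ low-rank ones creating ``holes'', the transition genuinely depends on $\corank B_j'$ as well as $\corank B$ --- this is precisely why the refined variable $Y_k$, not $X_k$, is Markovian, in exact parallel with the model. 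The output is a transition probability equal to $P_{\mathbf m\to\mathbf m_\new,\mathbf t}^\Alt$ up to an error $O(\alpha^k)$; the error is entirely due to the $O(1)$ global constraints acting on a space of growing dimension, together with the event that the relevant high-rank blocks or the $z_d'$-columns fail to be generic, which happens on a set of measure $O(\alpha^k)$ by the low-rank estimates of \S\ref{s:low-rank} and Proposition \ref{p:low-rank-max-rank}. Summing over $\omega$ yields \eqref{e:redei-matrix-transition-alt}.

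Finally, to obtain \eqref{e:redei-matrix-prob-alt} and \eqref{e:redei-matrix-sum-prob-alt}, write $\mu_k$ for the law of $Y_k$ and $P^\Alt$ for the limit transition operator of the model chain, so that $\mu_{k+1}=\mu_k P^\Alt+\varepsilon_k$ with $\|\varepsilon_k\|_1<C\alpha^k$ by \eqref{e:redei-matrix-transition-alt}. The model chain $(Y_k)$ is irreducible with limiting distribution $P_{t,\mathbf t}^\Alt$, and its generating function converges on all of $\BC$, so $\sum_{\mathbf m}2^{\xi m}P_{t,\mathbf t}^\Alt(\mathbf m)<\infty$ for every $\xi$ and $P^\Alt$ contracts on the weighted space with norm $\sum_{\mathbf m}2^{\xi m}|\cdot|$; comparing $\mu_k$ with the stationary relation $P_{t,\mathbf t}^\Alt=P_{t,\mathbf t}^\Alt P^\Alt$ and iterating the contraction against the geometrically small perturbations $\varepsilon_k$ gives the geometric bound in \eqref{e:redei-matrix-sum-prob-alt}, the first inequality there being automatic since $X_k$ is a coordinate of $Y_k$. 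I expect the main obstacle to be the bookkeeping in the conditional computation with the ``holes'' present --- verifying that the joint kernel for $(\corank B,\corank B_1',\dots,\corank B_s')$ closes into a Markov kernel in the limit and matches the model kernel block-by-block --- with the weighted-norm contraction for $P^\Alt$, while not automatic, being a more routine consequence of its explicit transition probabilities.
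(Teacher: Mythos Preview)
Your overall architecture --- fibre $\Omega_{k+1}^{\Sigma,\mathbf s}\to\Omega_k^{\Sigma,\mathbf s}$, conditional linear-algebra computation matched to the model kernel, then Markov-chain convergence theory --- is exactly the paper's. Two points deserve comment.

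\medskip
\textbf{The conditional computation.} Your plan is to choose, for each $\omega$, a basis adapted to $\ker B(\omega)$ and the $\ker B_j'(\omega)$ and read off solution counts. The paper instead uses a dual device (Lemma~\ref{how to compute probability}): for subspaces $V_\omega,W_\omega\subset\widetilde V$,
\[
\BP\big(w\in V_\omega\mid \omega\in\Omega,\ w\in W_\omega\big)
=\sum_{h\in\Hom(\widetilde V,\BF_2)}\sum_{\substack{\omega\in\Omega\\ h|_{V_\omega}=0,\ h|_{W_\omega}=0}}\frac{\BP(\omega)}{\#(\widetilde V/V_\omega)},
\]
then partitions the $h$'s into a finite ``main-term'' piece (producing exactly $P^\Alt_{\mathbf m\to\mathbf m_\new}$) and an ``error'' piece bounded via three explicit lemmas: one for the medium-rank blocks $D_d'$ (average $\#\ker\leq 2^{\rho}(3/2)^{k-1}$), one for the high-rank block $A'$ ($\BP(XA'=Y)\leq 2^{-rk+r(r+1)/2}$), and one forcing $z_d'=0$ from generic full rank. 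Your description attributes the error only to global constraints and genericity failures; in the paper the dominant error $(3/4)^k$ actually comes from the interplay of the medium-rank and high-rank bounds via those $h$'s, not just from the low-rank event $F$. Your primal approach is not wrong in principle, but it leaves the error accounting vague at precisely the place where the paper does real work (Theorem~\ref{redei matrix trans prob}).

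\medskip
\textbf{A genuine gap for \eqref{e:redei-matrix-sum-prob-alt}.} You write that \eqref{e:redei-matrix-transition-alt} plus ``$P^\Alt$ contracts on the weighted space'' gives \eqref{e:redei-matrix-sum-prob-alt}. This does not close. First, \eqref{e:redei-matrix-transition-alt} bounds $\|\varepsilon_k\|_1$, not $\|\varepsilon_k\|_\nu$ for $\nu(\mathbf m)=2^{\xi m}$; weighting by $2^{\xi m}$ against an $\ell^1$-small perturbation supported at arbitrarily large $m$ need not be small. Second, $P^\Alt$ is only $\nu$-driftable (contraction outside a finite set), not a global weighted contraction. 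The paper closes this with an independent uniform tail bound (Corollary~\ref{redei matrix upper bound}):
\[
\BP(X_k\geq m)<C\cdot 2^{-\alpha m^2}\quad\text{for all }k\geq 1,\ m\geq 0,
\]
proved by induction from the transition estimate. This supplies the hypothesis $\|x_k^{\nu>C}\|_\nu\leq C_0 C^{-\epsilon}$ of Theorem~\ref{p:MC-2}(ii), which together with the $\ell^1$-bound \eqref{e:redei-matrix-transition-alt} yields \eqref{e:redei-matrix-sum-prob-alt}. Without this a priori Gaussian-type tail, your iteration does not control the weighted norm, so you should add it (or an equivalent) to your outline.
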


\begin{proof}
The \eqref{e:redei-matrix-transition-alt} comes from Theorem
\ref{redei matrix trans prob},
the fact that the range of $\mathbf m=(m,m_1',\cdots,m_s')$ is
$0\leq m_j'\leq m\leq 2m_j'-t_j$
for $1\leq j\leq s$ (same as that in Proposition \ref{selmer lower bound}),
and the fact that there are at most $3\cdot 4^s$ choices of
$\mathbf m_\new$ when $\mathbf m$ is fixed
(vice versa).
Now Theorem \ref{p:MC}(ii) implies the \eqref{e:redei-matrix-prob-alt}.
As for \eqref{e:redei-matrix-sum-prob-alt}, we only need to prove $\xi\geq 1$ case, which
comes from Theorem \ref{p:MC-2}(ii),
taking $\nu:I\to\BR_{\geq 1}$ to be
$\mathbf m=(m,m_1',\cdots,m_s')\mapsto 2^{\xi\cdot m}$,
utilizing an upper bound of $\BP(X_k=m)$ in Corollary \ref{redei matrix upper bound}.
\end{proof}

The above result for type (A) is proved in \cite{SD08}
in terms of linear algebra and vector spaces
(i.e.~without choosing a basis and written as matrix form),
also proved in \cite{Smith} in the matrix form.
The advantage of matrix form instead of pure linear algebra
becomes clearer for types (B) and (C):
in these types it's easy to spot the low-rank submatrices $B_j''$
of $B$ (which cause difficulty in the analysis)
when written as matrix form.

Summing up the relevant terms in \eqref{e:redei-matrix-transition-alt},
the same argument allows us to deduce the following result,
which is related to the $t_\pi\neq-\infty$ case of Theorem \ref{main strict}.

\begin{cor}
\label{strict redei matrix main thm}
For $1\leq j\leq s$, the random variables
$Z_{j,k}:\Omega_k^{\Sigma,\mathbf s}\to\BZ_{\geq 0}$,
$\omega\mapsto\corank(B_j'(\omega))$ satisfy the similar results:
for all $k\geq 1$,
\begin{equation}
\label{e:redei-matrix-transition-mat}
\sum_{m'_\new}\left|\BP(Z_{j,k+1}=m'_\new)
-\sum_{m'}P_{m'\to m'_\new}^\Mat\cdot\BP(Z_{j,k}=m')\right|
<C\cdot\alpha^k,
\end{equation}
where $0<\alpha<1$ and $C>0$ are constants independent of
$m'$, $m'_\new$ and $k$,
namely, they
form an almost Markov chain, whose limit transition matrix
is $P_{m'\to m'_\new}^\Mat=P_{m'\to m'_\new,t_j}^\Mat$
for the model $M_{(k-t_j)\times k}(\BF_2)$
in Appendix.
In particular, they satisfy
\begin{equation}
\label{e:redei-matrix-prob-mat}
\lim_{k\to\infty}\BP(Z_{j,k}=m')
=P_{t_j}^\Mat(m'),
\end{equation}
and moreover, for any $\xi\in\BR$
there exists $C>0$ and $0<\alpha<1$ such that for all $k\geq 1$,
\begin{equation}
\label{e:redei-matrix-sum-prob-mat}
\sum_{m'\geq 0}2^{\xi\cdot m'}\cdot\left|\BP(Z_{j,k}=m')
-P_{t_j}^\Mat(m')\right|<C\cdot\alpha^k.
\end{equation}
\end{cor}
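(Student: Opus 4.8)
The plan is to obtain \eqref{e:redei-matrix-transition-mat} by projecting the joint almost‑Markov chain of Theorem \ref{redei matrix main thm} onto a single coordinate. Fix $j$ with $1\le j\le s$, and write a state of $Y_k$ as $\mathbf m=(m,m_1',\cdots,m_s')\in I$; then $Z_{j,k}$ is the $m_j'$‑component of $Y_k$, so $\BP(Z_{j,k}=m')=\sum_{\mathbf m:\,m_j'=m'}\BP(Y_k=\mathbf m)$. First I would establish the following \emph{lumpability} of the limit kernel $P^\Alt$ with respect to this component: for every state $\mathbf m$ and every $m_\new'\ge0$,
\begin{equation}
\label{e:lumpability-aux}
\sum_{\substack{\mathbf m_\new=(n,n_1,\cdots,n_s)\in I\\ n_j=m_\new'}}
P_{\mathbf m\to\mathbf m_\new}^\Alt
=P_{m_j'\to m_\new',\,t_j}^\Mat ,
\end{equation}
i.e.\ the indicated partial sum depends on $\mathbf m$ only through $m_j'$ and equals the transition probability of the random rectangular matrix model $M_{(k-t_j)\times k}(\BF_2)$, where $t_j$ is the parameter of $B$ introduced before Theorem \ref{redei matrix main thm}. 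Granting \eqref{e:lumpability-aux}, grouping the states $\mathbf m_\new$ of $Y_{k+1}$ according to their $m_j'$‑component and applying the triangle inequality turns the left‑hand side of \eqref{e:redei-matrix-transition-mat} into exactly the left‑hand side of \eqref{e:redei-matrix-transition-alt}; hence \eqref{e:redei-matrix-transition-mat} holds with the same constants $C$ and $\alpha$.

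For \eqref{e:lumpability-aux} there are two routes, and I would use whichever is more convenient. The first is to read it off from the explicit description, in Appendix \ref{s:model}, of the Markov chain attached to $M_{2k+t,\mathbf t}^\Alt(\BF_2)$: forgetting the other coordinates, the sequence $\corank(B_i')$ on its own evolves by precisely the transition law of $M_{(k-t_i)\times k}(\BF_2)$, which is the content of the assertion, recorded before Theorem \ref{main theorem refined}, that $M_{2k+t,\mathbf t}^\Alt(\BF_2)$ \emph{extends} $M_{(k-t_i)\times k}(\BF_2)$. The second, equivalent route is to re‑run the $2$‑descent and linear‑algebra computation behind Theorem \ref{redei matrix trans prob} with $B_j'$ in place of $B$: among the two $(k{-}1)\times(k{-}1)$ blocks $B_{1j},B_{2j}$ of $B_j'$ exactly one is a high‑rank R\'edei matrix and the other is the low‑rank block $B_{jj}$, so no further refinement is needed, and one finds directly that the transition probability of $\corank(B_j')$ agrees, up to an error $O(\alpha^k)$, with that of $M_{(k-t_j)\times k}(\BF_2)$. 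I expect this step — verifying that a single ``hole'' coordinate of the refined chain is by itself (almost) Markov with the rectangular‑model kernel — to be the only non‑formal point.

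Finally, \eqref{e:redei-matrix-transition-mat} feeds into the general Markov machinery exactly as in the proof of Theorem \ref{redei matrix main thm}. The chain $P_{m'\to m_\new',t_j}^\Mat$ of the model $M_{(k-t_j)\times k}(\BF_2)$ is irreducible with stationary distribution $P_{t_j}^\Mat(\cdot)$, so Theorem \ref{p:MC}(ii) yields \eqref{e:redei-matrix-prob-mat}. For \eqref{e:redei-matrix-sum-prob-mat} it suffices to treat $\xi\ge1$ (for $\xi\le1$ one bounds $2^{\xi m'}\le2^{m'}$ and uses the case $\xi=1$), and I would invoke Theorem \ref{p:MC-2}(ii) with the weight $\nu:\BZ_{\ge0}\to\BR_{\ge1}$, $m'\mapsto2^{\xi m'}$; the required tail control on $\BP(Z_{j,k}=m')$ follows from the linear‑algebra bound $\corank(B_j'(\omega))\le\corank(B(\omega))$ together with the upper bound on $\BP(\corank(B(\omega))=m)$ from Corollary \ref{redei matrix upper bound}. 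Once \eqref{e:lumpability-aux} is in place, everything else is the same bookkeeping already done for Theorem \ref{redei matrix main thm}.
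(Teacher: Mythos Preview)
Your proposal is correct and follows essentially the same approach as the paper, which simply says ``Summing up the relevant terms in \eqref{e:redei-matrix-transition-alt}, the same argument allows us to deduce the following result.'' Your lumpability identity \eqref{e:lumpability-aux} is exactly what is verified in the transition tables of Appendix~\S\ref{s:1-trivial} and~\S\ref{s:2-trivial} (the ``$*$'' rows and columns depend only on the relevant $m_j'$), and the rest is the same Markov machinery; one tiny imprecision is that the triangle inequality gives a bound on, not an equality with, the left-hand side of \eqref{e:redei-matrix-transition-alt}, but that is all you need.
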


To prove Theorem \ref{redei matrix main thm},
we use an isomorphism
$$
\Omega_k^{\Sigma,\mathbf s}\cong\BF_2^{k(k-1)/2}\times(\BF_2^{k-1})^{\#\Sigma},
\qquad
\omega\mapsto\big((a_{ij})_{1\leq i\leq j\leq k-1},(z_p')_{p\in\Sigma}\big).
$$
This induces a natural projection $\Omega_{k+1}^{\Sigma,\mathbf s}\twoheadrightarrow
\Omega_k^{\Sigma,\mathbf s}$.
Under this projection, the behavior of $A'$, $z_d'$ and $D_d'$ are better than $A$, $z_d$ and $D_d$.
Define $\Omega_\infty^{\Sigma,\mathbf s}:=\varprojlim\Omega_k^{\Sigma,\mathbf s}$
and view $\{Y_k\}$ as a sequence of random variables on $\Omega_\infty^{\Sigma,\mathbf s}$.
The computation of the transition probability
$\BP(Y_{k+1}=\mathbf m_\new\mid Y_k=\mathbf m)$ is very
similar to that in Theorem \ref{model main thm},
but more ingredients are needed:
\begin{itemize}
\item
Analysis involving low-rank R\'edei matrix.
\item
How to compute the probability (Lemma \ref{how to compute probability}),
which is also used in \cite{Smith}.
\end{itemize}

\subsection{Change of corank from $k$ to $k+1$}
\label{s:prob-Smith}

The projection $\Omega_{k+1}^{\Sigma,\mathbf s}\twoheadrightarrow
\Omega_k^{\Sigma,\mathbf s}$ has a splitting of probability spaces:
$$
\Omega_{k+1}^{\Sigma,\mathbf s}=\Omega_k^{\Sigma,\mathbf s}\times\Omega_{k,\Delta}^{\Sigma,\mathbf s}
\quad\text{where}\quad
\Omega_{k,\Delta}^{\Sigma,\mathbf s}:=
\left\{\big(u_1\in\BF_2^{k-1},a_{kk}\in\BF_2,
(z_k^{(p)}\in\BF_2)_{p\in\Sigma}\big)\right\}
=\BF_2^{k-1}\times\BF_2\times\BF_2^{\#\Sigma},
$$
here $u_1:=(a_{ik})_{1\leq i\leq k-1}\in\BF_2^{k-1}$.
Denote $u_2:=(a_{ki})_{1\leq i\leq k-1}=u_1+z_k^{(-1)}z_{-1}'\in\BF_2^{k-1}$.
If $\omega_\new$ is an element of $\Omega_{k+1}^{\Sigma,\mathbf s}$, we may write
$\omega_\new=(\omega,\omega_\Delta)$ according to the above decomposition,
and we say that ``$\omega_\new$ comes from $\omega$''.

Let $B=(B_{ij})$ be a
``high-rank alternating R\'edei matrix of level $2$''
in Definition \ref{p:high-rank-alternating-defn}.
Let $\omega$ and $\omega_\new$ be as above, with $\omega$ fixed.
By abuse of notation, write $B=B(\omega)$ and $B_\new=B(\omega_\new)$,
similarly for $B_{ij}$ and $B_{ij,\new}$.
Then
$$
B_{ii,\new}=\begin{pmatrix}
B_{ii} & v_{ii} \\
v_{ii}^\RT & 0
\end{pmatrix}~(i=1,2),
\quad
B_{12,\new}=\begin{pmatrix}
B_{12} & v_{12} \\
v_{21}^\RT & v_7
\end{pmatrix},
\quad
B_{i3,\new}=\begin{pmatrix}
B_{i3} \\
v_{3i}^\RT
\end{pmatrix}~(i=1,2),
$$
and
$$
B_\new\sim\left(\begin{array}{ccc|cc}
B_{11} & B_{12} & B_{13} & v_{11} & v_{12} \\
B_{21} & B_{22} & B_{23} & v_{21} & v_{22} \\
B_{31} & B_{32} & B_{33} & v_{31} & v_{32} \\
\hline
v_{11}^{\mathrm T} & v_{21}^{\mathrm T} & v_{31}^{\mathrm T} & 0 & v_7 \\
v_{12}^{\mathrm T} & v_{22}^{\mathrm T} & v_{32}^{\mathrm T} & v_7 & 0 \\
\end{array}\right),
$$
where $v_{11},v_{22}\in\BF_2^{k-1}$,
$v_{31},v_{32}\in\BF_2^t$
denend only on $(z_k^{(p)})_{p\in\Sigma}$;
$v_{12},v_{21}\in\BF_2^{k-1}$
depend only on $u_1$ and $(z_k^{(p)})_{p\in\Sigma}$;
and $v_7\in\BF_2$
depends only on $u_1$, $a_{kk}$ and $(z_k^{(p)})_{p\in\Sigma}$.
If the precise forms of $B_{ij}$ are given (for example in
Theorem \ref{p:2-descent-redei-mat-2}),
it's easy to write down the explicit formula of the map
$\big(u_1,a_{kk},(z_k^{(p)})_{p\in\Sigma}\big)\mapsto
(v_{11},v_{12},v_{21},v_{22},v_{31},v_{32},v_7)$.
But this is not necessary in our argument.
We only need the following.
\begin{itemize}
\item
The map
\begin{equation}
\label{e:W-omega}
\BF_2^{k-1}\times\BF_2^{\#\Sigma}\to(\BF_2^{k-1})^4\times(\BF_2^t)^2,
\qquad
\big(u_1,(z_k^{(p)})_{p\in\Sigma}\big)\mapsto
(v_{11},v_{12},v_{21},v_{22},v_{31},v_{32})
\end{equation}
is a linear map.
It maps $(u_1,0)$ to $(0,u_1,u_1,0,0,0)$.
\item
If $u_1$ and $(z_k^{(p)})_{p\in\Sigma}$ are fixed, then the map $\BF_2\to\BF_2$,
$a_{kk}\mapsto v_7$ is bijective.
\end{itemize}
This is the ``Markov chain nature'' of $B\to B_\new$
(compare with corresponding part in Appendix).

To compute the transition probability
$\BP(Y_{k+1}=\mathbf m_\new\mid Y_k=\mathbf m)$, an
important simplification comes from low-rank R\'edei matrix.
It turns out that the following event
(i.e.~subset of $\Omega_k^{\Sigma,\mathbf s}$) is important:
\begin{equation}
\label{e:lin-indep-event}
F:=\{\omega\in\Omega_k^{\Sigma,\mathbf s}\mid
(z_p'(\omega)\in\BF_2^{k-1})_{p\in\Sigma}\text{ are linearly independent}\}.
\end{equation}
For example, the event $F$ implies that
$(z_p(\omega)\in\BF_2^k)_{p\in\Sigma}$ are also linearly independent,
which we already seen as a condition of Corollary \ref{p:strict-Sel-eq}.
It's clear that
$$
\BP(F)=\prod_{i=1}^{\#\Sigma}(1-2^{i-k})
\geq 1-\sum_{i=1}^{\#\Sigma}2^{i-k}
=1-2\cdot(2^{\#\Sigma}-1)\cdot 2^{-k}=1-C\cdot 2^{-k}
$$
with constant $C=2\cdot(2^{\#\Sigma}-1)>0$
depending only on $\#\Sigma$.
That is to say, for almost all $\omega$, the
$(z_p'(\omega)\in\BF_2^{k-1})_{p\in\Sigma}$ are linearly independent.
The relation of the event $F$ and low-rank R\'edei matrices
is via Proposition \ref{p:low-rank-max-rank}.
Recall that we have defined $B_j'':=\left(\begin{smallmatrix}
B_{jj} & B_{j3} \\
B_{3j} & B_{33}
\end{smallmatrix}\right)$ for $1\leq j\leq s$,
which are of low-rank.
Let $\rho_j:=\rho(B_j'')$ be the maximal possible rank of $B_j''$.
We assumed that $(B_{31},B_{32})$ is generically full rank,
which means that its maximal possible rank is just $t$.
So by Proposition \ref{p:low-rank-max-rank}, the event $F$ implies that
$$
\rank(B_{31},B_{32})=t\qquad\text{and}\qquad
\rank(B_j'')=\rho_j\text{ for }1\leq j\leq s.
$$
Theorem \ref{redei matrix main thm}
is proved via proving the following result.

\begin{thm}
\label{redei matrix trans prob}
For any elements $\mathbf m=(m,m_1',\cdots,m_s')$
and $\mathbf m_\new=(m_\new,m_{1,\new}',\cdots,m_{s,\new}')$ of
$I:=\BZ_{\geq 0}^{s+1}$,
and any $k\geq 1$, the transition probability
$\BP(Y_{k+1}=\mathbf m_\new\mid Y_k=\mathbf m)$ is zero unless
$m_\new-m\in\{0,\pm 2\}$ and $m_{i,\new}'-m_i'\in\{-2,-1,0,1\}$ for all $i$;
for such $\mathbf m$ and $\mathbf m_\new$ we have
$$
\left|\BP(Y_{k+1}=\mathbf m_\new\mid Y_k=\mathbf m)
-P_{\mathbf m\to\mathbf m_\new}^\Alt\right|\cdot\BP(Y_k=\mathbf m)
<C\cdot\big(\alpha^{m+k}+\BP(Y_k=\mathbf m\text{ and not }F)\big),
$$
where $0<\alpha<1$ and $C>0$ are constants independent of
$\mathbf m$, $\mathbf m_\new$ and $k$.
Note that $P_{\mathbf m\to\mathbf m_\new}^\Alt=0$
if $m_{i,\new}'-m_i'\notin\{0,\pm 1\}$ for some $i$.
\end{thm}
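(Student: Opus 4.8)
The plan is to reduce the computation of the transition probability $\BP(Y_{k+1}=\mathbf m_\new\mid Y_k=\mathbf m)$ to a linear-algebra count over the splitting fiber $\Omega_{k,\Delta}^{\Sigma,\mathbf s}=\BF_2^{k-1}\times\BF_2\times\BF_2^{\#\Sigma}$, and then show this count agrees with the model transition $P_{\mathbf m\to\mathbf m_\new}^\Alt$ up to the stated error. First I would fix $\omega\in\Omega_k^{\Sigma,\mathbf s}$ with $Y_k(\omega)=\mathbf m$, and, thanks to the bound $\BP(F)\geq 1-C\cdot 2^{-k}$ already established, restrict attention to $\omega\in F$; the complementary event contributes the term $\BP(Y_k=\mathbf m\text{ and not }F)$ in the error. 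On $F$ we know $\rank(B_{31},B_{32})=t$ and $\rank(B_j'')=\rho_j$ are at their generic values, so the low-rank blocks $B_j''$ behave ``stably'' and the only genuine randomness in passing from $B$ to $B_\new$ is carried by the new bordering vectors $(v_{11},v_{12},v_{21},v_{22},v_{31},v_{32})$, which by \eqref{e:W-omega} depend linearly on $(u_1,(z_k^{(p)})_{p\in\Sigma})$, together with the bit $v_7$ which, for fixed $u_1$ and $(z_k^{(p)})$, is a bijective function of $a_{kk}$.

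The core of the argument is then the computation of $\corank(B_\new)$ and $\corank(B_{j,\new}')$ in terms of how the random border vectors sit relative to the kernels of $B$ and of the relevant submatrices. Here I would invoke Lemma \ref{how to compute probability} (the probability-counting lemma also used in \cite{Smith}): writing $B_\new$ in the bordered form displayed just before the theorem, the change in corank is governed by whether the projections of the new columns into $\ker(B)^\perp$-complements are independent, and by the value of the scalar $v_7$ relative to a pairing term. Because $B_{12},B_{21}$ are high-rank R\'edei matrices while $B_{11},B_{22},B_{11}+B_{12}+B_{21}+B_{22}$ are medium- or low-rank, the $2\times 2$ block structure forces $m_\new-m\in\{0,\pm2\}$ exactly as in the type (A) analysis of \cite{SD08}, and the refined coordinates $\corank(B_j')$ can each move only in $\{-2,-1,0,1\}$ since $B_j'$ is a ``half-width'' matrix with a single high-rank block; this gives the vanishing statement. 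For the non-vanishing cases, I would set up the linear map \eqref{e:W-omega} explicitly enough to see that the distribution of the tuple of relevant inner products, conditioned on $Y_k=\mathbf m$ and on $F$, is within $O(\alpha^{m+k})$ (in total variation, summed against the finitely many target states) of the idealized distribution in which all the needed vectors are uniform and independent — the latter being, by construction, exactly the distribution producing $P_{\mathbf m\to\mathbf m_\new}^\Alt$ in Appendix \ref{s:model}, Theorem \ref{model main thm}. The $\alpha^{m+k}$ comes from two sources: $\alpha^k$ because a generic vector in $\BF_2^{k-1}$ lies outside a subspace of bounded codimension with probability $1-O(2^{-k})$, and $\alpha^m$ because the event $Y_k=\mathbf m$ already constrains the relevant kernels to have dimension $\sim m$, so that hitting or missing those kernels happens with probability differing from the ideal by $O(2^{-m})$.

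The main obstacle I anticipate is the bookkeeping around the low-rank blocks $B_j''$ and the off-diagonal interaction between the ``$2$-Selmer corank'' coordinate $m$ and the ``$\phi$-Selmer corank'' coordinates $m_j'$: one must verify that conditioning on all $s+1$ coordinates of $Y_k$ simultaneously still leaves the new border vectors sufficiently generic, i.e.\ that the joint event does not secretly pin down a large piece of $(v_{11},\dots,v_{32})$. This is where the hypothesis that $(B_{13};B_{23})$ is generically full rank and that $B_{12},B_{21}$ are high-rank is used decisively: it guarantees that on $F$ the map \eqref{e:W-omega} is surjective onto a complement of the constrained directions up to bounded codimension, so the conditional distribution of the border data is uniform on a coset up to the $\alpha^{m+k}$ error. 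Once that uniformity is in hand, matching with $P_{\mathbf m\to\mathbf m_\new}^\Alt$ is a direct comparison with the parallel computation in the matrix model, since the model is defined by precisely the same bordering procedure with genuinely uniform blocks subject to the same ``holes'' $B_{jj}=0$ for $j\le s$.
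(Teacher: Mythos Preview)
Your setup through the bordering picture, the use of the event $F$, and the invocation of Lemma \ref{how to compute probability} is correct and matches the paper. But the error-term mechanism you sketch is not the one that works, and your heuristic for $\alpha^{m+k}$ would not yield a proof. The claim that ``on $F$ the map \eqref{e:W-omega} is surjective onto a complement of the constrained directions up to bounded codimension'' is false as stated: the domain of \eqref{e:W-omega} has dimension $k-1+\#\Sigma$ while the codomain has dimension $4(k-1)+2t$, so $W_\omega$ is a subspace of dimension roughly $k$ inside an ambient space of dimension roughly $4k$, and there is no approximate surjectivity to exploit. Likewise, conditioning on $Y_k=\mathbf m$ gives no direct control on how $W_\omega$ sits relative to $V_\omega$; the border distribution is already exactly uniform on the fiber, and the issue is the interaction with $B(\omega)$, which varies over $\Omega$.

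What the paper does is apply Lemma \ref{how to compute probability} literally, turning the probability into a double sum $\sum_h\sum_\omega$ over $h\in\Hom(\widetilde V,\BF_2)$ and $\omega\in\Omega$ with $h|_{V_\omega+W_\omega}=0$, and then partitions by the shape of $h=(h_{11},h_{21},h_{31},h_{12},h_{22},h_{32})$. The constraint $h|_{W_\omega}=0$ forces $h_{12}=h_{21}$; the ``main term'' consists of those $h$ with $(h_{12},h_{22},h_{32})=0$ (type B) or $h_{12}=0$ (type C), and these reproduce $P_{\mathbf m\to\mathbf m_\new}^\Alt$. The real work, on which your proposal is silent, is bounding the remaining $h$'s: for such $h$ the condition $h|_{V_\omega}=0$ unpacks into constraints of three kinds, handled by three separate lemmas --- $\#\ker(D_d'+L)$ averaged over $(z_p')$ is $\leq C(3/2)^k$ when $d\neq 1$ (Lemma \ref{error term ingredient 1}); $\BP(XA'=Y)\leq 2^{-rk+r(r+1)/2}$ when $\rank X=r$ (Lemma \ref{error term ingredient 2}); and $h_{31}(B_{31},B_{32})=0$ with $h_{31}\neq 0$ forces some $z_d'=0$ by the generically-full-rank hypothesis (Lemma \ref{error term ingredient 3}). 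The count is carried out over the \emph{full} space $\Omega_k^{\Sigma,\mathbf s}$, not over $\Omega$, giving an error of shape $\frac{\#\Omega_k^{\Sigma,\mathbf s}}{\#\Omega}\cdot 2^{-2m}\cdot(3/4)^k$; multiplying by $\BP(Y_k=\mathbf m)$ cancels the ratio. The $(3/4)^k$ is driven by the medium-rank block $D_{d_2}'$ with $d_2\neq 1$ --- a hypothesis you never invoke, and without which the argument collapses.
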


\begin{cor}
\label{redei matrix upper bound}
For any $k\geq 1$, $m\geq 0$, we have
$\BP(X_k\geq m)<C\cdot 2^{-\alpha m^2}$,
where $C>0$ and $\alpha>0$ are constants independent of $m$ and $k$.
\end{cor}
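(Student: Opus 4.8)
The plan is to bound $\corank(B(\omega))$, up to an additive constant, by twice the corank of a genuinely high-rank R\'edei block, and then to feed in the sub-Gaussian tail estimate for the corank of such a block. I stress at the outset that Theorem \ref{redei matrix trans prob} does not by itself give this: it controls the law of $X_k$ only up to a per-step error of size $\alpha^{m+k}$, which, accumulated over the $\Theta(m)$ steps needed for the corank to climb to level $m$, cannot compete with $2^{-\alpha m^2}$ once $m$ is comparable to $k$; so the estimate has to be read off from the matrix $B$ directly.

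Suppose first that $B$ is of type (B) or (C), with the blocks ordered so that $B_{11}$ (and, in type (C), also $B_{22}$) is a low-rank R\'edei matrix, and recall the submatrices $B_1'$ and $B_1''$ of $B$. The proof of Proposition \ref{selmer lower bound} is purely linear-algebraic and goes through verbatim with the abstract $B$ in place of $\widetilde B(E)$, giving $\corank(B(\omega))\le 2\,\corank(B_1'(\omega))-t_1$ for every $\omega$. Since $B_{21}=B_{12}^\RT$ is a high-rank R\'edei matrix of size $(k-1)\times(k-1)$, its occurrence inside $B_1'$ (the sub-block in rows from block $2$ and columns from block $1$) has rank $(k-1)-\corank(B_{12}(\omega))$, so $\corank(B_1'(\omega))\le t+\corank(B_{12}(\omega))$; moreover $\rho_1:=t-t_1=\max_{k,\omega}\rank(B_1''(\omega))$ is bounded by a constant depending only on $B$, because $B_1''$ is assembled from the low-rank R\'edei matrix $B_{11}$ together with the $t$-column blocks $B_{13},B_{31},B_{33}$. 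Hence
\[
\corank(B(\omega))\ \le\ (t+\rho_1)+2\,\corank(B_{12}(\omega)).
\]

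It then suffices to bound, uniformly in $k$, the probability $\BP\bigl(\corank(B_{12}(\omega))\ge i\bigr)$ for the high-rank R\'edei matrix $B_{12}$. Conditioning on $(z_p'(\omega))_{p\in\Sigma}$ exhibits $B_{12}$, up to a deterministic correction of bounded rank, as a uniformly random symmetric matrix over $\BF_2$, and the standard first-moment estimate — summing, over the $\binom{k-1}{i}_{\BF_2}$ candidate $i$-dimensional kernels, the conditional probability $\lesssim 2^{-i(k-1)+\binom{i}{2}}$ that a given one is annihilated — yields $\BP\bigl(\corank(B_{12}(\omega))\ge i\bigr)\le C_0\cdot 2^{-c_0 i^2}$ for absolute constants $C_0,c_0>0$ and all $k,i$; this is exactly the kind of estimate established in \cite{Smith}, whose argument applies to high-rank R\'edei matrices. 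Plugging into the previous display, for $m\ge 2(t+\rho_1)$ one gets $\BP(X_k\ge m)\le\BP(\corank(B_{12})\ge m/4)\le C_0\cdot 2^{-c_0 m^2/16}$, while for $m<2(t+\rho_1)$ the bound is trivial after enlarging the constant, so $\alpha=c_0/16$ works. For $B$ of type (A) — where all diagonal blocks are of medium rank — the analogous estimate $\BP(\corank(B(\omega))\ge m)\le C\cdot 2^{-cm^2}$ is classical (\cite{SD08}, \cite{Smith}) and follows from the same first-moment argument applied directly to the alternating matrix $B$, the randomness being furnished by the high-rank blocks $B_{12},B_{21}$.

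The one step that is not mere bookkeeping is the sub-Gaussian corank tail for the high-rank block $B_{12}$ — equivalently, for a high-rank alternating R\'edei matrix; I expect that to be the crux, and it is the same estimate that disposes of type (A) as well.
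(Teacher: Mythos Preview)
Your approach is correct, and since the paper states the Corollary without proof, there is nothing to compare against except the ambient toolkit of \S\ref{s:markov}. Your reduction to a sub-Gaussian tail for the high-rank block $B_{12}$ is exactly the right idea, and that tail follows immediately from the paper's own Lemma~\ref{error term ingredient 2}: for a fixed $i$-dimensional subspace $V$ with basis matrix $\tilde V$, the event $B_{12}\tilde V=0$ becomes $\tilde V^{\RT}A'=Y$ for some $Y$ depending only on $(z_p')_{p\in\Sigma}$, whence $\BP(V\subset\ker B_{12})\le 2^{-ik+i(i+1)/2}$; union-bounding over the $\binom{k-1}{i}_2\le C\cdot 2^{i(k-1-i)}$ subspaces gives $\BP(\corank B_{12}\ge i)\le C\cdot 2^{-i^2/2}$.

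Two remarks. First, your detour through $B_1'$ and Proposition~\ref{selmer lower bound} is unnecessary and forces the type-(A) case to be handled separately. A uniform and shorter route: since $B_{12}$ and $B_{21}=B_{12}^{\RT}$ occupy disjoint row-blocks and disjoint column-blocks of $B$, one has $\rank(B)\ge\rank\left(\begin{smallmatrix}0&B_{12}\\B_{21}&0\end{smallmatrix}\right)=2\rank(B_{12})$, hence $\corank(B)\le t+2\corank(B_{12})$ for \emph{all} types (A), (B), (C). This replaces your steps 1--2 and dispenses with $\rho_1$ and the separate type-(A) discussion.

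Second, a phrasing issue: the deterministic correction $B_{12}-A'=B_{12,M}+B_{12,L}$ is \emph{not} of bounded rank (the diagonal piece $D_d'$ can have rank $\sim k$), and $A'$ is uniform not over symmetric matrices but over a coset of them (since $A'+(A')^{\RT}$ is a fixed alternating matrix). Neither point affects your argument, because Lemma~\ref{error term ingredient 2} absorbs any fixed shift into its right-hand side $Y$; but the sentence ``up to a deterministic correction of bounded rank, as a uniformly random symmetric matrix'' should be amended.

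Your opening caveat --- that the Corollary does not follow purely from the \emph{statement} of Theorem~\ref{redei matrix trans prob} --- is well taken: the error term $\BP(Y_k=\mathbf m\text{ and not }F)$ there has no a priori decay in $m$, so one must go back to the matrix.
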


To see why the event $F$ \eqref{e:lin-indep-event} makes simplifications,
note that a priori $\rank(B'_{i,\new})-\rank(B_i')\in\{0,1,2,3\}$,
but under the event $F$, for type (B) and (C),
we must have $(v_{11};v_{31})\in\operatorname{span}(B_1'')$,
in particular, $\rank(B'_{1,\new})-\rank(B_1')\in\{0,1,2\}$;
and for type (C), $(v_{22};v_{32})\in\operatorname{span}(B_2'')$,
in particular, $\rank(B'_{2,\new})-\rank(B_2')\in\{0,1,2\}$.
Therefore, to compute transition probabilities up to suitable error term,
we only need to compute
$\BP(Y_{k+1}\in J\times\prod_{i=1}^sJ_i'\mid Y_k=\mathbf m)$ for all $J,J_1',\cdots,J_s'$,
where $J=\BZ,\{m+2\}$ or $\{m-2\}$,
and for $i=1,\cdots,s$, $J_i'=\BZ,\{m_i'+1\}$ or $\{m_i'-1\}$;
we may also assume the event $F$ happens,
which makes the linear algebra criteria simpler.

Let's look at an example
$\BP\big(\corank(B_\new)=m+2\mid Y_k=\mathbf m\big)$,
corresponding to $J=\{m+2\}$ and $J_i'=\BZ$ for all $i$.
Let $\Omega=\{\omega\in\Omega_k^{\Sigma,\mathbf s}\mid
Y_k(\omega)=\mathbf m\}$.
For each $\omega_\new=(\omega,\omega_\Delta)$ with $\omega\in\Omega$,
let $w_i=\left(\begin{smallmatrix}
v_{1i} \\ v_{2i} \\ v_{3i}
\end{smallmatrix}\right)$ for $i=1,2$.
Then
$$
\BP\big(\corank(B_\new)=m+2\mid Y_k=\mathbf m\big)
=\frac{1}{2}\BP\big(w_1\in\operatorname{span}(B)\text{ and }
w_2\in\operatorname{span}(B)\mid Y_k=\mathbf m\big).
$$
Suppose $\omega\in\Omega$ is fixed.
Let $W=W_\omega$ consists of all possible $w=\left(\begin{smallmatrix}
w_1 \\ w_2
\end{smallmatrix}\right)$ as $\omega_\Delta$ varies,
namely, the image of the map \eqref{e:W-omega}.
Let $V=V_\omega$ be $\operatorname{span}(B)
\oplus\operatorname{span}(B)$.
Let $\widetilde V=\BF_2^{2(2(k-1)+t)}$ which contains $V_\omega$ and $W_\omega$
for all $\omega$.
Then
$$
\BP\big(w_1\in\operatorname{span}(B)\text{ and }
w_2\in\operatorname{span}(B)\mid Y_k=\mathbf m\big)
=\BP\big(w\in V_\omega\mid\omega\in\Omega,w\in W_\omega\big).
$$
In fact, the computation of other transition
probabilities also involves the computation of
the probabilities of forms $\BP\big(w\in V_\omega\mid\omega\in\Omega,w\in W_\omega\big)$.
As another example,
\begin{align*}
&\BP\big(\corank(B_\new)=m-2\mid Y_k=\mathbf m\big) \\
&=\BP\big(w_1\notin\operatorname{span}(B)\text{ and }
w_2\notin\operatorname{span}(B)\text{ and }
w_1+w_2\notin\operatorname{span}(B)\mid Y_k=\mathbf m\big) \\
&=1-\BP\big(w_1\in\operatorname{span}(B)\mid Y_k=\mathbf m\big)
-\BP\big(w_2\in\operatorname{span}(B)\mid Y_k=\mathbf m\big)
-\BP\big(w_1+w_2\in\operatorname{span}(B)\mid Y_k=\mathbf m\big) \\
&\qquad{}+2\cdot\BP\big(w_1\in\operatorname{span}(B)\text{ and }
w_2\in\operatorname{span}(B)\mid Y_k=\mathbf m\big).
\end{align*}
Error terms of the form $O(1)\cdot\BP(\text{not }F\mid
Y_k=\mathbf m)$ may be introduced if we utilize the event $F$. For example, in type (B),
under the event $F$, the $(v_{11};v_{31})\in\operatorname{span}((B_1')^\RT)$ is always true,
hence
\begin{align*}
&\BP\big(\corank(B_{1,\new}')=m_1'+1\mid Y_k=\mathbf m\big) \\
&=\frac{1}{2}\BP\big((v_{11};v_{31})\in\operatorname{span}((B_1')^\RT)\text{ and }w_1\in\operatorname{span}(B_1')\text{ and }(v_{12};v_{32})\in\operatorname{span}((B_1')^\RT)
\mid Y_k=\mathbf m\big) \\
&=\frac{1}{2}\BP\big(w_1\in\operatorname{span}(B_1')\text{ and }(v_{12};v_{32})\in\operatorname{span}((B_1')^\RT)
\mid Y_k=\mathbf m\big)+O(1)\cdot\BP(\text{not }F\mid
Y_k=\mathbf m),
\end{align*}
where $O(1)$ is bounded independent of $k$ and $\mathbf m$.

To compute $\BP\big(w\in V_\omega\mid\omega\in\Omega,w\in W_\omega\big)$,
we have the following simple linear algebra lemma.

\begin{lem}
\label{how to compute probability}
Let $\widetilde V$ be a fixed $\BF_2$-vector space of finite dimensional.
Let $(\Omega,\BP)$ be a discrete probability space,
such that each element $\omega$ of $\Omega$ produces
subspaces $V_\omega$ and $W_\omega$
of $\widetilde V$.
Then
$$
\BP\big(w\in V_\omega\mid\omega\in\Omega,w\in W_\omega\big)
=\sum_{h\in\Hom(\widetilde V,\BF_2)}
\sum_{\substack{\omega\in\Omega\\
h|_{V_\omega}=0\\
h|_{W_\omega}=0}}
\frac{\BP(\omega)}{\#(\widetilde V/V_\omega)}.
$$
\end{lem}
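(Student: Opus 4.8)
The plan is to compute the conditional probability directly from its definition and to interchange the order of summation, the "trick" being to detect membership $w \in V_\omega$ using linear functionals that vanish on $V_\omega$. First I would unwind the conditional probability: by definition,
$$
\BP\big(w\in V_\omega\mid\omega\in\Omega,w\in W_\omega\big)
=\frac{\sum_{\omega\in\Omega}\BP(\omega)\cdot\#\{w\in W_\omega : w\in V_\omega\}/\#W_\omega}
{\sum_{\omega\in\Omega}\BP(\omega)\cdot\#\{w\in W_\omega\}/\#W_\omega},
$$
where the normalization by $\#W_\omega$ reflects that $w$ is chosen uniformly from $W_\omega$ once $\omega$ is given. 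The denominator is just $\sum_{\omega}\BP(\omega)=1$, so the left side equals $\sum_{\omega\in\Omega}\BP(\omega)\cdot\#(W_\omega\cap V_\omega)/\#W_\omega$. (If the intended reading is instead that the pair $(\omega,w)$ is drawn so that $\omega$ has weight $\BP(\omega)$ and $w$ is uniform over all of $\widetilde V$, or over some common ambient set, the same manipulation goes through after the obvious bookkeeping; I would state explicitly which convention is in force, matching the usage in \S\ref{s:prob-Smith}.)

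Next I would rewrite the indicator of $\{w\in V_\omega\}$ using duality over $\BF_2$: for a subspace $V_\omega\subseteq\widetilde V$ and a vector $w\in\widetilde V$,
$$
\mathbf 1[w\in V_\omega]=\frac{1}{\#(\widetilde V/V_\omega)}\sum_{\substack{h\in\Hom(\widetilde V,\BF_2)\\ h|_{V_\omega}=0}}(-1)^{h(w)},
$$
since the set of $h$ vanishing on $V_\omega$ is exactly $(\widetilde V/V_\omega)^\vee$, of size $\#(\widetilde V/V_\omega)$, and the character sum over that dual space detects whether $w$ lies in $V_\omega$. Substituting this into $\#(W_\omega\cap V_\omega)=\sum_{w\in W_\omega}\mathbf 1[w\in V_\omega]$ and then summing $(-1)^{h(w)}$ over $w\in W_\omega$ gives $\#W_\omega$ if $h|_{W_\omega}=0$ and $0$ otherwise (again by character orthogonality on the group $W_\omega$). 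Hence
$$
\frac{\#(W_\omega\cap V_\omega)}{\#W_\omega}
=\sum_{\substack{h\in\Hom(\widetilde V,\BF_2)\\ h|_{V_\omega}=0,\ h|_{W_\omega}=0}}\frac{1}{\#(\widetilde V/V_\omega)}.
$$

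Finally I would plug this back into $\sum_{\omega\in\Omega}\BP(\omega)\cdot\#(W_\omega\cap V_\omega)/\#W_\omega$ and interchange the two finite sums (legitimate since $\widetilde V$ is finite-dimensional, so $\Hom(\widetilde V,\BF_2)$ is finite and everything is absolutely convergent): this yields exactly
$$
\sum_{h\in\Hom(\widetilde V,\BF_2)}\ \sum_{\substack{\omega\in\Omega\\ h|_{V_\omega}=0,\ h|_{W_\omega}=0}}\frac{\BP(\omega)}{\#(\widetilde V/V_\omega)},
$$
as claimed. There is no real obstacle here; the only point requiring a moment's care is pinning down the sampling convention for $w$ (whether uniform on $W_\omega$, which is what makes the $\#(\widetilde V/V_\omega)$ rather than $\#(\widetilde V/W_\omega)$ appear) so that the displayed identity comes out verbatim in the form stated, and verifying that the two orthogonality computations — one over $(\widetilde V/V_\omega)^\vee$, one over $W_\omega$ — are applied to the correct groups. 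The rest is a routine finite Fourier/inclusion argument over $\BF_2$.
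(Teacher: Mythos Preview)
Your proof is correct and follows essentially the same approach as the paper: both reduce the conditional probability to $\sum_{\omega}\BP(\omega)\cdot\#(V_\omega\cap W_\omega)/\#W_\omega$ and then identify this ratio with the count of $h\in\Hom(\widetilde V,\BF_2)$ annihilating $V_\omega+W_\omega$, divided by $\#(\widetilde V/V_\omega)$. The only cosmetic difference is that the paper obtains this identity by direct cardinality arithmetic ($\#(V_\omega\cap W_\omega)/\#W_\omega=\#V_\omega/\#(V_\omega+W_\omega)=\#\Hom(\widetilde V/(V_\omega+W_\omega),\BF_2)/\#(\widetilde V/V_\omega)$) rather than via two character-orthogonality steps, but the content is the same.
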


\begin{proof}
Denote by $P$ the left hand side.
Clearly
$$
P=\sum_{\omega\in\Omega}\BP(\omega)\cdot\BP\big(w\in V_\omega\mid w\in W_\omega\big)
=\sum_{\omega\in\Omega}\BP(\omega)\cdot\frac{\#(V_\omega\cap W_\omega)}
{\#W_\omega}.
$$
On the other hand,
for $\omega\in\Omega$,
we have
$$
\frac{\#(V_\omega\cap W_\omega)}
{\#W_\omega}
=\frac{\#V_\omega}
{\#(V_\omega+W_\omega)}
=\frac{\#(\widetilde V/(V_\omega+W_\omega))}
{\#(\widetilde V/V_\omega)}
=\frac{\#\Hom(\widetilde V/(V_\omega+W_\omega),\BF_2)}
{\#(\widetilde V/V_\omega)}.
$$
This yields the desired result.
\end{proof}

This is the key formula which we will use later.
The conditions $h|_{V_\omega}=0$
and $h|_{W_\omega}=0$ may be rewritten as $h$ lies in the left
kernel of a matrix whose columns generate $V_\omega+W_\omega$.
The key point is that,
when we take $\Omega=\{\omega\in\Omega_k^{\Sigma,\mathbf s}\mid
Y_k(\omega)=\mathbf m\}$,
we may decompose $\Hom(\widetilde V,\BF_2)$
into disjoint subsets, such that one of them produces
the ``main term'' of $P$, which has a simple form
independent of $k$, and the remaining
part of them produce the ``error terms'' of $P$, which is
$O(\alpha^k)\cdot\frac{\#\Omega_k^{\Sigma,\mathbf s}}{\#\Omega}$ as $k\to\infty$ for some $0<\alpha<1$.
To give an upper bound of ``error terms'',
we may replace $W_\omega$ by its suitable subspace,
which makes the computation simpler.

As an example, let's back to
$$
\BP\big(\corank(B_\new)=m+2\mid Y_k=\mathbf m\big)
=\frac{1}{2}\sum_{h\in\Hom(\widetilde V,\BF_2)}
\sum_{\substack{\omega\in\Omega\\
h|_{V_\omega}=0\\
h|_{W_\omega}=0}}
\frac{\BP(\omega)}{\#(\widetilde V/V_\omega)}
$$
with $\#(\widetilde V/V_\omega)=2^{2m}$.
Since elements of $\widetilde V$ is written as column vectors
$w=(v_{11};v_{21};v_{31};v_{12};v_{22};v_{32})$,
we may write $h\in\Hom(\widetilde V,\BF_2)$
as a row vector $h=(h_{11},h_{21},h_{31},h_{12},h_{22},h_{32})$,
and the $h(w)$ is just the product of a row vector with a column vector.
Recall the property of the map \eqref{e:W-omega},
which implies that
$\{(0;u_1;0;u_1;0;0)\mid u_1\in\BF_2^{k-1}\}$ is a subspace of $W_\omega$
for all $\omega$, so a necessary condition for $h|_{W_\omega}=0$
is that $h_{12}=h_{21}$.

Consider type (B) as an example,
which is $d_1=1$ and $d_2\neq 1$.
It turns out that
the $\{(h_{12},h_{22},h_{32})=0\}$ gives the main term,
in this case the $h|_{V_\omega+W_\omega}=0$ is equivalent to
$$
(h_{11},h_{31})((B_1')^\RT,B_1''')=0,
$$
where $B_1'''$ is a matrix whose columns
are $(v_{11};v_{31})$ as $(z_k^{(p)})_{p\in\Sigma}$
runs over a basis of $\BF_2^{\#\Sigma}$,
similar to the map \eqref{e:W-omega}.
Note that $(B_1'',B_1''')$ is a submatrix of
$B_1''(\widetilde\omega)$ for certain
$\widetilde\omega\in\Omega_{k+\#\Sigma}^{\Sigma,\mathbf s}$,
so under the event $F$ \eqref{e:lin-indep-event},
we have $\rank(B_1'',B_1''')=\rho_1$,
and similarly,
since $B_1''$ is a submatrix of $B_1'$,
we have $\rank((B_1')^\RT,B_1''')=\rank(B_1')$.
On the other hand, if the event $F$ does not hold, then
$\rank((B_1')^\RT,B_1''')\geq\rank(B_1')$.
Therefore the main term for type (B) is
\begin{equation}
\label{e:type-B-main-term}
\begin{aligned}
&\frac{1}{2}\sum_{\substack{h\in\Hom(\widetilde V,\BF_2)\\
(h_{12},h_{22},h_{32})=0}}
\sum_{\substack{\omega\in\Omega\\
h|_{V_\omega}=0\\
h|_{W_\omega}=0}}
\frac{\BP(\omega)}{\#(\widetilde V/V_\omega)}
=\frac{1}{2}\sum_{\omega\in\Omega}
\frac{\BP(\omega)}{\#(\widetilde V/V_\omega)}\cdot
2^{k-1+t-\rank((B_1')^\RT,B_1''')} \\
&=2^{-1-2m}\left(
\BP(F\mid Y_k=\mathbf m)\cdot 2^{m_1'}
+\BP(\text{not }F\mid Y_k=\mathbf m)\cdot O(1)\cdot 2^{m_1'}
\right) \\
&=2^{m_1'-1-2m}+O(1)\cdot\frac{\#\Omega_k^{\Sigma,\mathbf s}}{\#\Omega}
\cdot 2^{m_1'-2m}\cdot 2^{-k},
\end{aligned}
\end{equation}
where $O(1)$ is bounded independent of $k$ and $\mathbf m$.

The $\{(h_{12},h_{22},h_{32})\neq 0\}$ gives the error term; for the estimation of upper bound,
we replace $W_\omega$ by its subspace $\{(0;u_1;0;u_1;0;0)\mid u_1\in\BF_2^{k-1}\}$,
and we are going to prove that
\begin{equation}
\label{e:type-B-error-term}
\frac{1}{2}\sum_{\substack{h\in\Hom(\widetilde V,\BF_2)\\
(h_{12},h_{22},h_{32})\neq 0}}
\sum_{\substack{\omega\in\Omega\\
h|_{V_\omega}=0\\
h|_{W_\omega}=0}}
\frac{\BP(\omega)}{\#(\widetilde V/V_\omega)}
\leq
C\cdot\frac{\#\Omega_k^{\Sigma,\mathbf s}}{\#\Omega}\cdot 2^{-2m}\cdot(3/4)^k
\end{equation}
for some constant $C>0$ independent of $\mathbf m$ and $k$.

To prove \eqref{e:type-B-error-term},
recall that we must have $h_{12}=h_{21}$, and in this case $h|_{V_\omega}=0$
is equivalent to that
$$
\begin{pmatrix}
h_{11} & h_{12} & h_{31} \\
h_{12} & h_{22} & h_{32} \\
\end{pmatrix}
B=0.
$$
The cases are divided according to the dimension of
$\operatorname{span}\{h_{11},h_{12},h_{22}\}$.

For example, consider the case $\dim(\operatorname{span}\{h_{11},h_{12},h_{22}\})=1$ with
$h_{22}\neq 0$, $h_{11}=h_{12}=0$.
Then the above condition becomes
$$
\begin{pmatrix}
0 & 0 & h_{31} \\
0 & h_{22} & h_{32} \\
\end{pmatrix}
B=0,
$$
which implies that
\begin{align}
\label{e:restriction 1}
h_{22}B_{22}+h_{32}B_{32}&=0
\quad\Leftrightarrow\quad
h_{22}(D_{d_2}'+B_{22,L})=h_{32}B_{32}, \\
\label{e:restriction 2}
h_{22}B_{21}+h_{32}B_{31}&=0
\quad\Leftrightarrow\quad
h_{22}A'=h_{22}(B_{21,M}+B_{21,L})+h_{32}B_{31}, \\
\label{e:restriction 3}
h_{31}(B_{31},B_{32})&=0.
\end{align}
These three equations are three typical types of restrictions.

\begin{lem}
\label{error term ingredient 1}
Let $1\neq d\in\BQ(\Sigma,2)$, $B\in\RRM'_{(k-1)\times(k-1)}$
be a low-rank R\'edei matrix.
Let $(z_p')_{p\in\Sigma}$ varies.
Then the average order of $\ker(D_d'+B)$ is
$$
\frac{1}{\#\{(z_p')_{p\in\Sigma}\}}\sum_{(z_p')_{p\in\Sigma}}
\#\ker(D_d'+B)\leq 2^{\rho(B)}(3/2)^{k-1}.
$$
\end{lem}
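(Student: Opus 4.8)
The plan is to bound $\#\ker(D_d'+B)$ \emph{pointwise} in $(z_p')_{p\in\Sigma}$ by a quantity depending only on the Hamming weight of $z_d'$, and then to average that simpler quantity. First I would invoke the elementary inequality $\rank(X+Y)\ge\rank(X)-\rank(Y)$ over $\BF_2$ (which follows from $\rank(X)=\rank((X+Y)+Y)\le\rank(X+Y)+\rank(Y)$), together with the facts that $D_d'=\diag(z_d')$ has rank exactly $w:=\#\supp(z_d')$ and that $\rank(B)\le\rho(B)$ by the very definition of $\rho(B)$ as the maximal rank of $B$ (recall $B$, being a low-rank R\'edei matrix, is a linear combination of the $z_c'(z_d')^\RT$ and so depends only on $(z_p')_{p\in\Sigma}$). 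This gives $\rank(D_d'+B)\ge w-\rho(B)$, hence by rank--nullity for a $(k-1)\times(k-1)$ matrix,
$$
\#\ker(D_d'+B)=2^{(k-1)-\rank(D_d'+B)}\le 2^{\rho(B)}\cdot 2^{(k-1)-w}.
$$

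Next I would determine the distribution of $w$ as $(z_p')_{p\in\Sigma}$ ranges uniformly over $(\BF_2^{k-1})^{\#\Sigma}$. Writing $d=\prod_{p\in\Sigma}p^{\epsilon_p}$ with $\epsilon_p\in\{0,1\}$ not all zero --- this is where the hypothesis $d\ne 1$ in $\BQ(\Sigma,2)$ enters --- we have $z_d'=\sum_{p:\,\epsilon_p=1}z_p'$, so the map $(z_p')_{p\in\Sigma}\mapsto z_d'$ is a \emph{surjective} $\BF_2$-linear map onto $\BF_2^{k-1}$; consequently $z_d'$ is uniformly distributed in $\BF_2^{k-1}$ and its weight $w$ is $\mathrm{Binomial}(k-1,\tfrac12)$. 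A one-line computation then gives $\BE\bigl[2^{-w}\bigr]=\bigl(\tfrac12+\tfrac12\cdot\tfrac12\bigr)^{k-1}=(3/4)^{k-1}$. Combining with the pointwise bound,
$$
\frac{1}{\#\{(z_p')_{p\in\Sigma}\}}\sum_{(z_p')_{p\in\Sigma}}\#\ker(D_d'+B)\le 2^{\rho(B)}\cdot 2^{k-1}\cdot\BE\bigl[2^{-w}\bigr]=2^{\rho(B)}\cdot 2^{k-1}\cdot(3/4)^{k-1}=2^{\rho(B)}(3/2)^{k-1},
$$
which is the claimed bound.

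I do not expect a serious obstacle here: the argument is a two-line linear-algebra estimate followed by an elementary moment computation. The only points requiring a little care are that the pointwise estimate must use the sharp constant $2^{\rho(B)}$ (rather than the cruder $2^{\#\Sigma}$), so that the rank invariant $\rho(B)$ of the low-rank R\'edei matrix appears exactly as in the statement, and that the hypothesis $d\ne 1$ is genuinely used --- if $d=1$ then $z_d'=0$, $D_d'=0$, $w\equiv 0$, and the estimate collapses. Everything else is routine bookkeeping.
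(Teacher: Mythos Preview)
Your proof is correct and follows essentially the same approach as the paper: a pointwise rank bound $\rank(D_d'+B)\ge w-\rho(B)$ combined with the observation that, since $d\neq 1$, the vector $z_d'$ is uniformly distributed in $\BF_2^{k-1}$, so averaging $2^{(k-1)-w}$ yields $(3/2)^{k-1}$. The paper phrases the averaging as an explicit count of $(z_p')$ with a given weight $m$ of $z_d'$, while you phrase it probabilistically as a binomial moment, but the argument is the same.
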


\begin{proof}
We have $\#\{(z_p')_{p\in\Sigma}\}=2^{(k-1)\cdot\#\Sigma}$,
and since $d\neq 1$, for each $0\leq m\leq k-1$,
there are $\binom{k-1}{m}2^{(k-1)\cdot(\#\Sigma-1)}$ elements
in $\{(z_p')_{p\in\Sigma}\}$ such that $z_d'$ has exactly $m$
non-zero entries.
For such elements, $\rank(D_d')=m$ and
$\rank(D_d'+B)\geq\rank(D_d')-\rank(B)\geq m-\rho(B)$.
The desired inequality follows easily.
\end{proof}

\begin{lem}
\label{error term ingredient 2}
Let $m\geq 0$ be an integer, $X,Y\in M_{m\times(k-1)}(\BF_2)$ be two fixed matrices, with
$\rank(X)=r$.
Let $\omega\in\Omega_k^{\Sigma,\mathbf s}$ with $(z_p')_{p\in\Sigma}$ fixed
and $(a_{ij})_{1\leq i\leq j\leq k-1}$ varies.
Then
$$
\BP(XA'=Y)\leq 2^{-rk+r(r+1)/2}.
$$
\end{lem}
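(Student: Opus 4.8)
The plan is to reduce to a statement about uniformly random \emph{symmetric} matrices over $\BF_2$ and then to carry out a fibre count. The key observation is that, once $(z_p')_{p\in\Sigma}$ is frozen, the matrix $A'(\omega)$ is a fixed translate of a uniformly random symmetric $(k-1)\times(k-1)$ matrix. Indeed, recall the parametrization $\Omega_k^{\Sigma,\mathbf s}\cong\BF_2^{k(k-1)/2}\times(\BF_2^{k-1})^{\#\Sigma}$, $\omega\mapsto\big((a_{ij})_{1\le i\le j\le k-1},(z_p')_{p\in\Sigma}\big)$. Holding $(z_p')_{p\in\Sigma}$ fixed and letting the bits $(a_{ij})_{1\le i\le j\le k-1}$ range uniformly, the entry $A'_{ij}$ of $A'=A'(\omega)$ equals $a_{ij}$ for $i\le j$ and equals $a_{ji}+z_i^{(-1)}z_j^{(-1)}$ for $i>j$, the latter being the quadratic-reciprocity relation built into $\Omega_k^{\Sigma,*}$. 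Hence $A'=M+N$, where $M$ is the symmetric $(k-1)\times(k-1)$ matrix whose entries on and above the diagonal are precisely the independent uniform bits $(a_{ij})_{1\le i\le j\le k-1}$ --- so $M$ is uniform over $\mathrm{Sym}_{k-1}(\BF_2)$ --- and $N$ is the strictly-lower-triangular part of $z_{-1}'(z_{-1}')^{\RT}$, a fixed matrix. Thus $XA'=Y$ is equivalent to $XM=Y+XN=:Y'$ with $Y'$ fixed, and it suffices to show $\BP(XM=Y')\le 2^{-rk+r(r+1)/2}$ for $M$ uniform over $\mathrm{Sym}_n(\BF_2)$ with $n:=k-1$ and $Y'$ arbitrary.

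For this, observe that $M\mapsto XM$ is $\BF_2$-linear on $\mathrm{Sym}_n(\BF_2)$, so the fibre over $Y'$ is empty or a coset of $K:=\{M\in\mathrm{Sym}_n(\BF_2):XM=0\}$, whence $\BP(XM=Y')\le \#K/\#\mathrm{Sym}_n(\BF_2)$. Now $XM=0$ says exactly that every column of $M$ lies in $\ker X$; using $M=M^{\RT}$, this is the condition that $(\ker X)^{\perp}$ lie in the radical of the symmetric form attached to $M$, and such forms are precisely the symmetric bilinear forms on $\BF_2^{n}/(\ker X)^{\perp}$, a space of dimension $\dim\ker X=n-r$. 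Hence $\#K=2^{(n-r)(n-r+1)/2}$, so $\BP(XM=Y')\le 2^{(n-r)(n-r+1)/2-n(n+1)/2}=2^{-nr+r(r-1)/2}$, and putting $n=k-1$ turns the exponent into $-rk+r(r+1)/2$.

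I do not anticipate a real obstacle; the one place that needs care is the first step --- extracting from the definition of $\Omega_k^{\Sigma,\mathbf s}$ that, with the $z_p'$ frozen, $A'$ is a fixed translate of a \emph{uniformly random symmetric} matrix (rather than a uniformly random arbitrary or alternating matrix), the point being that quadratic reciprocity symmetrizes the off-diagonal entries while the diagonal bits $a_{ii}$ stay free. The remainder is the routine fibre count for a linear map on the space of symmetric matrices, and the arithmetic matches the stated bound exactly --- with equality whenever $Y'$ satisfies the relevant symmetry-consistency conditions --- so the bound is in fact sharp.
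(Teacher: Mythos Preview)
Your proof is correct and takes a genuinely different route from the paper's.

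The paper argues directly at the level of the free coordinates $(a_{ij})_{1\le i\le j\le k-1}$: after row operations on $X$ and $Y$ (the paper says ``column operations'', but the intended normalisation is an echelon form on the rows of $X$) one singles out pivot positions $\ell_1,\dots,\ell_r$, freezes all $a_{ij}$ with $\{i,j\}\cap\{\ell_1,\dots,\ell_r\}=\varnothing$, and then solves the equations $XA'=Y$ explicitly for the remaining $rk-r(r+1)/2$ bits, showing there is at most one solution.

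You instead make the structural observation that, with $(z_p')_{p\in\Sigma}$ frozen, $A'$ is a fixed translate of a uniformly random \emph{symmetric} matrix $M\in\mathrm{Sym}_{k-1}(\BF_2)$, and then compute the fibres of the linear map $M\mapsto XM$ by identifying its kernel with the symmetric forms on $\BF_2^{k-1}/(\ker X)^{\perp}$. This is cleaner: it is basis-free, avoids all the index bookkeeping, and makes transparent why the exponent is exactly the difference of two triangular numbers. The paper's approach has the minor advantage of being self-contained at the level of coordinates (no appeal to the radical/quotient description of symmetric forms), but at the cost of the explicit solving step, which is somewhat delicate because the parameters are $a_{ij}$ for $i\le j$ only and one must silently absorb the reciprocity offsets $z_i^{(-1)}z_j^{(-1)}$ when $\ell_i>j$.

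The one point worth flagging is the first step: your claim that the diagonal bits $a_{ii}$ (for $1\le i\le k-1$) are free is correct precisely because the paper's chosen isomorphism $\Omega_k^{\Sigma,\mathbf s}\cong\BF_2^{k(k-1)/2}\times(\BF_2^{k-1})^{\#\Sigma}$ uses $(a_{ij})_{1\le i\le j\le k-1}$ as coordinates --- the row-sum constraint $\sum_j a_{ij}=0$ is absorbed into determining $a_{ik}$ rather than $a_{ii}$. You identified this correctly.
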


\begin{proof}
Write $X=(x_{ij})$ and $Y=(y_{ij})$.
For each $i$ let $S_i$ be the set of $j$ such that $x_{ij}\neq 0$.
By column operations on $X$ and $Y$,
we may assume that there exists $\ell_1,\cdots,\ell_r$
such that for each $i,j\in\{1,\cdots,r\}$,
$\ell_i\in S_j$ if and only if $i=j$.
Let $a_{ij}$ ($1\leq i\leq j\leq k-1$)
be fixed except for those with $i\in\{\ell_1,\cdots,\ell_r\}$
or $j\in\{\ell_1,\cdots,\ell_r\}$;
there are $rk-r(r+1)/2$ of them.
They can be solved in at most one way as follows:
\begin{itemize}
\item
$a_{\ell_ij}=y_{ij}+\sum_{k\in S_i\setminus\{\ell_1,\cdots,\ell_r\}}a_{kj}$ for
$i\in\{1,\cdots,r\}$ and
$j\in\{1,\cdots,k-1\}\setminus\{\ell_1,\cdots,\ell_r\}$,
\item
$a_{\ell_i\ell_j}=y_{i\ell_j}+\sum_{k\in S_i\setminus\{\ell_1,\cdots,\ell_r\}}a_{k\ell_j}$ for
$i,j\in\{1,\cdots,r\}$.
\end{itemize}
Now the result is clear.
\end{proof}

\begin{lem}
\label{error term ingredient 3}
Let $B\in\RRM'_{t\times u(k-1)}$ be of generically full rank,
$0\neq x\in\BF_2^t$ be a row vector.
Then there exists $1\neq d\in\BQ(\Sigma,2)$ depends only on $B$ and $x$,
such that for any $\omega=(z_p')_{p\in\Sigma}$
satisfying $x\cdot B(\omega)=0$, we have $z_d'(\omega)=0$.
\end{lem}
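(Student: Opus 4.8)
The plan is to unwind the block structure of $B$ and to exploit that, in the numerator, the Legendre symbol is multiplicative. First I would recall that a R\'edei matrix of size $t\times u(k-1)$ built from the vectors $z_d'$ is, concretely, a $t\times u$ array of blocks whose $(i,j)$-block is the row vector $(z_{d_{ij}}')^{\mathrm T}$ for certain $d_{ij}\in\BQ(\Sigma,2)$ that depend only on $B$ (this is how such a $B$ arises, e.g. as $(B_{31},B_{32})$ in Theorem \ref{p:2-descent-redei-mat}). The basic algebraic input is that, for $\omega=(z_p')_{p\in\Sigma}$ and $d=\prod_{p\in\Sigma}p^{\epsilon_p}\in\BQ(\Sigma,2)$, one has $z_d'(\omega)=\sum_{p\in\Sigma}\epsilon_p\,z_p'(\omega)$ because the additive Legendre symbol $\lrdd{d}{\ell}$ depends additively on $d$; in particular $d\mapsto z_d'$ is $\BF_2$-linear and $z_1'=0$ identically.

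Next I would compute $x\cdot B(\omega)$ one block at a time. Its $j$-th block ($1\le j\le u$) is $\sum_{i=1}^{t}x_i\,(z_{d_{ij}}'(\omega))^{\mathrm T}=(z_{d_j}'(\omega))^{\mathrm T}$, where $d_j:=\prod_{i:\,x_i=1}d_{ij}\in\BQ(\Sigma,2)$ depends only on $B$ and $x$. Hence $x\cdot B(\omega)=0$ holds if and only if $z_{d_j}'(\omega)=0$ for every $j=1,\dots,u$.

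Finally I would argue that not all the $d_j$ can equal $1$: if they did, then $z_{d_j}'=z_1'=0$ identically, so $x\cdot B(\omega)=0$ for every $k$ and every $\omega$, and since $x\neq 0$ this would force $\rank B(\omega)\le t-1$ for all $\omega$, contradicting that $B$ is generically of full rank $t$. So $d_{j_0}\neq 1$ for some $j_0$, and I would set $d:=d_{j_0}$: it is $\neq 1$, it depends only on $B$ and $x$, and whenever $x\cdot B(\omega)=0$ we get $z_d'(\omega)=z_{d_{j_0}}'(\omega)=0$, as required. I do not expect any genuine obstacle here: the whole content is the linearity of $d\mapsto z_d'$ together with the vanishing $z_1'=0$ and the generic full-rank hypothesis; the only thing to be careful about is the bookkeeping that identifies $B$ with the array $(d_{ij})$ and reads off $d_j$ correctly.
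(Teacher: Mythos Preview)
Your proof is correct and follows essentially the same line as the paper's: both write $B=((z_{d_{ij}}')^{\mathrm T})$, use the linearity $d\mapsto z_d'$ to see that the $j$-th block of $x\cdot B(\omega)$ is $(z_{d_j}'(\omega))^{\mathrm T}$ with $d_j=\prod_{i:x_i=1}d_{ij}$, and then extract some $d_j\neq 1$. The only cosmetic difference is that the paper phrases ``generically full rank'' directly as linear independence of the rows $(d_{i1},\dots,d_{iu})$ in $\BQ(\Sigma,2)^{\oplus u}$ and concludes $x\cdot(d_{ij})\neq 0$, whereas you argue the contrapositive (if all $d_j=1$ then $x\cdot B(\omega)\equiv 0$, forcing $\rank B(\omega)\le t-1$ always); these are the same observation.
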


\begin{proof}
Write $B=((z_{d_{ij}}')^\RT)_{1\leq i\leq t,1\leq j\leq u}$
with $d_{ij}\in\BQ(\Sigma,2)$ for $1\leq i\leq t,1\leq j\leq u$.
The $B$ is of generically full rank means that
the $t$ elements $(d_{11},\cdots,d_{1u}),\cdots,(d_{t1},\cdots,d_{tu})$
in $\BQ(\Sigma,2)^{\oplus u}$ are linearly independent.
Hence the element $x\cdot(d_{ij})$ in $\BQ(\Sigma,2)^{\oplus u}$
is non-trivial, say its $j$-th component is $d\neq 1$.
Now the result is clear since the $j$-th component of
$x\cdot B(\omega)$ is $z_d'(\omega)$.
\end{proof}

To estimate the upper bound,
\begin{itemize}
\item
the first step is to estimate the number
of possible $(h,(z_p')_{p\in\Sigma})$;
\item
the second step is that for each $(h,(z_p')_{p\in\Sigma})$, estimate the number
of possible $(a_{ij})$.
\end{itemize}

For the first step,
\begin{itemize}
\item
if $h_{31}=0$, apply Lemma \ref{error term ingredient 1}
on \eqref{e:restriction 1}
(note that type (B) implies $d_2\neq 1$), obtain that
there is a constant $C>0$ such that
the number of possible $(h,(z_p')_{p\in\Sigma})$ is
$\leq C\cdot 2^{(k-1)\cdot\#\Sigma}(3/2)^k$;
\item
if $h_{31}\neq 0$, apply Lemma \ref{error term ingredient 3}
on \eqref{e:restriction 3}, obtain that
there is a constant $C>0$ such that
the number of possible $(h,(z_p')_{p\in\Sigma})$ is
$\leq C\cdot 2^k\cdot 2^{(k-1)\cdot(\#\Sigma-1)}$.
\end{itemize}
The sum of these two is
$\leq C\cdot 2^{(k-1)\cdot\#\Sigma}(3/2)^k$.

For the second step, apply Lemma \ref{error term ingredient 2}
on \eqref{e:restriction 2}, obtain that
there is a constant $C>0$ such that
the number of possible $(a_{ij})$ is
$\leq C\cdot 2^{k(k-1)/2}\cdot 2^{-k}$.
Therefore
\begin{align*}
\frac{1}{2}\sum_{\substack{h\in\Hom(\widetilde V,\BF_2)\\
h_{22}\neq 0,h_{11}=h_{12}=0}}
\sum_{\substack{\omega\in\Omega\\
h|_{V_\omega}=0\\
h|_{W_\omega}=0}}
\frac{\BP(\omega)}{\#(\widetilde V/V_\omega)}
&\leq
C\cdot\frac{2^{-2m}}{\#\Omega}
\cdot(2^{(k-1)\cdot\#\Sigma}(3/2)^k)\cdot(2^{k(k-1)/2}\cdot 2^{-k}) \\
&=
C\cdot\frac{\#\Omega_k^{\Sigma,\mathbf s}}{\#\Omega}\cdot 2^{-2m}(3/4)^k.
\end{align*}
The other cases of $h$ are similar, also
using the above three types of restrictions.
Combine them we obtain the error term estimate \eqref{e:type-B-error-term}.

Now we know that the transition probability
$\BP\big(\corank(B_\new)=m+2\mid Y_k=\mathbf m\big)$ for type (B)
is the sum of \eqref{e:type-B-main-term} and \eqref{e:type-B-error-term},
on the other hand, from Appendix we know that for type (B),
$P_{(m,m_1')\to(m+2,*)}^\Alt=2^{m_1'-1-2m}$,
hence from $m\leq 2m_1'-t_1$
it's easy to obtain the inequality in Theorem \ref{redei matrix trans prob}.
The same transition probability for types (A)
and (C) are similar.
For type (A), the $\{h=0\}$ gives the main term, which is
$2^{-1-2m}=P_{m\to m+2}^\Alt$;
for type (C), the $\{h_{12}=0\}$ gives the main term, which is
$2^{m_1'+m_2'-1-2m}+O(1)\cdot\frac{\#\Omega_k^{\Sigma,\mathbf s}}{\#\Omega}
\cdot 2^{m_1'+m_2'-2m}\cdot 2^{-k}$
(note that $P_{(m,m_1',m_2')\to(m+2,*)}^\Alt=2^{m_1'+m_2'-1-2m}$ from Appendix).
The error term is the same as type (B), namely the right hand side of
\eqref{e:type-B-error-term}.

To prove Theorem \ref{redei matrix trans prob},
there are $3^{s+1}$ transition probabilities
need to be computed (there will be less when considering symmetry
and some obvious linear algebra restrictions),
including $\BP\big(\corank(B_\new)=m+2\mid Y_k=\mathbf m\big)$ we have just
computed.
They can be computed by the same method,
which are straightforward but lengthy.
We omit the details here.

\subsection{Low-rank R\'edei matrix}
\label{s:low-rank}

Recall that we have already defined
a low-rank R\'edei matrix of size $(k-1)\times(k-1)$.
For general sizes, we have the similar definition.

\begin{defn}
An unrestricted R\'edei matrix $B\in\RRM'_{(a(k-1)+b)\times(c(k-1)+d)}$
is called a low-rank R\'edei matrix, if its entries are
linear combinations of $0$, $1$, $z_d'$,
$(z_d')^{\mathrm T}$ and $z_c'(z_d')^{\mathrm T}$,
$c,d\in\BQ(\Sigma,2)$.
\end{defn}

It's clear that if $B$ is of low-rank,
then the maximal possible rank $\rho$ of $B$
$$
\rho=\rho(B):=\max_{k\geq 1,\omega\in\Omega_k^{\Sigma,\mathbf s}}\rank(B(\omega))
$$
is a finite number.

A main result for a low-rank R\'edei matrix is that the probability
of its rank equal to its maximal possible rank tends to $1$ as $k\to\infty$.

\begin{prop}
\label{p:low-rank-max-rank}
Let $B$ be a low-rank R\'edei matrix.
Let $\omega\in\Omega_k^{\Sigma,\mathbf s}$ be such that
$(z_p'(\omega)\in\BF_2^{k-1})_{p\in\Sigma}$ are linearly independent.
Then $\rank(B(\omega))=\rho$.
In particular, for all $k$,
\begin{align*}
\BP\big(\rank(B(\omega))=\rho\mid\omega\in\Omega_k^{\Sigma,\mathbf s}\big)
&\geq\BP\big((z_p'(\omega)\in\BF_2^{k-1})_{p\in\Sigma}\text{ are linearly independent}
\mid\omega\in\Omega_k^{\Sigma,\mathbf s}\big) \\
&=\prod_{i=1}^{\#\Sigma}(1-2^{i-k})
\geq 1-\sum_{i=1}^{\#\Sigma}2^{i-k}
=1-2\cdot(2^{\#\Sigma}-1)\cdot 2^{-k}.
\end{align*}
\end{prop}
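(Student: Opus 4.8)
The plan is to reduce the statement to a single algebraic fact: if $(z_p'(\omega))_{p\in\Sigma}$ are linearly independent in $\BF_2^{k-1}$, then the ``evaluation at $\omega$'' map realizes the generic rank of $B$. First I would set up notation: since $B$ is a low-rank R\'edei matrix, each entry is an $\BF_2$-linear combination of $1$, the coordinates of the vectors $z_d'$, $(z_d')^\RT$, and the rank-one blocks $z_c'(z_d')^\RT$, where $c,d$ range over the finite group $\BQ(\Sigma,2)$. The crucial observation is that the vectors $\{z_d' : d\in\BQ(\Sigma,2)\}$ all lie in the span of $\{z_p' : p\in\Sigma\}$, because $\BQ(\Sigma,2)$ is generated by the primes in $\Sigma$ together with $-1$, and additivity of the Legendre symbol gives $z_{cd}'=z_c'+z_d'$. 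So the ``data'' defining $B(\omega)$ is completely determined by the tuple $(z_p'(\omega))_{p\in\Sigma}\in(\BF_2^{k-1})^{\#\Sigma}$, and $B(\omega)$ depends on this tuple through a fixed linear recipe independent of $k$.

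Next I would make this functorial. Let $N=\#\Sigma$ and let $e_1,\dots,e_N$ be the standard basis of $\BF_2^N$. There is a ``universal'' low-rank R\'edei matrix: for any $\BF_2$-vector space $L$ with a chosen ordered tuple $\mathbf v=(v_1,\dots,v_N)\in L^N$, the recipe produces a matrix $B[\mathbf v]$ with entries in $\BF_2$ obtained by substituting $z_p'\mapsto v_p$ (coordinatewise) and $z_c'(z_d')^\RT\mapsto (\text{the appropriate combination of }v_p)(\ldots)^\RT$; concretely $B[\mathbf v]$ is block-assembled from the $v_p$, their transposes, rank-one products $v_cv_d^\RT$, and constant $0/1$ blocks. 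The point is: if $L'\to L$ is an injective linear map sending $\mathbf v'$ to $\mathbf v$, then $B[\mathbf v]$ is obtained from $B[\mathbf v']$ by post- and pre-composing with injective block-diagonal maps built from $L'\hookrightarrow L$, hence $\rank B[\mathbf v]=\rank B[\mathbf v']$. Now take $\omega\in\Omega_k^{\Sigma,\mathbf s}$ with $(z_p'(\omega))_{p\in\Sigma}$ linearly independent: then the assignment $e_p\mapsto z_p'(\omega)$ is an injection $\BF_2^N\hookrightarrow\BF_2^{k-1}$, so $\rank B(\omega)=\rank B[(e_p)_p]$, a quantity that does not depend on $\omega$ or on $k$ (as long as such a linearly independent $\omega$ exists, which it does once $k-1\ge N$). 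On the other hand, for any $\omega'\in\Omega_{k'}^{\Sigma,\mathbf s}$ the tuple $(z_p'(\omega'))_p$ is the image of $(e_p)_p$ under the (not necessarily injective) map $e_p\mapsto z_p'(\omega')$, and since any linear map factors as a surjection followed by an injection one gets $\rank B(\omega')\le\rank B[(e_p)_p]$. Therefore $\rho(B)=\rank B[(e_p)_p]=\rank B(\omega)$, which is the first assertion.

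Finally, the inequality is immediate: the event $\{\rank B(\omega)=\rho\}$ contains the event that $(z_p'(\omega))_{p\in\Sigma}$ are linearly independent, and on $\Omega_k^{\Sigma,\mathbf s}$ the coordinates $(z_p')_{p\in\Sigma}$ are uniformly distributed and independent in $\BF_2^{k-1}$ (this is exactly the splitting of the probability space recorded just before the proposition — the constraints defining $\Omega_k^{\Sigma,\mathbf s}$ only involve the $a_{ij}$ and the column sums $\sum_i z_i^{(p)}$, which do not constrain the truncated vectors $z_p'$), so the probability that $N$ uniform random vectors in $\BF_2^{k-1}$ are independent is $\prod_{i=1}^{N}(1-2^{i-1-(k-1)})=\prod_{i=1}^{\#\Sigma}(1-2^{i-k})\ge 1-\sum_{i=1}^{\#\Sigma}2^{i-k}=1-2(2^{\#\Sigma}-1)2^{-k}$. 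The main obstacle I anticipate is purely bookkeeping: writing the block-substitution $B[\mathbf v]$ and verifying that an injection on the $v$-data induces pre/post-composition by injections on $B[\mathbf v]$ (so that rank is preserved), since the blocks of $B$ mix vectors, transposes, and rank-one products in several shapes; but this is routine once the universal formulation is in place, and no genuinely new idea is needed.
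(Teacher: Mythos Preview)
Your argument is correct, and it takes a genuinely different route from the paper's proof. The paper proceeds by explicit Gaussian elimination on the block structure: it first augments $B$ with extra $Z_\Sigma'$-columns and an identity block so that the $z_c'(z_d')^\RT$ blocks can be killed by column operations, then brings the remaining $Q$ and $R$ blocks to ``block triangular'' form over $\BQ(\Sigma,2)$, and reads off $\rho=t_1+t_2$ directly from the shape. Your approach instead packages everything into a single functoriality statement: writing $\Phi$ for the matrix of the linear map $e_p\mapsto z_p'(\omega)$, one has the identity $B(\omega)=\Phi_{\mathrm{row}}\,B[(e_p)_p]\,\Phi_{\mathrm{col}}^{\mathrm T}$ with $\Phi_{\mathrm{row}}=\diag(\Phi,\dots,\Phi,I_b)$ and $\Phi_{\mathrm{col}}=\diag(\Phi,\dots,\Phi,I_d)$, so that rank is preserved when $\Phi$ is injective and never increases in general. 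This is cleaner and avoids all the bookkeeping of the reduction; in particular your ``main obstacle'' dissolves once you write down that single block-diagonal identity, since every block type ($S$, $z_d'$, $(z_d')^\RT$, $z_c'(z_d')^\RT$) visibly transforms the right way. What the paper's approach buys in exchange is an explicit description of $\rho$ in terms of the triangularized data, which the paper remarks is useful for effective computation of the parameters $t_j$; your version gives $\rho=\rank B[(e_1,\dots,e_{\#\Sigma})]$, which is of course also computable but less structurally explicit.
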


\begin{proof}
We may write $B=\left(\begin{smallmatrix}
P & Q \\ R & S
\end{smallmatrix}\right)$
with $P=(P_{ij})_{1\leq i\leq s_1,1\leq j\leq s_2}$
where each $P_{ij}\in\RRM'_{(k-1)\times(k-1)}$
is a fixed linear combination of $z_c'(z_d')^{\mathrm T}$,
$Q=(z_{q_{ij}}')_{1\leq i\leq s_1,1\leq j\leq t_2}$
and $R=((z_{r_{ij}}')^{\mathrm T})_{1\leq i\leq t_1,1\leq j\leq s_2}$
where each $q_{ij}$ and $r_{ij}$ are fixed elements of $\BQ(\Sigma,2)$,
and $S=(s_{ij})_{1\leq i\leq t_1,1\leq j\leq t_2}\in M_{t_1\times t_2}(\BF_2)$
is a fixed matrix.
We may impose the following assumptions.

\begin{itemize}
\item
We may assume that $P=(P_{ij})=0$.
In fact, we can add some new rows and columns to $B$
and obtain the following matrix
$$
\widetilde B=\begin{pmatrix}
P & Q & \begin{pmatrix}
Z_\Sigma' \\
& \ddots \\
& & Z_\Sigma'
\end{pmatrix} \\
R & S \\
& & I
\end{pmatrix},
$$
here $Z_\Sigma':=(z_p')_{p\in\Sigma}\in\RRM'_{(k-1)\times\#\Sigma}$,
and $I\in M_{(s_1\cdot\#\Sigma)\times(s_1\cdot\#\Sigma)}(\BF_2)$ is the identity matrix.
It satisfies
$\rank(\widetilde B)=\rank(B)+s_1\cdot\#\Sigma$.
Utilizing newly added columns, by column operation it's easy to see that
we can eliminate all the $P_{ij}$'s in $\widetilde B$.
\item
By row and column operations, we may assume that
$Q$ and $R$ are of ``full rank'' in the sense that
$(q_{ij})_{1\leq i\leq s_1}\in\BQ(\Sigma,2)^{\oplus s_1}$, $j=1,\cdots,t_2$
are linearly independent, as well as
$(r_{ij})_{1\leq j\leq s_2}\in\BQ(\Sigma,2)^{\oplus s_2}$, $i=1,\cdots,t_1$
are linearly independent.
\end{itemize}

By column operation
we may assume that the matrix $(q_{ij})_{1\leq i\leq s_1,1\leq j\leq t_2}$
is ``block lower triangular'' in the sense that we have a
decomposition $[t_2]=\bigsqcup_{\ell=1}^{s_1}J_\ell$,
such that the submatrix $(q_{ij})_{j\in J_\ell}$ is zero for $i<\ell$,
and the elements in the submatrix $(q_{ij})_{j\in J_i}$ are linearly independent
in $\BQ(\Sigma,2)$.
Similarly, by row operation we may assume that the matrix
$(r_{ij})_{1\leq i\leq t_1,1\leq j\leq s_2}$
is ``block upper triangular'' in the sense that we have a
decomposition $[t_1]=\bigsqcup_{\ell=1}^{s_2}I_\ell$,
such that the submatrix $(r_{ij})_{i\in I_\ell}$ is zero for $\ell>j$,
and the elements in the submatrix $(r_{ij})_{i\in I_j}$ are linearly independent
in $\BQ(\Sigma,2)$.

Now it's clear that
$$
\sum_{i=1}^{s_1}\rank\big((z_{q_{ij}}')_{j\in J_i}\big)
+\sum_{j=1}^{s_2}\rank\big(((z_{r_{ij}}')^{\mathrm T})_{i\in I_j}\big)
\leq\rank(B)\leq t_1+t_2.
$$
On the other hand, if
$(z_p'(\omega)\in\BF_2^{k-1})_{p\in\Sigma}$ are linearly independent,
then
$\rank\big((z_{q_{ij}}')_{j\in J_i}\big)=\#J_i$ for all $i=1,\cdots,s_1$,
and $\rank\big(((z_{r_{ij}}')^{\mathrm T})_{i\in I_j}\big)=\#I_j$ for all $j=1,\cdots,s_2$,
hence the left hand side of the above inequality is also equal to
$t_1+t_2$.
This means that the maximal possible rank of $B$
is $\rho=\rho(B)=t_1+t_2$, and the desired result holds.
\end{proof}

From the proof one can see that the $\rho$
can be effectively computed, either by Gaussian elimination,
or by choosing any element $\omega\in\Omega_k^{\Sigma,\mathbf s}$ such that
$(z_p'(\omega)\in\BF_2^{k-1})_{p\in\Sigma}$ are linearly independent,
and compute the rank of $B(\omega)$.

\subsection{R\'edei matrix whose corank is expected to be zero}
\label{s:high-rank-non-square}

In this section we study $j>s$ cases, namely, the cases
that the $B_j'$ is not of low-rank.
In this case the $B_j'$, as a submatrix of $B$ of form in
Definition \ref{p:high-rank-alternating-defn},
produced by Swinnerton-Dyer's argument \cite{SD08}
in \S\ref{s:2-Selmer},
is not so easy to analyze, since it contains non-zero low-rank parts
which are not so easy to control, as in Theorem \ref{p:2-descent-redei-mat-2}.
In this case we use R\'edei matrices
in Theorem \ref{p:2-descent-redei-mat-0} to describe the
essential $\pi$-strict Selmer groups
instead.

\begin{prop}
\label{p:high-rank-non-square}
Let
$$
B=(P,Q)=\begin{pmatrix}
A'+D_{d_1}' & Q_1 \\
D_{d_2}' & Q_2 \\
\vdots & \vdots \\
D_{d_a}' & Q_a
\end{pmatrix}\in\RRM'_{a(k-1)\times((k-1)+d)}
$$
with $a\geq 2$,
such that at least one of
$d_1,\cdots,d_a\in\BQ(\Sigma,2)$ is $\neq 1$,
and $Q=(Q_1;\cdots;Q_a)\in\RRM'_{a(k-1)\times d}$ is of generically full rank,
namely, there exists $k\geq 1$ and $\omega\in\Omega_k^{\Sigma,\mathbf s}$
such that $\rank(Q(\omega))=d$.
Then there exists a constant $C>0$ depending only on $\#\Sigma$ and $d$, such that
for any $m\geq 1$ and $k\geq 1$,
the random variable $X_k:\Omega_k^{\Sigma,\mathbf s}\to\BZ_{\geq 0}$,
$\omega\mapsto\corank(B(\omega))$ satisfies
$$
P_{\geq m}^{(k)}:=\BP(X_k\geq m)\leq
C\cdot k\cdot 2^{-m}(3/4)^k.
$$
In particular, $P_m^{(\infty)}:=\lim_{k\to\infty}\BP(X_k=m)$
is $1$ if $m=0$, is $0$ otherwise.
\end{prop}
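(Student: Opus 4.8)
The plan is to bound the first moment $\sum_{0\neq x}\BP(B(\omega)x=0)$ and then read off the tail estimate by Markov's inequality. First I would observe that $\corank(B(\omega))=\dim_{\BF_2}\ker B(\omega)$, so $\#\ker B(\omega)=2^{\corank(B(\omega))}$; hence on the event $\{X_k\geq m\}$ (with $m\geq 1$) there are at least $2^m-1\geq 2^{m-1}$ nonzero $x\in\BF_2^{(k-1)+d}$ with $B(\omega)x=0$. Applying Markov to $N^{\neq 0}(\omega):=\#\{0\neq x:B(\omega)x=0\}$ gives $\BP(X_k\geq m)\leq 2^{1-m}\BE[N^{\neq 0}(\omega)]=2^{1-m}\sum_{0\neq x}\BP(B(\omega)x=0)$. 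So it is enough to show $\sum_{0\neq x}\BP(B(\omega)x=0)\leq C\cdot(3/4)^k$ for a constant $C=C(\#\Sigma,d)$; the assertion about $P_m^{(\infty)}$ then follows at once from $\BP(X_k\geq 1)\to 0$.

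Next I would write $x=(x_1;x_2)$ with $x_1\in\BF_2^{k-1}$, $x_2\in\BF_2^{d}$, and split according to whether $x_1=0$. For $x_1=0$, $x_2\neq 0$, the equation forces $Q(\omega)x_2=0$; since $Q$ is generically of full rank $d$, for every such $x_2$ at least one coordinate of $Q(\omega)x_2$ is either a nonzero constant or a vector $z_{e^*}'(\omega)$ with $e^*\neq 1$ in $\BQ(\Sigma,2)$, and as the $(z_p')_{p\in\Sigma}$ are independent uniform in $\BF_2^{k-1}$ such $z_{e^*}'$ is uniform, so $\BP(Q(\omega)x_2=0)\leq 2^{-(k-1)}$. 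Summing over the fewer than $2^d$ choices of $x_2$ contributes $O(2^{-k})=O((3/4)^k)$.

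The main case is $x_1\neq 0$, and here the idea is to combine two complementary sources of randomness. I would pick an index $i_0$ with $d_{i_0}\neq 1$ (in the intended applications one may take $i_0\geq 2$, i.e.\ a pure $D'_{d_{i_0}}$-row; I treat this case and indicate that $i_0=1$ requires an additional bookkeeping over the Hamming weight of $x_1$, which accounts for the harmless factor $k$). The $i_0$-th block row imposes $z_{d_{i_0}}'(\omega)\odot x_1=Q_{i_0}(\omega)x_2$ (with $\odot$ the entrywise product), a condition on $(z_p')_{p\in\Sigma}$ alone. The first block row imposes $A'(\omega)x_1=D'_{d_1}(\omega)x_1+Q_1(\omega)x_2$; conditioning on $(z_p')_{p\in\Sigma}$ and using $(A')^\RT=A'+D'_{-1}+z'_{-1}(z'_{-1})^\RT$, this is equivalent to $x_1^\RT A'(\omega)=Y$ for a fixed vector $Y$, a rank-one linear constraint on the free entries $(a_{ij})_{1\leq i\leq j\leq k-1}$, so by Lemma \ref{error term ingredient 2} its conditional probability is $\leq 2^{-k+1}$. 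Hence $\sum_{x_1\neq 0,\,x_2}\BP(B(\omega)x=0)\leq 2^{-k+1}\BE[N(\omega)]$, where $N(\omega):=\#\{(x_1,x_2):x_1\neq 0,\ z_{d_{i_0}}'(\omega)\odot x_1=Q_{i_0}(\omega)x_2\}$. For fixed $\omega$ and $x_2$, the equation $z_{d_{i_0}}'\odot x_1=(\text{fixed})$ has at most $2^{(k-1)-\#\supp(z_{d_{i_0}}')}$ solutions $x_1$, so $N(\omega)\leq 2^d\,2^{(k-1)-\#\supp(z_{d_{i_0}}')}$; since $d_{i_0}\neq 1$, $z_{d_{i_0}}'$ is uniform in $\BF_2^{k-1}$, whence $\BE[2^{-\#\supp(z_{d_{i_0}}')}]=(3/4)^{k-1}$ and $\BE[N(\omega)]\leq 2^d(3/2)^{k-1}$. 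Plugging in gives $\sum_{x_1\neq 0,\,x_2}\BP(B(\omega)x=0)\leq 2^d(3/4)^{k-1}$, which with the $x_1=0$ contribution yields the claimed bound.

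The hard part is correctly pairing the two sources of randomness: the $A'$-part alone only gives a per-$x$ probability $\lesssim 2^{-k+1}$ and hence a useless $O(1)$ total if used naively, while the decay comes entirely from the medium-rank row through the random size of $\supp(z_{d_{i_0}}')$. So the delicate bookkeeping is to apply Lemma \ref{error term ingredient 2} conditionally on $(z_p')_{p\in\Sigma}$ (where $A'$ is still uniform) but to use the medium-rank row unconditionally to control $\BE[N(\omega)]$, together with the transpose identity $(A')^\RT=A'+D'_{-1}+z'_{-1}(z'_{-1})^\RT$ needed to turn $A'x_1=c$ into the form $x_1^\RT A'=Y$ that Lemma \ref{error term ingredient 2} accepts; I also expect the degenerate configuration where only $d_1\neq 1$ to need a separate (weight-of-$x_1$) argument, which is where the factor $k$ in the statement enters.
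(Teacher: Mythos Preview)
Your approach via the first moment and Markov's inequality is correct for the main case and is genuinely different from the paper's proof. The paper instead runs a step-by-step transition analysis: passing from $B(\omega)$ to $B(\omega_\new)$ as $k\to k+1$, it bounds $P^{(k)}_{m\to m+1}$ and $P^{(k)}_{m\to m}$ using Lemma~\ref{how to compute probability} together with Lemmas~\ref{error term ingredient 1}--\ref{error term ingredient 3}, obtains a recursion of the shape $P_{\geq m}^{(k+1)}\leq C_0(3/4)^k2^{-m}+2^{-m}P_{\geq m}^{(k)}+P_{\geq m+1}^{(k)}$, and closes it by induction on $k$; the factor $k$ in the statement is an artefact of this recursion and is present even when $d_2\neq 1$. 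Your argument, by contrast, bounds $\sum_{x\neq 0}\BP(B(\omega)x=0)$ directly: the $A'$-row, after conditioning on $(z_p')_{p\in\Sigma}$ and transposing via $(A')^\RT=A'+D'_{-1}+z'_{-1}(z'_{-1})^\RT$, contributes the factor $2^{-k+1}$ from Lemma~\ref{error term ingredient 2}, while a pure $D'_{d_{i_0}}$-row with $i_0\geq 2$ and $d_{i_0}\neq 1$ controls the residual count through $\BE[2^{-\#\operatorname{supp}(z'_{d_{i_0}})}]=(3/4)^{k-1}$. This gives $\BP(X_k\geq m)\leq C\,2^{-m}(3/4)^k$ with \emph{no} factor $k$, so in the main case your bound is actually sharper and the argument shorter.

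One correction, however: your claim that the residual case ``only $d_1\neq 1$'' is handled by a Hamming-weight bookkeeping producing the factor $k$ is not substantiated, and in fact the first-moment method cannot work there. If $d_2=\cdots=d_a=1$ and $x_2=0$, the only active constraint is $(A'+D'_{d_1})x_1=0$; conditioning on $(z'_p)$, the linear map $(a_{ij})_{i\leq j}\mapsto A'x_1$ is surjective for every $x_1\neq 0$, so $\BP\big((A'+D'_{d_1})x_1=0\big)=2^{-(k-1)}$ exactly, and $\sum_{x_1\neq 0}\BP=1-2^{-(k-1)}$ does not decay. No rearrangement by weight of $x_1$ can salvage this, since the per-vector probability is already exact. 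The paper sidesteps this configuration with ``We may assume that $d_2\neq 1$'', which is a relabelling among $d_2,\ldots,d_a$ valid precisely when some $d_i\neq 1$ for $i\geq 2$; this holds in every application in the paper (cf.\ Theorem~\ref{p:2-descent-redei-mat-0}, where one applies the proposition exactly when the relevant $d_2=e_1(e_1-e_2)$ is a non-square). So this is a shared boundary case rather than a defect specific to your argument---but you should not attribute the factor $k$ to it.
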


\begin{proof}
We may assume that $d_2\neq 1$.
If write $Q=(z_{q_{ij}}')_{1\leq i\leq a,1\leq j\leq d}$, then
we have
$$
B_\new=\left(
\begin{array}{c|c|c}
P & Q & (u_1;0;\cdots;0) \\
\hline
(u_2^\RT;0;\cdots;0) & (z_k^{(q_{ij})}) &
(a_{kk}+z_k^{(d_1)};z_k^{(d_2)};\cdots;z_k^{(d_a)})
\end{array}
\right),
$$
$0\leq\rank(B_\new)-\rank(B)\leq a+1$
and correspondingly, $-a\leq\corank(B_\new)-\corank(B)\leq 1$.
We may assume $Q=(Q_{11},0;Q_{21},Q_{22})$
with $Q_{11}\in\RRM'_{(k-1)\times d_1}$ and
$Q_{22}\in\RRM'_{(a-1)(k-1)\times(d-d_1)}$, such that
$Q_{11}$ and $Q_{22}$ are both of generically full rank.
This implies that $q_{11},\cdots,q_{1d_1}$ are linearly independent,
and $q_{1,d_1+1}=\cdots=q_{1d}=0$.

For simplicity of notation, let $\cdots$ be the condition $\corank(B)=m$,
and let $\Omega:=\{\omega\in\Omega_k^{\Sigma,\mathbf s}\mid
\corank(B(\omega))=m\}$.
We start with two conditional probabilities.
Firstly, for each $\omega\in\Omega$, let $W_\omega:=(*;0;\cdots;0)=\BF_2^{k-1}$, $V_\omega:=\operatorname{span}(B)$,
$\widetilde V:=(*;*;\cdots;*)=\BF_2^{a(k-1)}$, then by Lemma
\ref{how to compute probability}
\begin{align*}
\BP\big((u_1;0;\cdots;0)\in\operatorname{span}(B)\mid\cdots\big)
&=\sum_{h:\widetilde V\to\BF_2}\sum_{\substack{\omega\in\Omega\\
h|_{V_\omega}=0\\h|_{W_\omega}=0}}\frac{\BP(\omega)}{\#(\widetilde V/V_\omega)}
\leq\sum_{\substack{\omega\in\Omega\\
h:\BF_2^{(a-1)(k-1)}\to\BF_2\\h\cdot(D_{d_2}';\cdots;D_{d_a}')=0}}
\frac{1}{\#\Omega\cdot 2^{(a-1)(k-1)-d+m}} \\
&\leq\sum_{\substack{\omega\in\Omega_k^{\Sigma,\mathbf s}\\
h:\BF_2^{k-1}\to\BF_2\\h\cdot D_{d_2}'=0}}\frac{1}{\#\Omega\cdot 2^{k-1-d+m}}
=\frac{\#\Omega_k^{\Sigma,\mathbf s}}{\#\Omega}\cdot(3/4)^{k-1}\cdot 2^{d-m}.
\end{align*}
Secondly, for each $\omega$, let $W_\omega=(*;*;0)=\BF_2^{k-1+d_1}$,
$V_\omega=\operatorname{span}(B^\RT)$,
then by Lemma \ref{how to compute probability}
and Proposition \ref{p:low-rank-max-rank},
\begin{align*}
&\BP\big((u_2;(z_k^{(q_{1j})})_{1\leq j\leq d}^\RT)
\in\operatorname{span}(B^\RT)\mid\cdots\big)
=\frac{1}{\#\Omega}\sum_{\omega\in\Omega}\frac{\#V_\omega}
{\#(V_\omega+W_\omega)}
=\frac{1}{\#\Omega}\sum_{\omega\in\Omega}\frac{2^{k-1+d-m}}
{2^{k-1+d_1+\rank(Q_{22})}} \\
&\leq\frac{1}{\#\Omega}\left(
\sum_{\substack{\omega\in\Omega\text{ such that}\\
Z_\Sigma'\text{ is of full rank}}}2^{-m}
+\sum_{\substack{\omega\in\Omega_k^{\Sigma,\mathbf s}\text{ such that}\\
Z_\Sigma'\text{ is not of full rank}}}2^{d-m}
\right)
\leq 2^{-m}+\frac{\#\Omega_k^{\Sigma,\mathbf s}}{\#\Omega}\cdot
(2^{\#\Sigma}-1)\cdot 2^{1-k+d-m}.
\end{align*}

From these we can obtain that
\begin{align*}
P_{m\to m+1}^{(k)}&=\BP\left(
\begin{array}{l}
(u_1;0;\cdots;0)\in\operatorname{span}(B) \\
\text{and }
(u_2;(z_k^{(q_{1j})})_{1\leq j\leq d}^\RT;a_{kk}+z_k^{(d_1)})
\in\operatorname{span}(B^\RT;(u_1^\RT,0,\cdots,0)) \\
\text{and }
(0;(z_k^{(q_{ij})})_{1\leq j\leq d}^\RT;z_k^{(d_i)})
\in\operatorname{span}(B^\RT;(u_1^\RT,0,\cdots,0))
\text{ for }2\leq i\leq a
\end{array}
\middle|~\cdots\right) \\
&\leq\BP\big((u_1;0;\cdots;0)\in\operatorname{span}(B)\mid\cdots\big)
\leq\frac{\#\Omega_k^{\Sigma,\mathbf s}}{\#\Omega}\cdot(3/4)^{k-1}\cdot 2^{d-m},
\displaybreak[0]\\
P_{m\to m}^{(k)}&=\BP\left(
\begin{array}{l}
(u_1;0;\cdots;0)\notin\operatorname{span}(B) \\
\text{and }
(u_2;(z_k^{(q_{1j})})_{1\leq j\leq d}^\RT;a_{kk}+z_k^{(d_1)})
\in\operatorname{span}(B^\RT;(u_1^\RT,0,\cdots,0)) \\
\text{and }
(0;(z_k^{(q_{ij})})_{1\leq j\leq d}^\RT;z_k^{(d_i)})
\in\operatorname{span}(B^\RT;(u_1^\RT,0,\cdots,0))
\text{ for }2\leq i\leq a
\end{array}
\middle|~\cdots\right) \\
&\qquad{}+\BP\big((u_1;0;\cdots;0)\in\operatorname{span}(B)
\text{ and }\rank(B_\new)=\rank(B)+1\mid\cdots\big) \\
&\leq\BP\big((u_2;(z_k^{(q_{1j})})_{1\leq j\leq d}^\RT)
\in\operatorname{span}(B^\RT)\mid\cdots\big)
+\BP\big((u_1;0;\cdots;0)\in\operatorname{span}(B)\mid\cdots\big) \\
&\leq 2^{-m}+\frac{\#\Omega_k^{\Sigma,\mathbf s}}{\#\Omega}\cdot
(3/4)^{k-1}\cdot 2^{\#\Sigma+d-m}.
\end{align*}
Hence for any $m\geq 1$ and $k\geq 1$,
\begin{align*}
&P_{\geq m}^{(k+1)}
=\sum_{i\geq m}P_i^{(k+1)}
=\sum_{i\geq m-1}P_{i\to i+1}^{(k)}P_i^{(k)}
+\sum_{i\geq m}P_{i\to i}^{(k)}P_i^{(k)}
+\sum_{j\geq 1}\sum_{i\geq m+j}P_{i\to i-j}^{(k)}P_i^{(k)} \\
&\leq(3/4)^{k-1}\sum_{i\geq m-1}2^{d-i}
+\left(\sum_{i\geq m}2^{-i}P_i^{(k)}
+(3/4)^{k-1}\sum_{i\geq m}2^{\#\Sigma+d-i}\right)
+\sum_{i\geq m+1}P_i^{(k)} \\
&\leq(3/4)^{k-1}\cdot(2^{\#\Sigma}+2)\cdot 2^{d+1-m}
+2^{-m}P_{\geq m}^{(k)}+P_{\geq m+1}^{(k)}.
\end{align*}
Note that $P_m^{(1)}=1$ if $m=d$, and $P_m^{(1)}=0$ otherwise.
Denote $C_0:=(3/4)^{-1}\cdot(2^{\#\Sigma}+2)\cdot 2^{d+1}$.
Then by induction on $k$, it's easy to see that, for any $m\geq 1$ and $k\geq 1$,
$$
P_{\geq m}^{(k)}\leq C_0\cdot k\cdot 2^{-m}(3/4)^k\cdot\begin{cases}
4/3,&\text{if }m\geq 2, \\
8/3,&\text{if }m=1.
\end{cases}
$$
This completes the proof.
\end{proof}

Now we look at the example described in Theorem \ref{p:2-descent-redei-mat-0}.
Let $E:y^2=x(x-e_1)(x-e_2)$,
$P_1=(e_1,0)$, $P_2=(0,0)$,
$\pi:E[2]\to\BF_2$ is given by $P_1=(e_1,0)\mapsto 0$ and $P_2=(0,0)\mapsto 1$.
Let $\Sigma$ be containing $-1$ all primes dividing $2e_1e_2(e_1-e_2)$.
Let $\fX$ be a $\Sigma$-equivalence class of square-free integers,
and $q\in\BQ(\Sigma,2)$ be the $\Sigma$-part of $\fX$.
By Theorem \ref{p:2-descent-redei-mat-0},
for $m=qn\in\fX$ the $S_{\pi\sstr}(E^{(m)})$
and $S_{\pi\sstr}'(E^{(m)})$ depend only on
$\omega=\omega(n)\in\Omega_k^{\Sigma,\mathbf s}$,
by abuse of notation, denote them by $S_{\pi\sstr}(E^{(q\omega)})$
and $S_{\pi\sstr}'(E^{(q\omega)})$.
Suppose $e_1(e_1-e_2)$ is not a perfect square.
Then we can apply Proposition \ref{p:high-rank-non-square}
to the R\'edei matrices describing
$S_{\pi\sstr}(E^{(m)})$
and $S_{\pi\sstr}'(E^{(m)})$,
which are $B_1$ and $B_1'$ in Theorem \ref{p:2-descent-redei-mat-0}
with a column removed.
The conclusion is, there exists
a constant $C>0$ depending only on $\#\Sigma$, such that for any $k\geq 1$,
\begin{equation}
\label{e:strict-Sel-upper-bound}
\BP\big(S_{\pi\sstr}(E^{(q\omega)})\neq 0
\mid\omega\in\Omega_k^{\Sigma,\mathbf s}\big)\leq C\cdot k\cdot(3/4)^k,
\end{equation}
the same result goes for $S_{\pi\sstr}'(E^{(q\omega)})$.
For other $\pi$ there are similar results.

\subsection{Change of $\Sigma$ and invariance of the parameter}
\label{s:inv of param}

In this section we prove that,
if $\fX$ is a $\Sigma$-equivalence class of quadratic twists of elliptic curves
over $\BQ$ with full rational $2$-torsion points,
$\Sigma\subset \Sigma'$ and $\fX'\subset \fX$ is a $\Sigma'$-equivalence class,
then the class $\fX'$ has the same parameter $\mathbf t$
as that of $\fX$.
This is reduced to the corresponding result on
``high-rank alternating R\'edei matrix of level $2$''
in Definition \ref{p:high-rank-alternating-defn}.
The Markov analysis is not needed in the proof; we only need to use
the basic properties of low-rank R\'edei matrices.
By induction, we may add new primes to $\Sigma'$ one by one,
so we only need to consider $\Sigma'=\Sigma\sqcup\{q\}$ case.

Let $B=(B_{ij})\in\RRM'_{(2(k-1)+t)\times(2(k-1)+t)}$
be in Definition \ref{p:high-rank-alternating-defn},
associated to the $\Sigma$-equivalence class $S^{\mathbf s}(\Sigma)$ of $S(\Sigma)$
given by $\mathbf s=(s_p)_{p\in\Sigma}\in\BF_2^{\#\Sigma}$.
There are $4$ different $\Sigma'$-equivalence classes
$\fX'$ contained in $S^{\mathbf s}(\Sigma)$:
\begin{itemize}
\item
For $s_q\in\BF_2$, the two classes
$$
\fX'=\left\{
n~\middle|~n\in S(\Sigma')\text{ such that }
\lrdd{p}{n}=s_p\text{ for all }p\in\Sigma'
\right\}.
$$
\item
For $s_q\in\BF_2$, the two classes
$$
\fX'=\left\{
qn~\middle|~n\in S(\Sigma')\text{ such that }
\lrdd{p}{n}=s_p+\lrdd{p}{q}\text{ for all }p\in\Sigma\text{ and }\lrdd{q}{n}=s_q
\right\}.
$$
\end{itemize}
For each of the above $\fX'$, there is an associated
$\widetilde B=(\widetilde B_{ij})$ for the $\Sigma'$-equivalence
class $\{n\}$,
such that
\begin{itemize}
\item
for the first two classes,
$\widetilde B\in\RRM'_{(2(k-1)+t)\times(2(k-1)+t)}$
and $\widetilde B(n)=B(n)$;
\item
for the last two classes,
$\widetilde B\in\RRM'_{(2(k-1)+(t+2))\times(2(k-1)+(t+2))}$
and $\widetilde B(n)\sim B(qn)$,
here we need to fix the order of prime factors of $qn$
according to that of $n$:
if $\ell_1,\cdots,\ell_k$ is the prime factors of $n$,
then we fix the order of prime factors of $qn$
to be $\ell_1,\cdots,\ell_{k-1},q,\ell_k$.
\end{itemize}
For example, we may take the above $B$ and $\widetilde B$
to be the $\widetilde B$ in Corollary \ref{selmer mat unrestricted version}
associated to $\fX$ and $\fX'$, respectively,
namely, for $E\in\fX$, $\dim_{\BF_2}S(E)=\corank(B(n))$
where $n$ is the product of bad primes of $E$
outside $\Sigma$, and for $E\in\fX'$, $\dim_{\BF_2}S(E)=\corank(\widetilde B(n))$
where $n$ is the product of bad primes of $E$
outside $\Sigma'$.

\begin{prop}
\label{invariance of parameter}
For each of the above $\fX'$,
the $\widetilde B$ is also a
``high-rank alternating R\'edei matrix of level $2$'',
and has the same parameter $\mathbf t$ as that of $B$.
\end{prop}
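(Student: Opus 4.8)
The statement to prove is Proposition \ref{invariance of parameter}: for each of the four $\Sigma'$-equivalence classes $\fX'$ refining a $\Sigma$-equivalence class $\fX$ (with $\Sigma' = \Sigma \sqcup \{q\}$), the associated matrix $\widetilde B$ is again a ``high-rank alternating R\'edei matrix of level $2$'' in the sense of Definition \ref{p:high-rank-alternating-defn}, of the same type, and has the same parameter $\mathbf t$. The plan is to treat the two ``easy'' classes (where $\widetilde B(n) = B(n)$ as matrices of the same size) and the two ``hard'' classes (where $\widetilde B \in \RRM'_{(2(k-1)+(t+2)) \times (2(k-1)+(t+2))}$ and $\widetilde B(n) \sim B(qn)$) separately, since the first case is essentially formal and the second is where all the content lies.

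For the first two classes, I would observe that passing from the probability space $\Omega_k^{\Sigma,\mathbf s}$ to $\Omega_k^{\Sigma',\mathbf s'}$ (with the enlarged $\Sigma'$) simply adds the coordinate $z_q'$; the matrix $\widetilde B$ is literally the same R\'edei matrix $B$, now read over the larger coefficient ring $\BQ(\Sigma',2) \supset \BQ(\Sigma,2)$. The high-rank, medium-rank, and low-rank parts are unchanged (the decomposition by ranks $B = B_H + B_M + B_L$ only sees $A$, the $D_d$'s, and the $z_c z_d^\RT$'s, and these are the same elements). Hence the type, the values $d_1, d_2, -d_1 d_2$, and the submatrices $B_j''$ are unchanged, so the only thing to check is that $\rho(B_j'') = \max_{\omega \in \Omega_k^{\Sigma',\mathbf s'}} \rank(B_j''(\omega))$ is unchanged. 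This follows from Proposition \ref{p:low-rank-max-rank}: $\rho$ equals $t_1 + t_2$ (in the notation there, the size of the ``off-diagonal'' full-rank blocks), which is a combinatorial invariant of the R\'edei matrix independent of which $\Omega_k^{\Sigma,\mathbf s}$ we evaluate on — it is computed purely from linear independence of the $z_p'$, and enlarging $\Sigma$ only makes more tuples $(z_p')_{p}$ linearly independent, never fewer. So $t_j = t - \rho_j$ is unchanged.

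For the last two classes the point is that $\widetilde B(n) \sim B(qn)$ under the prescribed reordering of prime factors of $qn$ (insert $q$ in the second-to-last slot). Here I would argue as follows. First, $\widetilde B$ is alternating because $B$ is and row/column permutations preserve the alternating property. Second, the block structure: since $q \in \BQ(\Sigma',2)$ now, the vector $z_q'$ becomes an available entry; the old ``hole'' columns $B_{13}, B_{23}$ (entries $z_d$, $d \in \BQ(\Sigma,2) \subset \BQ(\Sigma',2)$) acquire two extra columns corresponding to the new $q$-direction, and I must check $\begin{pmatrix} \widetilde B_{13} \\ \widetilde B_{23} \end{pmatrix}$ is still generically full rank of rank $t+2$ — this uses that the two new columns are of the form $z_d'$ for genuinely new classes $d$ (involving $q$) together with generic full rank of the old part. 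Third, and this is the heart: the ``$k \times k$'' blocks of $\widetilde B$ differ from those of $B$ only by the extra row/column indexed by $q$, which contributes constants and $z_q'$-type entries, i.e.\ by \emph{low-rank} R\'edei matrix pieces; therefore $\widetilde B_{12}, \widetilde B_{21}$ remain high-rank (the high-rank part $A'$ is untouched), the medium-rank parts of $\widetilde B_{11}, \widetilde B_{22}, \widetilde B_{11}+\widetilde B_{12}+\widetilde B_{21}+\widetilde B_{22}$ are still $D_{d_1}', D_{d_2}', D_{-d_1 d_2}'$ with the \emph{same} $d_1, d_2$ (the values $e_1(e_1-e_2)$, $e_1 e_2$ from Theorem \ref{p:2-descent-redei-mat-2} don't change), so the number $s$ of low-rank ones among them is preserved — the type is preserved. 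Finally, for the parameter: $\widetilde B_j'' = \begin{pmatrix} \widetilde B_{jj} & \widetilde B_{j3} \\ \widetilde B_{3j} & \widetilde B_{33} \end{pmatrix}$ is low-rank, and $\rho(\widetilde B_j'')$ — computed via Proposition \ref{p:low-rank-max-rank} by evaluating at any $\omega$ with $(z_p')_{p \in \Sigma'}$ linearly independent — equals $\rho(B_j'') + 2$, the $+2$ coming from the two new full-rank directions ($q$ appears both as a new row/column in the diagonal block and the hole block $B_{j3}$ grows by the two $q$-columns). Since $\widetilde t = t + 2$ as well, $\widetilde t_j = \widetilde t - \rho(\widetilde B_j'') = (t+2) - (\rho(B_j'') + 2) = t - \rho(B_j'') = t_j$, so the parameter is unchanged.

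The main obstacle will be the bookkeeping in the last two cases: precisely identifying, after the reordering $\ell_1, \dots, \ell_{k-1}, q, \ell_k$, which blocks of $B(qn)$ correspond to which blocks of $\widetilde B(n)$, and verifying that the discrepancy is exactly a low-rank perturbation plus two new generically-full-rank coordinate directions — neither more nor less. In particular one must confirm the $+2$ in $\rho(\widetilde B_j'')$ is genuine (not $+1$ or absent), which amounts to checking that the new $q$-row of the diagonal block $\widetilde B_{jj}$ and the new $q$-columns of $\widetilde B_{j3}$ together contribute exactly two to the generic rank, using the generic-full-rank hypotheses on the hole blocks and the structure lemma Proposition \ref{p:low-rank-max-rank}. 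Once this dimension count is pinned down, the conclusion $\widetilde t_j = t_j$ is immediate arithmetic.
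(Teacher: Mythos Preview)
Your proposal follows the same overall strategy as the paper's proof and would succeed, but there are two points where your bookkeeping is off and the paper's argument is sharper.

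First, your claim that the $(k-1)\times(k-1)$ blocks of $\widetilde B$ differ from those of $B$ ``only by low-rank R\'edei matrix pieces'' is not quite right for the off-diagonal blocks. From the formula $\widetilde A' = \left(\begin{smallmatrix} A'+D_q' & z_q' \\ (z_{q^*}')^\RT & * \end{smallmatrix}\right)$ one sees that the top-left $(k-1)\times(k-1)$ of $B_{21}(qn)$ is $B_{21}+D_q'$, so $\widetilde B_{21}$ and $\widetilde B_{12}$ each pick up the \emph{medium-rank} piece $D_q'$. This does not affect your conclusion about the type, because $\widetilde B_{12}+\widetilde B_{21}=B_{12}+B_{21}$ (the two copies of $D_q'$ cancel over $\BF_2$) and the diagonal blocks satisfy $\widetilde B_{jj}=B_{jj}$ exactly (they contain no $A'$). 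So the three medium-rank parts $D_{d_1}',D_{d_2}',D_{-d_1d_2}'$ are genuinely unchanged, but for the right reason.

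Second, and more substantively, your plan for showing $\widetilde\rho_j=\rho_j+2$ compares $\widetilde B_j''(n)$ to $B_j''(n)$ plus \emph{two} extra rows and columns, and then you would have to argue that exactly one of the two new column directions is ``genuinely new'' while the other is absorbed --- this is the dimension count you flag as the main obstacle. The paper bypasses this entirely: moving one of the two extra columns of $\widetilde B_{j3}$ back into $\widetilde B_{jj}$ reassembles precisely $B_{jj}(qn)$, so that $\widetilde B_j''(n)$ is (up to permutation) equal to $B_j''(qn)$ with a \emph{single} extra row and column. Since $\rank B_j''(qn)\le\rho_j$ for every $qn$, this gives $\widetilde\rho_j\le\rho_j+2$ at once; and equality holds because the one remaining extra row/column comes from the $q$-column of $B_{j,3-j}(qn)$, which contains $(A')^\RT$ and hence contributes a $z_{q^*}'$, a direction genuinely new over $\Sigma'$. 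This trick of evaluating the \emph{same} low-rank R\'edei matrix $B_j''$ at the larger integer $qn$ (rather than staying at $n$) is what makes the parameter invariance transparent.
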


\begin{proof}
For the first two classes, we have $\widetilde B=B$
and there is nothing to prove.
For the last two classes, it's easy to see that,
according to our choice of order of prime factors,
the $\widetilde B$ can be defined block-wise by
$$
\widetilde A'=\begin{pmatrix}
A'+D_q' & z_q' \\
(z_{q^*}')^\RT & \lrdd{q^*}{n}
\end{pmatrix}
\quad\text{and}\quad
\widetilde z_p'=\begin{pmatrix}
z_p' \\
\lrdd{p}{q}
\end{pmatrix}
\quad\text{for }p\in\Sigma.
$$
Here $q^*=(-1)^{(q-1)/2}q$.
We reorder the rows and columns to make
$\widetilde B\in\RRM'_{(2(k-1)+(t+2))\times(2(k-1)+(t+2))}$.

It's easy to see that $\widetilde B$ is still alternating,
and $\widetilde B_{ii}=B_{ii}$ for $i=1,2$.
Consider $\left(\begin{smallmatrix}
\widetilde B_{13} \\ \widetilde B_{23}
\end{smallmatrix}\right)\in\RRM'_{2(k-1)\times(t+2)}$.
Its first $t$ columns are that of $\left(\begin{smallmatrix}
B_{13} \\ B_{23}
\end{smallmatrix}\right)$,
and its last two columns comes from the additional column associated to
$\left(\begin{smallmatrix}
\widetilde B_{1j} \\ \widetilde B_{2j}
\end{smallmatrix}\right)$, $j=1,2$.
Considering the occurrence of $z_q'$, it's easy to see that
$\left(\begin{smallmatrix}
\widetilde B_{13} \\ \widetilde B_{23}
\end{smallmatrix}\right)$ is also of generically full rank.

Now we compare the maximal possible rank $\widetilde\rho_j$
of $\widetilde B_j'':=\left(\begin{smallmatrix}
\widetilde B_{jj} & \widetilde B_{j3} \\
\widetilde B_{3j} & \widetilde B_{33}
\end{smallmatrix}\right)$
to the maximal possible rank $\rho_j$
of $B_j'':=\left(\begin{smallmatrix}
B_{jj} & B_{j3} \\
B_{3j} & B_{33}
\end{smallmatrix}\right)$, $1\leq j\leq s$.
Note that the $t+2$ columns of $\widetilde B_{j3}$ come
from the $t$ columns of $B_{j3}$ and the additional columns associated
to $\widetilde B_{j1}$ and $\widetilde B_{j2}$.
If we put one of that additional column back to $\widetilde B_{jj}$,
then it's easy to see that $\widetilde B_j''(n)$
is equal to $B_j''(qn)$ with an additional row and column containing $z_q'$.
Therefore $\widetilde\rho_j\leq\rho_j+2$.
On the other hand, by Proposition \ref{p:low-rank-max-rank}
it's easy to see that ``$=$'' can be obtained.
Hence the parameter $\widetilde t_j$ of $\widetilde B$
is $\widetilde t_j=(t+2)-\widetilde\rho_j=t-\rho_j=t_j$, the same as that of $B$.
\end{proof}

\begin{remark}
\label{change Sigma remark}
Each of the above $\fX'$
gives maps $\psi_r:\Omega_r^{\Sigma',*}/S_r\to\Omega_{r+t}^{\Sigma,*}/S_{r+t}$
for $r\geq 0$, where $t=0$ for the first two classes,
$t=1$ for the last two classes, such that
$\psi_r(\omega(n))=\omega(n)$ for the first two classes,
$\psi_r(\omega(n))=\omega(qn)$ for the last two classes,
see \S\ref{s:change-set}.
Lemma \ref{p:add-primes} tells us that,
if for each $r$, a subset $B_r$ of $\Omega_r^{\Sigma,*}$ is given
(i.e.~an event),
which is $S_r$-invariant, let $B_r'$ be the preimage of $B_{r+t}$
in $\Omega_r^{\Sigma',*}$ under the map $\psi_r$,
then $\lim_{r\to\infty}\BP(B_r')=\lim_{r\to\infty}\BP(B_r)$
if they exist.
From this and the Markov analysis
(which gives the limit distribution of the corank of $B$)
one may also deduce that the parameter is invariant
under refinement of equivalence classes
(since different parameter gives different limit distribution).
\end{remark}

\subsection{Limit of moments}

For $\xi\in\BZ_{\geq 1}$,
we consider the limit of the $\xi$-th moment of the order of $\ker(B(\omega))$,
$\omega\in\Omega_k^{\Sigma,\mathbf s}$, as $k\to\infty$:
$$
\lim_{k\to\infty}\BE(2^{\xi\cdot X_k})
=\lim_{k\to\infty}\sum_{m\geq 0}2^{\xi\cdot m}\cdot\BP(X_k=m).
$$
The \eqref{e:redei-matrix-sum-prob-alt} in Theorem \ref{redei matrix main thm}
implies that
\begin{equation}
\label{e:redei limit moment equal}
\left|\BE(2^{\xi\cdot X_k})
-\sum_{m\geq 0}2^{\xi\cdot m}\cdot P_{t,\mathbf t}^\Alt(m)\right|
\leq C\cdot\alpha^k,
\text{ in particular }
\lim_{k\to\infty}\BE(2^{\xi\cdot X_k})
=\sum_{m\geq 0}2^{\xi\cdot m}\cdot P_{t,\mathbf t}^\Alt(m),
\end{equation}
which is the $\xi$-th moment of the corresponding matrix model $M_{2k+t,\mathbf t}^\Alt(\BF_2)$.

On the other hand, Heath-Brown's method \cite{HB94} allows us to compute the
limit of average order, we called ``$\omega$-average order''
(and hence the limit of $\xi$-th moment),
by a linear algebra way without Markov chain analysis.
The disadvantage is that it does not produce easily a simple formula
(in \cite{HB93}, \cite{HB94}
the type (A) is considered, although with a lengthy computation).
Instead, it's better to pass to $P_{t,\mathbf t}^\Alt$,
then apply the same method to the matrix model
(which produces $3+\sum_i2^{t_i}$ easily, see appendix \ref{average order matrix model}):
$$
\lim_{k\to\infty}\BE(2^{X_k})
=\lim_{k\to\infty}\sum_{m\geq 0}2^m\cdot\BP(X_k=m)
=\sum_{m\geq 0}2^m\cdot P_{t,\mathbf t}^\Alt(m)=3+\sum_i2^{t_i}.
$$
The advantage of Heath-Brown's method is
that the same method also applies to the natural average order,
and allows us to compare them.

Let $B=(B_{ij})_{\substack{1\leq i\leq a+1\\
1\leq j\leq c+1}}\in\RRM_{(ak+b)\times(ck+d)}$
be a restricted R\'edei matrix.
Consider the high-rank part $B_{ij,H}$ of $B_{ij}$
for $1\leq i\leq a$, $1\leq j\leq c$.
If $(B_{ij,H})\sim(\diag(A,\cdots,A);0)$, then such $B$
is called a ``high-rank R\'edei matrix''.
This coincides with the previous definition when $B$ is of size $k\times k$.
Clearly, matrix in Theorem \ref{p:2-descent-redei-mat}
(or Theorem \ref{p:2-descent-redei-mat-0},
more generally, in Remark \ref{p:2-descent-redei-mat-general})
is of high-rank.
Heath-Brown's method implies that,
for such matrix, the $\omega$-average order of kernel exists.

\begin{prop}
\label{p:high-rank-moment-limit}
Let $B\in\RRM_{(ak+b)\times(ck+d)}$
be a high-rank restricted R\'edei matrix.
For each $k\geq 1$ let $X_k:\Omega_k^{\Sigma,\mathbf s}\to\BZ_{\geq 0}$
be the random variable $\omega\mapsto\corank(B(\omega))$
and $E^{(k)}:=\BE(2^{X_k})$.
Then the $\omega$-average order $E^{(\infty)}:=\lim_{k\to\infty}E^{(k)}$
exists, has an explicit formula \eqref{e:high-rank-moment},
satisfies an upper bound
$$
1\leq E^{(\infty)}<3\cdot 2^{c(c+3)/2+d},
$$
and we have the convergence rate estimation:
let $k_0\geq 1$ be any fixed integer, denote $x_0:=(1-2^{-a})^{k_0}$,
then for any $k\geq k_0$,
\begin{align*}
\left|E^{(k)}-E^{(\infty)}\right|
&<3\cdot 2^{c(c+3)/2+d}\cdot
\left(\frac{\left(1+x_0\right)^{2^a}
-1-x_0^{2^a}}{x_0}+2^a\right)\cdot(1-2^{-a})^k \\
&<3\cdot 2^{c(c+3)/2+d}\cdot
(2^{2^a}+2^a-2)\cdot(1-2^{-a})^k.
\end{align*}
\end{prop}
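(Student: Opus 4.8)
The plan is to reduce the computation of $E^{(k)}=\BE(2^{X_k})$ to a sum over linear-algebra data, following Heath-Brown's method, and then to analyze the resulting sum as $k\to\infty$. First I would use the standard identity $\#\ker(B(\omega)) = 2^{\corank(B(\omega))}$, together with the duality $\#\ker(B) = \#\{v : Bv = 0\}$, to rewrite
$$
E^{(k)} = \frac{1}{\#\Omega_k^{\Sigma,\mathbf s}}\sum_{\omega\in\Omega_k^{\Sigma,\mathbf s}} \#\ker(B(\omega)) = \sum_{v}\BP\big(B(\omega)v = 0\big),
$$
where $v$ ranges over the relevant $\BF_2$-vector space (of dimension $ck+d$). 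The next step is to organize the vectors $v$ by the ``support pattern'' of their $k$-periodic blocks: write $v = (v_1,\dots,v_c; v')$ with each $v_i\in\BF_2^k$ and $v'\in\BF_2^d$, and for each $i$ let the block $v_i$ record which of the $k$ primes $\ell_1,\dots,\ell_k$ it is supported on. Since $B$ is a high-rank restricted R\'edei matrix, the high-rank part $(B_{ij,H})\sim(\diag(A,\dots,A);0)$ contributes the condition $Av_i = (\text{lower-order terms})$ for each $i$, and the event that a R\'edei relation of the form $Av_i + (\text{stuff involving }D_d,\,z_cz_d^\RT,\text{ and }\Sigma\text{-blocks})=0$ holds, over the randomness of $(a_{ij})$, has probability computable by the same counting argument as in Lemma \ref{error term ingredient 2} — roughly $2^{-k\cdot(\text{effective rank})}$ with an explicit correction of size $2^{O(\dim)}$ coming from the ``diagonal'' overlaps $a_{ii}$. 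Because $\sum_j a_{ij}=0$ (the restricted condition), one has to be a little careful with exactly which coordinates are free, but this only changes the exponent by a bounded amount and is where the constant $2^{c(c+3)/2+d}$ enters.

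Carrying this out, $E^{(k)}$ becomes a sum indexed by a partition of $\{1,\dots,k\}$ into a bounded number of ``types'' (one for each of the $2^c$ possible sign-patterns of the $v_i$ at a given prime, or more precisely the $2^a$ with $a$ the number of high-rank rows), and for each such partition the summand is a product over the $k$ primes of a fixed local factor times a global correction. This is exactly the combinatorial structure that makes the limit $k\to\infty$ tractable: summing the geometric-type series in each type variable $n_1+\dots+n_{2^a}=k$ gives, after letting $k\to\infty$, a closed expression — this is the formula \eqref{e:high-rank-moment} referred to in the statement (which I take as given / to be recorded just before this proof). The upper bound $E^{(\infty)}<3\cdot 2^{c(c+3)/2+d}$ then follows by bounding each local factor by its value when the ``extra'' low-rank and $\Sigma$-data maximally fail to constrain (at most multiplying by $2^d$ from $v'$ and by $2^{c(c+3)/2}$ from the diagonal overlap corrections among the $c$ medium/low blocks), together with the fact that the $2^a$-fold geometric sum of the leading local weight telescopes to something bounded by $3$ — this last factor $3$ is the type-(A) average order $\sum_m 2^m P^{\Alt}_{*,\varnothing}(m)$ appearing as the base case, and one checks $1\le E^{(\infty)}$ trivially since $\#\ker\ge 1$ always.

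For the convergence rate, the idea is to compare the finite sum $E^{(k)}$ with the infinite sum $E^{(\infty)}$ term by term in the type-partition index. The difference comes entirely from (i) truncating the series $\sum_{n_1+\dots+n_{2^a}=k}$ at level $k$ rather than $\infty$, and (ii) the bounded multiplicative error in each per-prime factor. Writing $x=(1-2^{-a})$, the tail of the relevant multinomial series beyond level $k$ is controlled by the generating-function identity $\sum_{k\ge k_0}\big((1+x)^{2^a}-1-x^{2^a}\big)x^{k-k_0}$-type bound; explicitly, splitting off the ``all mass in one type'' diagonal terms (the $x^{2^a}$ and the $2^a$ single-type contributions) from the genuinely mixed terms yields the stated bound
$$
\left|E^{(k)}-E^{(\infty)}\right| < 3\cdot 2^{c(c+3)/2+d}\cdot\left(\frac{(1+x_0)^{2^a}-1-x_0^{2^a}}{x_0}+2^a\right)\cdot(1-2^{-a})^k,
$$
with $x_0=(1-2^{-a})^{k_0}$, and then the cruder bound $(1+x_0)^{2^a}-1-x_0^{2^a}\le x_0\cdot(2^{2^a}-2)$ (valid since $0<x_0<1$) gives the second, cleaner inequality. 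The main obstacle I anticipate is step (i)–(ii) bookkeeping: getting the \emph{uniform} constant in front — i.e.\ verifying that the per-prime multiplicative errors and the restricted-R\'edei diagonal corrections really do stay inside $3\cdot 2^{c(c+3)/2+d}$ and do not accumulate a factor growing with $k$ — since that is precisely the kind of estimate that was done incorrectly in \cite{Yu05}; the presence of the explicit high-rank matrix $B$ is what pins down the correct main term here. The probability estimates themselves reduce to Lemma \ref{error term ingredient 1}, Lemma \ref{error term ingredient 2}, and Proposition \ref{p:low-rank-max-rank}, so no new analytic input is needed.
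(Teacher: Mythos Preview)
Your starting identity $E^{(k)}=\sum_v\BP(B(\omega)v=0)$ is correct, but the proof in the paper does \emph{not} proceed by estimating $\BP(Bv=0)$ directly for each $v$. Instead it immediately Fourier-expands again over a dual variable $w\in\BF_2^{ak+b}$:
\[
2^{\corank(B)}=\frac{1}{\#W}\sum_{v\in V,\,w\in W}(-1)^{w^\RT Bv},
\]
and then studies the character $\psi_{B,v,w}(\omega)=w^\RT B(\omega)v$. The pair $(v,w)$ is recorded by a map $\iota:[k]\to\BF_2^{c+a}$, and the key structural fact is that $\sum_\omega(-1)^{\psi(\omega)}=0$ unless $\Lambda:=\Im(\iota)$ is an \emph{unlinked} subset (no $a_{\lambda\lambda'}$ appears in $\psi$). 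Lemma~\ref{p:HB94-lem7} then bounds $\#\Lambda\le 2^a$ via the Lagrangian condition, and the main term comes from $\#\Lambda=2^a$; this is where the $2^a$ in the error bound and the factor $2^c\prod_{i=1}^c(2^i+1)$ (counting Lagrangian translates, whence the $3\cdot 2^{c(c+3)/2}$) arise. Your account never introduces $w$, so the linked/unlinked dichotomy and the Lagrangian count are absent, and you are left trying to compute $\BP(Bv=0)$ via Lemma~\ref{error term ingredient 2}---but that lemma only gives an \emph{upper bound} $\BP(XA'=Y)\le 2^{-rk+r(r+1)/2}$, not the exact value needed for the main term. The correlations among the events $\{Av_i+(\text{lower})_i=0\}_{i=1}^c$ through the shared matrix $A$ are precisely what the character-sum side-steps and what your plan does not address.

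Two smaller points: your identification of the ``$2^a$ types'' with the number of high-rank row blocks is not right---the $2^a$ is the maximal size of an unlinked subset of $\BF_2^{c+a}$, coming from the Lagrangian bound. And the constant $3$ is not literally the type-(A) average order serving as a base case; it is the crude bound $\prod_{i=1}^c(1+2^{-i})<3$ on the Lagrangian count (the coincidence with the type-(A) average is real but not the mechanism here). The convergence-rate part of your sketch is closer in spirit to the paper: once the main term is isolated at $\#\Lambda=2^a$, the error comes from $\#\{[k]\twoheadrightarrow[m]\}$ for $m<2^a$ and from replacing $\#\{[k]\twoheadrightarrow[2^a]\}$ by $(2^a)^k$, which the paper bounds by the binomial expansion of $(1+x)^{2^a}$ with $x=(1-2^{-a})^k$, matching your final inequality.
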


\begin{proof}
Write $B=(P,Q;R^\RT,S)\in\RRM_{(ak+b)\times(ck+d)}$.
By elementary row and column operations,
we may keep $a,b,c,d$ unchanged and assume
that $P_H=(\diag(A,\cdots,A);0)$.

Let $V=\BF_2^{ck+d}$ and $W=\BF_2^{ak+b}$.
For $B\in M_{(ak+b)\times(ck+d)}(\BF_2)$ and $v\in V$ we have
$$
\frac{1}{\#W}\sum_{w\in W}(-1)^{w^\RT Bv}=\begin{cases}
0,&\text{if }v\notin\ker(B), \\
1,&\text{if }v\in\ker(B),
\end{cases}
\quad\text{hence}\quad
2^{\corank(B)}=\frac{1}{\#W}
\sum_{v\in V,w\in W}(-1)^{w^\RT Bv}.
$$
Therefore
\begin{equation}
\label{e:E2Xk-alg}
\BE(2^{X_k})
=\frac{1}{\#\Omega_k^{\Sigma,\mathbf s}\cdot\#W}
\sum_{v\in V,w\in W}
\sum_{\omega\in\Omega_k^{\Sigma,\mathbf s}}
(-1)^{w^\RT B(\omega)v}.
\end{equation}

When $k\in\BZ_{\geq 1}$, $v\in V=\BF_2^{ck+d}$ and
$w\in W=\BF_2^{ak+b}$ are fixed, define the map
$\psi=\psi_{B,v,w}:\Omega_k^{\Sigma,\mathbf s}\to\BF_2$,
$\omega\mapsto w^\RT B(\omega)v$.
To study the map $\psi_{B,v,w}$,
write $v=(v^{(1)};\cdots;v^{(c)};\widetilde v)$
and $w=(w^{(1)};\cdots;w^{(a)};\widetilde w)$,
where $v^{(1)},\cdots,v^{(c)},
w^{(1)},\cdots,w^{(a)}\in\BF_2^k$,
$\widetilde v\in\BF_2^d$,
and $\widetilde w\in\BF_2^b$
(note that $a\geq c$).
Note that to give $v^{(1)},\cdots,v^{(c)},
w^{(1)},\cdots,w^{(a)}$ is equivalent to give the map
$$
\iota:[k]:=\{1,\cdots,k\}\to\BF_2^{c+a},
\quad
i\mapsto(v_i^{(1)},\cdots,v_i^{(c)},
w_i^{(1)},\cdots,w_i^{(a)})^\RT.
$$
For $\lambda\in\BF_2^{c+a}$, define
$[k]_\lambda:=\iota^{-1}(\{\lambda\})
=\{i\in[k]\mid(v_i^{(1)},\cdots,v_i^{(c)},
w_i^{(1)},\cdots,w_i^{(a)})^\RT=\lambda\}$.
Similarly, for $1\leq\ell\leq c$ and $\lambda\in\BF_2^2$,
define $[k]_\lambda^{(\ell)}:=\{i\in[k]\mid(v_i^{(\ell)},w_i^{(\ell)})^\RT=\lambda\}$.

Let $\Lambda:=\Im(\iota)\subset\BF_2^{c+a}$ be the image of the map $\iota$.
We claim that once $\Lambda,\widetilde v,\widetilde w$
are fixed, the $\psi_{B,v,w}(\omega)$
depends only on $a_{\lambda\lambda'}:=\sum_{i\in[k]_\lambda,j\in[k]_{\lambda'}}a_{ij}$
for all $\lambda\neq\lambda'$ in $\Lambda$,
and $z_\lambda^{(p)}:=\sum_{i\in[k]_\lambda}z_i^{(p)}$
for all $\lambda\in\Lambda$ and $p\in\Sigma$.
More precisely, it is a linear combination
of $1$, $a_{\lambda\lambda'}$, $z_\lambda^{(p)}$
and $z_\lambda^{(p)}z_{\lambda'}^{(q)}$ with coefficients
depending only on $B$.
To prove this, note that the R\'edei matrix $A$ only
occurred in the diagonal of $P$, so $\psi_{B,v,w}(\omega)$
is equal to $\sum_{\ell\in[c]}(w^{(\ell)})^\RT A(\omega)v^{(\ell)}$
plus $\widetilde w^\RT S\widetilde v$
plus some linear combinations (with coefficients
depending only on $B$) of the following:
\begin{itemize}
\item[(i)]
$(w^{(i)})^\RT D_pv^{(j)}$ for some $i\in[a]$, $j\in[c]$, $p\in\Sigma$;
\item[(ii)]
$(w^{(i)})^\RT z_pz_q^\RT v^{(j)}$ for some $i\in[a]$, $j\in[c]$, $p,q\in\Sigma$;
\item[(iii)]
$(w^{(i)})^\RT z_p\widetilde v_j$ for some $i\in[a]$, $j\in[d]$, $p\in\Sigma$;
\item[(iv)]
$\widetilde w_iz_p^\RT v^{(j)}$ for some $i\in[b]$, $j\in[c]$, $p\in\Sigma$.
\end{itemize}
Note that
\begin{align*}
(w^{(i)})^\RT D_pv^{(j)}
&=\sum_{\ell=1}^kw_\ell^{(i)}z_\ell^{(p)}v_\ell^{(j)}
=\sum_{\lambda\in\Lambda}\sum_{\ell\in[k]_\lambda}
w_\ell^{(i)}z_\ell^{(p)}v_\ell^{(j)}
=\sum_{\lambda\in\Lambda}\lambda_{c+i}\lambda_jz_\lambda^{(p)}, \\
(w^{(i)})^\RT z_p
&=\sum_{\ell=1}^kw_\ell^{(i)}z_\ell^{(p)}
=\sum_{\lambda\in\Lambda}\sum_{\ell\in[k]_\lambda}w_\ell^{(i)}z_\ell^{(p)}
=\sum_{\lambda\in\Lambda}\lambda_{c+i}z_\lambda^{(p)}, \\
z_p^\RT v^{(j)}
&=\sum_{\ell=1}^kz_\ell^{(p)}v_\ell^{(j)}
=\sum_{\lambda\in\Lambda}\sum_{\ell\in[k]_\lambda}z_\ell^{(p)}v_\ell^{(j)}
=\sum_{\lambda\in\Lambda}\lambda_jz_\lambda^{(p)}.
\end{align*}
Therefore the above (i)--(iv) all depend only on
$z_\lambda^{(p)}$.
For $\sum_{\ell\in[c]}(w^{(\ell)})^\RT A(\omega)v^{(\ell)}$, we have
\begin{equation}
\label{e:wTAv}
(w^{(\ell)})^\RT A(\omega)v^{(\ell)}
=\sum_{i\in[k]_{01}^{(\ell)},j\in[k]_{10}^{(\ell)}}a_{ij}
+\sum_{i\in[k]_{11}^{(\ell)},j\in[k]_{00}^{(\ell)}}a_{ij}
+\sum_{i\in[k]_{11}^{(\ell)},j\in[k]_{01}^{(\ell)}}z_i^{(-1)}z_j^{(-1)}.
\end{equation}
Note that in each of the three sums in \eqref{e:wTAv},
the ranges of $i$ and $j$
are disjoint.
For each $\lambda\neq\lambda'$ in $\Lambda$,
\begin{itemize}
\item
the $a_{\lambda\lambda'}$ appeared in \eqref{e:wTAv} if and only if
$(\lambda_\ell,\lambda_\ell',\lambda_{c+\ell},\lambda_{c+\ell}')
=(0,1,1,0)$ or $(1,0,1,0)$,
\item
the $a_{\lambda'\lambda}$ appeared in \eqref{e:wTAv} if and only if
$(\lambda_\ell,\lambda_\ell',\lambda_{c+\ell},\lambda_{c+\ell}')
=(1,0,0,1)$ or $(0,1,0,1)$,
\item
the $z_\lambda^{(-1)}z_{\lambda'}^{(-1)}$ appeared in \eqref{e:wTAv} if and only if
$(\lambda_\ell,\lambda_\ell',\lambda_{c+\ell},\lambda_{c+\ell}')
=(1,0,1,1)$ or $(0,1,1,1)$.
\end{itemize}
Replace $a_{\lambda'\lambda}$ with
$a_{\lambda\lambda'}+z_\lambda^{(-1)}z_{\lambda'}^{(-1)}$,
sum them over $1\leq\ell\leq c$, we obtain that
$$
\sum_{\ell\in[c]}(w^{(\ell)})^\RT A(\omega)v^{(\ell)}
=\sum_{\lambda<\lambda'}\left(
a_{\lambda\lambda'}\cdot
\sum_{\ell=1}^c(\lambda_\ell+\lambda_\ell')(\lambda_{c+\ell}+\lambda_{c+\ell}')
+z_\lambda^{(-1)}z_{\lambda'}^{(-1)}\cdot
\sum_{\ell=1}^c(\lambda_\ell+\lambda_\ell')\lambda_{c+\ell}'
\right),
$$
here we fix any complete order $<$ on $\Lambda$.
This proves our claim.

The occurrence of $a_{\lambda\lambda'}$ or not in the expression of
$\psi_{B,v,w}(\omega)$ is important to the properties of $\psi$.
For example, if $a_{\lambda\lambda'}$ occurred
for some $\lambda\neq\lambda'$ in $\Lambda$,
fix $i\neq j$ with $i\in[k]_\lambda$ and $j\in[k]_{\lambda'}$,
utilizing the bijection
$$
\Omega_k^{\Sigma,\mathbf s}\xrightarrow\sim\BF_2^{k(k-1)/2}\times M_{(k-1)\times 1}(\BF_2)^{\#\Sigma},
\qquad
\omega=\big((a_{ij}),(z_p)_{p\in\Sigma}\big)\mapsto
\big((a_{ij})_{1\leq i<j\leq k},(z_p')_{p\in\Sigma}\big)
$$
we let $a_{ij}$ or $a_{ji}$ varies while let others unchanged,
then it's easy to see that
$\sum_{\omega\in\Omega_k^{\Sigma,\mathbf s}}(-1)^{\psi(\omega)}=0$,
so it has no contribution in \eqref{e:E2Xk-alg}.
According to \cite{HB94},
\cite{Kane13}, two elements $\lambda,\lambda'$ of $\BF_2^{c+a}$
are called \emph{linked} if
$\sum_{\ell=1}^c(\lambda_\ell+\lambda_\ell')(\lambda_{c+\ell}+\lambda_{c+\ell}')=1$,
or equivalently, when $\lambda,\lambda'\in\Lambda$,
$a_{\lambda\lambda'}$ occurred in the expression of
$\psi_{B,v,w}(\omega)$.
A subset $\Lambda$ of $\BF_2^{c+a}$ is called an \emph{unlinked subset}
if any elements $\lambda,\lambda'$ of $\Lambda$ are unlinked.

\begin{lem}[\cite{HB94}, Lemma 7, \cite{Kane13}, p.~1274]
\label{p:HB94-lem7}
Let $\Lambda$ be an unlinked subset of $\BF_2^{c+a}$.
Then its projection to the first $2c$ components
is contained in a translate of a Lagrangian of $\BF_2^{2c}$
with respect to the symplectic form
$\left(\begin{smallmatrix}
0 & I \\ I & 0
\end{smallmatrix}\right)$.
In particular, $\#\Lambda\leq 2^a$.
\end{lem}

Now we know that the expression \eqref{e:E2Xk-alg}
only involves those $\psi=\psi_{B,v,w}$ such that the corresponding
$\iota:[k]\to\BF_2^{c+a}$, $\widetilde v$ and $\widetilde w$
satisfy that $\Lambda:=\Im(\iota)$ is an unlinked subset
(which must have cardinality $\leq 2^a$).
We know that for such $\psi$, the $a_{\lambda\lambda'}$ does not occurred in the expression
of $\psi_{B,v,w}(\omega)$ for any $\lambda,\lambda'$,
hence such map $\psi=\psi_{B,v,w}$ factors through the surjective map
\begin{align*}
\Omega_k^{\Sigma,\mathbf s}&\twoheadrightarrow
\Omega_\Lambda^{\Sigma,\mathbf s}
:=\left\{\mathbf z=(z_\lambda^{(p)})_{\lambda\in\Lambda,p\in\Sigma}
\in\BF_2^{\#\Lambda\cdot\#\Sigma}~\middle|~
\sum_{\lambda\in\Lambda}z_\lambda^{(p)}=s_p\text{ for all }p\in\Sigma\right\}, \\
\omega=\big((a_{ij}),(z_i^{(p)})\big)&\mapsto
\left(\sum_{i\in[k]_\lambda}z_i^{(p)}\right)_{\lambda\in\Lambda,p\in\Sigma}.
\end{align*}

For each $\mathbf z\in\Omega_\Lambda^{\Sigma,\mathbf s}$,
its preimage in $\Omega_k^{\Sigma,\mathbf s}$
has size $2^{k(k-1)/2+(k-\#\Lambda)\cdot\#\Sigma}$.
When $\Lambda$ is fixed with $\#\Lambda=m$, the number of maps $\iota:[k]\to\BF_2^{c+a}$
with $\Im(\iota)=\Lambda$ is equal to
the number of surjections $\#\{[k]\twoheadrightarrow[m]\}$
from $[k]$ to $[m]$, which clearly satisfies
$m^k-m(m-1)^k\leq\#\{[k]\twoheadrightarrow[m]\}\leq m^k$.
(In fact $\#\{[k]\twoheadrightarrow[m]\}=\sum_{i=1}^m(-1)^{m-i}\binom{m}{i}i^k$.)
Therefore \eqref{e:E2Xk-alg} implies that
\begin{align*}
\BE(2^{X_k})
&=\frac{1}{\#\Omega_k^{\Sigma,\mathbf s}\cdot 2^{ak+b}}
\sum_{\Lambda\text{ unlinked}}
\sum_{\substack{\iota:[k]\to\BF_2^{c+a}\\
\Im(\iota)=\Lambda}}
\sum_{\widetilde v,\widetilde w}
\sum_{\omega\in\Omega_k^{\Sigma,\mathbf s}}
(-1)^{\psi_{B,v,w}(\omega)} \\
&=\frac{1}{2^{ak+b}}
\sum_{m=1}^{2^a}
2^{(1-m)\cdot\#\Sigma}
\cdot\#\{[k]\twoheadrightarrow[m]\}\cdot
\sum_{\substack{\Lambda\text{ unlinked}\\
\#\Lambda=m}}
\sum_{\widetilde v,\widetilde w}
\sum_{\mathbf z\in\Omega_\Lambda^{\Sigma,\mathbf s}}
(-1)^{\psi_{B,v,w}(\mathbf z)}.
\end{align*}
Its main term $E^{(\infty)}$
comes from the $m=2^a$ term in the above sum
and by replacing $\#\{[k]\twoheadrightarrow[m]\}$
with $m^k$. Namely, the explicit formula of $E^{(\infty)}$ is
\begin{equation}
\label{e:high-rank-moment}
E^{(\infty)}
=2^{(1-2^a)\cdot\#\Sigma-b}
\sum_{\substack{\Lambda\text{ unlinked}\\
\#\Lambda=2^a}}
\sum_{\widetilde v,\widetilde w}
\sum_{\mathbf z\in\Omega_\Lambda^{\Sigma,\mathbf s}}
(-1)^{\psi_{B,v,w}(\mathbf z)}.
\end{equation}
The number of different Lagrangian subspaces of $\BF_2^{2c}$
is $\#\big(\Sp_{2c}(\BF_2)/(\Sp_{2c}(\BF_2)\cap(\begin{smallmatrix}
* & * \\ 0 & *
\end{smallmatrix}))\big)=\prod_{i=1}^c(2^i+1)$.
Therefore an upper bound of $E^{(\infty)}$ is
$$
E^{(\infty)}\leq
2^{(1-2^a)\cdot\#\Sigma-b}
\cdot 2^c\cdot\prod_{i=1}^c(2^i+1)
\cdot 2^{b+d}
\cdot 2^{(2^a-1)\cdot\#\Sigma}
=2^{c(c+3)/2+d}\cdot\prod_{i=1}^c(1+2^{-i})
<3\cdot 2^{c(c+3)/2+d}.
$$
Let $k_0\geq 1$ be any fixed integer such that $k\geq k_0$.
For simplicity of notation, denote
$x:=(1-2^{-a})^k$ and $x_0:=(1-2^{-a})^{k_0}$, so that $x\leq x_0<1$.
Note that for any $1\leq m\leq 2^a-1$, $(1-2^{-a}m)^k\leq(1-2^{-a})^{mk}=x^m$.
So we may get an estimate of error term
\begin{align*}
&\left|\BE(2^{X_k})-E^{(\infty)}\right|
\leq\frac{1}{2^{ak+b}}
\sum_{m=1}^{2^a-1}
2^{(1-m)\cdot\#\Sigma}
\cdot m^k\cdot\binom{2^a}{m}
\cdot 2^c\cdot\prod_{i=1}^c(2^i+1)
\cdot 2^{b+d}
\cdot 2^{(m-1)\cdot\#\Sigma} \\
&\qquad{}+\frac{1}{2^{ak+b}}
\cdot 2^{(1-2^a)\cdot\#\Sigma}
\cdot 2^a(2^a-1)^k
\cdot 2^c\cdot\prod_{i=1}^c(2^i+1)
\cdot 2^{b+d}
\cdot 2^{(2^a-1)\cdot\#\Sigma} \\
&=2^{c(c+3)/2+d}\cdot\prod_{i=1}^c(1+2^{-i})\cdot
\left(
\sum_{m=1}^{2^a-1}
(1-2^{-a}m)^k\cdot\binom{2^a}{m}
+2^ax\right) \\
&<3\cdot 2^{c(c+3)/2+d}\cdot
\left(
\left(1+x\right)^{2^a}
-1-x^{2^a}
+2^ax\right)
\leq 3\cdot 2^{c(c+3)/2+d}\cdot
\left(\frac{\left(1+x_0\right)^{2^a}
-1-x_0^{2^a}}{x_0}+2^a\right)x.
\end{align*}
This completes the proof.
\end{proof}

\section{Natural density and average order}

Recall that for $r\geq 1$, $S_r(\Sigma)$ is
the set of positive square-free integers
prime to $\Sigma$ and has exactly $r$ prime factors,
and $\Omega_r^{\Sigma,*}$ is the corresponding
probability space of Legendre symbols
defined in \eqref{e:Omega-k-Sigma-*}.
For all $n\in S_r(\Sigma)$
fix an order of prime factors of $n$.
Then there is a natural map $\omega:S_r(\Sigma)\to\Omega_r^{\Sigma,*}$.
If $N$ is a positive real number, define
$S_r(N,\Sigma):=\{n\in S_r(\Sigma)\mid n<N\}$,
similarly define $S_{\geq 1}(N,\Sigma)$.

Let $I$ be a countable set. For each $r\geq 1$,
let $B_r:\Omega_r^{\Sigma,*}/S_r\to I$ be a random variable.
For $n\in S_r(\Sigma)$,
we can define $B_r(n):=B_r(\omega(n))$.
If there is no risk of confusion, the $B_r(\omega)$ and $B_r(n)$ are simply denoted by
$B(\omega)$ and $B(n)$.
Namely, the $B=(B_r)_{r\geq 1}$ also gives $\bigsqcup_{r=1}^\infty\Omega_r^{\Sigma,*}/S_r\to I$
and $S(\infty,\Sigma)\to I$.
For each $i\in I$,
there are three densities associated to $B$.
\begin{itemize}
\item
For each $r\geq 1$,
the (algebraic) $\omega$-density
$$
P^{(r)}(i):=\BP(B_r(\omega)=i\mid\omega\in\Omega^{\Sigma,*}_r)
\qquad\text{and}\qquad
P^{(\infty)}(i):=\lim_{r\to\infty}P^{(r)}(i).
$$
\item
For each $r\geq 1$,
the natural $\omega$-density
$$
P_\nat^{(r)}(i):=\lim_{N\to\infty}\BP\big(B_r(n)=i\mid n\in S_r(N,\Sigma)\big)
\qquad\text{and}\qquad
P_\nat^{(\infty)}(i):=\lim_{r\to\infty}P_\nat^{(r)}(i).
$$
\item
The natural density
$$
P_\nat(i):=\lim_{N\to\infty}\BP\big(B(n)=i\mid n\in S_{\geq 1}(N,\Sigma)\big).
$$
\end{itemize}

By the work of Gerth \cite{Ger84},
the algebraic $\omega$-density $P^{(r)}(i)$
is equal to the natural $\omega$-density $P_\nat^{(r)}(i)$.
The main result of this section gives a uniform behavior
of the limit of algebraic $\omega$-density $P^{(\infty)}(i)$
(if exists) related to the natural density $P_\nat(i)$,
which is a key ingredient of the proof of Theorem \ref{main theorem}.

\begin{thm}[$\omega$-density implies natural density]
\label{p:natural-density}
Let $I$ and $B=(B_r)_{r\geq 1}$ be as above.
For each $r\geq 1$, let $I_r\subset I$ be a finite subset.
Suppose that for each $i\in I$, the limit $P^{(\infty)}(i)$ exists.
Then there are absolute constants $C_1>0$ and $c>0$,
such that for any fixed
$\epsilon>0$, the following inequality
holds for sufficiently large $N$ (depending on $\epsilon$):
\begin{multline*}
\sum_{i\in I}
\left|\BP\big(B(n)=i\mid n\in S_{\geq 1}(N,\Sigma)\big)-P^{(\infty)}(i)\right| \\
\leq I_\epsilon(\log\log N)\cdot C_1\cdot\exp(-c(\log\log\log N)^{1/2})
+R_\epsilon(\log\log N),
\end{multline*}
where
$$
I_\epsilon(x):=1+\max_{(1-\epsilon)x\leq r\leq(1+\epsilon)x}\#I_r
$$
and
$$
R_\epsilon(x):=\max_{(1-\epsilon)x\leq r\leq(1+\epsilon)x}
\left(\sum_{i\in I_r}\left|P^{(r)}(i)-P^{(\infty)}(i)\right|
+\sum_{i\notin I_r}\left(P^{(r)}(i)+P^{(\infty)}(i)\right)\right).
$$
In particular, if $\lim_{x\to\infty}I_\epsilon(x)\exp(-c(\log x)^{1/2})=0$
and $\lim_{x\to\infty}R_\epsilon(x)=0$,
then $P_\nat(i)=P^{(\infty)}(i)$ for all $i\in I$.
\end{thm}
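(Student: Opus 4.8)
The goal is Theorem~\ref{p:natural-density}: the algebraic $\omega$-density at level $r$ controls the natural density up to an explicitly small error. The strategy is to pass through the "intermediate" quantity $P_\nat^{(r)}(i)$ (the natural density restricted to integers with exactly $r$ prime factors) and combine two inputs: (1) Gerth's theorem \cite{Ger84}, already cited above, which identifies $P^{(r)}(i)=P_\nat^{(r)}(i)$ for each fixed $r$ (this requires, as usual, an effective equidistribution statement for tuples of Legendre symbols of $r$ primes below $N$, i.e.~a Chebotarev-type input with power-saving error, which one obtains from Smith's work \cite{Smith2} as indicated in the introduction); and (2) the classical fact that, among $n\in S_{\ge 1}(N,\Sigma)$, the number $\omega(n)$ of prime factors is concentrated around $\log\log N$ with Gaussian-type tails — the Erd\H os--Kac / Hardy--Ramanujan estimate
\[
\#\{n<N:\ |\omega(n)-\log\log N|>t\sqrt{\log\log N}\}\ \ll\ N\exp(-c t^2)
\]
(suitably adapted to squarefree $n$ coprime to $\Sigma$). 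The parameter $\epsilon$ in the statement is exactly the half-width of the window $[(1-\epsilon)\log\log N,(1+\epsilon)\log\log N]$ that one keeps; outside it the contribution is $o(1)$ by concentration.

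**Key steps, in order.** First, decompose
\[
\BP\big(B(n)=i\mid n\in S_{\ge1}(N,\Sigma)\big)
=\sum_{r\ge1}\BP\big(\omega(n)=r\mid n\in S_{\ge1}(N,\Sigma)\big)\cdot\BP\big(B(n)=i\mid n\in S_r(N,\Sigma)\big),
\]
and split the sum over $r$ at the window $W_\epsilon:=[(1-\epsilon)\log\log N,(1+\epsilon)\log\log N]$. Second, for $r\notin W_\epsilon$, bound the total contribution (summed over all $i\in I$, using $\sum_i$ of probabilities $\le1$ on each side) by $\BP(\omega(n)\notin W_\epsilon\mid n\in S_{\ge1}(N,\Sigma))+\sum_{r\notin W_\epsilon}\BP(\omega=r)P^{(\infty)}(i)$-type terms; the first is $\ll\exp(-c(\log\log N)^{?})$ by concentration, the second is bounded by the same. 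Third, for $r\in W_\epsilon$, use Gerth to replace $\BP(B(n)=i\mid n\in S_r(N,\Sigma))$ by $P^{(r)}(i)$ up to an error; summing the Gerth error over the $\#W_\epsilon\asymp\epsilon\log\log N$ values of $r$ and over $i\in I_r$ gives the factor $I_\epsilon(\log\log N)$ times the Chebotarev power-saving error, which is where $C_1\exp(-c(\log\log\log N)^{1/2})$ comes from (the triple-log arises because the number of primes whose symbols must equidistribute is $\sim\log\log N$, so the effective error in terms of $N$ translates to a bound of shape $\exp(-c\sqrt{\log(\log\log N)})$ after optimizing). Fourth, having reduced to $\sum_{r\in W_\epsilon}\BP(\omega=r)\sum_i|P^{(r)}(i)-P^{(\infty)}(i)|$, split each inner sum as $\sum_{i\in I_r}+\sum_{i\notin I_r}$; the first is $\le R_\epsilon(\log\log N)$ by definition, and for the second use $P^{(r)}(i)\le$ (tail) and $P^{(\infty)}(i)\le$ (tail), both absorbed into $R_\epsilon$. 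Since $\sum_{r\in W_\epsilon}\BP(\omega=r)\le1$, the $r$-sum collapses and we are left with exactly the two advertised terms. Finally, for the "in particular" clause: if $I_\epsilon(x)\exp(-c(\log x)^{1/2})\to0$ and $R_\epsilon(x)\to0$, then the right-hand side tends to $0$ as $N\to\infty$ for every $\epsilon$, forcing $\sum_i|P_\nat(i)-P^{(\infty)}(i)|=0$, hence $P_\nat(i)=P^{(\infty)}(i)$ for all $i$ (and in particular $P_\nat(i)$ exists).

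**Main obstacle.** The genuinely hard input is the \emph{uniformity in $r$} of the equidistribution of Legendre symbols: Gerth's original statement is for fixed $r$, whereas here $r$ ranges over a window of length $\asymp\epsilon\log\log N$ growing with $N$, and one needs the error term in $\BP(B(n)=i\mid n\in S_r(N,\Sigma))=P^{(r)}(i)+O(\cdot)$ to be small \emph{simultaneously} for all such $r$ and summable against the (possibly growing) cardinalities $\#I_r$. This is precisely the quantitative equidistribution result that the introduction attributes to Smith's earlier work \cite{Smith2}; the bookkeeping of how its power-saving-in-$N$ error degrades when $r\sim\log\log N$ — yielding the $\exp(-c(\log\log\log N)^{1/2})$ shape rather than a power of $\log N$ — is the delicate part. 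Everything else (the Erd\H os--Kac concentration, the elementary splitting, absorbing tails into $R_\epsilon$) is routine.
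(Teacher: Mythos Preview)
Your architecture matches the paper's exactly: decompose by the number of prime factors $r$, discard $r$ outside the window $[(1-\epsilon)\log\log N,(1+\epsilon)\log\log N]$ via Hardy--Ramanujan (the paper's Proposition~\ref{p:HR}), and for $r$ in the window invoke a uniform equidistribution result to replace the empirical probability by $P^{(r)}(i)$, then split the sum over $i$ into $I_r$ versus its complement. Two points to sharpen: the equidistribution input with error $\exp(-c(\log\log\log N)^{1/2})$ is not Gerth but Smith's box method \cite{Smith2}, packaged in the paper as Proposition~\ref{p:Smi2-6.11a-fake}, and the triple-log shape comes \emph{directly} from the density of Smith's ``good'' integers (Proposition~\ref{p:Smi2-5.4}) rather than from optimizing a Chebotarev-type error against $r\sim\log\log N$; also, the paper inserts an intermediate reduction from a general $\Sigma$ to $\Sigma_D=\{-1\}\cup\{\text{primes}<D\}$ via Lemma~\ref{p:add-primes}, which you have silently absorbed.
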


This result is due to an earlier work of Smith \cite{Smith2}.
The following is the main ingredients of the proof.
\begin{itemize}
\item
We may enlarge $\Sigma$ such that it contains $-1$
and all primes $<D$ for some fixed real number $D>3$,
which is the case considered in \cite{Smith2}.
This is reviewed in \S\ref{s:change-set}.
\item
By the result of Hardy and Ramanujan, we only need to consider integers $n$
such that the number $r$ of prime factors of $n$ is close to $\log\log N$;
these numbers are of density $1-o(1)$ as $N\to\infty$.
This argument is also used in Kane's method \cite{Kane13}.
This is reviewed in \S\ref{s:HR}.
\item
Smith's box method \cite{Smith2} improves Kane's method \cite{Kane13}
by introducing several additional conditions to integers $n$
besides the number of prime factors close to $\log\log N$,
such that their density is still $1-o(1)$,
and the problem is reduced to study
$\BP\big(B_r(n)=i\mid n\in X\big)$
for subsets $X$ of $S_r(\Sigma)$ satisfying some good condition
(called ``box'') and which contains at least one
such ``good'' integers.
This is reviewed in \S\ref{s:box-method}.
\end{itemize}

The error term in Theorem \ref{p:natural-density} is not sufficient
to deduce the result on the average order (Theorem \ref{main average})
from the distribution of ranks (Theorem \ref{main theorem}).
For this, we go back to the method of Heath-Brown \cite{HB93}, \cite{HB94}
and Kane \cite{Kane13}.
Recall that $r\sim\log\log N$ means $|r-\log\log N|\leq(\log\log N)^{2/3}$.

\begin{thm}[$\omega$-average order implies natural and ``essential'' average order]
\label{p:natural-average-order}
Let $B\in\RRM_{(ak+b)\times(ck+d)}$
be a high-rank restricted R\'edei matrix.
For each $k\geq 1$ let $X_k:\Omega_k^{\Sigma,\mathbf s}\to\BZ_{\geq 0}$,
$\omega\mapsto\corank(B(\omega))$,
and let $E^{(\infty)}:=\lim_{k\to\infty}\BE(2^{X_k})$
be the $\omega$-average order
as in Proposition \ref{p:high-rank-moment-limit}.
Then there exits a constant $C>0$ depending only on $B$,
such that for any $N>30$, the natural average order satisfies
$$
\left|\BE\big(2^{\corank(B(n))}\mid n\in S^{\mathbf s}(N,\Sigma)\big)
-E^{(\infty)}\right|<C\cdot\frac{(\log\log N)^{2^{c+1}}}{(\log N)^{1/2^a}},
$$
and the ``essential'' average order satisfies
$$
\left|\BE\big(2^{\corank(B(n))}\mid n\in S^{\mathbf s}(N,\Sigma),r(n)\sim\log\log N\big)-E^{(\infty)}\right|
<C\cdot\frac{\log\log\log N}{(\log\log N)^{1/3}}.
$$
\end{thm}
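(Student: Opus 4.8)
The plan is to expand $2^{\corank(B(n))}$ as a character sum, exactly as in the proof of Proposition~\ref{p:high-rank-moment-limit}, and then replace the uniform average over $\Omega_k^{\Sigma,\mathbf s}$ by the average over honest prime tuples, estimating the resulting sums of Legendre symbols by the analytic machinery of Heath-Brown~\cite{HB93},~\cite{HB94} (for the natural average) and Kane~\cite{Kane13} (for the essential one). Writing $V=\BF_2^{ck+d}$, $W=\BF_2^{ak+b}$ and using $2^{\corank(B(n))}=2^{-ak-b}\sum_{v\in V,\,w\in W}(-1)^{w^\RT B(n)v}$, the analogue of~\eqref{e:E2Xk-alg} reads
\[
\BE\bigl(2^{\corank(B(n))}\bigm| n\in S^{\mathbf s}(N,\Sigma)\bigr)=\frac{1}{\#S^{\mathbf s}(N,\Sigma)}\sum_{k\ge 1}\frac{1}{2^{ak+b}}\sum_{\substack{n=\ell_1\cdots\ell_k\in S^{\mathbf s}_k(\Sigma)\\ n<N}}\ \sum_{v,w}(-1)^{\psi_{B,v,w}(\omega(n))}.
\]
As recalled there, $\psi_{B,v,w}(\omega(n))$ depends only on $B$, on the auxiliary vectors $\widetilde v\in\BF_2^d$, $\widetilde w\in\BF_2^b$, and on the coloring $\iota\colon[k]\to\BF_2^{c+a}$ that $(v,w)$ induces on the prime coordinates; and for an \emph{unlinked} image $\Lambda=\Im(\iota)$ it is a linear combination of the symbols $z_\lambda^{(p)}=\sum_{i\in[k]_\lambda}\lrdd{p}{\ell_i}$ and of products $z_\lambda^{(p)}z_{\lambda'}^{(q)}$ with $p,q\in\Sigma$ fixed, so that $(-1)^{\psi_{B,v,w}(\omega(n))}$ is then a product of Jacobi symbols $\bigl(\frac{p}{n_\lambda}\bigr)$, $n_\lambda:=\prod_{i\in[k]_\lambda}\ell_i$; for a \emph{linked} $\Lambda$ it carries in addition at least one genuine cross-factor $\prod_{i\in[k]_\lambda,\,j\in[k]_{\lambda'}}\bigl(\frac{\ell_i}{\ell_j}\bigr)$ with $\lambda\ne\lambda'$.

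I then split the $(v,w)$-sum into an algebra part and an analysis part, mirroring the $\Omega_k$-computation. The \textbf{algebra part} is the contribution of colorings whose image is an unlinked subset of maximal size $\#\Lambda=2^a$: for these the character is, once the $\Sigma$-data $\mathbf s$ is imposed, insensitive to the precise primes, and summing over prime tuples $\ell_1<\cdots<\ell_k$ with $\prod\ell_i<N$ — using a Mertens/Landau–Selberg–Delange estimate for the number of such tuples in the prescribed class — reproduces the formula~\eqref{e:high-rank-moment} for $E^{(\infty)}$, up to the cost of (i)~replacing the exact surjection count $\#\{[k]\twoheadrightarrow[2^a]\}$ by $(2^a)^k$ and (ii)~discarding the unlinked $\Lambda$ with $\#\Lambda<2^a$; after summing over the (essentially Poisson) distribution of $k$ around $\log\log N$ both corrections are $O((\log N)^{-1/2^a})$, since each missing color class costs a factor $(m/2^a)^k$ with $k\approx\log\log N$ and $m\le 2^a-1$. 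The \textbf{analysis part} is everything else: colorings with a linked pair, where $(-1)^{\psi_{B,v,w}}$ contains an oscillating product of Legendre symbols across distinct color classes. Here one invokes Heath-Brown's iterated large-sieve bound for sums $\sum_{\ell_1,\dots,\ell_k}\prod_{i\ne j}\bigl(\frac{\ell_i}{\ell_j}\bigr)^{m_{ij}}$ over primes with bounded product and $(m_{ij})$ not of square type; summing its output over the $O\bigl((\log\log N)^{2^{c+1}}\bigr)$ relevant combinatorial configurations (colorings, $\widetilde v$, $\widetilde w$, and the underlying Lagrangians of Lemma~\ref{p:HB94-lem7}) yields the first displayed estimate.

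For the essential average order, I restrict to $n$ with $r(n)\sim\log\log N$; by the Hardy–Ramanujan theorem these $n$ form a $1-o(1)$ fraction of $S^{\mathbf s}(N,\Sigma)$, so the change of normalization is negligible, and now $k=r(n)$ is pinned to the window $\log\log N\pm(\log\log N)^{2/3}$. Running the same expansion but applying Kane's refinement of the character-sum estimate — which is uniform over this short range of $k$ — and then combining the width $(\log\log N)^{2/3}$ of the window with the convergence rate $\bigl|\BE(2^{X_k})-E^{(\infty)}\bigr|\ll(1-2^{-a})^k$ furnished by Proposition~\ref{p:high-rank-moment-limit} and with Kane's error bound, one obtains the $O\bigl(\log\log\log N/(\log\log N)^{1/3}\bigr)$ claimed.

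The main obstacle is the analysis part: transcribing the estimates of Heath-Brown and Kane — originally designed for the single Rédei matrix $A$ attached to $ny^2=x^3-x$ — to an arbitrary high-rank Rédei matrix $B$, and in particular verifying, uniformly in $k$ and in all the combinatorial data $(\iota,\widetilde v,\widetilde w)$, precisely which of the resulting Legendre-symbol sums are trivial (hence feed the main term $E^{(\infty)}$) versus genuinely oscillating, and checking that the oscillating ones still fall within the scope of the large-sieve bound with the stated exponents. The linear-algebra bookkeeping — the linked/unlinked dichotomy and Lemma~\ref{p:HB94-lem7} — is already available from the proof of Proposition~\ref{p:high-rank-moment-limit}, so the genuinely new ingredient is the careful packaging of the analytic input; throughout one must carry the $\Sigma$-equivalence constraint $\mathbf s$, since it is exactly what forces the main term to be $E^{(\infty)}$ rather than an $\mathbf s$-dependent quantity with residual cancellation.
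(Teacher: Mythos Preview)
Your overall plan---expand $2^{\corank(B(n))}$ as a character sum, separate main term from error via the linked/unlinked dichotomy of Lemma~\ref{p:HB94-lem7}, and feed the oscillating pieces into Heath-Brown's (respectively Kane's) analytic machinery---is exactly the paper's approach. But your organization of the natural-average proof differs from the paper's in a way that leaves a real gap.

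The paper does \emph{not} split according to the image $\Lambda=\Im(\iota)$ of the coloring. It first performs a dyadic decomposition $n=\prod_\lambda n_\lambda$ with $A_\lambda<n_\lambda\le 2A_\lambda$, and then partitions the tuples $\mathbf A=(A_\lambda)$ into three types relative to a threshold $\exp(\kappa(\log\log N)^2)$: type~(1), some linked pair has $A_\lambda$ large and $A_{\lambda'}\ge 1$ (large sieve gives $O(N/\log N)$); type~(2), at most $2^a-1$ of the $A_\lambda$ are large; type~(3), the remainder, which forces exactly $2^a$ large classes forming a maximal unlinked set and yields the main term $E^{(\infty)}$ after a further splitting by residues $\bmod\ \Sigma$. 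The dominant error $(\log\log N)^{2^{c+1}}/(\log N)^{1/2^a}$ comes from type~(2), via the bound $\sum_{n'\le N'}2^{c\cdot r(n')}/n'\ll(\log N')^{2^c}$ with $\log N'\asymp(\log\log N)^2$, where $n'$ is the product of the small-threshold $n_\lambda$. Your claim that the $(\log\log N)^{2^{c+1}}$ factor counts ``combinatorial configurations'' is incorrect---those are $O_B(1)$---and your two-way split (maximal unlinked image versus linked image) omits this type~(2) regime entirely: an $n$ can have all $2^{c+a}$ colors present yet have most of its mass concentrated in fewer than $2^a$ color classes, and this is precisely where the $(\log\log N)^{2^{c+1}}$ arises. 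Without the dyadic decomposition and the size threshold you cannot invoke Heath-Brown's Lemma~7 cleanly, since that bound is stated per dyadic box.

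Your treatment of the essential average is close to the paper's. The paper introduces the statistic $m(\psi)$ (the number of indices linked to some other index under $\iota$), counts for each $m$ the number of $\psi$ with $m(\psi)=m$ using the Lagrangian bound, and applies Kane's Proposition~9 ($m>0$) and Proposition~10 ($m=0$) at each fixed $r$. The final error $\log\log\log N/(\log\log N)^{1/3}$ then comes, as you say, from summing Kane's per-$r$ error of order $N\cdot\log\log\log N/\log\log N$ over the window of width $(\log\log N)^{2/3}$ and normalizing.
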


\subsection{Natural density and Smith's box method}
\label{s:natural-density}

In this section we prove Theorem \ref{p:natural-density}.
We change the meaning of $r\sim\log\log N$ in this section,
from $|r-\log\log N|\leq(\log\log N)^{2/3}$ to
$|r-\log\log N|\leq\frac14(\log\log N)^{2/3}$.
(Here the $\frac14$ is arbitrary; any positive constant $<1$ should also work.)

\subsubsection{Box method}
\label{s:box-method}

We review box method of Smith \cite{Smith2}.
The main result is Proposition \ref{p:Smi2-6.11a}.

Let $N>30$, $D_1>D>3$, $C_0>1$ be real numbers such that
$\log N>(\log D)^4$.
Define $\Sigma_D$ to be the set
consists of $-1$ and all primes $<D$.
We introduce the notation $r\sim_D\log\log N$
which means that $|r-\log(\frac{\log N}{\log D})|
\leq(\log(\frac{\log N}{\log D}))^{2/3}$.
As in \cite{Smith2}, Definition 5.3,
we only consider $r$ fixed such that $r\sim_D\log\log N$.
For an integer $n\in S_r(N,\Sigma_D)$,
let $\ell_1<\cdots<\ell_r$ be its prime factors ordered by size.
The following additional conditions for $n$ is introduced
(see \cite{Smith2}, Definition 5.3 and Proposition 6.10):
\begin{itemize}
\item
$n$ is called \emph{comfortably spaced above} $D_1$
if $\ell_i>D_1$ ($1\leq i<r$) implies
$\ell_i>2D_1$ and $\ell_{i+1}>2\ell_i$.
\item
$n$ is called \emph{$C_0$-regular} if the range of $\ell_i$
is specified in certain way for all $i\leq r/3$.
\item
$n$ is called \emph{extravagantly spaced} if
$\ell_i$ is larger than $\ell_{i-1}$ in certain way
for some $\sqrt{r}/2<i<r/2$.
\item
$n$ is called \emph{Siegel-free above $D_1$} if it is not contained
in the set $V$ of \cite{Smith2}, Proposition 6.10.
\end{itemize}
The $n$ is called ``good'' if it satisfies all of the above conditions.
Take $D_1=D^{(\log\log N)^{c_{13}}}$
and $C_0=c_{14}\cdot\log\log\log N$
for some positive absolute constants $c_{13},c_{14}$ in \cite{Smith2}.
The first important result is that, the ``good'' integers has density $1-o(1)$
as $N\to\infty$.

\begin{prop}
[\cite{Smith2}, Theorem 5.4 and Proposition 6.10]
\label{p:Smi2-5.4}
Let $W$ be the set of
``good'' integers $n\in S_r(N,\Sigma_D)$.
Then
$$
\BP\big(n\in W\mid n\in S_r(N,\Sigma_D)\big)>1-C_1\cdot
\exp(-c(\log\log\log N)^{1/2})
$$
for some absolute constants $C_1>0,c>0$.
\end{prop}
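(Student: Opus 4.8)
The plan is to show that each of the four properties defining a ``good'' integer fails only on a subset of $S_r(N,\Sigma_D)$ of relative density at most $C_1\cdot\exp(-c(\log\log\log N)^{1/2})$, for $r$ in the relevant range $r\sim_D\log\log N$, and then to take a union bound over these four exceptional sets. The whole argument is carried out in \cite{Smith2}; what remains is to quote the relevant statements with the parameters $D_1=D^{(\log\log N)^{c_{13}}}$ and $C_0=c_{14}\cdot\log\log\log N$ fixed as in \cite{Smith2}, and to combine them.

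First I would dispatch the conditions ``comfortably spaced above $D_1$'', ``$C_0$-regular'', and ``extravagantly spaced'': together these are precisely the hypotheses of \cite{Smith2}, Definition 5.3, and \cite{Smith2}, Theorem 5.4 bounds the relative density of $n\in S_r(N,\Sigma_D)$ violating any one of them by $C_1\cdot\exp(-c(\log\log\log N)^{1/2})$. The proofs there are elementary sieve estimates: the probability that two consecutive prime factors of $n$ lie within a bounded logarithmic window, that the small prime factors miss their prescribed dyadic ranges, or that no sufficiently large multiplicative gap occurs in the prescribed index window, are each controlled by Mertens-type sums, with the exponential error term coming from the concentration of $\omega(n)$ around $\log\log N$ (Hardy--Ramanujan, cf.\ \S\ref{s:HR}).

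Next I would handle ``Siegel-free above $D_1$'', i.e.\ the requirement that $n$ avoid the set $V$ of \cite{Smith2}, Proposition 6.10; that proposition bounds the relative density of $V$ in $S_r(N,\Sigma_D)$ by $C_1\cdot\exp(-c(\log\log\log N)^{1/2})$ as well. This is the only step that is not purely combinatorial: it combines an upper-bound sieve with a bound on the number of Dirichlet characters possessing a Landau--Siegel zero in the relevant range, so Siegel's theorem (or an effective substitute adequate for the density statement) enters here. Finally, the complement of $W$ in $S_r(N,\Sigma_D)$ lies in the union of the four exceptional sets just described, so a union bound gives the stated inequality, after absorbing the factor $4$ into $C_1$. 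The main obstacle is the Siegel-free estimate, since it is the sole ingredient requiring control of exceptional zeros; the other three conditions reduce to routine prime-number-theorem and sieve bounds, and the proposition as a whole is, as indicated, a repackaging of \cite{Smith2}, Theorem 5.4 together with Proposition 6.10.
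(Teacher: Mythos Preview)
Your proposal is correct and matches the paper's treatment: the proposition is stated without proof in the paper, as a direct citation of \cite{Smith2}, Theorem 5.4 and Proposition 6.10, and your outline of splitting into the four ``good'' conditions, invoking Theorem 5.4 for the first three and Proposition 6.10 for the Siegel-free condition, then taking a union bound, is exactly how those cited results combine to yield the statement.
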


Smith considered certain subsets $X$ of $S_r(\Sigma_D)$, called ``box''.

\begin{defn}[\cite{Smith2}, Definition 6.8]
A ``box'' $X$ is a subset of $S_r(\Sigma_D)$ given by the following data:
\begin{itemize}
\item
an integer $0\leq k\leq r$,
\item
primes $D<\ell_1<\cdots<\ell_k<D_1$,
\item
real numbers $D_1<t_{k+1}<t_{k+1}'<\cdots<t_r<t_r'$,
where $t_i':=(1+\frac{1}{\exp(i-k)\log D_1})t_i$ for $i\geq k+1$,
\end{itemize}
such that
$$
X:=\left\{\prod_{i=1}^r\ell_i~\middle|~t_i<\ell_i<t_i'
\text{ prime for }i\geq k+1\right\}.
$$
\end{defn}

The second important result is that,
if a ``box'' contains at lease one ``good'' integers,
then the equidistribution of Legendre symbols holds on it.

\begin{prop}
[\cite{Smith2}, Theorem 6.4 and Corollary 6.11]
\label{p:Smi2-6.4}
Let $X\subset S_r(\Sigma_D)$ be a box, such that
it contains an element $<N$ which is ``good''.
Then there are positive absolute constants $c_8,c_{12}$,
such that for any $S_r$-stable subset $B_r$ of $\Omega_r^{\Sigma_D,*}$,
$$
\left|\BP\big(\omega(n)\in B_r\mid
n\in X\big)-\BP(B_r)\right|\leq r^{-c_{12}}+D_1^{-c_8}.
$$
\end{prop}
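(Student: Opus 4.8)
The plan is to follow the proof in \cite{Smith2} (Theorem 6.4 and Corollary 6.11); here I outline its architecture. Write $V:=\Omega_r^{\Sigma_D,*}$, an $\BF_2$-vector space carrying the $S_r$-action that permutes the indices $1,\dots,r$. Since $B_r$ is $S_r$-stable, Fourier inversion on $V$ shows that $\bigl|\BP(\omega(n)\in B_r\mid n\in X)-\BP(B_r)\bigr|$ is controlled by a weighted sum, over nonzero characters $\psi\in\Hom(V,\BF_2)$, of the normalized character sums $(\#X)^{-1}\bigl|\sum_{n\in X}(-1)^{\psi(\omega(n))}\bigr|$, in which only the ``structured'' $\psi$ contribute appreciably (compare the unlinked-subset phenomenon of Lemma~\ref{p:HB94-lem7}). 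A character $\psi$ amounts to a pattern of pairs $\{i,j\}$, picking out $a_{ij}$, together with a pattern of pairs $(i,p)$ with $p\in\Sigma_D$, picking out $z_i^{(p)}$. Using that each $n\in X$ has its $k$ smallest prime factors fixed while $\ell_{k+1},\dots,\ell_r$ range over the prescribed narrow intervals, the character sum becomes a sum over those primes of a product $\prod\left(\frac{\ell_i}{\ell_j}\right)\cdot\prod\left(\frac{d_i}{\ell_i}\right)$ of Jacobi symbols, with $d_i\in\BQ(\Sigma_D,2)$ of size bounded in terms of $D$.

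First I would dispose of the ``$\Sigma$-type'' factors $\prod_i\left(\frac{d_i}{\ell_i}\right)$: by quadratic reciprocity each is a fixed Dirichlet character of bounded modulus evaluated at the prime $\ell_i$, so the Siegel--Walfisz theorem in the short interval containing $\ell_i$ yields a power saving, \emph{provided} that modulus has no exceptional real zero --- which is exactly what the ``Siegel-free'' hypothesis on $X$ (membership outside the set $V$ of \cite{Smith2}, Proposition 6.10) guarantees. Combining the savings over the $\sim r$ intervals produces the $D_1^{-c_8}$ term.

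The core difficulty --- and the one step I would not attempt to redo from scratch --- is the ``matrix-type'' factor $\prod_{\{i,j\}\in P}\left(\frac{\ell_i}{\ell_j}\right)$: one must bound sums of products of Jacobi symbols over primes confined to short intervals. No classical single-character estimate suffices, because for the larger primes $\ell_i$ the relevant modulus $\prod_{j<i}\ell_j$ is comparable to, or exceeds, $\ell_i$, so P\'olya--Vinogradov is vacuous. Smith's resolution is an iterative argument that reveals the primes of $n$ in a controlled order and, at each stage, extracts cancellation from a \emph{bilinear} pair of prime variables rather than from a single one, feeding into Heath-Brown's large sieve for real (quadratic) characters; the conditions built into a ``good'' integer --- $C_0$-regularity fixing the sizes of the small primes, and ``comfortably spaced''/``extravagantly spaced'' ensuring multiplicative room between consecutive primes --- are engineered precisely so that each bilinear input lies in its range of validity and the accumulated errors form a convergent geometric-type series, uniformly over all boxes $X$ containing a good element. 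Tracking the losses through the iteration produces the $r^{-c_{12}}$ term, and a general $\psi$ mixing the two types is treated within the same iteration.

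Finally I would assemble the pieces: summing the per-character bounds over the nonzero $S_r$-orbits of characters --- their weighted total being absorbed because the matrix-type saving improves with the complexity of $\psi$ --- yields the asserted bound $r^{-c_{12}}+D_1^{-c_8}$ on the total-variation distance. The main obstacle throughout is the uniform multilinear Jacobi-symbol estimate of \cite{Smith2} driving the iteration; granting it, the remainder is bookkeeping around the combinatorics of characters and the arithmetic of exceptional zeros, both of which are pinned down by the definition of a ``good'' integer and the ``box'' geometry.
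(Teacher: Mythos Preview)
The paper does not give its own proof of this proposition: it is stated as a citation of \cite{Smith2}, Theorem 6.4 and Corollary 6.11, and used as a black box in the proof of Proposition \ref{p:Smi2-6.11a}. So there is no ``paper's own proof'' to compare your proposal against.

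That said, your outline is a broadly faithful sketch of Smith's argument --- Fourier expansion on the $\BF_2$-space of Legendre-symbol data, reduction to oscillatory sums of products of Jacobi symbols over primes in the box intervals, Siegel--Walfisz input (with the Siegel-free hypothesis handling exceptional zeros) for the fixed-modulus factors, and an iterated bilinear large-sieve argument for the mutual $\bigl(\tfrac{\ell_i}{\ell_j}\bigr)$ factors, with the ``comfortably/extravagantly spaced'' and ``$C_0$-regular'' conditions guaranteeing the spacing needed for the bilinear machinery. One small correction: $\Omega_r^{\Sigma_D,*}$ as defined in \eqref{e:Omega-k-Sigma-*} is an \emph{affine} subspace of $M_r(\BF_2)\times(\BF_2^r)^{\#\Sigma_D}$ (the quadratic-reciprocity constraint $a_{ji}=a_{ij}+z_i^{(-1)}z_j^{(-1)}$ is not linear), so ``Fourier inversion on $V$'' should be understood on the ambient vector space or on a linear parametrization; this is harmless but worth saying precisely. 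The only genuinely nontrivial point you flag correctly is the uniform multilinear Jacobi-symbol bound: the number of characters is of size $2^{\Theta(r^2)}$, so one cannot simply sum crude per-character savings --- Smith's iteration is arranged so that the saving improves geometrically with the number of ``active'' pairs in $\psi$, and this is what makes the final sum converge. Your proposal defers this to \cite{Smith2}, which is exactly what the paper does as well.
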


The third important result is that,
to study the density of a subset of $S_r(\Sigma_D)$,
one only needs to study its density restricted to each good boxes.

\begin{prop}[\cite{Smith2}, Proposition 6.9]
\label{p:Smi2-6.9}
Let $\delta\in\BR$ and $\epsilon>0$.
Let $W\subset S_r(N,\Sigma_D)$ be a subset, whose elements
are all comfortably spaced above $D_1$. Suppose
$$
\BP\big(n\in W\mid n\in S_r(N,\Sigma_D)\big)>1-\epsilon.
$$
Let $V$ be a subset of $S_r(\Sigma_D)$.
Suppose that for any box $X\subset S_r(\Sigma_D)$
satisfying $X\cap W\neq\varnothing$, we have
$$
\left|\BP\big(n\in V\mid n\in X\big)-\delta\right|<\epsilon.
$$
Then
$$
\left|\BP\big(n\in V\mid n\in S_r(N,\Sigma_D)\big)-\delta\right|
\leq C_1\cdot(\epsilon+(\log D_1)^{-1})
$$
for some absolute constant $C_1>0$.
\end{prop}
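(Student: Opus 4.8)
The plan is to prove Proposition~\ref{p:Smi2-6.9} by covering (most of) $S_r(N,\Sigma_D)$ by a disjoint family of boxes and averaging the box-level estimate $|\BP(n\in V\mid n\in X)-\delta|<\epsilon$ over the boxes that meet $W$. We may assume $\epsilon<\tfrac1{10}$: otherwise, since some box contains an element of $W$ and conditional probabilities lie in $[0,1]$, we have $\delta\in[-\epsilon,1+\epsilon]$ and the claimed bound is trivial for $C_1$ large. Thus from now on $|\delta|\le 2$.

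The first step is to build the box partition. Given a comfortably spaced $n=\ell_1\cdots\ell_r\in S_r(\Sigma_D)$ with $\ell_1<\cdots<\ell_r$, and letting $\ell_1,\dots,\ell_k$ be its prime factors below $D_1$, assign to $n$ a ``grid box'' as follows: keep those small primes, tile $(D_1,\infty)$ by the geometric grid of ratio $1+\tfrac1{e\log D_1}$ and let $(t_{k+1},t'_{k+1})$ be the grid interval containing $\ell_{k+1}$; then tile $(t'_{k+1},\infty)$ by the grid of ratio $1+\tfrac1{e^2\log D_1}$, locate $\ell_{k+2}$, and so on, so that $t'_i=(1+\tfrac1{\exp(i-k)\log D_1})t_i$ at each step. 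Comfortable spacing, $\ell_{i+1}>2\ell_i$, together with $D_1$ large enough that $(1+\tfrac1{\log D_1})^2<2$ (true for all large $N$), forces $\ell_{i+1}$ to land strictly above $t'_i$, so the chain $D_1<t_{k+1}<t'_{k+1}<\cdots$ is genuinely nested and one obtains a legitimate box. Running the procedure on any integer lying in a grid box reproduces that box; hence distinct grid boxes are disjoint as sets of integers, and every comfortably spaced $n\in S_r(\Sigma_D)$ lies in exactly one. I would also record the ``thin box'' estimate $\prod_{i>k}(t'_i/t_i)\le\exp\!\big(\tfrac1{(e-1)\log D_1}\big)<1+\tfrac2{\log D_1}$, so a box containing one integer $<N$ has all of its (finitely many) elements $<N\big(1+\tfrac2{\log D_1}\big)$.

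The second step is the averaging. Put $M:=\#S_r(N,\Sigma_D)$, let $\mathcal B_W$ be the finite set of grid boxes meeting $W$, and for $X\in\mathcal B_W$ set $X^-:=X\cap\{n<N\}=X\cap S_r(N,\Sigma_D)$ and $X^+:=X\cap\{n\ge N\}$. Since each element of $W$ is comfortably spaced and $<N$, one has $W\subseteq\bigsqcup_{X\in\mathcal B_W}X^-$, so the complement of $\bigsqcup_{X\in\mathcal B_W}X^-$ inside $S_r(N,\Sigma_D)$ lies in $S_r(N,\Sigma_D)\setminus W$ and has fewer than $\epsilon M$ elements; hence $\#(V\cap S_r(N,\Sigma_D))=\sum_{X\in\mathcal B_W}\#(V\cap X^-)+O(\epsilon M)$. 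For $X\in\mathcal B_W$ the hypothesis gives $\#(V\cap X)=(\delta+\theta_X\epsilon)\#X$ with $|\theta_X|<1$; splitting $X=X^-\sqcup X^+$ and using $|\delta|\le2$ yields $\#(V\cap X^-)=(\delta+\theta_X\epsilon)\#X^-+O(\#X^+)$. Summing over $\mathcal B_W$, the main term is $\delta\sum_X\#X^-=\delta M+O(\epsilon M)$ (since $(1-\epsilon)M<\#W\le\sum_X\#X^-\le M$), the $\theta_X$-terms contribute $O(\epsilon M)$ by disjointness, and by disjointness plus the thin box estimate $\sum_X\#X^+\le\#\{N\le n<N(1+\tfrac2{\log D_1}):n\text{ squarefree, coprime to }\Sigma_D,\text{ with exactly }r\text{ prime factors}\}$. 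Dividing by $M$, Proposition~\ref{p:Smi2-6.9} follows provided this short-interval count is $O(M/\log D_1)$ with absolute implied constant.

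That short-interval count is the one genuinely analytic point and the main obstacle: one needs the number of squarefree integers coprime to $\Sigma_D$ with exactly $r$ prime factors in an interval $(N,N(1+\eta))$, $\eta\asymp1/\log D_1$, to be $O(\eta)$ times the corresponding count up to $N$, uniformly for $r\sim_D\log\log N$. This is a Landau--Selberg--Delange / Sathe--Selberg type estimate; it applies here because $\eta N=N/\log D_1=N^{1-o(1)}$ (as $\log D_1=(\log\log N)^{c_{13}}\log D=o(\log N)$) is long enough for the prime-counting asymptotics, and $r$ is of order $\log\log N$, well inside the uniformity range. The remaining ingredients---the greedy box bookkeeping, the elementary manipulations with $\delta$ and $\epsilon$, and the disjointness---are routine.
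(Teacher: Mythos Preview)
The paper does not give its own proof of this proposition; it is quoted verbatim from \cite{Smith2}, Proposition~6.9, and used as a black box in the proof of Proposition~\ref{p:Smi2-6.11a}. So there is no in-paper argument to compare against.

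Your sketch is the natural argument and matches what one expects Smith's proof to be: partition the comfortably spaced integers into disjoint grid boxes, average the box-level hypothesis, and control the boundary overshoot $\sum_X\#X^+$ by a short-interval count for integers with exactly $r$ prime factors. The box construction and the disjointness/covering bookkeeping are correct as you have them; in particular the observation that the adaptive grid depends only on previously chosen grid endpoints (not on $n$) is exactly what makes the boxes a genuine partition, and the comfortable-spacing hypothesis is used precisely to ensure each $\ell_{i+1}$ clears $t_i'$. You have also correctly isolated the one nontrivial analytic input: the Sathe--Selberg/Landau--Selberg--Delange bound giving $\#\{n\in[N,N(1+\eta)):\omega(n)=r,\ (n,\Sigma_D)=1,\ n\text{ squarefree}\}\ll\eta\cdot\#S_r(N,\Sigma_D)$ uniformly for $r\sim_D\log\log N$, valid here since $\eta N=N^{1-o(1)}$. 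With that estimate in hand, your decomposition gives the stated bound with an absolute $C_1$.
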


Combine all the above results, we can prove the equidistribution
of Legendre symbols on $S_r(\Sigma_D)$.

\begin{prop}
\label{p:Smi2-6.11a}
For any $S_r$-stable subset $B_r$ of $\Omega_r^{\Sigma_D,*}$,
$$
\left|\BP\big(\omega(n)\in B_r\mid n\in S_r(N,\Sigma_D)\big)-\BP(B_r)\right|
\leq C_1\cdot\exp(-c(\log\log\log N)^{1/2})
$$
for some absolute constants $C_1>0,c>0$.
\end{prop}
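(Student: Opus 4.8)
Looking at Proposition \ref{p:Smi2-6.11a}, I need to assemble the pieces that have just been set up: the box method results of Smith (Propositions \ref{p:Smi2-5.4}, \ref{p:Smi2-6.4}, \ref{p:Smi2-6.9}), together with a suitable choice of auxiliary parameters.

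Here is my plan for the proof.

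\medskip

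The plan is to apply Proposition \ref{p:Smi2-6.9} with $V:=\{n\in S_r(\Sigma_D)\mid\omega(n)\in B_r\}$ and $\delta:=\BP(B_r)$, taking for $W$ the set of ``good'' integers in $S_r(N,\Sigma_D)$. First I would fix the auxiliary parameters exactly as in Smith's setup: choose a real number $D>3$ with $\log N>(\log D)^4$, set $D_1=D^{(\log\log N)^{c_{13}}}$ and $C_0=c_{14}\log\log\log N$ for the absolute constants $c_{13},c_{14}$ from \cite{Smith2}, and restrict attention to $r\sim_D\log\log N$ (the case $r\not\sim_D\log\log N$ being negligible by Hardy–Ramanujan, as reviewed in \S\ref{s:HR}; more precisely one only needs the statement for such $r$, since the statement is about a fixed $r$). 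Second, by Proposition \ref{p:Smi2-5.4} the set $W$ of ``good'' integers satisfies $\BP(n\in W\mid n\in S_r(N,\Sigma_D))>1-\epsilon$ with $\epsilon=C_1\exp(-c(\log\log\log N)^{1/2})$, and every element of $W$ is in particular comfortably spaced above $D_1$, so the hypotheses of Proposition \ref{p:Smi2-6.9} on $W$ are met.

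\medskip

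Third, I would verify the per-box hypothesis of Proposition \ref{p:Smi2-6.9}: for every box $X\subset S_r(\Sigma_D)$ with $X\cap W\neq\varnothing$, one has $|\BP(n\in V\mid n\in X)-\delta|<\epsilon$. Since $X\cap W\neq\varnothing$ means $X$ contains an element $<N$ that is ``good'', Proposition \ref{p:Smi2-6.4} applies (here we use that $B_r$ is $S_r$-stable), giving
$$
\bigl|\BP\bigl(\omega(n)\in B_r\mid n\in X\bigr)-\BP(B_r)\bigr|\le r^{-c_{12}}+D_1^{-c_8}.
$$
With the chosen $D_1$ and with $r\sim_D\log\log N$, both $r^{-c_{12}}$ and $D_1^{-c_8}$ are $O(\exp(-c'(\log\log\log N)^{1/2}))$ for a suitable absolute $c'>0$ (indeed $D_1^{-c_8}$ is far smaller, being superpolynomially small in $\log\log N$), so this bound is $<\epsilon$ after adjusting constants. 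Thus Proposition \ref{p:Smi2-6.9} yields
$$
\bigl|\BP\bigl(n\in V\mid n\in S_r(N,\Sigma_D)\bigr)-\BP(B_r)\bigr|\le C_1\bigl(\epsilon+(\log D_1)^{-1}\bigr),
$$
and since $(\log D_1)^{-1}=(\,(\log\log N)^{c_{13}}\log D\,)^{-1}$ is also $O(\exp(-c(\log\log\log N)^{1/2}))$, the whole right-hand side is absorbed into $C_1\exp(-c(\log\log\log N)^{1/2})$ after renaming the constants. This is exactly the assertion of Proposition \ref{p:Smi2-6.11a}.

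\medskip

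The main obstacle is not any single step — each cited proposition does the heavy lifting — but rather the bookkeeping of constants and the checking that the error terms $r^{-c_{12}}$, $D_1^{-c_8}$, $(\log D_1)^{-1}$ all collapse into a single clean bound of the shape $\exp(-c(\log\log\log N)^{1/2})$ under the chosen normalization of $D_1$ and the constraint $r\sim_D\log\log N$; one must also confirm that $\log N>(\log D)^4$ can be arranged for all sufficiently large $N$ with $D$ held fixed (or slowly growing), so that Smith's hypotheses genuinely apply. A secondary point requiring care is the compatibility of the enlarged set $\Sigma_D$ with the original $\Sigma$: one invokes the change-of-$\Sigma$ discussion of \S\ref{s:change-set} to reduce to the case where $\Sigma$ contains $-1$ and all primes $<D$, which is the setting of \cite{Smith2}; this reduction is routine but must be stated to make the citation of Proposition \ref{p:Smi2-6.4} legitimate.
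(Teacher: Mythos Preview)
Your proposal is correct and follows exactly the paper's approach: apply Proposition \ref{p:Smi2-6.9} with $W$ the set of ``good'' integers (density bound from Proposition \ref{p:Smi2-5.4}), $V=\{n:\omega(n)\in B_r\}$, $\delta=\BP(B_r)$, and verify the per-box hypothesis via Proposition \ref{p:Smi2-6.4}. One minor remark: the change-of-$\Sigma$ reduction you flag at the end is not needed here, since Proposition \ref{p:Smi2-6.11a} is already stated for $\Sigma_D$; that reduction belongs to the next result (Proposition \ref{p:Smi2-6.11a-fake}), which passes from $\Sigma_D$ back to a general $\Sigma$.
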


\begin{proof}
In Proposition \ref{p:Smi2-6.9}, take $W$
to be all ``good'' integers $<N$,
take $V$ to be all $n\in S_r(\Sigma_D)$ such that $\omega(n)\in B_r$.
By Proposition \ref{p:Smi2-5.4}, we may take $\epsilon=C_1\cdot
\exp(-c(\log\log\log N)^{1/2})$ and $\delta=\BP(B_r)$.
Then the problem is reduced to Proposition \ref{p:Smi2-6.4}.
\end{proof}

\subsubsection{Change the set of primes}
\label{s:change-set}

Proposition \ref{p:Smi2-6.11a} requires that
$r\sim_D\log\log N$ and the set of primes is $\Sigma_D$.
For a general $\Sigma$, we can fix a real number $D>3$
such that $\Sigma\subset\Sigma_D$.
The main result in this section is Proposition \ref{p:Smi2-6.11a-fake},
which is similar to Proposition \ref{p:Smi2-6.11a} but for the set $\Sigma$
and requires that $r\sim\log\log N$.

First we consider the following settings.
Let $\Sigma\subset\Sigma'$ be finite sets consisting of $-1$
and some primes of $\BQ$.
Let $q=q_1\cdots q_t$ be a positive square-free integer whose prime factors
are in $\Sigma'\setminus\Sigma$, with $t\geq 0$ be its number of prime factors.
For each $r\geq 1$, let $\psi_r:\Omega^{\Sigma',*}_r/S_r\to\Omega^{\Sigma,*}_{r+t}/S_{r+t}$
be the unique map which makes the following diagram commutes:
$$
\xymatrix@C=3em{
~S_r(\Sigma')~\ar@{->>}[d]^-{\omega}\ar@{^(->}[r]^-{n\mapsto qn} & S_{r+t}(\Sigma)\ar@{->>}[d]^-{\omega} \\
\Omega^{\Sigma',*}_r/S_r\ar[r]^-{\psi_r} & \Omega^{\Sigma,*}_{r+t}/S_{r+t}.
}
$$
In fact, the map $\psi_r$ comes from a map
$\widetilde\psi_r:\Omega^{\Sigma',*}_r\to\Omega^{\Sigma,*}_{r+t}$
(not unique) and one of the possible choice is
$$
\big((a_{ij})_{1\leq i<j\leq r},
(z_i^{(p)})_{1\leq i\leq r,p\in\Sigma'}\big)\mapsto\big((b_{ij})_{1\leq i<j\leq r+t},
(w_i^{(p)})_{1\leq i\leq r+t,p\in\Sigma}\big),
$$
where
$$
b_{ij}=\begin{cases}
a_{ij},&\text{if }1\leq i<j\leq r, \\
z_i^{(q_{j-r})},&\text{if }1\leq i\leq r\text{ and }r+1\leq j\leq r+t, \\
\lrdd{q_{j-r}}{q_{i-r}},&\text{if }r+1\leq i<j\leq r+t,
\end{cases}
\quad\text{and}\quad
w_i^{(p)}=\begin{cases}
z_i^{(p)},&\text{if }1\leq i\leq r, \\
\lrdd{p}{q_{i-r}},&\text{if }r+1\leq i\leq r+t.
\end{cases}
$$

Let $B=(B_r)_{r=1}^\infty$ where
$B_r$ is a $S_r$-stable subset of $\Omega^{\Sigma,*}_r$.
Define $B'=(B_r')_{r=1}^\infty$
where $B_r'\subset\Omega^{\Sigma',*}_r$
is the preimage of $B_{r+t}$ under the map
$\psi_r:\Omega^{\Sigma',*}_r/S_r\to\Omega^{\Sigma,*}_{r+t}/S_{r+t}$.
In particular, $B_r'$ is also $S_r$-stable, and if $n\in S_r(\Sigma')$,
then $\omega(n)\in B_r'$ if and only if $\omega(qn)\in B_{r+t}$.

Let $P^{(r)}:=\BP(B_r)$
and $Q^{(r)}:=\BP(B_r')$.
The following result tells us that the sequence
$(Q^{(r)})_{r=1}^\infty$ is very close to $(P^{(r)})_{r=1}^\infty$.

\begin{lem}
\label{p:add-primes}
Under the above settings, for any $r\geq 1$ we have
$$
|Q^{(r)}-P^{(r+t)}|\leq 2^{\#\Sigma}\cdot(2^t-1)\cdot
\frac{\log(r+1)+13/12}{\sqrt{r+1}}.
$$
In particular, $\lim_{r\to\infty}Q^{(r)}=\lim_{r\to\infty}P^{(r)}$
if they exist.
\end{lem}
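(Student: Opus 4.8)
The plan is to realize $Q^{(r)}$ as the mass of the $S_{r+t}$-stable event $B_{r+t}$ under the push-forward of the uniform measure along $\widetilde\psi_r$, and then use the symmetry of $B_{r+t}$ to compare that push-forward with the uniform measure on $\Omega^{\Sigma,*}_{r+t}$. Enlarging $\Sigma'$ beyond $\Sigma\cup\{q_1,\dots,q_t\}$ only adds $z$-coordinates that $\widetilde\psi_r$ ignores and changes neither $Q^{(r)}$ nor the bound, so we may assume $\Sigma'=\Sigma\cup\{q_1,\dots,q_t\}$. Parametrizing $\Omega^{\Sigma,*}_k$ freely by the upper-triangular entries $(a_{ij})_{i<j}$ and the coordinates $(z_i^{(p)})_{1\le i\le k,\,p\in\Sigma}$ and reading off the formula for $\widetilde\psi_r$, one checks that $\widetilde\psi_r$ is a bijection onto the coordinate subspace $Y\subseteq\Omega^{\Sigma,*}_{r+t}$ obtained by fixing the last-$t$ Legendre data, namely $b_{ij}=\lrdd{q_{j-r}}{q_{i-r}}$ for $r<i<j\le r+t$ and $w_i^{(p)}=\lrdd{p}{q_{i-r}}$ for $r<i\le r+t$, $p\in\Sigma$ (the free-coordinate count gives $\#Y=\#\Omega^{\Sigma',*}_r$). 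Since $B_{r+t}$ is $S_{r+t}$-stable, $B_r'=\widetilde\psi_r^{-1}(B_{r+t})$, so $Q^{(r)}=\#(B_{r+t}\cap Y)/\#Y$ while $P^{(r+t)}=\#B_{r+t}/\#\Omega^{\Sigma,*}_{r+t}$.

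The second step is symmetrization. For $\omega'\in\Omega^{\Sigma,*}_{r+t}$ let $M(\omega')$ count the ordered $t$-tuples of distinct indices whose induced Legendre data inside $\omega'$ matches that of $(q_1,\dots,q_t)$. Averaging the identity $\#(B_{r+t}\cap Y)=\#(B_{r+t}\cap\sigma Y)$ over $\sigma\in S_{r+t}$ and using $S_{r+t}$-stability gives $\#(B_{r+t}\cap Y)=\frac{r!}{(r+t)!}\sum_{\omega'\in B_{r+t}}M(\omega')$; since $\sum_{\omega'}M(\omega')=\#\Omega^{\Sigma,*}_{r+t}\cdot\mu_M$ with $\mu_M:=(r+1)\cdots(r+t)\,2^{-\binom t2-t\#\Sigma}$, it follows that
\begin{equation*}
|Q^{(r)}-P^{(r+t)}|=\Big|\BE_{\omega'}\big[(M(\omega')/\mu_M-1)\,\mathbf 1_{B_{r+t}}\big]\Big|\ \le\ \tfrac12\,\BE_{\omega'}\big|M(\omega')/\mu_M-1\big|,
\end{equation*}
the last step using $\BE_{\omega'}[M/\mu_M-1]=0$. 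Thus the lemma is reduced to bounding $\tfrac12\BE_{\omega'}|M(\omega')/\mu_M-1|$, a quantity measuring how far the $\Sigma$-signatures of $\omega'$ deviate from their expected multiplicities under the uniform measure, where the $(z_i^{(p)})_{p\in\Sigma}$ are i.i.d.\ uniform on $\BF_2^{\#\Sigma}$ and independent of the $b_{ij}$.

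For this I would induct on $t$. Inserting the intermediate sets $\Sigma^{(j)}=\Sigma\cup\{q_1,\dots,q_j\}$ factors $\psi_r$ into $t$ one-prime steps on the level of $S$-quotients, so by the triangle inequality it suffices to treat $t=1$, with the base set running through $\Sigma^{(0)},\dots,\Sigma^{(t-1)}$ and the number of prime factors through $r+1,\dots,r+t$. When $t=1$, $M(\omega')$ is simply $N(\omega')=\#\{i:(z_i^{(p)})_{p\in\Sigma}=(\lrdd{p}{q_1})_{p\in\Sigma}\}\sim\mathrm{Bin}(r+1,2^{-\#\Sigma})$, so $\tfrac12\BE|N/\mu_M-1|=\frac{1}{2\mu_M}\BE|N-\BE N|$ is half the mean absolute deviation of a binomial over its mean, which the elementary bound $\BE|N-\BE N|\le\sqrt{\operatorname{Var}N}$ controls by $\frac{2^{\#\Sigma/2}}{2\sqrt{r+1}}\le 2^{\#\Sigma}\tfrac{\log(r+1)+13/12}{\sqrt{r+1}}$. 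Summing the resulting estimates $2^{\#\Sigma^{(t-1-j)}}\tfrac{\log(r+j+1)+13/12}{\sqrt{r+j+1}}$ over $j=0,\dots,t-1$, using monotonicity of $x\mapsto(\log x+13/12)/\sqrt x$ on $x\ge 3$ (the case $r=1$ being trivial, the right side already exceeding $1$), telescopes the geometric sum of prefactors to $2^{\#\Sigma}(2^t-1)$ and yields the claim; that $\lim_r Q^{(r)}=\lim_r P^{(r)}$ follows since the bound tends to $0$. The main obstacle is purely the constant bookkeeping — getting the one-prime estimate into exactly the shape $2^{\#\Sigma}\tfrac{\log(r+1)+13/12}{\sqrt{r+1}}$ so the geometric sum of prefactors reproduces $2^{\#\Sigma}(2^t-1)$ — while the structural content, the identification of $Y$ and the symmetrization, is routine once the coordinates of $\Omega^{\Sigma,*}$ are laid out.
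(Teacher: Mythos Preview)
Your argument is correct and structurally parallel to the paper's: both reduce to the case $\#(\Sigma'\setminus\Sigma)=1$ by induction, identify $\widetilde\psi_r$ with the inclusion of the coordinate subspace $Y\subset\Omega^{\Sigma,*}_{r+1}$ where the last index carries the fixed signature $(\lrdd{p}{q})_{p\in\Sigma}$, and use $S_{r+1}$-invariance of $B_{r+1}$ to express $Q^{(r)}-P^{(r+1)}$ through the deviation of the binomial count $N=\#\{i:(z_i^{(p)})_{p\in\Sigma}=(\lrdd{p}{q})_{p\in\Sigma}\}\sim\mathrm{Bin}(r+1,2^{-\#\Sigma})$ from its mean. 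Your formulation via $|\BE[(M/\mu_M-1)\mathbf 1_B]|\le\tfrac12\BE|M/\mu_M-1|$ is the clean abstract statement of what the paper obtains by separate upper and lower estimates on $\#(B_{r+1}\cap Y)$ after stratifying $\Omega^{\Sigma,*}_{r+1}$ by the value $s$ of $N$.

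The one substantive difference is the concentration step. The paper applies a Hoeffding-type tail bound $\BP(|N-\BE N|\ge(r+1)\delta)\le 2e^{-2(r+1)\delta^2}$, getting $|Q^{(r)}-P^{(r+1)}|\le 2^{\#\Sigma}(\delta+2e^{-2(r+1)\delta^2})$, and then optimizes $\delta=\log(r+1)/\sqrt{r+1}$ to land precisely on the constant $\log(r+1)+13/12$. You instead use the elementary $\BE|N-\BE N|\le\sqrt{\mathrm{Var}\,N}$, which is both simpler and sharper, giving $2^{\#\Sigma/2}/(2\sqrt{r+1})$ for the one-prime step before you relax it back to the paper's form so that the geometric sum over steps telescopes to $2^{\#\Sigma}(2^t-1)$. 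Either route suffices; yours avoids the optimization entirely and shows the logarithm in the final bound is an artifact of the paper's method rather than intrinsic.
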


Note that the right hand side decreases as $r\geq 1$ increases.

\begin{proof}
By induction we only need to prove the case that $\#(\Sigma'\setminus\Sigma)=1$.
There are two cases: either $q=1$ or $q$ is the unique element of
$\Sigma'\setminus\Sigma$.

If $q=1$, then the map $\widetilde\psi_r:\Omega^{\Sigma',*}_r\to\Omega^{\Sigma,*}_r$
is simply
$\big((a_{ij}),(z_p)_{p\in\Sigma'}\big)\mapsto
\big((a_{ij}),(z_p)_{p\in\Sigma}\big)$,
which is surjective, and for any element $\omega$ of $\Omega^{\Sigma,*}_r$,
the size of its preimage in $\Omega^{\Sigma',*}_r$ is $2^r$.
Hence it's clear that $Q^{(r)}=P^{(r)}$.

If $q$ is the unique element of
$\Sigma'\setminus\Sigma$, then the map
$\widetilde\psi_r:\Omega^{\Sigma',*}_r\to\Omega^{\Sigma,*}_{r+1}$
is $\big((a_{ij})_{1\leq i<j\leq r},
(z_i^{(p)})_{1\leq i\leq r,p\in\Sigma'}\big)\mapsto\big((b_{ij})_{1\leq i<j\leq r+1},
(w_i^{(p)})_{1\leq i\leq r+1,p\in\Sigma}\big)$, where
$$
b_{ij}=\begin{cases}
a_{ij},&\text{if }1\leq i<j\leq r, \\
z_i^{(q)},&\text{if }1\leq i\leq r\text{ and }j=r+1,
\end{cases}
\qquad\text{and}\qquad
w_i^{(p)}=\begin{cases}
z_i^{(p)},&\text{if }1\leq i\leq r, \\
\lrdd{p}{q},&\text{if }i=r+1.
\end{cases}
$$
It's clear that this map is injective, and whose image is the set
of $\big((b_{ij}),(w_i^{(p)})\big)$ such that
$w_{r+1}^{(p)}=\lrdd{p}{q}$ for all $p\in\Sigma$.
By this way we view $\Omega^{\Sigma',*}_r$ as a subset of $\Omega^{\Sigma,*}_{r+1}$.

For a fixed $r$, we have $P^{(r+1)}=\#B_{r+1}/\#\Omega^{\Sigma,*}_{r+1}$ as well as
$Q^{(r)}=\#(B_{r+1}\cap\Omega^{\Sigma',*}_r)/\#\Omega^{\Sigma',*}_r$.
Note that $B_{r+1}$ is a union of $S_{r+1}$-orbits.
Note that $\#\Omega^{\Sigma',*}_r=2^{r(r-1)/2+r\cdot(\#\Sigma+1)}$
and $\#\Omega^{\Sigma,*}_{r+1}=2^{r(r+1)/2+(r+1)\cdot\#\Sigma}
=2^{\#\Sigma}\#\Omega^{\Sigma',*}_r$.
In the following we estimate the difference of these two probabilities.

Write $\Omega^{\Sigma,*}_{r+1}=\bigsqcup_{s=0}^{r+1}\Omega^{\Sigma,*}_{r+1,s}$
where $\Omega^{\Sigma,*}_{r+1,s}$ consists of elements
$\big((b_{ij}),(w_i^{(p)})\big)$ of $\Omega^{\Sigma,*}_{r+1}$ such that
$\#\{1\leq i\leq r+1\mid w_i^{(p)}=\lrdd{p}{q}\text{ for all }p\in\Sigma\}=s$.
Then $\Omega^{\Sigma,*}_{r+1,s}$ is also a union of $S_{r+1}$-orbits, and
$$
\#\Omega^{\Sigma,*}_{r+1,s}=2^{r(r+1)/2}\binom{r+1}{s}(2^{\#\Sigma}-1)^{r+1-s}.
$$
It's known that (see for example \cite{MU17}, Theorem 4.12)
for $N\in\BZ_{\geq 1}$, $0<p<1$ and any $\delta>0$
(we will choose $\delta$ later), we have
\begin{equation}
\label{e:tail-bound-2}
\sum_{|n-Np|\geq N\delta}\binom{N}{n}p^n(1-p)^{N-n}\leq
2\exp(-2N\delta^2),
\end{equation}
hence
$$
R:=\sum_{|s-(r+1)/2^{\#\Sigma}|\geq(r+1)\delta}
\frac{\#\Omega^{\Sigma,*}_{r+1,s}}{\#\Omega^{\Sigma,*}_{r+1}}\leq
2\exp(-2(r+1)\delta^2).
$$

On the other hand,
if $\omega$ is an element of $\Omega^{\Sigma,*}_{r+1,s}$,
consider the $S_{r+1}$-orbit $[\omega]$, it's easy to see that
$$
\frac{\#([\omega]\cap\Omega^{\Sigma',*}_r)}{\#[\omega]}
=\frac{s}{r+1}.
$$
Therefore, if we
write $B_{r+1,s}:=B_{r+1}\cap\Omega^{\Sigma,*}_{r+1,s}$, then we have
$$
\#(B_{r+1}\cap\Omega^{\Sigma',*}_r)=\sum_{s=0}^{r+1}\#(B_{r+1,s}\cap\Omega^{\Sigma',*}_r)
=\sum_{s=0}^{r+1}\frac{s}{r+1}\#B_{r+1,s}.
$$
Hence
\begin{align*}
\#(B_{r+1}\cap\Omega^{\Sigma',*}_r)
&\leq
\sum_{|s-(r+1)/2^{\#\Sigma}|<(r+1)\delta}
\frac{s}{r+1}\#B_{r+1,s}
+R\cdot\#\Omega^{\Sigma,*}_{r+1} \\
&\leq(2^{-\#\Sigma}+\delta)\cdot\#B_{r+1}
+2\exp(-2(r+1)\delta^2)\cdot\#\Omega^{\Sigma,*}_{r+1}
\end{align*}
as well as
\begin{align*}
\#(B_{r+1}\cap\Omega^{\Sigma',*}_r)
&\geq
\sum_{|s-(r+1)/2^{\#\Sigma}|<(r+1)\delta}
\frac{s}{r+1}\#B_{r+1,s}
\geq(2^{-\#\Sigma}-\delta)\cdot(\#B_{r+1}-R\cdot\#\Omega^{\Sigma,*}_{r+1}) \\
&\geq(2^{-\#\Sigma}-\delta)\cdot\#B_{r+1}
-2\exp(-2(r+1)\delta^2)\cdot\#\Omega^{\Sigma,*}_{r+1},
\end{align*}
therefore
\begin{align*}
|Q^{(r)}-P^{(r+1)}|
&=\left|
\frac{\#(B_{r+1}\cap\Omega^{\Sigma',*}_r)-2^{-\#\Sigma}
\cdot\#B_{r+1}}{\#\Omega^{\Sigma',*}_r}
\right|
\leq\frac{\delta\cdot\#B_{r+1}+2\exp(-2(r+1)\delta^2)\cdot\#\Omega^{\Sigma,*}_{r+1}}
{\#\Omega^{\Sigma',*}_r} \\
&\leq
2^{\#\Sigma}\big(\delta+2\exp(-2(r+1)\delta^2)\big).
\end{align*}
Taking $\delta=\log(r+1)/\sqrt{r+1}$ it's easy to obtain
$|Q^{(r)}-P^{(r+1)}|\leq 2^{\#\Sigma}(\log(r+1)+13/12)/\sqrt{r+1}$.
\end{proof}

\begin{prop}
\label{p:Smi2-6.11a-fake}
Let $N$ be a sufficiently large (depending on $\Sigma$ and $c$ below) real number,
and let $r$ be a positive integer satisfying $r\sim\log\log N$.
Let $B_r$ be any $S_r$-stable subset of $\Omega^{\Sigma,*}_r$.
Then
$$
\left|\BP\big(\omega(n)\in B_r\mid n\in S_r(N,\Sigma)\big)-\BP(B_r)\right|
\leq C_1\cdot\exp(-c(\log\log\log N)^{1/2})
$$
for some absolute constants $C_1>0$ and $c>0$.
\end{prop}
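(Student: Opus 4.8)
The plan is to reduce to Proposition \ref{p:Smi2-6.11a} by enlarging $\Sigma$ to a set $\Sigma_D$ of the required shape and then stripping off the small prime factors. First I would fix a real number $D>3$ with $\Sigma\subset\Sigma_D$ and write $\Sigma_D=\Sigma\sqcup P$, where $P$ is a finite set of primes, all $<D$, say $\#P=T$. Every $n\in S_r(N,\Sigma)$ factors uniquely as $n=qn'$, where $q$ is the product of the primes of $P$ dividing $n$ and $n'=n/q$ is coprime to $\Sigma_D$; writing $t(q)$ for the number of prime factors of $q$ one has $t(q)\le T$ and $n'\in S_{r-t(q)}(N/q,\Sigma_D)$. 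This gives a disjoint decomposition
\[
S_r(N,\Sigma)=\bigsqcup_{q\mid\prod P}q\cdot S_{r-t(q)}(N/q,\Sigma_D),
\]
indexed by the finitely many squarefree divisors $q$ of $\prod P$ (those with $t(q)>r$ contribute nothing, which for $r\sim\log\log N$ never happens).

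Fix an $S_r$-stable $B_r\subset\Omega^{\Sigma,*}_r$. For each relevant $q$, let $\beta_q\subset\Omega^{\Sigma_D,*}_{r-t(q)}$ be the pullback of $B_r$ along the map $\psi_{r-t(q)}$ of Lemma \ref{p:add-primes} attached to $q$ (taking $\Sigma'=\Sigma_D$ there); then $\beta_q$ is $S_{r-t(q)}$-stable and $\omega(n')\in\beta_q\iff\omega(qn')\in B_r$ for $n'\in S_{r-t(q)}(\Sigma_D)$. Writing the conditional probability as a weighted average over $q$,
\[
\BP\big(\omega(n)\in B_r\mid n\in S_r(N,\Sigma)\big)
=\sum_{q}w_q\cdot\BP\big(\omega(n')\in\beta_q\mid n'\in S_{r-t(q)}(N/q,\Sigma_D)\big),
\]
with $w_q:=\#S_{r-t(q)}(N/q,\Sigma_D)/\#S_r(N,\Sigma)\ge 0$ and $\sum_q w_q=1$. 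With the normalization $|r-\log\log N|\le\tfrac14(\log\log N)^{2/3}$ fixed in this section, and since $t(q)\le T$ and $q\le\prod P$ are bounded, one has $\log\log(N/q)=\log\log N+o(1)$, so for $N$ large (depending on $\Sigma$) every shift satisfies $r-t(q)\sim_D\log\log(N/q)$. Proposition \ref{p:Smi2-6.11a} (applied with $N/q$ in place of $N$) then gives, up to a harmless adjustment of the absolute constant $c$,
\[
\Big|\BP\big(\omega(n')\in\beta_q\mid n'\in S_{r-t(q)}(N/q,\Sigma_D)\big)-\BP(\beta_q)\Big|\le C_1\exp\!\big(-c(\log\log\log N)^{1/2}\big)
\]
for every $q$.

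Finally, Lemma \ref{p:add-primes} bounds $|\BP(\beta_q)-\BP(B_r)|$ by $2^{\#\Sigma}(2^{t(q)}-1)\big(\log(r-t(q)+1)+\tfrac{13}{12}\big)/\sqrt{r-t(q)+1}$, which for $r\sim\log\log N$ is $O_\Sigma\!\big(\log\log\log N/\sqrt{\log\log N}\big)$; since $\log x/\sqrt{x}=o\big(\exp(-c\sqrt{\log x})\big)$ as $x\to\infty$, this is $o\big(\exp(-c(\log\log\log N)^{1/2})\big)$. Averaging the last two displays over the (finitely many, $\Sigma$-dependently many) values of $q$ and using $\sum_q w_q=1$ yields
\[
\big|\BP(\omega(n)\in B_r\mid n\in S_r(N,\Sigma))-\BP(B_r)\big|\le C_1\exp\!\big(-c(\log\log\log N)^{1/2}\big)
\]
for absolute $C_1,c>0$ (enlarging $C_1$ if necessary), once $N$ is large in terms of $\Sigma$. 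The main obstacle is the bookkeeping around the two tolerance windows $\sim$ and $\sim_D$: one must confirm that $r\sim\log\log N$ (with the $\tfrac14$-normalization) forces $r-t(q)\sim_D\log\log(N/q)$ simultaneously for all the bounded shifts $t(q)$, and that the $\log\log\log N/\sqrt{\log\log N}$ error coming from Lemma \ref{p:add-primes} is genuinely dominated by the box-method error; both points are elementary but require the normalizations to be lined up correctly.
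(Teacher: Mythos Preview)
Your proposal is correct and follows essentially the same route as the paper: fix $D$ with $\Sigma\subset\Sigma_D$, decompose $S_r(N,\Sigma)$ according to the $\Sigma_D\setminus\Sigma$ part $q$ of $n$, pull back $B_r$ to $\beta_q\subset\Omega^{\Sigma_D,*}_{r-t(q)}$, apply Proposition~\ref{p:Smi2-6.11a} on each piece, and control $|\BP(\beta_q)-\BP(B_r)|$ via Lemma~\ref{p:add-primes}. The only cosmetic difference is that you combine the pieces via the weighted average $\sum_q w_q(\cdot)$ with $\sum_q w_q=1$, whereas the paper simply bounds by $\max_q$; these are equivalent here, and your explicit handling of the tolerance windows $\sim$ versus $\sim_D$ and of the absorption of the $\log\log\log N/\sqrt{\log\log N}$ term matches what the paper does.
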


\begin{proof}
Take $D>3$ to be a fixed real number
such that all primes in $\Sigma$ are $<D$.
When $r$ is sufficiently large, we have
$$
S_r(N,\Sigma)=\bigsqcup_qq\cdot S_{r-r(q)}(N/q,\Sigma_D),
$$
where $q$ runs over all positive square-free integers whose prime factors
are in $\Sigma_D\setminus\Sigma$.
For each such $q$, define
$B_q'\subset\Omega_{r-r(q)}^{\Sigma_D,*}$
to be the $B'$ as in the beginning of this section (i.e.~preimage of $B_r$
under certain map),
then for any $n\in S_{r-r(q)}(N/q,\Sigma_D)$,
$\omega(n)\in B_q'$ if and only if $\omega(qn)\in B_r$.
Note that $q$ and $r(q)$ have upper bounds depending only on $D$,
hence it's easy to see that when $N$ is sufficiently large (depending only on $D$),
$r\sim\log\log N$ implies $r-r(q)\sim_D\log\log(N/q)$
for all above $q$.
By Lemma \ref{p:add-primes} it's easy to see that
$$
\left|\BP(B_q')-\BP(B_r)\right|\leq 2^D\cdot
\frac{\log(r+1)+13/12}{\sqrt{r+1}},
$$
on the other hand, by Proposition \ref{p:Smi2-6.11a} it's easy to see that
(here we need to slightly decrease $c$ by an absolute constant,
and taking $N$ sufficiently large, depending only on $c$ and $D$)
$$
\left|\BP\big(\omega(n)\in B_q'\mid n\in S_{r-r(q)}(N/q,\Sigma_D)\big)-\BP(B_q')\right|
\leq C_1\cdot\exp(-c(\log\log\log N)^{1/2}),
$$
hence
\begin{align*}
\left|\BP\big(\omega(n)\in B_r\mid n\in S_r(N,\Sigma)\big)-\BP(B_r)\right|
&\leq\max_q\left|\BP\big(\omega(n)\in B_q'\mid n\in S_{r-r(q)}(N/q,\Sigma_D)\big)-\BP(B_r)\right| \\
&\leq C_1\cdot\exp(-c(\log\log\log N)^{1/2})
+2^D\cdot\frac{\log(r+1)+13/12}{\sqrt{r+1}}.
\end{align*}
The last term in above inequality is
$o(\exp(-c(\log\log\log N)^{1/2}))$ as $N\to\infty$.
Therefore the desired result holds by enlarge $C_1$ by an absolute constant.
\end{proof}

\subsubsection{Change the number of prime divisors}
\label{s:HR}

The Proposition \ref{p:Smi2-6.11a-fake} requires that the number $r$
of prime factors of $n$ are close to $\log\log N$.
But we have the following result of Hardy and Ramanujan,
which implies that almost all numbers $n$ satisfy this property.
From this we can finally prove Theorem \ref{p:natural-density}.

\begin{prop}[Hardy and Ramanujan, \cite{Kane13}, Lemma 6]
\label{p:HR}
There are some absolute constants $C_2>0,C_3>0$
such that for any $r\geq 1$ and any $N\geq 3$, we have
$$
\#S_r(N,\Sigma)\leq
\#S_r(N,\varnothing)\leq
\frac{C_3N}{\log N}\cdot\frac{(\log\log N+C_2)^{r-1}}{(r-1)!}.
$$
\end{prop}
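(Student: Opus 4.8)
\textbf{Proof plan for Proposition \ref{p:HR} (Hardy--Ramanujan).}
The plan is to prove the stated bound on $\#S_r(N,\varnothing)$ (the bound for $\#S_r(N,\Sigma)$ being immediate, since imposing coprimality to $\Sigma$ only removes integers). Write $\pi_r(N) := \#S_r(N,\varnothing)$ for the number of squarefree positive integers $n<N$ with exactly $r$ prime factors; I will in fact prove the slightly stronger and cleaner statement that $\pi_k(N) \le \frac{C_3 N}{\log N}\cdot\frac{(\log\log N + C_2)^{k-1}}{(k-1)!}$ for all $k\ge 1$, by induction on $k$, following the classical Hardy--Ramanujan argument (in the streamlined form due to Pollack, or as presented in Tenenbaum). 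The base case $k=1$ is just the prime number theorem (or Chebyshev's bound): $\pi_1(N) = \pi(N) \le C_3 N/\log N$ for a suitable absolute constant $C_3$.

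For the inductive step, assume the bound for $k-1$. Each $n$ counted by $\pi_k(N)$ factors as $n = \ell m$ with $\ell$ the largest prime factor of $n$ and $m\in S_{k-1}$ having all prime factors $<\ell$; in particular $m < N/\ell$ and $\ell > N^{1/k}$ is impossible only in the trivial sense, so one simply writes
\begin{equation}
\label{e:HR-induction}
\pi_k(N) \;\le\; \sum_{\ell \le N^{1/2}} \pi_{k-1}(N/\ell) \;+\; \sum_{N^{1/2} < \ell < N} \#\{m \in S_{k-1} : m < N/\ell\}.
\end{equation}
The second sum is bounded crudely: for $\ell > N^{1/2}$ one has $N/\ell < N^{1/2}$, and $\sum_{\ell}1$ over such $\ell$ together with the trivial bound $\#\{m<N^{1/2}\}\le N^{1/2}$ contributes something of lower order that is absorbed into the constants. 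The main term is the first sum; applying the inductive hypothesis gives
$$
\sum_{\ell\le N^{1/2}}\pi_{k-1}(N/\ell) \;\le\; \frac{C_3(\log\log N + C_2)^{k-2}}{(k-2)!}\sum_{\ell\le N^{1/2}}\frac{N/\ell}{\log(N/\ell)}.
$$
On $\ell \le N^{1/2}$ we have $\log(N/\ell)\ge \tfrac12\log N$, so $\frac{1}{\log(N/\ell)}\le \frac{2}{\log N}$, and then Mertens' theorem $\sum_{\ell \le x}\frac{1}{\ell} = \log\log x + M + O(1/\log x)$ gives $\sum_{\ell\le N^{1/2}}\frac1\ell \le \log\log N + C_2'$ for an absolute constant. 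Combining, the main term is $\le \frac{2C_3 N}{\log N}\cdot\frac{(\log\log N + C_2)^{k-2}}{(k-2)!}\cdot(\log\log N + C_2')$, and choosing $C_2\ge C_2'$ and absorbing the factor $2$ and the lower-order second sum from \eqref{e:HR-induction} into $C_3$ (this is where one must be slightly careful to keep the constants independent of $k$), one obtains $\pi_k(N)\le \frac{C_3 N}{\log N}\cdot\frac{(\log\log N+C_2)^{k-1}}{(k-1)!}$, completing the induction.

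The only genuine subtlety — the step I would flag as the main obstacle — is making sure the constants $C_2, C_3$ can be chosen \emph{once and for all}, uniformly in $k$ and $N\ge 3$, rather than degrading with $k$; the factor-of-$2$ losses from $\log(N/\ell)\ge\frac12\log N$ and from the error terms in Mertens and in Chebyshev must be shown to be compensated exactly by the shift from $(k-2)!$ to $(k-1)!$ combined with the additive constant in $(\log\log N + C_2)$. This is handled by the standard trick of carrying the induction with explicit constants and verifying the one-step inequality $2(\log\log N + C_2') \le (k-1)\cdot\frac{(\log\log N + C_2)}{1}$ is \emph{not} what is needed — rather one needs $\frac{2(\log\log N+C_2')}{k-1}\cdot\frac{(\log\log N+C_2)^{k-2}}{(k-2)!} \le \frac{(\log\log N+C_2)^{k-1}}{(k-1)!}$, i.e. $2(\log\log N + C_2')\le \log\log N + C_2$, which fails for large $N$ unless one instead sums the geometric-type series more carefully. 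The correct classical workaround (Hardy--Ramanujan) is to not extract $\ell$ as the \emph{largest} prime but to sum symmetrically over all prime factors, giving a factor $\frac1k$, or equivalently to prove the bound with $(k-1)!$ replaced by something and then optimize; I would simply cite the clean treatment in Tenenbaum's book or reproduce Pollack's two-page argument, which organizes exactly this bookkeeping. For the purposes of this paper only the stated inequality with \emph{some} absolute $C_2, C_3$ is needed, so no sharpness is required and the routine induction suffices.
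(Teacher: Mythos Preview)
The paper does not give its own proof: Proposition~\ref{p:HR} is simply stated with attribution to Hardy--Ramanujan and a citation to \cite{Kane13}, Lemma~6, then used as a black box in the proof of Theorem~\ref{p:natural-density}. So there is no paper's argument to compare against.

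Your sketch is the classical induction, and you correctly flag the one genuine obstacle (uniformity of $C_2,C_3$ in $k$) and correctly diagnose that the crude bound $\log(N/\ell)\ge\tfrac12\log N$ loses a constant factor that does not close directly. Your proposed fix --- symmetric counting over all prime factors to gain a $1/k$ --- is one standard ingredient, though note that even combined with the crude split at $N^{1/2}$ it leaves a factor $\tfrac{2(k-1)}{k}\to 2$, so the complete bookkeeping in Hardy--Ramanujan (1917) or the modern treatments you mention (Tenenbaum, Pollack) requires slightly more care with the sum $\sum_p 1/(p\log(N/p))$ than either crude estimate provides. Since the paper only needs the existence of absolute constants and is content to cite the result, your plan to ultimately do likewise is entirely in line with what the paper does.
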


\begin{proof}[Proof of Theorem \ref{p:natural-density}]
First let's give an upper bound of
$$
Q:=\sum_{r\not\sim\log\log N}\frac{\#S_r(N,\Sigma)}{\#S(N,\Sigma)}
=1-\sum_{r\sim\log\log N}\frac{\#S_r(N,\Sigma)}{\#S(N,\Sigma)}.
$$
It's known that (see for example \cite{MU17}, Theorem 5.4)
for $\lambda>0$, we have
\begin{equation}
\label{e:tail-bound-1}
\sum_{n\geq x}\frac{\lambda^n}{n!}\leq\left(\frac{e\lambda}{x}\right)^x
\text{ for }x>\lambda,
\qquad\text{and}\qquad
\sum_{0\leq n\leq x}\frac{\lambda^n}{n!}\leq\left(\frac{e\lambda}{x}\right)^x
\text{ for }0<x<\lambda.
\end{equation}
Consider the functions
$f(x)=\log\left((\frac{e\lambda}{x})^x\right)=x(\log\lambda+1-\log x)$
and $g(x)=\lambda-(x-\lambda)^2/4\lambda$.
Then $f(\lambda)=g(\lambda)=\lambda$, and it's easy to see that
$f(x)\leq g(x)$ when $0<x\leq 2\lambda$.
In particular, for $\lambda>1$, let $0<\alpha<1$ and $0<\beta<1$ be fixed, we have
\begin{equation}
\label{e:tail-bound-1a}
f(\lambda\pm\lambda^\alpha\beta)\leq
g(\lambda\pm\lambda^\alpha\beta)=\lambda-\lambda^{2\alpha-1}\beta^2/4.
\end{equation}
Take $\lambda=\log\log N+C_2$ where $C_2$
is in Proposition \ref{p:HR}.
Then for sufficiently large $N$,
$$
\log\log N-\frac14(\log\log N)^{2/3}+2
<\lambda-\lambda^{2/3}/5
<\lambda+\lambda^{2/3}/5
<\log\log N+\frac14(\log\log N)^{2/3}-2.
$$
On the other hand, there is a constant $C_4>0$ depending only on $\Sigma$ such that
$\#S(N,\Sigma)>C_4N$ for sufficiently large $N$.
Hence by Proposition \ref{p:HR}
and by taking $\alpha=2/3$ and $\beta=1/5$ in \eqref{e:tail-bound-1a},
\begin{align*}
Q&=\sum_{r\not\sim\log\log N}\frac{\#S_r(N,\Sigma)}{\#S(N,\Sigma)}
\leq\frac{C_3}{C_4\log N}\sum_{r\not\sim\log\log N}
\frac{\lambda^{r-1}}{(r-1)!}
\leq\frac{2C_3}{C_4\log N}\exp(\lambda-\lambda^{1/3}/100) \\
&\leq\frac{2C_3\exp(C_2)}{C_4\exp((\log\log N)^{1/3}/100)}
=o(\exp(-c(\log\log\log N)^{1/2}))\quad\text{as}\quad N\to\infty.
\end{align*}

Now we can prove Theorem \ref{p:natural-density}.
Denote the left hand side of Theorem \ref{p:natural-density} by $P$, then
$$
P=\sum_{i\in I}
\left|\sum_{r\geq 1}\frac{\#S_r(N,\Sigma)}{\#S(N,\Sigma)}
\Big(\BP\big(B(n)=i\mid n\in S_r(N,\Sigma)\big)-P^{(\infty)}(i)\Big)\right|
\leq P_A+P_B,
$$
where
$$
P_A:=\sum_{i\in I}
\sum_{r\sim\log\log N}\frac{\#S_r(N,\Sigma)}{\#S(N,\Sigma)}
\Big|\BP\big(B(n)=i\mid n\in S_r(N,\Sigma)\big)-P^{(\infty)}(i)\Big|
$$
and
$$
P_B:=\sum_{i\in I}\sum_{r\not\sim\log\log N}\frac{\#S_r(N,\Sigma)}{\#S(N,\Sigma)}
\Big(\BP\big(B(n)=i\mid n\in S_r(N,\Sigma)\big)+P^{(\infty)}(i)\Big)=2Q.
$$

By applying Proposition \ref{p:Smi2-6.11a-fake}
(on the subset $\{\omega\in\Omega_r^{\Sigma,*}\mid B_r(\omega)=i\}$
for each $i\in I_r$, and
the subset $\{\omega\in\Omega_r^{\Sigma,*}\mid B_r(\omega)\notin I_r\}$)
we have
\begin{align*}
P_A&\leq\sum_{r\sim\log\log N}\frac{\#S_r(N,\Sigma)}{\#S(N,\Sigma)}
\Bigg(\sum_{i\in I_r}\Big|\BP\big(B(n)=i\mid n\in S_r(N,\Sigma)\big)-P^{(r)}(i)\Big|
+\sum_{i\in I_r}\left|P^{(r)}(i)-P^{(\infty)}(i)\right| \\
&\qquad{}+\sum_{i\notin I_r}
\Big(\BP\big(B(n)=i\mid n\in S_r(N,\Sigma)\big)+P^{(\infty)}(i)\Big)\Bigg) \displaybreak[0]\\
&\leq\sum_{r\sim\log\log N}\frac{\#S_r(N,\Sigma)}{\#S(N,\Sigma)}
\Bigg((\#I_r+1)\cdot C_1\cdot\exp(-c(\log\log\log N)^{1/2})
+\sum_{i\in I_r}\left|P^{(r)}(i)-P^{(\infty)}(i)\right| \\
&\qquad{}+\sum_{i\notin I_r}\left(P^{(r)}(i)+P^{(\infty)}(i)\right)\Bigg) \displaybreak[0]\\
&\leq I_\epsilon(\log\log N)\cdot C_1\cdot\exp(-c(\log\log\log N)^{1/2})
+R_\epsilon(\log\log N),
\end{align*}
here we note that when $N$ is sufficiently large (depending on $\epsilon$),
$r\sim\log\log N$ implies $(1-\epsilon)\log\log N<r<(1+\epsilon)\log\log N$.
On the other hand, the $Q$ computed in the above is $o(\exp(-c(\log\log\log N)^{1/2}))$
as $N\to\infty$.
Thus the desired result holds by enlarge $C_1$
by an absolute constant.
\end{proof}

\subsection{Natural average order and Heath-Brown's method}
\label{s:natural average}

The proof of the first part of Theorem \ref{p:natural-average-order} is
due to Heath-Brown \cite{HB93}, \cite{HB94},
and is based on the proof of Proposition \ref{p:high-rank-moment-limit}.

Write $B=(P,Q;R^\RT,S)\in\RRM_{(ak+b)\times(ck+d)}$.
We may keep $a,c$ unchanged and assume
that $P_H=(\diag(A,\cdots,A);0)$,
and that $P_L=0$, namely, $B$ does not contain $z_cz_d^\RT$
(similar to the proof of Proposition \ref{p:low-rank-max-rank}).
For each integer $n$ in $S^{\mathbf s}(N,\Sigma):=\{n\in S^{\mathbf s}(\Sigma)\mid n<N\}$,
let $r:=r(n)$ be the number of prime factors of $n$,
fix an order $\ell_1,\cdots,\ell_r$ of the prime factors of $n$,
let $V=\BF_2^{cr+d}$ and $W=\BF_2^{ar+b}$ (depending on $n$),
then $B(n)$ is a well-defined matrix, and we have
$$
\BE\big(2^{\corank(B(n))}\mid n\in S^{\mathbf s}(N,\Sigma)\big)
=\frac{1}{\#S^{\mathbf s}(N,\Sigma)}
\sum_{n\in S^{\mathbf s}(N,\Sigma)}
\frac{1}{\#W}
\sum_{v\in V,w\in W}
(-1)^{w^\RT B(n)v}.
$$

Recall that we defined the map
$\psi=\psi_{B,v,w}:\Omega_r^{\Sigma,\mathbf s}\to\BF_2$,
$\omega\mapsto w^\RT B(\omega)v$.
It induces the map $\psi=\psi_{B,v,w}:S_r^{\mathbf s}(N,\Sigma)\to\BF_2$,
$n\mapsto w^\RT B(n)v$.
Recall the map $\iota:[r]\to\BF_2^{c+a}$,
and that once $\Im(\iota),\widetilde v,\widetilde w$
are fixed, the $\psi_{B,v,w}(\omega)$ depends only on $a_{\lambda\lambda'}$
for $\lambda\neq\lambda'$ in $\Im(\iota)$,
and $z_\lambda^{(p)}$ for $\lambda\in\Im(\iota)$ and $p\in\Sigma$.
By the same reason, $\psi_{B,v,w}(n)$ depends only on
$$
a_{\lambda\lambda'}=\sum_{i\in[r]_\lambda,j\in[r]_{\lambda'}}a_{ij}
=\sum_{i\in[r]_\lambda,j\in[r]_{\lambda'}}\lrdd{\ell_j}{\ell_i}
=\lrdd{n_{\lambda'}}{n_\lambda}
\text{ for }\lambda\neq\lambda'\text{ in }\Im(\iota),
$$
and
$$
z_\lambda^{(p)}=\sum_{i\in[r]_\lambda}z_i^{(p)}
=\sum_{i\in[r]_\lambda}\lrdd{p}{\ell_i}
=\lrdd{p}{n_\lambda}
\text{ for }\lambda\in\Im(\iota)\text{ and }p\in\Sigma,
$$
here $n_\lambda:=\prod_{i\in[r]_\lambda}\ell_i$
so that $n=\prod_{\lambda\in\Im(\iota)}n_\lambda$.
We also know that, if $\Im(\iota)$ is an unlinked subset,
then $\psi_{B,v,w}(\omega)$ depends only on
$z_\lambda^{(p)}$ for $\lambda\in\Im(\iota)$ and $p\in\Sigma$,
namely $\psi_{B,v,w}$ factors through
$\Omega_r^{\Sigma,\mathbf s}\twoheadrightarrow\Omega_{\Im(\iota)}^{\Sigma,\mathbf s}$.
Similarly, in this case $\psi_{B,v,w}(n)$
depends only on $\lrdd{p}{n_\lambda}$
for $\lambda\in\Im(\iota)$ and $p\in\Sigma$.

Divide the sum over all $n\in S^{\mathbf s}(N,\Sigma)$
into the sum over intervals of each $n_\lambda$ as in \cite{HB94},
we have
$$
\BE\big(2^{\corank(B(n))}\mid n\in S^{\mathbf s}(N,\Sigma)\big)
=\frac{1}{\#S^{\mathbf s}(N,\Sigma)}\sum_{\mathbf A}S(\mathbf A),
$$
where the sum runs over $\mathbf A=(A_\lambda)_{\lambda\in\BF_2^{c+a}}$
with $A_\lambda\in 2^{\BZ_{\geq -1}}=\{\frac12,1,2,4,\cdots\}$
such that $\prod_\lambda A_\lambda<N$, and
$$
S(\mathbf A):=\sum_{\substack{A_\lambda<n_\lambda\leq 2A_\lambda\\
n=\prod_\lambda n_\lambda\in S^{\mathbf s}(N,\Sigma)}}
\frac{1}{\#W}
\sum_{\widetilde v,\widetilde w}
(-1)^{w^\RT B(n)v}.
$$
The number of different $\mathbf A$'s are
$\leq 2^{2^{c+a}}\cdot(\log N)^{2^{c+a}}$.
The terms in $S(\mathbf A)$
satisfy $n_\lambda\neq 1$ (namely $\lambda$ is contained in the image of $\iota$)
if and only if $A_\lambda\geq 1$,
namely, the image of $\iota$ is determined by $\mathbf A$.

These $\mathbf A$'s are divided into three types.
\begin{itemize}
\item[(1)]
There are linked $\lambda,\lambda'$ such that
$A_\lambda\geq\exp(\kappa(\log\log N)^2)$
and $A_{\lambda'}\geq 1$.
Here $\kappa>0$ is some constant depending only on $B$ to be chosen later.
\item[(2)]
$\#\{\lambda\mid A_\lambda\geq\exp(\kappa(\log\log N)^2)\}\leq 2^a-1$.
\item[(3)]
The remaining part.
\end{itemize}

It turns out that the first two types of $\mathbf A$ contribute error terms
(\cite{HB94}, Lemma 6),
and the last type of $\mathbf A$ contributes main term.
More precisely, for the first type of $\mathbf A$, we have the following.

\begin{lem}
There are $\kappa>0$ and $C>0$ depending only on $B$ such that
$$
\sum_{\substack{\mathbf A\text{ such that}\\
\exists\lambda,\lambda'\text{ linked s.t. }
A_\lambda\geq\exp(\kappa(\log\log N)^2),A_{\lambda'}\geq 1
}}
|S(\mathbf A)|
\leq C\cdot N\cdot(\log N)^{-1}.
$$
\end{lem}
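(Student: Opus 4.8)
This lemma is essentially \cite[Lemma 6]{HB94}; the plan is to run that argument in the present notation. The starting point is the observation already used in the proof of Proposition~\ref{p:high-rank-moment-limit}: when $\lambda,\lambda'$ are linked, the symbol $a_{\lambda\lambda'}=\lrdd{n_{\lambda'}}{n_\lambda}$ genuinely occurs in the expression for $\psi_{B,v,w}(n)=w^{\mathrm T}B(n)v$, while the only other way $n_\lambda$ enters is through $a_{\lambda\mu}=\lrdd{n_\mu}{n_\lambda}$ for the finitely many $\mu\neq\lambda'$ linked to $\lambda$ and through $z_\lambda^{(p)}=\lrdd{p}{n_\lambda}$, $p\in\Sigma$. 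Hence, after fixing $\mathbf A$ (which fixes $\Lambda=\Im(\iota)$), $\widetilde v$, $\widetilde w$, and all $n_\mu$ with $\mu\neq\lambda$, the factor $(-1)^{w^{\mathrm T}B(n)v}$ is, as a function of $n_\lambda$, of the shape $\psi(n_\lambda)\bigl(\tfrac{M}{n_\lambda}\bigr)$, where $\psi$ is a fixed Dirichlet character of modulus dividing the $\Sigma$-conductor and $M$ is a squarefree integer divisible by $n_{\lambda'}$. Since $A_{\lambda'}\geq 1$ forces $n_{\lambda'}>1$, the character $n_\lambda\mapsto\psi(n_\lambda)\bigl(\tfrac{M}{n_\lambda}\bigr)$ is non-principal, and cancellation in the $n_\lambda$-sum is available.

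The plan for the key estimate is then as follows. First I would peel off the constraints that $n_\lambda$ be squarefree, coprime to $\Sigma$ and to the other $n_\mu$, and carry the prescribed Legendre symbols at the primes of $\Sigma$: a Möbius sum over $d^2\mid n_\lambda$ together with Dirichlet characters of bounded conductor reduces each inner sum to $\sum_{A_\lambda<n_\lambda\leq 2A_\lambda}\psi'(n_\lambda)\bigl(\tfrac{M}{n_\lambda}\bigr)$ with $\psi'$ fixed and $M>1$ squarefree. When the conductor of $n_\lambda\mapsto\psi'(n_\lambda)\bigl(\tfrac{M}{n_\lambda}\bigr)$ is small relative to $A_\lambda$, the Pólya–Vinogradov inequality saves a factor $\gg A_\lambda\,(\mathrm{cond})^{-1/2-\epsilon}$ over the trivial bound $A_\lambda$; when the conductor is large, I would instead sum additionally over $n_{\lambda'}\in(A_{\lambda'},2A_{\lambda'}]$ and invoke the large sieve for real characters (Heath-Brown's mean value estimate), saving a positive power of $\min(A_\lambda,A_{\lambda'})$. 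Splitting the $\mathbf A$-sum at $A_{\lambda'}=\exp(\kappa(\log\log N)^2)$, one of the two applies and in either case the lower bound $A_\lambda\geq\exp(\kappa(\log\log N)^2)$ yields a saving of size $\exp(c'\kappa(\log\log N)^2)=(\log N)^{c'\kappa\log\log N}$ for some $c'>0$. Choosing $\kappa$ large in terms of $a,b,c,d$, this saving dominates all remaining losses — the $\leq 2^{2^{c+a}}(\log N)^{2^{c+a}}$ choices of $\mathbf A$ of type (1), the $\leq 2^{b+d}$ choices of $(\widetilde v,\widetilde w)$, the $O(1)$ choices of block structure $\Lambda$, the $(\log\log N)^{O(1)}$ combinatorial factors, and the normalizations $\#W^{-1}$ and $\#S^{\mathbf s}(N,\Sigma)^{-1}\asymp N^{-1}$ — so that summing gives $\sum_{\mathbf A\text{ of type }(1)}|S(\mathbf A)|\ll N(\log N)^{-1}$.

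The hard part will be the uniformity in the conductor of the quadratic character, which ranges over the whole interval $[3,N]$: Pólya–Vinogradov on a single variable never suffices by itself, so one must genuinely exploit oscillation in a second variable (here $n_{\lambda'}$, possibly more of the $n_\mu$), and the bookkeeping that ties together the number of parts $r\sim\log\log N$, the dyadic sizes $A_\mu$, the constraint $\prod_\mu A_\mu<N$, and the large-sieve bound is where the care lies. But since the matrix $B$ is fixed — hence so are the set of linked pairs and the finite list of characters $M$ that can occur — this is exactly the situation of \cite[Lemma 6]{HB94} (see also \cite{HB93}), and that argument transfers with only cosmetic changes.
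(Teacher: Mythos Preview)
Your proposal is correct and follows essentially the same route as the paper: isolate the Jacobi symbol $\lrd{n_{\lambda'}}{n_\lambda}$ as the source of cancellation and invoke Heath-Brown's bilinear estimates (P\'olya--Vinogradov plus the large sieve for real characters). The paper packages this as a citation to \cite[Lemma~7]{HB93}, but adds one algebraic observation you gloss over: after possibly swapping $\lambda$ and $\lambda'$, one has
\[
\psi_{B,v,w}(n)=\lrdd{n_{\lambda'}}{n_\lambda}+\psi^{(1)}(n)+\psi^{(2)}(n),
\]
with $\psi^{(1)}$ independent of $n_{\lambda'}$ and $\psi^{(2)}$ independent of $n_\lambda$. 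The swap is needed to kill the quadratic-reciprocity cross term $z_\lambda^{(-1)}z_{\lambda'}^{(-1)}$: its coefficient is $\sum_\ell(\lambda_\ell+\lambda_\ell')\lambda'_{c+\ell}$ in one ordering and $\sum_\ell(\lambda_\ell+\lambda_\ell')\lambda_{c+\ell}$ in the other, and exactly one of these vanishes since they sum to the linked value~$1$. This delivers the separable bilinear form $\sum_{n_\lambda}\sum_{n_{\lambda'}}c_{n_\lambda}c'_{n_{\lambda'}}\lrd{n_{\lambda'}}{n_\lambda}$ that \cite[Lemma~7]{HB93} consumes directly. Your single-variable description (fix all $n_\mu$, $\mu\neq\lambda$, and view $(-1)^\psi$ as a character in $n_\lambda$) is correct, but when you then ``sum additionally over $n_{\lambda'}$'' for the large sieve, the factor $(-1)^{\lrdd{-1}{n_\lambda}\lrdd{-1}{n_{\lambda'}}}$ is not of the form $f(n_\lambda)g(n_{\lambda'})$ and obstructs separability; you would need either the swap trick or a further split into residue classes of $n_\lambda\pmod 4$. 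Both fixes work and the analytic input is the same; the paper's formulation is just cleaner.
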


\begin{proof}
Let $\lambda,\lambda'\in\BF_2^{c+a}$ be linked
and recall the explicit formula of $\psi_{B,v,w}(\omega)$.
We have
$1=\sum_{\ell=1}^c(\lambda_\ell+\lambda_\ell')(\lambda_{c+\ell}+\lambda_{c+\ell}')
=\sum_{\ell=1}^c(\lambda_\ell+\lambda_\ell')\lambda_{c+\ell}
+\sum_{\ell=1}^c(\lambda_\ell+\lambda_\ell')\lambda_{c+\ell}'$,
namely, exactly one of
$\sum_{\ell=1}^c(\lambda_\ell+\lambda_\ell')\lambda_{c+\ell}$
and $\sum_{\ell=1}^c(\lambda_\ell+\lambda_\ell')\lambda_{c+\ell}'$
is $1$. Therefore, switching $\lambda$ and $\lambda'$ if necessary
(switching or not only depends on $\lambda$ and $\lambda'$,
and is independent of $B,v,w$), we may write
$$
\psi_{B,v,w}(\omega)=a_{\lambda\lambda'}
+\psi_{B,v,w}^{(1)}(\omega)+\psi_{B,v,w}^{(2)}(\omega)
$$
for some $\psi_{B,v,w}^{(1)}:\Omega_r^{\Sigma,\mathbf s}\to\BF_2$
(resp.~$\psi_{B,v,w}^{(2)}:\Omega_r^{\Sigma,\mathbf s}\to\BF_2$)
which depends only on $a_{\mu\mu'}$
for all $\mu\neq\mu'$ in $\Im(\iota)\setminus\{\lambda'\}$
(resp.~$\Im(\iota)\setminus\{\lambda\}$),
and $z_\mu^{(p)}$ for all
$\mu\in\Im(\iota)\setminus\{\lambda'\}$
(resp.~$\Im(\iota)\setminus\{\lambda\}$)
and $p\in\Sigma$.
This implies that
$$
\psi_{B,v,w}(n)=\lrdd{n_{\lambda'}}{n_\lambda}
+\psi_{B,v,w}^{(1)}(n)+\psi_{B,v,w}^{(2)}(n)
$$
where $\psi_{B,v,w}^{(1)}(n)$ is independent of $n_{\lambda'}$
and $\psi_{B,v,w}^{(2)}(n)$ is independent of $n_\lambda$.
Utilizing this decomposition,
the \cite{HB93}, Lemma 7 applies (whose proof uses
\cite{HB94}, Lemma 3 and Lemma 4), namely,
\begin{itemize}
\item
For any $\delta>0$,
there are $\kappa>0$ and $C>0$ depending only on $B$ and $\delta$ such that
for any linked $\lambda,\lambda'\in\BF_2^{c+a}$, and any $\mathbf A$
such that $A_\lambda\geq\exp(\kappa(\log\log N)^2)$ and $A_{\lambda'}\geq 1$,
we have $|S(\mathbf A)|\leq C\cdot N\cdot(\log N)^{-\delta}$.
\end{itemize}
Now taking $\delta=2^{c+a}+1$ the desired result follows.
\end{proof}

For the second type of $\mathbf A$, we have the following.

\begin{lem}
There are $\kappa>0$ and $C>0$ depending only on $B$ such that
$$
\sum_{\substack{\mathbf A\text{ such that}\\
\#\{\lambda\mid A_\lambda\geq\exp(\kappa(\log\log N)^2)\}\leq 2^a-1
}}
|S(\mathbf A)|
\leq C\cdot N\cdot(\log N)^{-2^{-a}}\cdot(\log\log N)^{2^{c+1}}.
$$
\end{lem}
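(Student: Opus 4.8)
The plan is to adapt Heath-Brown's treatment of the analogous error term in \cite{HB93, HB94} to the present R\'edei-matrix setting. The only structural input beyond that work is the description of $\psi_{B,v,w}$ obtained in the proof of Proposition \ref{p:high-rank-moment-limit}: once $\Lambda=\Im(\iota)$, $\widetilde v$ and $\widetilde w$ are fixed, $\psi_{B,v,w}(n)$ is a fixed (depending only on $B$) linear combination of $1$, of $\lrdd{n_{\lambda'}}{n_\lambda}$ for \emph{linked} $\lambda\neq\lambda'$ in $\Lambda$, of $\lrdd{p}{n_\lambda}$, and of $\lrdd{p}{n_\lambda}\lrdd{q}{n_{\lambda'}}$, with $n_\lambda=\prod_{i\in[r]_\lambda}\ell_i$. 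In particular the lower-rank blocks $Q,R,S$ and the vectors $z_d$ enter only through quantities indexed by $\Lambda$ and create no new linked pairs, so Heath-Brown's analytic inputs (the mean-value estimates \cite[Lemmas 3 and 4]{HB94} and their consequence \cite[Lemma 7]{HB93}) apply after this reduction just as for the pure block $A$.

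First I would split a second-type $\mathbf A$ into its large blocks $\CL=\CL(\mathbf A)=\{\lambda:A_\lambda\geq T\}$, where $T:=\exp(\kappa(\log\log N)^2)$ and $\#\CL\leq 2^a-1$ by hypothesis, and its small blocks $\CS=\{\lambda:1\leq A_\lambda<T\}$, and write $n=n_\CS n_\CL$. The bookkeeping over $\CS$ is harmless: the product $n_\CS$ is subpolynomial in $N$ (being at most $(2T)^{2^{c+a}}=\exp(O((\log\log N)^2))$), and after reducing the relevant part of the data by the Lagrangian constraint of Lemma \ref{p:HB94-lem7} (projection to the first $2c$ coordinates) the number of essentially distinct small-block configurations is $O\bigl((\log\log N)^{2^{c+1}}\bigr)$; this accounts for the $(\log\log N)^{2^{c+1}}$ factor in the statement. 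What remains is to bound, uniformly over those configurations, $\sum_{\mathbf A}|S(\mathbf A)|$ with the large blocks restricted to prescribed dyadic ranges. Here is the crux: a second-type $\mathbf A$ has at most $2^a-1$ large blocks, so any unlinked image $\Lambda$ of size $2^a$ realised within it must contain a block of size $<T$; feeding this deficiency into \cite[Lemma 7]{HB93} (built on \cite[Lemmas 3--4]{HB94}), applied to the $n_\lambda$ with $\lambda\in\CL$ under $\prod_{\lambda\in\CL}A_\lambda<N$ and after using the cancellation still available in the sum over $\widetilde v,\widetilde w$ and the small blocks, yields a saving of a power $(\log N)^{-2^{-a}}$. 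Summing over the $O((\log N)^{2^{c+a}})$ admissible large-block shapes is then absorbed by choosing $\kappa$ small (shrinking the $\kappa$ of the previous lemma if need be), and dividing by $\#S^{\mathbf s}(N,\Sigma)\gg N$ gives the claim.

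The main obstacle is exactly this last power saving. Simply discarding the character sum and counting factorizations of integers below $N$ into at most $2^a-1$ parts produces the \emph{growing} factor $(\log N)^{2^a-2}$, so one must retain precisely enough cancellation in $\psi_{B,v,w}$ to turn "one missing large block" into a genuine gain of the power $1/2^a$ of $\log N$, and must do so uniformly in $\mathbf A$ and in the auxiliary data $\widetilde v,\widetilde w,\CS$; threading the mean-value inequalities of \cite{HB94} through the dyadic decomposition to get the exponent out as exactly $2^{-a}$ is the delicate point, and is where one really re-runs Heath-Brown's argument rather than merely citing it. The only additional verification — essentially contained in the proof of Proposition \ref{p:high-rank-moment-limit} — is that the extra terms $D_d$ and $z_cz_d^\RT$ in $B$, entering $\psi_{B,v,w}$ only through the Legendre symbols $\lrdd{p}{n_\lambda}$ and $\lrdd{p}{n_\lambda}\lrdd{q}{n_{\lambda'}}$, do not contribute unbounded-rank pieces and so are inert for these analytic estimates.
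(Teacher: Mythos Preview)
Your plan misidentifies where the saving $(\log N)^{-2^{-a}}$ comes from. You propose to extract it from residual character cancellation via \cite[Lemma~7]{HB93}, but for the second type of $\mathbf A$ there need be no linked pair at all among the large blocks (indeed $\Lambda$ can be a genuine unlinked set of size $\leq 2^a-1$), so that lemma gives you nothing here. The paper's argument is instead purely combinatorial and uses no oscillation whatsoever: one bounds $|S(\mathbf A)|$ trivially by $2^d\sum_{n_\lambda}2^{-a\,r(n)}$, coming from $\#W=2^{ar(n)+b}$ and the $2^{b+d}$ choices of $(\widetilde v,\widetilde w)$. The point you are missing is that this built-in weight $2^{-a\,r(n)}$ is exactly what turns ``at most $2^a-1$ large blocks'' into a power saving: each prime of the large part $n''$ can be placed in at most $2^a-1$ slots, contributing $(2^a-1)^{r(n'')}$, and $(2^a-1)^{r(n'')}\cdot 2^{-a\,r(n'')}=(1-2^{-a})^{r(n'')}$; then $\sum_{n''\leq X}(1-2^{-a})^{r(n'')}\ll X(\log X)^{-2^{-a}}$. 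Your sentence ``simply discarding the character sum\ldots\ produces the growing factor $(\log N)^{2^a-2}$'' overlooks this weight and is the source of the confusion.

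Two further points. First, the $(\log\log N)^{2^{c+1}}$ does not come from a Lagrangian count on the small blocks; it comes from $\sum_{n'\leq N'}2^{c\,r(n')}/n'\ll(\log N')^{2^c}$ with $\log N'\asymp(\log\log N)^2$. Second, your remark about ``choosing $\kappa$ small'' to absorb $(\log N)^{2^{c+a}}$ goes in the wrong direction: $\kappa$ was chosen \emph{large} in the previous lemma, and here it plays no role beyond fixing $N'$. Once you replace the appeal to Lemma~7 by the trivial bound plus the multiplicative-weight argument above, the proof is short and requires no delicate threading of mean-value estimates.
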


\begin{proof}
We have (\cite{HB94}, pp.~340--341)
$$
\sum_{\substack{\mathbf A\text{ such that}\\
\#\{\lambda\mid A_\lambda\geq\exp(\kappa(\log\log N)^2)\}\leq 2^a-1}}
|S(\mathbf A)|
\leq
2^d\cdot\sum_{\substack{n_\lambda\text{ such that}\\
\#\{\lambda\mid n_\lambda\geq 2\exp(\kappa(\log\log N)^2)\}\leq 2^a-1\\
n=\prod_\lambda n_\lambda\in S^{\mathbf s}(N,\Sigma)}}
\frac{1}{2^{a\cdot\sum_\lambda r(n_\lambda)}}.
$$
For each $(n_\lambda)_{\lambda\in\BF_2^{c+a}}$ appeared in the above sum,
let $n'$ (resp.~$n''$) be the product of $n_\lambda$
such that $n_\lambda$ is $<$ (resp.~$\geq$) $2\exp(\kappa(\log\log N)^2)$.
Then $n'\leq N':=(2\exp(\kappa(\log\log N)^2))^{2^{c+a}}$
and $n''\leq N/n'$.
Conversely, if $n'$ and $n''$ in these ranges are given,
then one can determine
$(n_\lambda)_{\lambda\in\BF_2^{c+a}}$
by determine a map which maps each prime factor of $n'$ and $n''$ to
an element of $\BF_2^{c+a}$.
One may choose a subset of $\BF_2^{c+a}$ of size $2^a-1$,
for each prime factor of $n''$ it maps into this subset,
and for each prime factor of $n'$
it maps to anywhere in $\BF_2^{c+a}$.
There are repeated and invalid choices, hence the above sum is
\begin{align*}
&\leq\sum_{n'\leq N'}\sum_{n''\leq N/n'}
2^{-a(r(n')+r(n''))}\cdot\binom{2^{c+a}}{2^a-1}
\cdot(2^a-1)^{r(n'')}(2^{c+a})^{r(n')} \\
&=\binom{2^{c+a}}{2^a-1}
\sum_{n'\leq N'}2^{c\cdot r(n')}\sum_{n''\leq N/n'}(1-2^{-a})^{r(n'')}
\leq O(1)\cdot\sum_{n'\leq N'}2^{c\cdot r(n')}\cdot N/n'
\cdot(\log N/n')^{-2^{-a}} \\
&\leq O(1)\cdot N\cdot(\log N)^{-2^{-a}}
\sum_{n'\leq N'}\frac{2^{c\cdot r(n')}}{n'}
\leq O(1)\cdot N\cdot(\log N)^{-2^{-a}}
\cdot(\log N')^{2^c} \\
&\leq O(1)\cdot N\cdot(\log N)^{-2^{-a}}
\cdot(\log\log N)^{2^{c+1}}.
\end{align*}
This completes the proof.
\end{proof}

Suppose $\mathbf A$ is of the last type.
Namely,
\begin{itemize}
\item
Let $\Lambda:=\{\lambda\mid A_\lambda\geq\exp(\kappa(\log\log N)^2)\}$.
Then $\#\Lambda\geq 2^a$.
\item
For any linked $\lambda,\lambda'$,
either $A_\lambda<\exp(\kappa(\log\log N)^2)$
or $A_{\lambda'}<1$.
\end{itemize}
This implies that for any $\lambda,\lambda'\in\Lambda$,
they are not linked, hence by Lemma \ref{p:HB94-lem7}, $\#\Lambda=2^a$,
$\Lambda$ is a maximal unlinked subset,
so for any $\lambda'\notin\Lambda$ it must be linked
with some elements in $\Lambda$, which implies $A_{\lambda'}<1$.
In other words, we have $\Lambda=\{\lambda\mid A_\lambda\geq 1\}=\Im(\iota)$
which is a maximal unlinked subset.
Recall that this implies that once $\widetilde v,\widetilde w$ are fixed,
the $\psi_{B,v,w}(n)=w^\RT B(n)v$
depends only on
$\mathbf z=\left(\lrdd{p}{n_\lambda}\right)_{\lambda\in\Lambda,p\in\Sigma}
\in\Omega_\Lambda^{\Sigma,\mathbf s}$.
We have
$$
S(\mathbf A)=
2^{-b}
\sum_{\mathbf z=(z_\lambda^{(p)})\in\Omega_\Lambda^{\Sigma,\mathbf s}}
\sum_{\widetilde v,\widetilde w}
(-1)^{\psi_{B,v,w}(\mathbf z)}
\underbrace{\sum_{\substack{A_\lambda<n_\lambda\leq 2A_\lambda,\lambda\in\Lambda\\
n=\prod_{\lambda\in\Lambda}n_\lambda\in S^{\mathbf s}(N,\Sigma)\\
\lrdd{p}{n_\lambda}=z_\lambda^{(p)},\forall\lambda,p}}
\frac{1}{2^{a\cdot\sum_\lambda r(n_\lambda)}}}_{(*)}.
$$
The $(*)$ is equal to
$$
2^{-2^a\cdot\#\Sigma}
\sum_{\mathbf y=(y_\lambda^{(p)})\in\Omega_\Lambda^{\Sigma,*}}
(-1)^{\sum_{p,\lambda}y_\lambda^{(p)}z_\lambda^{(p)}}
\underbrace{\sum_{\substack{A_\lambda<n_\lambda\leq 2A_\lambda,\lambda\in\Lambda\\
n=\prod_{\lambda\in\Lambda}n_\lambda\in S^{\mathbf s}(N,\Sigma)}}
\left(\prod_{\lambda\in\Lambda}\lrd{d_\lambda}{n_\lambda}\right)
\frac{1}{2^{a\cdot\sum_\lambda r(n_\lambda)}}}_{(**)},
$$
where $\Omega_\Lambda^{\Sigma,*}
:=\{(y_\lambda^{(p)})_{\lambda\in\Lambda,p\in\Sigma}\}
\cong\BF_2^{\#\Lambda\cdot\#\Sigma}$
and $d_\lambda:=\prod_{p\in\Sigma}p^{y_\lambda^{(p)}}\in\BQ(\Sigma,2)$.
If there are $\lambda,\lambda'$ such that $(y_\lambda^{(p)})_{p\in\Sigma}
\neq(y_{\lambda'}^{(p)})_{p\in\Sigma}$, namely $d_\lambda\neq d_{\lambda'}$,
then $(**)$ is an error term
(\cite{HB94}, Lemma 12),
and the sum of absolute values of them is
(\cite{HB94}, p.~348)
$$
\leq C\cdot N\cdot(\log N)^{2^a-c'\kappa^{1/2}}
$$
for some constant $c'>0$ depending only on $\Sigma$, $a$,
and some constant $C>0$ depending only on $\Sigma$, $a$ and $\kappa$.
If the $(y_\lambda^{(p)})_{p\in\Sigma}$ are equal for all $\lambda$,
say all equal to $(y^{(p)})_{p\in\Sigma}$, then $(**)$ is
$$
\sum_{\substack{A_\lambda<n_\lambda\leq 2A_\lambda,\lambda\in\Lambda\\
n=\prod_{\lambda\in\Lambda}n_\lambda\in S^{\mathbf s}(N,\Sigma)}}
(-1)^{\sum_py^{(p)}s_p}
\frac{1}{2^{a\cdot\sum_\lambda r(n_\lambda)}}
$$
and the outer $(-1)^{\sum_{p,\lambda}y_\lambda^{(p)}z_\lambda^{(p)}}$
is also $(-1)^{\sum_py^{(p)}s_p}$.
Hence the main term of $(*)$ is
independent of $\mathbf z$, $\widetilde v$ and $\widetilde w$;
sum all of them together we obtain that
\begin{multline*}
S(\mathbf A)
=2^{(1-2^a)\cdot\#\Sigma-b}
\left(\sum_{\mathbf z\in\Omega_\Lambda^{\Sigma,\mathbf s}}
\sum_{\widetilde v,\widetilde w}
(-1)^{\psi_{B,v,w}(\mathbf z)}\right)
\left(\sum_{\substack{A_\lambda<n_\lambda\leq 2A_\lambda,\lambda\in\Lambda\\
n=\prod_{\lambda\in\Lambda}n_\lambda\in S^{\mathbf s}(N,\Sigma)}}
\frac{1}{2^{a\cdot\sum_\lambda r(n_\lambda)}}\right) \\
+O(N\cdot(\log N)^{2^a-c'\kappa^{1/2}}).
\end{multline*}
Let $\Lambda$ be fixed and let $A_\lambda\geq\exp(\kappa(\log\log N)^2)$
varies for $\lambda\in\Lambda$.
There are $\leq 2^{2^a}\cdot(\log N)^{2^a}$ choices.
By \cite{HB94}, p.~348,
the sum of all such $S(\mathbf A)$ is
\begin{multline*}
2^{(1-2^a)\cdot\#\Sigma-b}
\left(\sum_{\mathbf z\in\Omega_\Lambda^{\Sigma,\mathbf s}}
\sum_{\widetilde v,\widetilde w}
(-1)^{\psi_{B,v,w}(\mathbf z)}\right)
\left(\#S^{\mathbf s}(N,\Sigma)+O(N\cdot(\log N)^{-2^{-a}}\cdot(\log\log N)^2)\right) \\
+O(N\cdot(\log N)^{2\cdot 2^a-c'\kappa^{1/2}}).
\end{multline*}
Finally, let $\Lambda$ varies, combined with the terms of the first
two types of $\mathbf A$,
we obtain that
\begin{multline*}
\BE\big(2^{\corank(B(n))}\mid n\in S^{\mathbf s}(N,\Sigma)\big)
=2^{(1-2^a)\cdot\#\Sigma-b}
\sum_{\substack{\Lambda\text{ unlinked}\\
\#\Lambda=2^a}}
\sum_{\mathbf z\in\Omega_\Lambda^{\Sigma,\mathbf s}}
\sum_{\widetilde v,\widetilde w}
(-1)^{\psi_{B,v,w}(\mathbf z)} \\
+O((\log N)^{-2^{-a}}\cdot(\log\log N)^2)
+O((\log N)^{2\cdot 2^a-c'\kappa^{1/2}})
+O((\log N)^{-2^{-a}}\cdot(\log\log N)^{2^{c+1}}).
\end{multline*}
Taking $\kappa$ sufficiently large, only the last error term remains.
The main term is the same as the formula \eqref{e:high-rank-moment}
of $E^{(\infty)}$ in Proposition \ref{p:high-rank-moment-limit}.
Hence the first part of Theorem \ref{p:natural-average-order} holds.

\subsection{Essential average order and Kane's method}
\label{s:essential average}

The proof of the second part of Theorem \ref{p:natural-average-order} is
due to Kane \cite{Kane13},
and is also based on the proof of Proposition \ref{p:high-rank-moment-limit}.

Again let $B=(P,Q;R^\RT,S)\in\RRM_{(ak+b)\times(ck+d)}$
be such that $P_H=(\diag(A,\cdots,A);0)$
and $P_L=0$.
Study the property of the map
$\psi=\psi_{B,v,w}:\Omega_r^{\Sigma,\mathbf s}\to\BF_2$.
Recall that $\psi$ is determined by $\iota:[r]\to\BF_2^{c+a}$,
$\widetilde v\in\BF_2^d$ and $\widetilde w\in\BF_2^b$.
Recall that $\Lambda:=\Im(\iota)$.
Define
\begin{align*}
m(\psi)&:=\#\{i\mid
\text{there exists }j\neq i\text{ such that }
a_{ij}\text{ or }a_{ji}\text{ occurred in the expression of }\psi(\omega)\} \\
&=\sum_{\substack{\lambda\in\Lambda\\
\text{s.t. }\lambda,\lambda'\text{ linked for some }\lambda'\in\Lambda}}\#[r]_\lambda
=r-\sum_{\substack{\lambda\in\Lambda\\
\text{s.t. }\lambda,\lambda'\text{ unlinked for all }\lambda'\in\Lambda}}\#[r]_\lambda.
\end{align*}
Recall that if $m(\psi)\neq 0$, then
$\sum_{\omega\in\Omega_r^{\Sigma,\mathbf s}}(-1)^{\psi(\omega)}=0$.
Correspondingly,
the result of Kane assert that, if $m(\psi)\neq 0$,
then $\psi$ produces error term in the essential average order.

\begin{prop}
\label{p:Kane-char-bound}
Let $N>30$ be a real number and $r\geq 1$ be an integer such that
$\frac12<\frac{r}{\log\log N}<2$.

{\rm(i) (\cite{Kane13}, Proposition 9)}
Let $0<\epsilon<1$. There exists a constant $C>0$ depending only on $\epsilon$ and $\Sigma$,
such that for any above $\psi$ with $m(\psi)>0$, we have
$$
\left|
\sum_{n\in S_r^{\mathbf s}(N,\Sigma)}
(-1)^{\psi(n)}
\right|<C\cdot N\cdot\epsilon^{m(\psi)}.
$$

{\rm(ii) (\cite{Kane13}, Proposition 10)}
There exists a constant $C>0$ depending only on $\Sigma$,
such that for any above $\psi$
with $m(\psi)=0$, we have
$$
\left|
\sum_{n\in S_r^{\mathbf s}(N,\Sigma)}
(-1)^{\psi(n)}-\frac{\#S_r^{\mathbf s}(N,\Sigma)}{\#\Omega_r^{\Sigma,\mathbf s}}
\sum_{\omega\in\Omega_r^{\Sigma,\mathbf s}}(-1)^{\psi(\omega)}
\right|<C\cdot\frac{N\log\log\log N}{\log\log N}.
$$
\end{prop}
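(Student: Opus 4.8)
The plan is to deduce both statements from Propositions~9 and~10 of \cite{Kane13} by making the translation between the characters $\psi_{B,v,w}$ appearing here and the ones Kane treats completely explicit. First I would recall the structure of $\psi=\psi_{B,v,w}$ established in the proof of Proposition~\ref{p:high-rank-moment-limit}: once $\Lambda:=\Im(\iota)$, $\widetilde v$ and $\widetilde w$ are fixed, $\psi(\omega)$ is a sum of a constant and a linear combination, with coefficients depending only on $B$ and not on $n$, of the quantities $a_{\lambda\lambda'}$ for $\lambda\neq\lambda'$ in $\Lambda$ (which occur exactly when $\lambda,\lambda'$ are linked) together with the quantities $z_\lambda^{(p)}$ and $z_\lambda^{(p)}z_{\lambda'}^{(q)}$ for $\lambda,\lambda'\in\Lambda$ and $p,q\in\Sigma$. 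Writing $n=\prod_{\lambda\in\Lambda}n_\lambda$ with $n_\lambda=\prod_{i\in[r]_\lambda}\ell_i$ as in \S\ref{s:natural average}, these become $a_{\lambda\lambda'}=\lrdd{n_{\lambda'}}{n_\lambda}$ and $z_\lambda^{(p)}=\lrdd{p}{n_\lambda}$, so that $(-1)^{\psi(n)}$ is a monomial in the Jacobi symbols $\lrd{n_{\lambda'}}{n_\lambda}$ and $\lrd{p}{n_\lambda}$, which is exactly the shape of character handled by Heath-Brown and Kane. The only differences from their setting are cosmetic: $\#\Lambda\le 2^a$ by Lemma~\ref{p:HB94-lem7} in place of $\#\Lambda\le 4$, and the fixed data $\widetilde v\in\BF_2^d$, $\widetilde w\in\BF_2^b$ range over a set of bounded size $2^{b+d}$.

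For part~(i), I would use that $m(\psi)=\sum_\lambda\#[r]_\lambda$, the sum over those $\lambda\in\Lambda$ linked to some element of $\Lambda$, so $m(\psi)>0$ forces a linked pair $\lambda,\lambda'\in\Lambda$; then, exactly as in the proof of the first part of Theorem~\ref{p:natural-average-order}, one writes $\psi(n)=\lrdd{n_{\lambda'}}{n_\lambda}+\psi^{(1)}(n)+\psi^{(2)}(n)$ with $\psi^{(1)}$ independent of $n_{\lambda'}$ and $\psi^{(2)}$ independent of $n_\lambda$. The saving $\epsilon^{m(\psi)}$ then comes from iterating, over the $m(\psi)$ prime factors lying in linked blocks $[r]_\lambda$, the Jacobi-symbol cancellation estimate of Heath-Brown (\cite{HB94}, Lemmas~3 and~4) for sums over square-free integers with a prescribed number of prime factors, which is precisely the mechanism of Kane's proof of his Proposition~9; the constraint $\frac12<r/\log\log N<2$ keeps the number of prime factors per block under control. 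The step that needs checking is that this iteration uses nothing about $\#\Lambda\le 4$ or about the specific type~(A) matrix, only the monomial form of the character.

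For part~(ii), when $m(\psi)=0$ no $a_{\lambda\lambda'}$ survives, so $\Lambda$ is unlinked and, by the structural description, $(-1)^{\psi(n)}$ depends only on the residues of the $n_\lambda$ modulo a fixed modulus depending only on $\Sigma$ (the common conductor of the quadratic characters $\chi_d$, $d\in\BQ(\Sigma,2)$). I would split $\sum_{n\in S_r^{\mathbf s}(N,\Sigma)}(-1)^{\psi(n)}$ according to these residue patterns and invoke Kane's Proposition~10: the number of $n\in S_r^{\mathbf s}(N,\Sigma)$ realizing a prescribed pattern of $(n_\lambda)$ equals the expected proportion times $\#S_r^{\mathbf s}(N,\Sigma)$ with error $O(N\log\log\log N/\log\log N)$, proved by a Selberg--Delange/Landau-type count of square-free integers with exactly $r$ prime factors in arithmetic progressions, where again $\frac12<r/\log\log N<2$ produces this shape of error term. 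Summing over patterns and weighting by $(-1)^\psi$ then reassembles the main term $\frac{\#S_r^{\mathbf s}(N,\Sigma)}{\#\Omega_r^{\Sigma,\mathbf s}}\sum_{\omega\in\Omega_r^{\Sigma,\mathbf s}}(-1)^{\psi(\omega)}$.

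The hard part is not a new idea but the bookkeeping: one must verify that Kane's analytic arguments, written for the $2$-Selmer matrix of a type~(A) family, transfer verbatim to the character $(-1)^{\psi_{B,v,w}}$ attached to an arbitrary high-rank R\'edei matrix $B$. What makes this routine is precisely the structural decomposition of $\psi_{B,v,w}$ from Proposition~\ref{p:high-rank-moment-limit}, which exhibits $(-1)^{\psi_{B,v,w}(n)}$ as a Jacobi-symbol monomial of the same form as Kane's, now over at most $2^a$ blocks and with bounded extra multiplicative constants $2^{b+d}$ in place of the type~(A) parameters; all of Kane's estimates are stated in a way that only sees this form, so the quantities $m(\psi)$ and the saving carry over unchanged.
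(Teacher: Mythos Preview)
Your proposal is correct and matches the paper's approach: the paper does not give an independent proof of this proposition but simply attributes parts (i) and (ii) to Propositions~9 and~10 of \cite{Kane13} respectively, leaving the translation between the present $\psi_{B,v,w}$ and Kane's character sums implicit. Your write-up supplies exactly that translation---showing via the structural decomposition from Proposition~\ref{p:high-rank-moment-limit} that $(-1)^{\psi(n)}$ is a Jacobi-symbol monomial of the form Kane treats, with $\#\Lambda\le 2^a$ blocks and bounded auxiliary parameters $\widetilde v,\widetilde w$---and is therefore more explicit than, but entirely consistent with, what the paper does.
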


For each $0\leq m\leq r$, we estimate the number of $\psi$
such that $m(\psi)=m$.
\begin{itemize}
\item
For $m=0$ case, it means that $\Lambda$ is an unlinked subset,
contained in a maximal unlinked subset,
which has $2^c\cdot\prod_{i=1}^c(2^i+1)$ of them.
Hence the number of choices of $\psi$ is
$\leq 2^c\cdot\prod_{i=1}^c(2^i+1)\cdot 2^{b+d}\cdot(2^a)^r$.
\item
For $1\leq m\leq r$ case, let $\Lambda_0:=\{\lambda\in\Lambda\mid
\lambda,\lambda'\text{ unlinked for all }\lambda'\in\Lambda\}\subsetneqq\Lambda$.
It's clear that for any $\lambda'\in\Lambda\setminus\Lambda_0$,
the $\Lambda_0\sqcup\{\lambda'\}$ is also an unlinked subset,
hence $\#\Lambda_0\leq 2^a-1$.
There are at most $2^a\cdot 2^c\cdot\prod_{i=1}^c(2^i+1)$
unlinked subsets of size $2^a-1$.
The $m(\psi)=m$ means that there are exactly $r-m$ elements $i\in[r]$
such that $\iota(i)\in\Lambda_0$.
Therefore the number of choices of $\psi$ is
$\leq 2^a\cdot 2^c\cdot\prod_{i=1}^c(2^i+1)\cdot 2^{b+d}
\cdot\binom{r}{m}(2^a-1)^{r-m}(2^{c+a})^m$.
\end{itemize}
Sum of these $\psi$ together,
applying Proposition \ref{p:Kane-char-bound},
for $\frac12<\frac{r}{\log\log N}<2$, we have
$$
\left|\BE\big(2^{\corank(B(n))}\mid n\in S_r^{\mathbf s}(N,\Sigma)\big)
-\BE(2^{X_r})\right| \\
<\frac{C\cdot N}{\#S_r^{\mathbf s}(N,\Sigma)}\cdot
\left(\frac{\log\log\log N}{\log\log N}
+(1-2^{-a}+2^c\epsilon)^r\right),
$$
where $C>0$ is a constant depending only on $\epsilon$, $\Sigma$ and $B$.
Take $\epsilon=2^{-c-a-1}$, summing over $r\sim\log\log N$,
it's easy to obtain
the second part of Theorem \ref{p:natural-average-order}.

\subsection{Proof of main results on $2$-Selmer groups}
\label{s:proof}

The results of Theorem \ref{main theorem}, Theorem \ref{main strict}
and Theorem \ref{main average} are stated using $\Sigma$-equivalence class $\fX$
of elliptic curves, and the condition is written as $E\in\fX$, $N(E)<N$.
On the other hand,
we may represent a $\Sigma$-equivalence class $\fX$
of elliptic curves with full rational $2$-torsion points
as $\fX=\{E^{(n)}\mid n\in S^{\mathbf s}(\Sigma)\}$,
where $E:y^2=x(x-e_1)(x-e_2)$ is a fixed elliptic curve with $e_1,e_2\in\BZ$
such that the prime divisors of $2e_1e_2(e_1-e_2)$ are all contained in $\Sigma$,
and $S^{\mathbf s}(\Sigma)$ represents
a $\Sigma$-equivalence class of square-free integers,
defined in \eqref{e:Ss-Sigma}.
Then there exists a constant $C>0$ depending only on $\Sigma$
such that for any $n\in S^{\mathbf s}(\Sigma)$ the conductor $N(E^{(n)})$ of $E^{(n)}$
satisfies $1/C<N(E^{(n)})/n^2<C$.
Therefore by changing the constants in these theorems
by absolute constants, we may replace
the condition $E\in\fX,N(E)<N$ in these theorems
by $n\in S^{\mathbf s}(N,\Sigma)$.

\begin{proof}[Proof of Theorem \ref{main theorem} and Theorem \ref{main strict}]
They are consequences of
Theorem \ref{p:2-descent-redei-mat} (which relates the $2$-Selmer group to certain R\'edei matrix),
Theorem \ref{redei matrix main thm} (which gives the limit distribution of such R\'edei matrix),
and Theorem \ref{p:natural-density} (which relates limit distribution to natural distribution).
The invariance of parameter $\mathbf t$
comes from Proposition \ref{invariance of parameter}.
In the following we give the details.

Let $r=r(n)$ be the number of prime factors of $n$;
we only consider $r\geq 1$ case.
Theorem \ref{p:2-descent-redei-mat},
\ref{p:2-descent-redei-mat-2}, \ref{p:2-descent-redei-mat-0} and
Corollary \ref{selmer mat unrestricted version}
tell us that the $2$-Selmer group $\Sel_2(E^{(n)}/\BQ)$,
$\pi$-strict Selmer group $\Sel_{2,\pi\sstr}(E^{(n)}/\BQ)$,
modified $\pi$-strict Selmer group $\Sel_{2,\pi\sstr}'(E^{(n)}/\BQ)$,
as well as their essential versions $S(E^{(n)})$,
$S_{\pi\sstr}(E^{(n)})$,
$S_{\pi\sstr}'(E^{(n)})$, depend only on
$\omega=\omega(n)\in\Omega_r^{\Sigma,*}$.
More precisely, we take $B\in\RRM'_{(2(k-1)+t_\fX)\times(2(k-1)+t_\fX)}$
be the matrix $\widetilde B$ in Corollary \ref{selmer mat unrestricted version}
(see also Theorem \ref{p:2-descent-redei-mat},
\ref{p:2-descent-redei-mat-2}),
which is a ``high-rank alternating R\'edei matrix of level $2$''
in Definition \ref{p:high-rank-alternating-defn},
with associated submatrices $B_j'\in\RRM'_{(2(k-1)+t_\fX)\times((k-1)+t_\fX)}$
for $1\leq j\leq s$ defined in \S\ref{s:markov}
(which are $\widetilde B_j'$ in Corollary \ref{selmer mat unrestricted version}),
such that $\dim_{\BF_2}S(E^{(n)})=\corank(B(\omega))$
and $\dim_{\BF_2}S_{\pi_j\sstr}'(E^{(n)})=\corank(B_j'(\omega))$.
On the other hand, for $1\leq j\leq s$
we can take $B_j\in\RRM'_{(2(k-1)+(a+2))\times((k-1)+\#\Sigma)}$,
which is the unrestricted variation of the $B_j$ in Theorem \ref{p:2-descent-redei-mat-0},
such that $\dim_{\BF_2}S_{\pi_j\sstr}(E^{(n)})=\corank(B_j(\omega))$.

As in Theorem \ref{redei matrix main thm}, write $I:=\BZ_{\geq 0}^{s+1}$,
define the random variables
\begin{align*}
Y_k':\Omega_k^{\Sigma,\mathbf s}&\to I,&
\omega&\mapsto
\big(\corank(B(\omega)),(\corank(B_j'(\omega)))_{1\leq j\leq s}\big) \\
& & &=\big(\dim_{\BF_2}S(E^{(\omega)}),
(\dim_{\BF_2}S_{\pi_j\sstr}'(E^{(\omega)}))_{1\leq j\leq s}\big), \displaybreak[0]\\
Y_k:\Omega_k^{\Sigma,\mathbf s}&\to I,&
\omega&\mapsto
\big(\corank(B(\omega)),(\corank(B_j(\omega)))_{1\leq j\leq s}\big) \\
& & &=\big(\dim_{\BF_2}S(E^{(\omega)}),
(\dim_{\BF_2}S_{\pi_j\sstr}(E^{(\omega)}))_{1\leq j\leq s}\big), \displaybreak[0]\\
X_k:\Omega_k^{\Sigma,\mathbf s}&\to\BZ_{\geq 0},&
\omega&\mapsto\corank(B(\omega))=\dim_{\BF_2}S(E^{(\omega)}).
\end{align*}
Let $\xi\in\BR_{\geq 1}$ be any fixed real number.
By applying \eqref{e:redei-matrix-sum-prob-alt}
of Theorem \ref{redei matrix main thm} to the R\'edei matrix $B$,
we may find constants $C>0$ and $0<\alpha<1$ such that for all $k$,
\begin{equation}
\label{e:main-thm-Y'}
\sum_{\mathbf m=(m,m_1',\cdots,m_s')\in I}2^{\xi\cdot m}\cdot\left|\BP(Y_k'=\mathbf m)
-P_{t,\mathbf t}^\Alt(\mathbf m)\right|
<C\cdot\alpha^k.
\end{equation}
To compare $Y_k'$ with $Y_k$, we consider the event $F$
defined in \eqref{e:lin-indep-event}.
We may require the above $C$ such that
$\BP(\text{not }F)\leq C\cdot 2^{-k}$ for all $k$.
The Corollary \ref{p:strict-Sel-eq} tells us that
$\BP(Y_k'=\mathbf m\text{ and }F)=\BP(Y_k=\mathbf m\text{ and }F)$.
Also, $S_{\pi_j\sstr}(E^{(\omega)})
\subset S_{\pi_j\sstr}'(E^{(\omega)})
\subset S(E^{(\omega)})$, hence
$\BP(Y_k'=\mathbf m)=\BP(Y_k=\mathbf m)=0$ unless
$\mathbf m=(m,m_1',\cdots,m_s')$ satisfies
$0\leq m_j'\leq m$
for $1\leq j\leq s$.
By Corollary \ref{redei matrix upper bound},
we may find a constant $\alpha>0$ and require the above $C$ such that
$\BP(X_k=m)<C\cdot 2^{-\alpha m^2}$
for all $k\geq 1$, $m\geq 0$.
Therefore
\begin{multline}
\label{e:main-thm-Y'-Y}
\sum_{\mathbf m=(m,m_1',\cdots,m_s')\in I}2^{\xi\cdot m}\cdot\left|\BP(Y_k'=\mathbf m)
-\BP(Y_k=\mathbf m)\right|
\leq\sum_{m\geq 0}(m+1)^s\cdot 2^{\xi\cdot m}\cdot
\BP(X_k=m\text{ and not }F) \\
\leq\sum_{m=0}^M(m+1)^s\cdot 2^{\xi\cdot m}\cdot C\cdot 2^{-k}
+\sum_{m=M+1}^\infty(m+1)^s\cdot 2^{\xi\cdot m}\cdot C\cdot 2^{-\alpha m^2},
\end{multline}
here $M\in\BZ_{\geq 0}$ is to be chosen later.
In fact, we choose $M=\lfloor\frac{k}{2(1+\xi)}\rfloor$,
and only consider sufficiently large $k$, such that $(m+1)^{s+1}\leq 2^m$
for all $m\geq M$. Then we have
\begin{equation}
\label{e:main-thm-Y'-Y-1}
\sum_{m=0}^M(m+1)^s\cdot 2^{\xi\cdot m}\cdot C\cdot 2^{-k}
\leq(M+1)^{s+1}\cdot 2^{\xi\cdot M}\cdot C\cdot 2^{-k}
\leq 2^{(\xi+1)M}\cdot C\cdot 2^{-k}\leq C\cdot 2^{-k/2}.
\end{equation}
Also, for all $m$ we have
$(\xi+1)m-\alpha m^2\leq-(\xi+1)(m-\frac{\xi+1}{\alpha})$
(taking ``$=$'' when $m=\frac{\xi+1}{\alpha}$), so
\begin{multline}
\label{e:main-thm-Y'-Y-2}
\sum_{m=M+1}^\infty(m+1)^s\cdot 2^{\xi\cdot m}\cdot C\cdot 2^{-\alpha m^2}
\leq\sum_{m=M+1}^\infty 2^{(\xi+1)m}\cdot C\cdot 2^{-\alpha m^2}
\leq C\cdot\sum_{m=M+1}^\infty 2^{-(\xi+1)(m-\frac{\xi+1}{\alpha})} \\
=C\cdot 2^{-(\xi+1)(M+1-\frac{\xi+1}{\alpha})}/(1-2^{-(\xi+1)})
\leq C\cdot 2^{-k/2}\cdot 2^{(\xi+1)^2/\alpha}/(1-2^{-(\xi+1)}).
\end{multline}

By enlarging $C$, the ``sufficiently large $k$'' condition in the above can be removed.
Combine \eqref{e:main-thm-Y'},
\eqref{e:main-thm-Y'-Y}, \eqref{e:main-thm-Y'-Y-1},
\eqref{e:main-thm-Y'-Y-2}, we know that
there are constants $C>0$ and $0<\alpha<1$ such that for all $k$,
\begin{equation}
\label{e:main-thm-Y}
\sum_{\mathbf m=(m,m_1',\cdots,m_s')\in I}2^{\xi\cdot m}\cdot\left|\BP(Y_k=\mathbf m)
-P_{t,\mathbf t}^\Alt(\mathbf m)\right|
<C\cdot\alpha^k.
\end{equation}
In particular, $\lim_{k\to\infty}\BP(Y_k=\mathbf m)=P_{t,\mathbf t}^\Alt(\mathbf m)$.
Hence we may apply Theorem \ref{p:natural-density}
(taking any $0<\epsilon<1$)
to the random variables $Y_k$
(need to assign a junk value on
$\Omega_k^{\Sigma,*}\setminus\Omega_k^{\Sigma,\mathbf s}$)
and which tells us that
\begin{multline}
\label{e:main-thm-Y-nat}
\sum_{d,d_1',\cdots,d_s'\geq 0}\left|\BP\left(
\begin{array}{l}
\dim_{\BF_2}S(E^{(n)})=d\text{ and} \\
\dim_{\BF_2}S_{\pi_i\sstr}(E^{(n)})=d_i'\text{ for }1\leq i\leq s
\end{array}
\middle|~n\in S_{\geq 1}^{\mathbf s}(N,\Sigma)
\right)-P_{r,\mathbf t}^\Alt(d,d_1',\cdots,d_s')\right| \\
\leq I_\epsilon(\log\log N)\cdot C_1\cdot\exp(-c(\log\log\log N)^{1/2})
+R_\epsilon(\log\log N),
\end{multline}
here for each $k$, the finite subset $I_k\subset I$ still need to be chosen.
We choose $I_k:=\{(m,m_1',\cdots,m_s')\in I\mid
0\leq m_j'\leq m\leq\log k\text{ for }1\leq j\leq s\}$.
Then $\#I_k\leq(1+\log k)^3$
and $I_\epsilon(x)\leq 1+(1+\log(1+\epsilon)+\log x)^3
\leq 2(\log x)^3$ when $x$ is sufficiently large.
As for $R_\epsilon(x)$, similar to the above argument,
we only consider sufficiently large $x$, such that
for any $k\geq(1-\epsilon)x$, any $m>\log k$, we have $(m+1)^s\leq 2^m$,
then by \eqref{e:main-thm-Y} and Corollary \ref{redei matrix upper bound},
it is easy to see that
\begin{align*}
R_\epsilon(x)&\leq
\max_{(1-\epsilon)x\leq k\leq(1+\epsilon)x}
\left(C\cdot\alpha^k
+2C\cdot 2^{1/\alpha-\log k}\right)
\leq C\cdot\alpha^{(1-\epsilon)x}+2C\cdot 2^{1/\alpha-\log(1-\epsilon)-\log x}.
\end{align*}
From \eqref{e:main-thm-Y-nat} and the upper bound of $I_\epsilon(x)$ and $R_\epsilon(x)$
it is easy to obtain \eqref{e:main-theorem-refined} of Theorem \ref{main theorem}
(need to change $C_1$ and $c$ by absolute constants).

The \eqref{e:main-theorem} of Theorem \ref{main theorem}
and the $t_\pi\neq-\infty$ case
(namely, corresponding to $S_{\pi_i\sstr}$ for $1\leq i\leq s$)
of Theorem \ref{main strict}
are direct consequences of \eqref{e:main-theorem-refined}.
Now we prove Theorem \ref{main strict} for
$t_\pi=-\infty$ case.
In this case, we let $B\in\RRM'_{(2(k-1)+(a+2))\times((k-1)+\#\Sigma)}$,
which is the unrestricted variation of the $B_j$ in Theorem \ref{p:2-descent-redei-mat-0},
such that $\dim_{\BF_2}S_{\pi\sstr}(E^{(n)})=\corank(B(\omega))$.
Then $B$
satisfies the conditions in Proposition \ref{p:high-rank-non-square},
hence if we define random variables
\begin{align*}
Z_k:\Omega_k^{\Sigma,\mathbf s}&\to\BZ_{\geq 0},&
\omega&\mapsto\corank(B(\omega))=\dim_{\BF_2}S_{\pi\sstr}(E^{(\omega)}),
\end{align*}
then there is a constant $C>0$ such that for any $m\geq 1$ and $k\geq 1$,
$\BP(Z_k\geq m)\leq C\cdot k\cdot 2^{-m}(3/4)^k$,
in particular, for any $m\geq 0$, $\lim_{k\to\infty}\BP(Z_k=m)=P_{-\infty}^\Mat(m)$.
Again by Theorem \ref{p:natural-density} we have
\begin{multline}
\label{e:main-thm-Z-nat}
\sum_{d\geq 0}\left|\BP\left(
\dim_{\BF_2}S_{\pi\sstr}(E^{(n)})=d
~\middle|~n\in S_{\geq 1}^{\mathbf s}(N,\Sigma)
\right)-P_{-\infty}^\Mat(d)\right| \\
\leq I_\epsilon(\log\log N)\cdot C_1\cdot\exp(-c(\log\log\log N)^{1/2})
+R_\epsilon(\log\log N).
\end{multline}
In this case, we just take $I_k=\{0\}$ for all $k$, hence
$I_\epsilon(x)=2$ and
$$
R_\epsilon(x)\leq\max_{(1-\epsilon)x\leq k\leq(1+\epsilon)x}
\left(2\cdot C\cdot k\cdot 2^{-1}(3/4)^k\right)
\leq 2\cdot(1-\epsilon)x\cdot(3/4)^{(1-\epsilon)x}
$$
when $x$ is sufficiently large.
From \eqref{e:main-thm-Z-nat} and the upper bound of $I_\epsilon(x)$ and $R_\epsilon(x)$
it is easy to obtain $t_\pi=-\infty$ case of Theorem \ref{main strict}.
\end{proof}

\begin{proof}[Proof of Theorem \ref{main average}]
It comes from
the \eqref{e:redei-matrix-sum-prob-alt} of Theorem \ref{redei matrix main thm} (which relates the limit of $\xi$-th moment of R\'edei matrix to that of the corresponding matrix model),
Appendix \ref{average order matrix model} (which computes the $\xi$-th moment
of the matrix model in $\xi=1$ case),
and Theorem \ref{p:natural-average-order} (which relates the limit of $\xi$-th moment
to the natural and essential $\xi$-th moment).
In the following we give the details.

We take $\widetilde B\in\RRM'_{(2(k-1)+t_\fX)\times(2(k-1)+t_\fX)}$
be the unrestricted R\'edei matrix $\widetilde B$
in Corollary \ref{selmer mat unrestricted version},
take $B\in\RRM_{(2k+t_\fX)\times(2k+t_\fX)}$
be the restricted R\'edei matrix $B$
in Theorem \ref{p:2-descent-redei-mat},
such that $\dim_{\BF_2}S(E^{(n)})+2
=\dim_{\BF_2}\Sel_2(E^{(n)}/\BQ)
=\corank(\widetilde B(\omega))+2
=\corank(B(\omega))$.
Define the random variables
\begin{align*}
X_k:\Omega_k^{\Sigma,\mathbf s}&\to\BZ_{\geq 0},&
\omega&\mapsto\corank(\widetilde B(\omega))
=\corank(B(\omega))-2=\dim_{\BF_2}S(E^{(\omega)}).
\end{align*}
Then by \eqref{e:redei-matrix-sum-prob-alt}
of Theorem \ref{redei matrix main thm}
(or by \eqref{e:main-thm-Y'}
or \eqref{e:main-thm-Y})
we know that
for any $\xi\in\BZ_{\geq 1}$
there exists $C>0$ and $0<\alpha<1$ such that for all $k\geq 1$,
\begin{equation}
\label{e:main-thm-X}
\sum_{m\geq 0}2^{\xi\cdot m}\cdot\left|\BP(X_k=m)
-P_{t,\mathbf t}^\Alt(m)\right|
<C\cdot\alpha^k.
\end{equation}
Apply Theorem \ref{p:natural-average-order} to the restricted
R\'edei matrix $B^{\oplus\xi}=\diag(B,\cdots,B)$,
we know that the natural average order satisfies
$$
\left|2^{2\xi}\cdot\BE\big(\#S(E^{(n)})^\xi\mid n\in S^{\mathbf s}(N,\Sigma)\big)
-E^{(\infty)}\right|<C\cdot\frac{(\log\log N)^{2^{2\xi+1}}}{(\log N)^{1/2^{2\xi}}},
$$
and the ``essential'' average order satisfies
$$
\left|2^{2\xi}\cdot\BE\big(\#S(E^{(n)})^\xi\mid n\in S^{\mathbf s}(N,\Sigma),r(n)\sim\log\log N\big)-E^{(\infty)}\right|
<C\cdot\frac{\log\log\log N}{(\log\log N)^{1/3}}.
$$
Here
$$
E^{(\infty)}=\lim_{k\to\infty}\BE(2^{\xi(X_k+2)})
=2^{2\xi}\cdot\lim_{k\to\infty}
\sum_{m\geq 0}2^{\xi\cdot m}\cdot\BP(X_k=m)
=2^{2\xi}
\sum_{d=0}^\infty P_{t, \mathbf{t}}^{\Alt}(d) 2^{\xi d},
$$
where the last equality is by \eqref{e:main-thm-X}.
Hence we obtain the two inequalities
\eqref{e:average-order-1} and \eqref{e:essential-average-order-1}
in Theorem \ref{main average}.
The \eqref{e:average-moment} in Theorem \ref{main average} is a consequence of them.
For the $\xi=1$ case, by Appendix \ref{average order matrix model}
we know that
$\sum_{d=0}^\infty P_{t, \mathbf{t}}^{\Alt}(d) 2^{\xi d}
=3+\sum_i2^{t_i}$, hence it gives \eqref{e:average-order}
in Theorem \ref{main average}.

For the $S_{\pi\sstr}$ it is similar.
When $t_\pi\neq-\infty$, the \eqref{e:main-thm-Y}
implies that
\begin{equation}
\label{e:main-thm-Z}
\sum_{m\geq 0}2^{\xi\cdot m}\cdot\left|\BP(Z_k=m)
-P_{t_\pi}^\Mat(m)\right|
<C\cdot\alpha^k.
\end{equation}
When $t_\pi=-\infty$ it also holds, similar to the argument
from \eqref{e:main-thm-Y'} to \eqref{e:main-thm-Y}, using
the following upper bounds:
by Corollary \ref{redei matrix upper bound} and Proposition \ref{p:high-rank-non-square},
we may find $C>0$ and $\alpha>0$ such that
$\BP(Z_k=m)<C\cdot 2^{-\alpha m^2}$ (notice $Z_k\leq X_k$) for all
$k\geq 1$, $m\geq 0$,
and such that $\BP(Z_k\geq m)\leq C\cdot k\cdot 2^{-m}(3/4)^k$ for all
$k\geq 1$, $m\geq 1$.
The desired result comes from applying
Theorem \ref{p:natural-average-order} to the restricted
R\'edei matrix $B_j$ in Theorem \ref{p:2-descent-redei-mat-0},
and repeat the above argument.
\end{proof}

\appendix

\section{Markov chain, model of alternating matrices}

\subsection{Basic property of Markov chains}
\label{s:basic-property-markov}

Let $I$ be a countable set, usually called the state space.
Let $\Omega$ be a probability space.
If $X:\Omega\to I$ is a random variable, its
probability mass function $x=(x_i)_{i\in I}$,
$x_i=\BP(X=i)$ is an element in the $\ell^1$-space
$$
\ell^1(I):=\left\{x=(x_i)_{i\in I}\in\BR^I~\middle|~
\|x\|=\|x\|_1:=\sum_{i\in I}|x_i|<\infty\right\},
$$
such that $x_i\geq 0$ and $\|x\|=1$.
Namely, $x$ is contained in the following closed convex subset of $\ell_1(I)$:
$$
\ell_1^1(I):=\left\{x=(x_i)_{i\in I}\in\ell^1(I)~\middle|~
x_i\geq 0\text{ for all }i,\text{ and }\|x\|=1
\right\}.
$$
We also define
$$
\ell_+^1(I):=\BR_{\geq 0}\cdot\ell_1^1(I)
=\left\{x=(x_i)_{i\in I}\in\ell^1(I)~\middle|~
x_i\geq 0\text{ for all }i
\right\}.
$$
Elements in $\ell^1(I)$ are viewed as (finite or infinite) column vectors.
If $i\in I$, let $e_i\in\ell_1^1(I)$ whose $i$-th component is $1$,
other components are $0$.

A \emph{transition matrix} $P$ is
a bounded linear operator on $\ell^1(I)$ preserving $\ell_1^1(I)$.
When elements of $\ell^1(I)$ are viewed as column vectors,
the $P$ is viewed as a (finite or infinite) matrix $P=(P_{ij})_{i,j\in I}$
such that $P_{ij}\geq 0$ for all $i,j$,
and $\sum_{i\in I}P_{ij}=1$ for all $j$.
The $P_{ij}$ is ``the transition probability from $j$ to $i$''.
An element $x\in\ell_1^1(I)$ is called an \emph{equilibrium distribution} of $P$
if $P(x)=x$.

\begin{defn}
A sequence $\{X_k:\Omega\to I\}_{k=0}^\infty$
of random variables is called a \emph{Markov chain},
if the following properties are satisfied:
\begin{itemize}
\item
(independent of the past)
For any $k\geq 0$ and $x_0,\cdots,x_{k+1}\in I$,
$$
\BP\big(X_{k+1}=x_{k+1}\mid X_i=x_i\text{ for all }0\leq i\leq k\big)
=\BP\big(X_{k+1}=x_{k+1}\mid X_k=x_k\big)
$$
if both conditional probabilities are well-defined.
\item
(time homogeneous)
There exists a transition matrix
$P=(P_{ij})$ such that
for any $k\geq 0$ and $i,j\in I$,
$$
\BP\big(X_{k+1}=i\mid X_k=j\big)=P_{ij}
$$
if the conditional probability in the left hand side is well-defined.
The $P=(P_{ij})$ is called the \emph{transition matrix} of $\{X_k\}$.
\end{itemize}
\end{defn}

In our application the $\Omega$ is usually
an inverse limit $\varprojlim_k\Omega_k$ of probability spaces,
each $\Omega_k$ is usually finite,
and each $X_k$ is factored through $\Omega_k$.

If random variables $\{X_k\}$ form a Markov chain,
then they satisfy $\BP(X_{k+1}=i)=\sum_{j\in I}P_{ij}\cdot\BP(X_k=j)$,
namely,
their probability mass functions $\{x_k\}$ satisfy $x_{k+1}=P(x_k)$
and hence $x_k=P^k(x_0)$.
Therefore, to understand the behavior of $X_k$ as $k\to\infty$,
an important aspect is to study $P^k(x)$ as $k\to\infty$
where $x\in\ell_1^1(I)$.

Inspired by the the later
results, we may introduce the following definition.

\begin{defn}
A sequence $\{X_k:\Omega\to I\}_{k=0}^\infty$
of random variables is called an \emph{almost Markov chain},
if the above ``independent of the past'' property is satisfied,
and the following property is satisfied:
\begin{itemize}
\item
(almost time homogeneous)
There exists a transition matrix
$P=(P_{ij})$ such that
$$
\sum_{k=0}^\infty\sum_{i\in I}\left|\BP(X_{k+1}=i)-\sum_{j\in I}P_{ij}\cdot\BP(X_k=j)\right|
$$
converges.
The $P=(P_{ij})$ is called the \emph{limit transition matrix} of $\{X_k\}$.
\end{itemize}
\end{defn}

Note that if $\{X_k\}$ is a Markov chain, then the above infinite sum is zero.

\subsubsection*{Ergodicity and geometric ergodicity}

The property of $P^k(x)$ as $k\to\infty$
is affected by the following important properties of $P$:
\begin{itemize}
\item
$P$ is called \emph{irreducible}, if for any $i,j\in I$,
there exists $k\geq 0$ such that $(P^k)_{ij}>0$.
\item
$P$ is called \emph{aperiodic}, if for any $i\in I$,
there exists $N$ such that $(P^k)_{ii}>0$ for all $k\geq N$.
\end{itemize}

If these two properties are satisfied, then we have the ergodicity result
of Markov chains (as well as almost Markov chains):

\begin{thm}
[see \cite{Nor97}, Theorem 1.8.3 and \cite{SD08}, \S5]
\label{p:MC}
Suppose $P$ is irreducible and aperiodic,
and has an equilibrium distribution $x_\infty\in\ell_1^1(I)$.

{\rm(i)}
For any $x\in\ell_1^1(I)$, we have
$\|P^n(x)-x_\infty\|\to 0$ as $n\to\infty$.
In particular, $P$ has exactly one equilibrium distribution.

{\rm(ii)}
Suppose $\{x_n\}_{n=0}^\infty$ is a sequence in $\ell_+^1(I)$
such that $\|x_n\|\to 1$ as $n\to\infty$ and
$\sum_{n=0}^\infty\|P(x_n)-x_{n+1}\|$ converges.
Then $\|x_n-x_\infty\|\to 0$ as $n\to\infty$.
\end{thm}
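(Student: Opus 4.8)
The plan is to prove (i) by the classical coupling argument for convergence to equilibrium of a positive recurrent chain, and then to deduce (ii) from (i) by a telescoping estimate that uses only the fact that a transition matrix contracts the $\ell^1$-norm.

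For (i), I would first record two elementary points: since $P$ preserves $\ell_1^1(I)$ and is linear, one has $\|P(v)\|\le\|v\|$ for every $v\in\ell^1(I)$, so $P$ is an $\ell^1$-contraction; and, $P$ being irreducible with invariant probability distribution $x_\infty$, every coordinate of $x_\infty$ is strictly positive. Fix a reference state $b\in I$ and form the product chain on $I\times I$ with transition matrix $\widetilde P_{(i,k),(j,l)}=P_{ij}P_{kl}$: aperiodicity of $P$ forces $\widetilde P$ to be irreducible, and $x_\infty\otimes x_\infty$ is an invariant probability distribution for $\widetilde P$, so $\widetilde P$ is positive recurrent. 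Running the product chain from $x\otimes x_\infty$ and letting $T$ be the first time both coordinates hit $b$, positive recurrence yields $\BP(T<\infty)=1$. The coupled process — first coordinate before $T$, second coordinate after $T$ — is, by the strong Markov property, Markov with initial law $x$, hence has law $P^n(x)$ at time $n$; comparing with the constant law $x_\infty$ of the second coordinate and cancelling the mass carried on $\{T\le n\}$ gives $\|P^n(x)-x_\infty\|\le 2\,\BP(T>n)\to0$. Applying this with $x$ a second equilibrium distribution gives uniqueness.

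For (ii), writing $\epsilon_n:=\|P(x_n)-x_{n+1}\|$ so that $\sum_n\epsilon_n<\infty$, I would iterate the defining relation to obtain, for $m<n$,
\[
x_n=P^{\,n-m}(x_m)+\sum_{j=m}^{n-1}P^{\,n-1-j}\bigl(x_{j+1}-P(x_j)\bigr),
\]
and use the contraction property to bound the norm of the error sum by $\delta_m:=\sum_{j\ge m}\epsilon_j\to0$. Since $\|x_m\|\to1$, for $m$ large the normalization $\widehat x_m:=x_m/\|x_m\|$ lies in $\ell_1^1(I)$ with $\|x_m-\widehat x_m\|=\bigl|\|x_m\|-1\bigr|\to0$, hence $\|x_n-P^{\,n-m}(\widehat x_m)\|\le\delta_m'$ with $\delta_m'\to0$. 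Given $\eta>0$, I would fix $m$ with $\delta_m'<\eta/2$ and then invoke (i) to make $\|P^{\,n-m}(\widehat x_m)-x_\infty\|<\eta/2$ for all large $n$, concluding $\|x_n-x_\infty\|\to0$.

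The hard part is the single input $\BP(T<\infty)=1$ in (i): this is precisely where both hypotheses enter — aperiodicity is what upgrades irreducibility of $P$ to irreducibility of the product chain $\widetilde P$, while the mere existence of the invariant probability distribution $x_\infty$ forces $\widetilde P$ to be positive recurrent, so the diagonal hitting time is almost surely finite. Everything else (the $\ell^1$-contraction bound, the strong Markov property justifying the switch at $T$, the telescoping identity, and the normalization reducing (ii) to (i)) is routine; for the detailed verification of the coupling and strong-Markov steps I would refer to \cite[Theorem 1.8.3]{Nor97}, and the almost-Markov extension in (ii) is as in \cite[\S5]{SD08}.
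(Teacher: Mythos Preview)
Your proposal is correct and matches the paper's approach: the paper's own proof is simply the two-line remark that (i) is a well-known result in Markov chain theory (citing \cite{Nor97}, Theorem 1.8.3, which is exactly the coupling argument you reproduce) and that (ii) follows from (i) easily. Your telescoping identity $x_n=P^{n-m}(x_m)+\sum_{j=m}^{n-1}P^{n-1-j}(x_{j+1}-P(x_j))$ together with the $\ell^1$-contraction of $P$ and the normalization step is precisely the ``easy'' deduction the paper has in mind.
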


The above (i) is a well-known result in the theory of Markov chains.
The (ii) follows from (i) easily.

Under some more stronger conditions,
the convergence rate of Markov chains (as well as almost Markov chains)
to the equilibrium distribution
is geometric, which is called geometric ergodicity.
In other words, it is an effective version of Theorem \ref{p:MC}.

Define
$$
\ell_0(I):=\left\{x=(x_i)_{i\in I}\in\BR^I~\middle|~
x_i=0\text{ for all but finitely many }i
\right\}.
$$
Suppose $\nu:I\to\BR_{\geq 1}$ is a map, such that
for any $C>0$, $\#\{i\in I\mid\nu(i)\leq C\}<\infty$,
equivalently, the image of $\nu$ is discrete.
Then it induces a norm
$\|\ \|_\nu:\ell_0(I)\to\BR$,
$(x_i)_{i\in I}\mapsto\sum_{i\in I}|x_i|\nu(i)$,
in particular $\|x_i\|_\nu=\nu(i)$.
Clearly $\|x\|_\nu\geq\|x\|$ for all $x\in\ell_0(I)$.
Define
$$
\ell_\nu(I):=\left\{x=(x_i)_{i\in I}\in\BR^I~\middle|~
\|x\|_\nu:=\sum_{i\in I}|x_i|\nu(i)<\infty\right\},
$$
which is ``the completion of $\ell_0(I)$ with respect to $\|\ \|_\nu$''.
Clearly $\ell_0(I)\subset\ell_\nu(I)\subset\ell^1(I)$.

\begin{defn}
Let $P$ be a transition matrix on $I$.
\begin{itemize}
\item
$P$ is called \emph{finite}, if for any $i\in I$,
$\#\{j\in I\mid P_{ij}>0\}<\infty$
and $\#\{j\in I\mid P_{ji}>0\}<\infty$.
\item
$P$ is called \emph{extremely aperiodic}, if for any $i\in I$,
$P_{ii}>0$.
\item
$P$ is called \emph{$\nu$-driftable}, if there exists $\lambda<1$ such that $\|P(e_j)\|_\nu\leq\lambda\|e_j\|_\nu$
for all but finitely many $j\in I$.
\end{itemize}
\end{defn}

\begin{thm}[see \cite{KP20}, Theorem 2.1, Theorem 3.1, also \cite{MT09}]
\label{p:MC-2}
Suppose $P$ is finite, irreducible, extremely aperiodic,
and $\nu$-driftable.

{\rm(i)}
$P$ has a unique equilibrium distribution $x_\infty\in\ell_\nu(I)\cap\ell_1^1(I)$.
There exist $C>0$ and $0<\alpha<1$ depending only on $\nu$ and $P$, such that
for any $x\in\ell_\nu(I)\cap\ell_1^1(I)$ and any $n\geq 0$,
$$
\|P^n(x)-x_\infty\|
\leq\|P^n(x)-x_\infty\|_\nu\leq C\cdot\alpha^n\cdot\|x\|_\nu.
$$

{\rm(ii)}
Suppose $\gamma\geq 1$ such that
\begin{itemize}
\item
$\nu(i)\leq\gamma\cdot\nu(j)$ for all $i,j\in I$ such that $P_{ij}\neq 0$.
\end{itemize}
Suppose $0<\beta<1$ and $\epsilon>0$.
There exist $C_1>0$ and $0<\alpha_1<1$ depending only on $\beta,\gamma,\epsilon,\nu$
and $P$, such that if $C_0>0$ and
$\{x_n\}_{n=0}^\infty$ is a sequence in $\ell_\nu(I)\cap\ell_+^1(I)$
satisfying that
\begin{itemize}
\item
$\big|\|x_n\|-1\big|\leq C_0\cdot\beta^n$ for all $n\geq 0$,
\item
$\|P(x_n)-x_{n+1}\|\leq C_0\cdot\beta^n$ for all $n\geq 0$,
\item
$\|x_n\|_\nu\leq C_0$ and
$\|x_n^{\nu>C}\|_\nu\leq C_0\cdot C^{-\epsilon}$ for all $n\geq 0$ and $C\geq 1$,
here if $x_n=\sum_{i\in I}x_{n,i}e_i$,
define $x_n^{\nu>C}:=\sum_{i\in I,\nu(i)>C}x_{n,i}e_i$,
\end{itemize}
then for any $n\geq 0$,
$$
\|x_n-x_\infty\|_\nu\leq C_0\cdot C_1\cdot\alpha_1^n.
$$
\end{thm}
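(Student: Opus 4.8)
The plan is to deduce both parts from the classical theory of geometrically (``$V$-uniformly'') ergodic Markov chains, in the packaged form of \cite{KP20} (Theorems 2.1 and 3.1) and \cite{MT09}. For part (i) I would first reinterpret the hypotheses as a Foster--Lyapunov geometric drift condition with Lyapunov function $\nu$. The $\nu$-driftability condition $\|P(e_j)\|_\nu\le\lambda\|e_j\|_\nu$ for all but finitely many $j$ says precisely that, outside a finite exceptional set $F\subset I$, one has $\sum_i P_{ij}\,\nu(i)\le\lambda\,\nu(j)$; since $P$ is finite and $\nu$ has discrete image, $b:=\sup_{j\in F}\sum_i P_{ij}\nu(i)<\infty$, so $P\nu\le\lambda\nu+b\,\mathbf 1_F$ in the usual sense. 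Irreducibility together with extreme aperiodicity ($P_{ii}>0$ for all $i$) makes every finite set, in particular $F$, a small/petite set and guarantees a stationary distribution. Feeding the drift and small-set conditions into the $V$-uniform ergodicity theorem (\cite{MT09}, or \cite{KP20} Theorem 2.1) produces the unique equilibrium $x_\infty$ with $\|x_\infty\|_\nu<\infty$ and constants $C>0$, $0<\alpha<1$ with $\|P^n(x)-x_\infty\|_\nu\le C\alpha^n\|x\|_\nu$ for every $x\in\ell_\nu(I)\cap\ell_1^1(I)$; the bound $\|\,\cdot\,\|\le\|\,\cdot\,\|_\nu$ is automatic since $\nu\ge 1$. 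By linearity the same estimate extends, with $x_\infty$ replaced by $0$, to every signed $x\in\ell_\nu(I)$ with $\sum_i x_i=0$; this refinement is what part (ii) will use.

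For part (ii) the point is that $\{x_n\}$ is an inexact orbit of $P$: set $\delta_n:=x_{n+1}-P(x_n)$, so $\|\delta_n\|\le 2C_0\beta^n$ by the first two hypotheses, and telescoping gives $x_n=P^n(x_0)+\sum_{k=0}^{n-1}P^{\,n-1-k}(\delta_k)$. The first term is handled by part (i) applied to $x_0/\|x_0\|$, giving $\|P^n(x_0)-\|x_0\|\,x_\infty\|_\nu\le CC_0\alpha^n$. For the correction sum I would split each error as $\delta_k=\delta_k^{0}+c_k\,\xi$, where $c_k:=\sum_i\delta_{k,i}$ (so $|c_k|\le 2C_0\beta^k$), $\xi$ is a fixed probability vector supported on $F$, and $\delta_k^{0}$ has zero total mass. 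Apply the zero-mass version of part (i) to the $\delta_k^{0}$-terms. For the $c_k\xi$-terms, split the sum at $k\approx n/2$: for early indices use $\|P^m(\xi)-x_\infty\|_\nu\le C\alpha^m\|\xi\|_\nu$, for late indices use the crude bound $\|P^m(\xi)\|_\nu\le\|x_\infty\|_\nu+C\|\xi\|_\nu$, and use the fact --- forced by $\|x_n\|=\|x_0\|+\sum_{k<n}c_k\to 1$ --- that $\sum_k c_k$ converges with geometrically small tails. Collecting terms yields $x_n=\big(\|x_0\|+\sum_k c_k\big)x_\infty+(\text{error})=x_\infty+(\text{error})$ with the error $O(\rho^n)$ in $\nu$-norm for some $\rho:=\max(\alpha,\sqrt{\beta})<1$, after renaming constants. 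The remaining hypotheses make this rigorous at the level of $\nu$-norms rather than $\ell^1$-norms: the one-step growth bound $\nu(i)\le\gamma\nu(j)$ when $P_{ij}\ne 0$ passes from $\|\delta_k\|$ to $\|P^m(\delta_k)\|_\nu$ with controlled loss, and the uniform-integrability bound $\|x_n^{\nu>C}\|_\nu\le C_0C^{-\epsilon}$ (inherited in the same form by the $\delta_k$) lets one truncate the high-$\nu$ tails uniformly in $n$.

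The main obstacle is exactly this last point. Because the $x_n$ are neither probability vectors nor mass-preserving under $P$, the textbook geometric-ergodicity estimate cannot be applied as a black box; one must (a) isolate the mass defect $\sum_k c_k$ and prove that it converges and has geometrically small tails --- this is where the hypothesis $\big|\|x_n\|-1\big|\le C_0\beta^n$ does genuine work --- and (b) upgrade all $\ell^1$-control of the perturbations $\delta_k$ to $\ell_\nu$-control, which is where the one-step growth bound $\nu(i)\le\gamma\nu(j)$ and the uniform-integrability hypothesis are essential: without them a single step could push mass to states of arbitrarily large $\nu$ and wreck the $\nu$-norm estimate. Once both difficulties are dealt with, the geometric rate $\alpha_1$ and constant $C_1$ depending only on $\beta,\gamma,\epsilon,\nu,P$ fall out of the estimates above.
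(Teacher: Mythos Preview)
Your proposal is correct and follows essentially the same route as the paper: part~(i) is taken as a black box from the cited $V$-uniform ergodicity results, and part~(ii) is proved by telescoping $x_n$ against iterates of $P$ applied to the one-step errors $y_k:=P(x_k)-x_{k+1}$, then feeding each piece into~(i). The executions differ in two respects worth noting.

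First, the paper's key simplification is to establish \emph{at the outset} that $\|y_n\|_\nu\le C_0C_2\alpha_2^n$ decays geometrically in the $\nu$-norm. This is done by truncating at height $C$: write $y_n=(P(x_n)^{\nu\le\gamma C}-x_{n+1}^{\nu\le\gamma C})+P(x_n)^{\nu>\gamma C}-x_{n+1}^{\nu>\gamma C}$, use the one-step growth bound \emph{once} to say $P(x_n^{\nu\le C})^{\nu>\gamma C}=0$, bound the low part by $\gamma C\cdot\|y_n\|$ and the high parts by the uniform-integrability hypothesis, and optimize $C=\beta^{-n/2}$. Once $\|y_n\|_\nu$ is under control, the paper telescopes from an intermediate point $x_m$ (with $m\approx k/(N+1)$) rather than from $x_0$, applies~(i) to each $y_{m+i}/\|y_{m+i}\|$ and to $x_m/\|x_m\|$, and collects the resulting $\|y_{m+i}\|\cdot x_\infty$ and $\|x_m\|\cdot x_\infty$ terms against $x_\infty$ using $|\|x_m\|-1|\le C_0\beta^m$. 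Your zero-mass/mass-defect decomposition $\delta_k=\delta_k^0+c_k\xi$ is therefore not needed; normalizing by the $\ell^1$-norm and absorbing the scalar multiples of $x_\infty$ does the same job more directly.

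Second, be careful with your phrasing that the one-step growth bound ``passes from $\|\delta_k\|$ to $\|P^m(\delta_k)\|_\nu$ with controlled loss'': iterating $\nu(i)\le\gamma\nu(j)$ over $m$ steps would cost $\gamma^m$, which is fatal. The correct use --- and what the paper does --- is to invoke it a single time to upgrade $\ell^1$-control of $y_n$ to $\nu$-norm control of $y_n$, after which part~(i) handles all powers of $P$.
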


\begin{proof}
(i)
By \cite{MT09}, Theorem 15.0.1,
the $x_\infty$ exists,
and there exists $R>0$ and $r>1$ such that
for any $x\in\ell_\nu(I)\cap\ell_1^1(I)$
$$
\sum_{n=0}^\infty r^n\|P^n(x)-x_\infty\|_\nu\leq R\cdot\|x\|_\nu.
$$
Hence $r^n\|P^n(x)-x_\infty\|_\nu\leq R\cdot\|x\|_\nu$ for any $n\geq 0$,
which yields the desired result.

(ii)
Denote $y_n:=P(x_n)-x_{n+1}$.
First we claim that there exists $C_2>0$ and $0<\alpha_2<1$
depending only on $\beta,\gamma,\epsilon,\nu$
and $P$,
such that for any $n\geq 0$,
$$
\|y_n\|_\nu\leq C_0\cdot C_2\cdot\alpha_2^n.
$$
In fact, we may write
$$
y_n=P(x_n)-x_{n+1}=\Big(P(x_n)^{\nu\leq\gamma C}-x_{n+1}^{\nu\leq\gamma C}\Big)
+P(x_n)^{\nu>\gamma C}-x_{n+1}^{\nu>\gamma C}.
$$
We have
\begin{align*}
\Big\|P(x_n)^{\nu\leq\gamma C}-x_{n+1}^{\nu\leq\gamma C}\Big\|_\nu
&\leq\gamma C\cdot\Big\|P(x_n)^{\nu\leq\gamma C}-x_{n+1}^{\nu\leq\gamma C}\Big\|
\leq\gamma C\cdot\|P(x_n)-x_{n+1}\|
\leq\gamma C\cdot C_0\cdot\beta^n, \\
\|P(x_n)^{\nu>\gamma C}\|_\nu
&=\|P(x_n^{\nu>C})^{\nu>\gamma C}\|_\nu
\leq\|P(x_n^{\nu>C})\|_\nu
\leq\|P\|_\nu\cdot C_0\cdot C^{-\epsilon}, \\
\|x_{n+1}^{\nu>\gamma C}\|_\nu
&\leq C_0\cdot(\gamma C)^{-\epsilon}.
\end{align*}
In the second formula we use the fact that
$P(x_n^{\nu\leq C})^{\nu>\gamma C}=0$.
Therefore if take $C=\beta^{-n/2}$, then
$$
\|y_n\|_\nu\leq C_0\cdot\left(\gamma\cdot\beta^{n/2}+(\|P\|_\nu+\gamma^{-\epsilon})
\cdot\beta^{n\epsilon/2}\right),
$$
so we can take $C_2=\gamma+\|P\|_\nu+\gamma^{-\epsilon}$
and $\alpha_2=\max\{\beta^{1/2},\beta^{\epsilon/2}\}$.

Now let $C$ and $\alpha$ be constants in (i) depending only on $\nu$ and $P$,
fix an arbitrary integer $N\geq 1$,
for any $k\geq 0$ write $k=m+n$ with $m=\lfloor\frac{k}{N+1}\rfloor$,
so that $n\geq\frac{Nk}{N+1}$ and $m\geq\frac{k-N}{N+1}$.
To estimate $\|x_k-x_\infty\|_\nu$ we first note that
$$
P^n(x_m)-x_{m+n}
=\sum_{i=0}^{n-1}P^{n-1-i}(y_{m+i}).
$$
Denote $s_n:=\|y_n\|$.
If $s_{m+i}\neq 0$ then
$y_{m+i}/s_{m+i}\in\ell_\nu(I)\cap\ell_1^1(I)$, hence by (i) it is easy to see that
$$
\|P^{n-1-i}(y_{m+i})-s_{m+i}\cdot x_\infty\|_\nu
\leq C\cdot\alpha^{n-1-i}\cdot\|y_{m+i}\|_\nu
\leq C\cdot C_0\cdot C_2\cdot\alpha^{n-1-i}\cdot\alpha_2^{m+i}.
$$
This is also true for $s_{m+i}=0$. Therefore, enlarge $\alpha_2$ such that
$\alpha<\alpha_2<1$, then
$$
\left\|P^n(x_m)-x_{m+n}
-x_\infty\cdot\sum_{i=0}^{n-1}s_{m+i}\right\|_\nu
\leq C\cdot C_0\cdot C_2\cdot\sum_{i=0}^{n-1}
\alpha^{n-1-i}\cdot\alpha_2^{m+i}
\leq C\cdot C_0\cdot C_2\cdot\frac{\alpha_2^{m+n}}{\alpha_2-\alpha}.
$$
On the other hand, denote $r_n:=\|x_n\|$,
if $r_m\neq 0$ then $x_m/r_m\in\ell_\nu(I)\cap\ell_1^1(I)$,
hence by (i),
$$
\|P^n(x_m)-r_mx_\infty\|_\nu
\leq C\cdot\alpha^n\cdot\|x_m\|_\nu
\leq C\cdot C_0\cdot\alpha^n.
$$
This is also true for $r_m=0$.
Therefore
\begin{align*}
\|x_k-x_\infty\|_\nu
&\leq C\cdot C_0\cdot\alpha^n
+C\cdot C_0\cdot C_2\cdot\frac{\alpha_2^{m+n}}{\alpha_2-\alpha}
+\|x_\infty\|_\nu\cdot\left(|r_m-1|+\sum_{i=0}^{n-1}s_{m+i}\right) \\
&\leq C_0\cdot\left(C\cdot\alpha^{\frac{Nk}{N+1}}
+C\cdot C_2\cdot\frac{\alpha_2^k}{\alpha_2-\alpha}
+\|x_\infty\|_\nu\cdot\beta^{\frac{k-N}{N+1}}\cdot\frac{2-\beta}{1-\beta}
\right),
\end{align*}
so we can take $C_1=C+\frac{C\cdot C_2}{\alpha_2-\alpha}
+\|x_\infty\|_\nu\cdot\frac{2-\beta}{1-\beta}
\cdot\beta^{-\frac{N}{N+1}}$
and $\alpha_1=\max\{\alpha^{\frac{N}{N+1}},\alpha_2,\beta^{\frac{1}{N+1}}\}$.
\end{proof}

\subsection{Models of matrices and corresponding Markov chains}
\label{s:model}

To compute
$$
P_{t,\mathbf t}^\Alt(m):=\lim_{k\to\infty}
\BP\big(\corank(B)=m\mid
B\in M_{2k+t,\mathbf t}^\Alt(\BF_2)\big)
$$
in Theorem \ref{main theorem},
a key ingredient is the corresponding Markov chain.

We consider the following slightly generalized model $M_{dk+t,\mathbf t}^\Alt(\BF_2)$.
Let $d\geq 2$ be an even integer
(it is $d=2$ in Theorem \ref{main theorem}), $0\leq s\leq d$ be an integer,
$t$ be an integer (the model only depends
on the residue class of $t$ modulo $d$),
$\mathbf t=(t_1,\cdots,t_s)$ be an unordered tuple of $s$ integers, such that
$t_1\equiv\cdots\equiv t_s\equiv -t\pmod d$.
In the case that $s=d$ we further require that
$\sum_{i=1}^st_i\leq 0$.
Denote by $\Omega_k=M_{dk+t,\mathbf t}^\Alt(\BF_2)$ the set of all $B\in M_{dk+t}^\Alt(\BF_2)$ of form
$$
B=\begin{pmatrix}
0 & B_{12} & \cdots & B_{1,s+1} \\
B_{21} & \ddots & \ddots & \vdots \\
\vdots & \ddots & 0 & B_{s,s+1} \\
B_{s+1,1} & \cdots & B_{s+1,s} & B_{s+1,s+1}
\end{pmatrix}
$$
such that for $1\leq i\leq s$, $1\leq j\leq s$,
the $B_{ij}$ is of size $(k+\frac{t_i+t}d)\times(k+\frac{t_j+t}d)$.
Endow $\Omega_k$ with normalized counting measure.
Define the projection map $\Omega_{k+1}\twoheadrightarrow\Omega_k$,
$B_\new=(B_{\new,ij})\mapsto B=(B_{ij})$
where each $B_{ij}$ is the top-left corner of $B_{\new,ij}$.
This allows us to define the probability space $\Omega=\varprojlim_k\Omega_k$.
Consider the random variables on $\Omega$.
For example, for $X_k:\Omega_k\to\BZ_{\geq 0}$, $B\mapsto\corank(B)$,
then $P_{t,\mathbf t}^\Alt(m)=\lim_{k\to\infty}\BP(X_k=m)$.
When $s\neq 0$ the $\{X_k\}$ do not form a Markov chain.
To make them Markovian, the state space should be refined.

\begin{thm}
\label{model main thm}
For an element $B=(B_{ij})$ of $\Omega_k$,
denote $B_j':=\left(\begin{smallmatrix}
B_{1j} \\ \vdots \\ B_{s+1,j}
\end{smallmatrix}\right)$ for $1\leq j\leq s$.
Then random variables
$$
Y_k:\Omega_k\to I:=\BZ_{\geq 0}^{s+1},\qquad
B\mapsto\big(\corank(B),\corank(B_1'),\cdots,\corank(B_s')\big)
$$
form an almost Markov chain.
For $d=2$ or $s=0$, they actually form a Markov chain,
whose transition probabilities are listed in
\S\ref{s:ordinary},
\S\ref{s:1-trivial}
and \S\ref{s:2-trivial}.
\end{thm}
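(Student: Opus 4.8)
The plan is to establish Theorem \ref{model main thm} by analyzing the transition from $\Omega_{k+1}$ to $\Omega_k$ via the splitting of probability spaces, exactly in the spirit of the Markov analysis in \S\ref{s:markov} but now for the ``clean'' matrix model without low-rank perturbations. First I would record the decomposition $\Omega_{k+1} = \Omega_k \times \Omega_{k,\Delta}$, where $\Omega_{k,\Delta}$ parametrizes the new rows and columns added to each block $B_{ij}$ when $k$ increases by one; since the blocks $B_{ii}$ for $i \le s$ stay zero on the diagonal, the new ``diagonal'' entry in those blocks is forced to be $0$, while for $B_{s+1,s+1}$ the new diagonal entry is free. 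Writing $B_\new$ in the bordered form (analogous to the display in \S\ref{s:prob-Smith}, with vectors $v_{ij}$ and scalars $v_7$ from the free off-diagonal entry), the key structural facts I need are: (a) the map from $\Omega_{k,\Delta}$ to the tuple of border vectors is linear and surjective onto the appropriate product of vector spaces (here there is no ``hole constraint'' beyond the block sizes, so surjectivity is cleaner than in \S\ref{s:prob-Smith}); and (b) fixing everything else, the free scalar $v_7$ coming from the $(s{+}1,s{+}1)$-block makes a certain parity bijective. These are the ``Markov chain nature'' of $B \to B_\new$.

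Next I would verify the ``independent of the past'' property. This follows because, under the projection $\Omega_{k+1} \twoheadrightarrow \Omega_k$, the fiber measure is uniform and independent of the value of $(Y_0,\dots,Y_{k-1})$: the border data $\omega_\Delta$ is drawn uniformly and independently of the entire matrix $B \in \Omega_k$, so $\BP(Y_{k+1} = \mathbf m_\new \mid Y_0 = \mathbf m_0,\dots, Y_k = \mathbf m)$ depends only on $\mathbf m$ via the linear-algebra configuration of $\mathrm{span}(B)$ and $\mathrm{span}(B_j')$, whose relevant invariants are captured by $\mathbf m = (\corank(B),\corank(B_1'),\dots,\corank(B_s'))$. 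The point is that $\corank(B_\new)$ and $\corank(B_{j,\new}')$ are determined by which of the border vectors $w_i = (v_{1i};\dots;v_{s+1,i})$ and their relevant sub-vectors lie in $\mathrm{span}(B)$, resp.\ $\mathrm{span}(B_j')$, together with the bijective behavior of $v_7$; and the distribution of these membership events, conditional on $Y_k = \mathbf m$, depends only on $\mathbf m$. I would make this precise using Lemma \ref{how to compute probability} with $\Omega = \{B \in \Omega_k : Y_k(B) = \mathbf m\}$, $\widetilde V$ the ambient space, $V_B = \mathrm{span}(B)^{\oplus 2}$ (or the appropriate span for the $B_j'$ computations), and $W_B$ the image of the linear map producing the border vectors. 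Because $\Omega$ here is the full uniform ensemble of alternating matrices with the prescribed block-zero pattern and fixed coranks, the resulting probabilities are genuinely independent of $k$ — this is precisely why in the $d = 2$ (or $s = 0$) case one gets an honest Markov chain rather than an almost Markov chain: the analogue of the error terms $O(\alpha^{m+k})$ and $O(\BP(\text{not }F))$ in Theorem \ref{redei matrix trans prob} simply vanishes, since there are no low-rank R\'edei blocks and no event $F$ to worry about.

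For the general even $d$ I would argue only the ``almost Markov'' statement: the transition probabilities $\BP(Y_{k+1} = \mathbf m_\new \mid Y_k = \mathbf m)$ still converge as $k \to \infty$ to limiting values $P_{\mathbf m \to \mathbf m_\new}^\Alt$, with geometric error rate $O(\alpha^k)$ uniformly in $\mathbf m, \mathbf m_\new$ — the source of the nonzero (but summable) discrepancy being that for $d \ge 3$ (and $s$ in the intermediate range) the rank increments of the off-diagonal blocks $B_{ij}$, $i \ne j$, $i,j \le s$, can interact in ways whose finite-$k$ corrections decay like $2^{-ck}$ but are not identically zero. Combining with the bound $\sum_{\mathbf m_\new}(\cdots) < C\alpha^k$ and summing over $k$ gives convergence of the series in the definition of an almost Markov chain. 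The main obstacle I anticipate is the bookkeeping for the general $d$: one must enumerate the possible increments $\corank(B_\new) - \corank(B) \in \{-d, -d+2, \dots, d\}$ and the joint increments of the $\corank(B_j')$, and show each transition probability stabilizes; this is combinatorially heavy but follows the same template as \S\ref{s:prob-Smith} and \S\ref{s:model}'s explicit tables. For $d = 2$ the case analysis is short enough that the explicit transition probabilities can be written down directly, as deferred to \S\ref{s:ordinary}, \S\ref{s:1-trivial}, and \S\ref{s:2-trivial}, and there the exact ($k$-independent) Markov property is transparent from the surjectivity and bijectivity facts (a) and (b) above.
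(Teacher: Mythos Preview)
Your overall strategy is reasonable, but it diverges from the paper's proof in a structural way, and the crucial $k$-independence claim is asserted rather than established.

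The paper does \emph{not} invoke Lemma~\ref{how to compute probability} here. Instead it factorizes the passage $B \to B_\new$ into $d$ elementary steps $B = B^{(0)} \to B^{(1)} \to \cdots \to B^{(d)} = B_\new$, each inserting a single row and column (into block row/column $r=\min\{\ell+1,s+1\}$ at step $\ell$), and computes the one-step conditional probabilities $P_S^{(\ell)}$, $P_{\{0\}\cup S}^{(\ell)}$ directly as span ratios. The decisive computation, for a step $0\le\ell\le s-1$ hitting a zero block (so the inserted vector $v$ has $v_{\ell+1}=0$), is the identity
\[
\Omega_{k,\Delta}^{(\ell)}+\operatorname{span}(B^{(\ell)})\ \cong\ \operatorname{span}(B_{\ell+1}^{(\ell)})\oplus\Omega_{k,\Delta}^{(\ell)},
\]
which gives $\BP\big(v\in\operatorname{span}(B^{(\ell)})\bigm|\cdots\big)=2^{m_{\ell+1}^{(\ell)}-m^{(\ell)}}$. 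This simultaneously explains \emph{why} the refined state $(m,m_1',\dots,m_s')$ is needed and yields $k$-free formulas for every one-step probability except $P_{\{\ell+1\}}^{(\ell)}=2^{(2-d)k+2(t_{\ell+1}+t)/d-t-\ell-m_{\ell+1}^{(\ell)}}$, whose $k$-dependence vanishes exactly when $d=2$ and which does not arise at all when $s=0$. That single formula is the entire mechanism separating ``exact Markov for $d=2$ or $s=0$'' from ``almost Markov otherwise.''

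Your all-at-once bordering via Lemma~\ref{how to compute probability} could in principle reach the same conclusion, but the justification you give --- that the analogue of the error terms in Theorem~\ref{redei matrix trans prob} ``simply vanishes'' because there are no low-rank blocks and no event $F$ --- does not by itself yield $k$-independence. Those error terms in \S\ref{s:prob-Smith} came from a particular decomposition of the $h$-sum tailored to the R\'edei situation; in the clean model there is no such decomposition to point to, and you still owe a computation that $\#(W_\omega\cap V_\omega)/\#W_\omega$, with $W_\omega$ constrained by the zero diagonal blocks, depends only on the coranks. That computation is essentially the displayed span identity above, which the step-by-step decomposition makes transparent while all-at-once bordering obscures. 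Your account of the $d>2$ case is also imprecise: the source of the $k$-dependence is not ``interaction of off-diagonal blocks'' but the single-step probability $\BP(v\in\operatorname{span}(B_{\ell+1}^{(\ell)}))$ when the zero block has $k_{\ell+1}$ columns inside an ambient space of dimension $dk+t+\ell$, and it decays like $2^{(2-d)k}$.
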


\begin{proof}
Consider $B_\new=(B_{\new,ij})\in\Omega_{k+1}$.
It is a matrix $B=(B_{ij})\in\Omega_k$ with newly added $d$ rows and columns.
To show $\{Y_k\}$ form a Markov chain,
we only need to show that for any elements $\mathbf m$
and $\mathbf m_\new$ of $I$,
for any fixed $k$ and fixed $B\in\Omega_k$
such that $Y_k(B)=\mathbf m$, the conditional probability
\begin{equation}
\label{e:model-markov-1}
\begin{array}{l}
\BP\big(Y_{k+1}(B_\new)=\mathbf m_\new\mid
B_\new\mapsto B\big)
\text{ only depends on $\mathbf m$, $\mathbf m_\new$,} \\
\text{and is independent of $B$ and $k$.}
\end{array}
\end{equation}
Here $B_\new\mapsto B$ means that
under the natural projection map $\Omega_{k+1}\twoheadrightarrow\Omega_k$,
the image of $B_\new$ is $B$.
This implies that
$\BP\big(Y_{k+1}=\mathbf m_\new\mid
Y_k=\mathbf m\big)$ is also equal to the above conditional probability.

We may divide the process $B\to B_\new$ into $d$ steps
$$
B=B^{(0)}\to B^{(1)}\to\cdots\to B^{(d)}=B_\new,
$$
each $B^{(\ell)}\in
M_{dk+t+\ell}^\Alt(\BF_2)$ is alternating,
and is $B^{(\ell-1)}$ with a new row and column added.
To describe this more precisely,
write $B^{(\ell)}=(B_{ij}^{(\ell)})$,
then $B^{(\ell)}\to B^{(\ell+1)}$
is adding a row to $B_{rj}^{(\ell)}$ for all $j$,
and adding a column to $B_{ir}^{(\ell)}$ for all $i$,
and leaving other submatrices unchanged,
here $r:=\min\{\ell+1,s+1\}$.
In other words, we have
$$
\Omega_k=\Omega_k^{(0)}
\twoheadleftarrow\Omega_k^{(1)}\twoheadleftarrow\cdots
\twoheadleftarrow\Omega_k^{(d)}=\Omega_{k+1},
$$
each $B^{(\ell)}$ is an element of $\Omega_k^{(\ell)}$,
and $\Omega_k^{(\ell+1)}=\Omega_k^{(\ell)}\times\Omega_{k,\Delta}^{(\ell)}$,
the space $\Omega_{k,\Delta}^{(\ell)}$ consists of all possible
added rows and columns in the step $B^{(\ell)}\to B^{(\ell+1)}$.
Moreover,
consider the natural decomposition
$\Omega_k=\prod_{1\leq i\leq j\leq s+1}\Omega_{k,ij}$
where $\Omega_{k,ii}:=0\subset
M_{k_i}^\Alt(\BF_2)$ for $1\leq i\leq s$,
$\Omega_{k,s+1,s+1}:=M_{k_{s+1}}^\Alt(\BF_2)$,
and $\Omega_{k,ij}:=M_{k_i\times k_j}(\BF_2)$ for $1\leq i<j\leq s+1$,
which makes $B_{ij}=B_{ji}^\RT\in\Omega_{k,ij}$
for $1\leq i\leq j\leq s+1$,
then we have
$$
\Omega_{k,ij}=\Omega_{k,ij}^{(0)}
\twoheadleftarrow\Omega_{k,ij}^{(1)}\twoheadleftarrow\cdots
\twoheadleftarrow\Omega_{k,ij}^{(d)}=\Omega_{k+1,ij},
$$
each $B_{ij}^{(\ell)}$ is an element of $\Omega_{k,ij}^{(\ell)}$,
and $\Omega_{k,ij}^{(\ell+1)}=\Omega_{k,ij}^{(\ell)}\times\Omega_{k,ij,\Delta}^{(\ell)}$,
the space $\Omega_{k,ij,\Delta}^{(\ell)}$ consists of all possible
added rows and columns in the step $B_{ij}^{(\ell)}\to B_{ij}^{(\ell+1)}$.
The precise definition of $\Omega_{k,ij}^{(\ell)}$
as well as the decomposition is as follows:
for $1\leq i\leq s$,
$$
\begin{cases}
\Omega_{k,ii}^{(0)}=\Omega_{k,ii}^{(1)}
=\cdots=\Omega_{k,ii}^{(i-1)}=0\subset M_{k_i}^\Alt(\BF_2), \\
\Omega_{k,ii}^{(i)}=\Omega_{k,ii}^{(i+1)}
=\cdots=\Omega_{k,ii}^{(d)}=0\subset M_{k_i+1}^\Alt(\BF_2),
\end{cases}
$$
for $i=s+1$,
$$
\begin{cases}
\Omega_{k,ii}^{(0)}=\Omega_{k,ii}^{(1)}
=\cdots=\Omega_{k,ii}^{(s)}=M_{k_i}^\Alt(\BF_2), \\
\Omega_{k,ii}^{(\ell+1)}=M_{k_i+\ell+1-s}^\Alt(\BF_2)
=\Omega_{k,ii}^{(\ell)}\times\BF_2^{k_i+\ell-s},&\text{if }s\leq\ell\leq d-1,
\end{cases}
$$
for $1\leq i<j\leq s$,
$$
\begin{cases}
\Omega_{k,ij}^{(0)}=\Omega_{k,ij}^{(1)}
=\cdots=\Omega_{k,ij}^{(i-1)}=M_{k_i\times k_j}(\BF_2), \\
\Omega_{k,ij}^{(i)}=\Omega_{k,ij}^{(i+1)}
=\cdots=\Omega_{k,ij}^{(j-1)}=M_{(k_i+1)\times k_j}(\BF_2)
=\Omega_{k,ij}^{(i-1)}\times\BF_2^{k_j}, \\
\Omega_{k,ij}^{(j)}=\Omega_{k,ij}^{(j+1)}
=\cdots=\Omega_{k,ij}^{(d)}=M_{(k_i+1)\times(k_j+1)}(\BF_2)
=\Omega_{k,ij}^{(j-1)}\times\BF_2^{k_i+1},
\end{cases}
$$
and for $1\leq i<j=s+1$,
$$
\begin{cases}
\Omega_{k,ij}^{(0)}=\Omega_{k,ij}^{(1)}
=\cdots=\Omega_{k,ij}^{(i-1)}=M_{k_i\times k_j}(\BF_2), \\
\Omega_{k,ij}^{(i)}=\Omega_{k,ij}^{(i+1)}
=\cdots=\Omega_{k,ij}^{(s)}=M_{(k_i+1)\times k_j}(\BF_2)
=\Omega_{k,ij}^{(i-1)}\times\BF_2^{k_j}, \\
\Omega_{k,ij}^{(\ell+1)}=M_{(k_i+1)\times(k_j+\ell+1-s)}(\BF_2)
=\Omega_{k,ij}^{(\ell)}\times\BF_2^{k_i+1},&\text{if }s\leq\ell\leq d-1.
\end{cases}
$$

Similar to $B_j'$, we need to consider
$B_j^{(\ell)}:=(B_{ij}^{(\ell)})_{1\leq i\leq s+1}$
for $1\leq j\leq s$.
Then $B_j^{(\ell)}\in M_{(dk+t+\ell)\times k_j}(\BF_2)$ for $0\leq\ell\leq j-1$,
and $B_j^{(\ell)}\in M_{(dk+t+\ell)\times(k_j+1)}(\BF_2)$ for $j\leq\ell\leq d$.
For simplicity of notation, write
$$
Y_k^{(\ell)}:\Omega_k^{(\ell)}\to I:=\BZ_{\geq 0}^{s+1},
\quad
B^{(\ell)}\mapsto\big(\corank(B^{(\ell)}),\corank(B_1^{(\ell)}),
\cdots,\corank(B_s^{(\ell)})\big).
$$
To show \eqref{e:model-markov-1}, we only need to show that
each step $B^{(\ell)}\to B^{(\ell+1)}$ is independent of previous steps,
and is independent of $k$. More precisely,
we only need to show that for any elements $\mathbf m^{(\ell)}$
and $\mathbf m^{(\ell+1)}$ of $I$,
for any fixed $k$ and fixed $B^{(\ell)}\in\Omega_k^{(\ell)}$
such that $Y_k^{(\ell)}(B^{(\ell)})=\mathbf m^{(\ell)}$, the conditional probability
\begin{equation}
\label{e:model-markov-2}
\begin{array}{l}
\BP\big(Y_k^{(\ell+1)}(B^{(\ell+1)})=\mathbf m^{(\ell+1)}\mid
B^{(\ell+1)}\mapsto B^{(\ell)}\big)
\text{ only depends on $\ell$, $\mathbf m^{(\ell)}$, $\mathbf m^{(\ell+1)}$,} \\
\text{and is independent of $B^{(\ell)}$ and $k$.}
\end{array}
\end{equation}
In the following the condition in \eqref{e:model-markov-2}
will be abbreviated as $\cdots$.
Write $\mathbf m^{(\ell)}=(m^{(\ell)},m_1^{(\ell)},\cdots,m_s^{(\ell)})$.
Then $\rank(B^{(\ell+1)})-\rank(B^{(\ell)})\in\{0,2\}$
and $\rank(B_j^{(\ell+1)})-\rank(B_j^{(\ell)})\in\{0,1\}$
(note that $B_{jj}^{(\ell)}=0$ for all $\ell$), so
$m^{(\ell+1)}-m^{(\ell)}\in\{\pm 1\}$,
$m_j^{(\ell+1)}-m_j^{(\ell)}\in\{0,-1\}$ for $\ell\neq j-1$,
and $m_j^{(\ell+1)}-m_j^{(\ell)}\in\{0,1\}$ for $\ell=j-1$.
This means that each $m^{(\ell)}\to m^{(\ell+1)}$
and $m_j^{(\ell)}\to m_j^{(\ell+1)}$ has $2$ possibilities,
and $\mathbf m^{(\ell)}\to\mathbf m^{(\ell+1)}$ has $2^{s+1}$
possibilities.
Therefore to compute \eqref{e:model-markov-2}
is equivalent to compute
\begin{equation}
\label{e:model-markov-P1}
P_S^{(\ell)}:=\BP\big(m_j^{(\ell+1)}=m_j^{(\ell)}+\delta_{j,\ell+1}
\text{ for all }j\in S\mid\cdots\big)
\end{equation}
and
\begin{equation}
\label{e:model-markov-P2}
P_{\{0\}\cup S}^{(\ell)}:=\BP\big(m^{(\ell+1)}=m^{(\ell)}+1
\text{ and }m_j^{(\ell+1)}=m_j^{(\ell)}+\delta_{j,\ell+1}
\text{ for all }j\in S\mid\cdots\big)
\end{equation}
for every subsets $S$ of $\{1,2,\cdots,s\}$.
Here $\delta_{j,\ell+1}$ is the Kronecker delta for $j$ and $\ell+1$.

Let $v\in\BF_2^{dk+t+\ell}$.
If $0\leq\ell\leq s-1$,
write $v=(v_1;\cdots;v_{s+1})$
with $v_i\in\BF_2^{k_i+1}$ for $1\leq i\leq\ell$
and $v_i\in\BF_2^{k_i}$ for $\ell+1\leq i\leq s+1$,
such that $v_{\ell+1}=0$ and $v_i$ varies arbitrarily for $i\neq\ell+1$.
If $s\leq\ell\leq d-1$, let $v$ varies arbitrarily.
Then the space $\Omega_{k,\Delta}^{(\ell)}$ is just all the possible $v$,
and $B^{(\ell+1)}$ is $B^{(\ell)}$ with newly added column $v$
and newly added row $v^\RT$.

Let's compute \eqref{e:model-markov-P1}.
We have
$$
P_S^{(\ell)}=\BP\big(m_j^{(\ell+1)}=m_j^{(\ell)}+\delta_{j,\ell+1}
\text{ for all }j\in S\mid\cdots\big)
=\BP\big(\rank(B_j^{(\ell+1)})=\rank(B_j^{(\ell)})
\text{ for all }j\in S\mid\cdots\big).
$$
If $\ell+1\notin S$, then
\begin{align*}
P_S^{(\ell)}
=\BP\big(m_j^{(\ell+1)}=m_j^{(\ell)}\text{ for all }j\in S\mid\cdots\big)
&=\BP\big(\rank(B_j^{(\ell+1)})=\rank(B_j^{(\ell)})\text{ for all }j\in S\mid\cdots\big) \\
&=\BP\big(v_j\in\operatorname{span}((B_j^{(\ell)})^\RT)
\text{ for all }j\in S\mid\cdots\big)
=2^{-\sum_{j\in S}m_j^{(\ell)}}.
\end{align*}
If $\ell+1\in S$, write $S=\{\ell+1\}\sqcup S'$, then
\begin{align*}
P_S^{(\ell)}&=\BP\big(m_{\ell+1}^{(\ell+1)}=m_{\ell+1}^{(\ell)}+1
\text{ and }m_j^{(\ell+1)}=m_j^{(\ell)}
\text{ for all }j\in S'\mid\cdots\big) \\
&=\BP\big(\rank(B_j^{(\ell+1)})=\rank(B_j^{(\ell)})
\text{ for all }j\in S\mid\cdots\big) \\
&=\BP\big(v\in\operatorname{span}(B_{\ell+1}^{(\ell)})
\text{ and }v_j\in\operatorname{span}((B_j^{(\ell)})^\RT)
\text{ for all }j\in S'\mid\cdots\big) \\
&\stackrel{(*)}=\BP\big(v\in\operatorname{span}(B_{\ell+1}^{(\ell)})\mid\cdots\big)
=P_{\{\ell+1\}}^{(\ell)}
=2^{2k_{\ell+1}-dk-t-\ell-m_{\ell+1}^{(\ell)}}
=2^{(2-d)k+2(t_{\ell+1}+t)/d-t-\ell-m_{\ell+1}^{(\ell)}}.
\end{align*}
Here $(*)$ holds since if
$v\in\operatorname{span}(B_{\ell+1}^{(\ell)})$,
then for any $j\in S'$ we have
$v_j\in\operatorname{span}(B_{j,\ell+1}^{(\ell)})
=\operatorname{span}((B_{\ell+1,j}^{(\ell)})^\RT)
\subset\operatorname{span}((B_j^{(\ell)})^\RT)$.
In particular, we know that the \eqref{e:model-markov-P1}
only depends on $\ell$, $m_j^{(\ell)}$ and $m_j^{(\ell+1)}$
for $j\in S$,
and is independent of $m^{(\ell)}$, $m^{(\ell+1)}$,
$m_j^{(\ell)}$ and $m_j^{(\ell+1)}$
for $j\notin S$, $B^{(\ell)}$ and $k$
(at least for $d=2$).

Let's compute \eqref{e:model-markov-P2}.
First consider $S=\varnothing$ case.
We have $m^{(\ell+1)}-m^{(\ell)}\in\{\pm 1\}$ and
$$
P_{\{0\}}^{(\ell)}=\BP\big(m^{(\ell+1)}=m^{(\ell)}+1\mid\cdots\big)
=\BP\big(\rank(B^{(\ell+1)})=\rank(B^{(\ell)})\mid\cdots\big)
=\BP\big(v\in\operatorname{span}(B^{(\ell)})\mid\cdots\big).
$$
Hence it's clear that if $s\leq\ell\leq d-1$, then
$$
P_{\{0\}}^{(\ell)}=\BP\big(m^{(\ell+1)}=m^{(\ell)}+1\mid\cdots\big)
=2^{-m^{(\ell)}}.
$$
If $0\leq\ell\leq s-1$, then
$$
P_{\{0\}}^{(\ell)}=\BP\big(m^{(\ell+1)}=m^{(\ell)}+1\mid\cdots\big)
=\frac{\#\big(\Omega_{k,\Delta}^{(\ell)}\cap\operatorname{span}(B^{(\ell)})\big)}
{\#\Omega_{k,\Delta}^{(\ell)}}
=\frac{\#\operatorname{span}(B^{(\ell)})}{\#\big(\Omega_{k,\Delta}^{(\ell)}+\operatorname{span}(B^{(\ell)})\big)},
$$
and
\begin{align*}
&\Omega_{k,\Delta}^{(\ell)}+\operatorname{span}(B^{(\ell)}) \\
&=\operatorname{span}\left(\begin{array}{ccccccc|ccccccc}
0 & B_{12}^{(\ell)} & \cdots & \cdots & \cdots & \cdots & B_{1,s+1}^{(\ell)} & I \\
B_{21}^{(\ell)} & \ddots & \ddots & & & & \vdots & & \ddots \\
\vdots & \ddots & \ddots & \ddots & & & \vdots & & & I \\
\vdots & & \ddots & \ddots & \ddots & & \vdots & & & & 0 \\
\vdots & & & \ddots & \ddots & \ddots & \vdots & & & & & I \\
\vdots & & & & \ddots & 0 & B_{s,s+1}^{(\ell)} & & & & & & \ddots \\
B_{s+1,1}^{(\ell)} & \cdots & \cdots & \cdots & \cdots & B_{s+1,s}^{(\ell)} & B_{s+1,s+1}^{(\ell)} & & & & & & & I
\end{array}\right) \\
&=\operatorname{span}\left(\begin{array}{ccccccc|ccccccc}
0 & \cdots & \cdots & \cdots & \cdots & \cdots & 0 & I \\
\vdots & & & & & & \vdots & & \ddots \\
0 & \cdots & \cdots & \cdots & \cdots & \cdots & 0 & & & I \\
B_{\ell+1,1}^{(\ell)} & \cdots & B_{\ell+1,\ell}^{(\ell)} & 0 & B_{\ell+1,\ell+2}^{(\ell)} & \cdots & B_{\ell+1,s+1}^{(\ell)} & & & & 0 \\
0 & \cdots & \cdots & \cdots & \cdots & \cdots & 0 & & & & & I \\
\vdots & & & & & & \vdots & & & & & & \ddots \\
0 & \cdots & \cdots & \cdots & \cdots & \cdots & 0 & & & & & & & I
\end{array}\right) \\
&\cong\operatorname{span}(B_{\ell+1}^{(\ell)})\oplus\Omega_{k,\Delta}^{(\ell)},
\end{align*}
hence
$$
P_{\{0\}}^{(\ell)}=\BP\big(m^{(\ell+1)}=m^{(\ell)}+1\mid\cdots\big)
=\frac{\#\operatorname{span}(B^{(\ell)})}
{\#\operatorname{span}(B_{\ell+1}^{(\ell)})\cdot\#\Omega_{k,\Delta}^{(\ell)}}
=2^{m_{\ell+1}^{(\ell)}-m^{(\ell)}}.
$$

Now let $S$ be any subset of $\{1,2,\cdots,s\}$
and we want to compute
\begin{align*}
P_{\{0\}\cup S}^{(\ell)}&=\BP\big(m^{(\ell+1)}=m^{(\ell)}+1
\text{ and }m_j^{(\ell+1)}=m_j^{(\ell)}+\delta_{j,\ell+1}
\text{ for all }j\in S\mid\cdots\big) \\
&=\BP\big(\rank(B^{(\ell+1)})=\rank(B^{(\ell)})
\text{ and }\rank(B_j^{(\ell+1)})=\rank(B_j^{(\ell)})
\text{ for all }j\in S\mid\cdots\big).
\end{align*}
Similar to the computation of \eqref{e:model-markov-P1}, if $\ell+1\in S$ then we have
$$
P_{\{0\}\cup S}^{(\ell)}
=\BP\big(v\in\operatorname{span}(B_{\ell+1}^{(\ell)})\mid\cdots\big)
=P_{\{\ell+1\}}^{(\ell)}
=2^{2k_{\ell+1}-dk-t-\ell-m_{\ell+1}^{(\ell)}}
=2^{(2-d)k+2(t_{\ell+1}+t)/d-t-\ell-m_{\ell+1}^{(\ell)}}.
$$
If $\ell+1\notin S$, then
\begin{align*}
P_{\{0\}\cup S}^{(\ell)}
&=\BP\big(v\in\operatorname{span}(B^{(\ell)})
\text{ and }v_j\in\operatorname{span}((B_j^{(\ell)})^\RT)
\text{ for all }j\in S\mid\cdots\big) \\
&\stackrel{(*)}=\BP\big(v\in\operatorname{span}(B^{(\ell)})\mid\cdots\big)
=P_{\{0\}}^{(\ell)}
=\begin{cases}
2^{-m^{(\ell)}},&\text{if }s\leq\ell\leq d-1, \\
2^{m_{\ell+1}^{(\ell)}-m^{(\ell)}},&\text{if }0\leq\ell\leq s-1.
\end{cases}
\end{align*}
Here $(*)$ holds since if
$v\in\operatorname{span}(B^{(\ell)})$,
then for any $j\in S$ we have
$v_j\in\operatorname{span}(B_{j1}^{(\ell)},\cdots,B_{j,s+1}^{(\ell)})
=\operatorname{span}((B_j^{(\ell)})^\RT)$.
\end{proof}

\begin{remark}
When $d\geq 4$ it turns out that
there exists $C>0$ such that
$\BP\big(\corank(B_j')=0\big)\geq 1-C\cdot 2^{-k}$
for all $1\leq j\leq s$ and all $k\geq 0$,
and not only $\{Y_k\}$, but also $\{X_k\}$ form an almost Markov chain.
\end{remark}

Later we only consider $d=2$ cases of Theorem \ref{model main thm}
which are models used in
Theorem \ref{main theorem}.
There are three models: type (A) model which is $s=0$,
type (B) model which is $s=1$,
and type (C) model which is $s=2$.
We will write down the transition probability and limit distribution
of them explicitly.

Denote $P_{t,\mathbf t}^\Alt(\mathbf m):=
\lim_{k\to\infty}\BP(Y_k=\mathbf m)$.
Once we know that $\{Y_k\}$ form a Markov chain,
when $t$ and $\mathbf t$ are fixed,
denote the transition probability $\BP\big(Y_{k+1}=\mathbf m_\new\mid
Y_k=\mathbf m\big)$ by $P_{\mathbf m\to\mathbf m_\new}^\Alt$
which is independent of $k$.
It's easy to see that such transition probality is $0$ unless
$\mathbf m_\new-\mathbf m\in\{0,\pm 2\}\times\prod_{j=1}^s\{0,\pm 1\}$,
hence when $s=0$, $1$ and $2$ there are $3$, $9$ and $27$
transition probabilities, respectively.

\subsubsection{Type (A)}
\label{s:ordinary}

The property of type (A) model was studied in, for example,
\cite{HB94}, \cite{SD08}, \cite{Kane13}, \cite{KMR13}, \cite{Smith}.
The transition probability
$P_{m\to m_\new}^\Alt$ is
listed in the following table,
here for the simplicity of notation, denote $x:=2^{-m}$.
\begin{center}
\begin{tabular}{|c|c|}
\hline
$m_\new$ & $P_{m\to m_\new}^\Alt$ \\
\hline
$m+2$ & $x^2/2$ \\
\hline
$m$ & $x(3-5x/2)$ \\
\hline
$m-2$ & $(1-x)(1-2x)$ \\
\hline
\end{tabular}
\end{center}
Note that we must have $m\equiv t\pmod 2$,
and when we only consider such $m$, it's easy to see that the
transition matrix $P_{m\to m_\new}^\Alt$
is irreducible and aperiodic,
hence by Theorem \ref{p:MC} we know that
the $P_t^\Alt$ must be the unique
equilibrium state of $P_{m\to m_\new}^\Alt$.

\begin{prop}
[{see \cite{HB94}, \cite{SD08}, \cite{Kane13}, \cite{KMR13}, \cite{Smith}}]
\label{p:prob-ordinary}
We have
$$
P_t^\Alt(m)=\begin{cases}
\displaystyle
\prod_{i=1}^m\frac{2}{2^i-1}\prod_{i=1}^\infty(1+2^{-i})^{-1},
&\text{if }m\equiv t\pmod 2, \\
0,&\text{if }m\not\equiv t\pmod 2.
\end{cases}
$$
\end{prop}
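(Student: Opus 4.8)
The plan is to identify $P_t^\Alt$ with the unique stationary distribution of the transition matrix $P^\Alt=(P_{m\to m_\new}^\Alt)$ displayed above, and then to check by hand that the claimed formula is that stationary distribution. For the reduction: as noted just before the statement, restricted to the parity class $\{m\in\BZ_{\geq0}:m\equiv t\pmod2\}$ the matrix $P^\Alt$ is irreducible and aperiodic. For type (A) the $\{X_k\}$ genuinely form a Markov chain (Theorem \ref{model main thm} with $s=0$), so their probability mass functions $x_k=\big(\BP(X_k=m)\big)_m\in\ell_1^1(I)$ satisfy $x_{k+1}=P^\Alt(x_k)$, i.e.\ $x_k=(P^\Alt)^k(x_0)$. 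Hence, once one exhibits a single stationary distribution $\pi\in\ell_1^1(I)$, Theorem \ref{p:MC}(i) forces $\|x_k-\pi\|\to0$, so $P_t^\Alt(m)=\lim_k\BP(X_k=m)=\pi_m$. It therefore suffices to show that
\[
\pi_m:=\begin{cases}\ds\prod_{i=1}^m\frac{2}{2^i-1}\cdot\prod_{i=1}^\infty(1+2^{-i})^{-1},&m\equiv t\pmod2,\\[1ex]0,&\text{otherwise},\end{cases}
\]
defines a probability distribution with $P^\Alt(\pi)=\pi$.

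For the normalization I would first rewrite $\prod_{i=1}^m\frac{2}{2^i-1}=\dfrac{2^{-\binom m2}}{\prod_{i=1}^m(1-2^{-i})}$, by pulling a factor $2^i$ out of each $2^i-1$ and using $\sum_{i=1}^m(1-i)=-\binom m2$. The required identity then becomes, with $q=1/2$,
\[
\sum_{\substack{m\geq0\\ m\equiv t\,(2)}}\frac{q^{\binom m2}}{\prod_{i=1}^m(1-q^i)}=\prod_{i=1}^\infty(1+q^i),
\]
which I would deduce from the classical identity of Euler $\sum_{m\geq0}\frac{q^{\binom m2}z^m}{\prod_{i=1}^m(1-q^i)}=\prod_{i=0}^\infty(1+zq^i)$: specializing $z=1$ gives $2\prod_{i\geq1}(1+q^i)$ while $z=-1$ gives $\prod_{i\geq0}(1-q^i)=0$, and half the sum (resp.\ half the difference) is exactly the even (resp.\ odd) partial sum, equal in either case to $\prod_{i\geq1}(1+q^i)$. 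Thus $\sum_m\pi_m=1$, and nonnegativity is obvious.

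For stationarity I would write $x=2^{-m}$ and read off from the formula the ratios $\pi_{m+2}/\pi_m=2^{-2m-1}\big/\big((1-2^{-m-1})(1-2^{-m-2})\big)$ and $\pi_{m-2}/\pi_m=2^{2m-3}(1-2^{-m})(1-2^{-m+1})$, the latter interpreted as $0$ when $m-2<0$. The equilibrium equation at $m$ reads $\pi_m=P_{m-2\to m}^\Alt\pi_{m-2}+P_{m\to m}^\Alt\pi_m+P_{m+2\to m}^\Alt\pi_{m+2}$, where $P_{m-2\to m}^\Alt=2^{3-2m}$ and $P_{m+2\to m}^\Alt=(1-2^{-m-2})(1-2^{-m-1})$ come from the table by shifting the argument by $\mp2$. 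Dividing through by $\pi_m$ and substituting, the three terms collapse to $(1-x)(1-2x)$, $x\big(3-\tfrac52x\big)$ and $\tfrac12x^2$, whose sum is the polynomial identity $(1-x)(1-2x)+x\big(3-\tfrac52x\big)+\tfrac12x^2=1$; the boundary cases $m=0$ and $m=1$ need no separate treatment since the first term vanishes at $x=1$ and at $x=\tfrac12$ respectively, matching the absence of the $\pi_{m-2}$ contribution. This gives $P^\Alt(\pi)=\pi$ and finishes the proof. The only genuinely non-routine ingredient is Euler's $q$-series identity used in the normalization step; the balance equation is the above one-line polynomial identity, so I do not anticipate a real obstacle here.
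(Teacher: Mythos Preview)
Your proposal is correct and follows exactly the strategy the paper itself sketches in the paragraph preceding the proposition: restrict to the parity class $m\equiv t\pmod2$, observe the transition matrix is irreducible and aperiodic, invoke Theorem~\ref{p:MC}(i), and identify the unique equilibrium. The paper does not actually carry out the verification of the formula but defers to the references \cite{HB94,SD08,Kane13,KMR13,Smith}; your normalization via Euler's identity and the one-line polynomial identity for the balance equation are clean ways to fill this in, and the boundary cases $m\in\{0,1\}$ are handled correctly.
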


\begin{cor}
[{see \cite{Kane13}, \S5 and \cite{HB94}, Theorem 1}]
\label{p:moment-ordinary}
Define the formal power series
$$
F_0(X):=\frac{\prod_{j=0}^\infty(1+2^{-j}X)}
{\prod_{j=0}^\infty(1+2^{-j})}\in\BR[[X]]
$$
and $F(X):=F_0(X)+(-1)^tF_0(-X)\in\BR[[X]]$.
Then $F(X)$ is the generating function of $P_t^\Alt$:
$$
\sum_mP_t^\Alt(m)X^m=F(X).
$$
It converges on the whole complex plane.
In particular, for $\xi\in\BZ_{\geq 0}$,
the $\xi$-th moment of the size of $\ker(B)$ as $B\in\Omega_k$, $k\to\infty$ is
$$
\sum_mP_t^\Alt(m)2^{\xi\cdot m}
=F(2^\xi)=F_0(2^\xi)
=\prod_{j=1}^\xi(1+2^j)
=\prod_{j=1}^\xi\frac{2^{2j}-1}{2^j-1},
$$
which is independent of $t$,
and taking $\xi=1$, the average size of $\ker(B)$ as $B\in\Omega_k$, $k\to\infty$ is $3$.
\end{cor}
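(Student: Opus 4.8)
The plan is to read the generating function directly off the closed formula for $P_t^\Alt(m)$ supplied by Proposition \ref{p:prob-ordinary}, using the functional equation satisfied by the product $F_0$. First I would note that $\prod_{j=0}^\infty(1+2^{-j}X)$ converges absolutely and locally uniformly on $\BC$, since $\sum_{j\geq 0}2^{-j}|X|<\infty$; hence it is entire, and dividing by the nonzero constant $\prod_{j=0}^\infty(1+2^{-j})$ shows that $F_0$, and therefore $F$, is entire and converges on the whole complex plane. Writing $F_0(X)=\sum_{n\geq 0}a_nX^n$, the elementary identity $\prod_{j=0}^\infty(1+2^{-j}X)=(1+X)\prod_{j=0}^\infty\big(1+2^{-j}(X/2)\big)$ yields the functional equation $F_0(X)=(1+X)\,F_0(X/2)$. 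Comparing coefficients of $X^n$ gives $a_n(1-2^{-n})=2^{1-n}a_{n-1}$, i.e. $a_n=\tfrac{2}{2^n-1}\,a_{n-1}$, with $a_0=F_0(0)=\prod_{j=0}^\infty(1+2^{-j})^{-1}=\tfrac12\prod_{i=1}^\infty(1+2^{-i})^{-1}$. Thus $a_n=\tfrac12\prod_{i=1}^\infty(1+2^{-i})^{-1}\prod_{i=1}^n\tfrac{2}{2^i-1}$, which is exactly $\tfrac12$ times the right-hand side of the formula in Proposition \ref{p:prob-ordinary} evaluated at $m=n$, read now for every $n\geq 0$ regardless of parity.

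Next I would symmetrize. From $F(X)=F_0(X)+(-1)^tF_0(-X)=\sum_{n\geq 0}a_n\big(1+(-1)^{t+n}\big)X^n$ and the fact that $1+(-1)^{t+n}$ equals $2$ when $n\equiv t\pmod 2$ and $0$ otherwise, one gets $F(X)=\sum_{n\equiv t(2)}2a_nX^n$, and by Proposition \ref{p:prob-ordinary} this is $\sum_{m\geq 0}P_t^\Alt(m)X^m$ (using $2a_m=P_t^\Alt(m)$ for $m\equiv t\pmod 2$ and $P_t^\Alt(m)=0$ otherwise). Since $\prod_{i=1}^m(2^i-1)$ grows like $2^{m(m+1)/2}$, the series $\sum_mP_t^\Alt(m)X^m$ converges for all $X\in\BC$, so this is an identity of entire functions, not merely of formal power series; in particular all coefficients are nonnegative and evaluation at real points is unproblematic.

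For the moments I would evaluate at $X=2^\xi$, $\xi\in\BZ_{\geq 0}$. The key observation is $F_0(-2^\xi)=0$: in the numerator $\prod_{j=0}^\infty(1-2^{\xi-j})$ of $F_0(-2^\xi)$ the factor with $j=\xi$ is $1-2^0=0$. Hence $\sum_mP_t^\Alt(m)2^{\xi m}=F(2^\xi)=F_0(2^\xi)$, independent of $t$. Re-indexing the two infinite products, $\prod_{j=0}^\infty(1+2^{\xi-j})=\prod_{i\leq\xi}(1+2^i)$ and $\prod_{j=0}^\infty(1+2^{-j})=\prod_{i\leq 0}(1+2^i)$, so their quotient telescopes to $\prod_{j=1}^\xi(1+2^j)=\prod_{j=1}^\xi\frac{2^{2j}-1}{2^j-1}$. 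Taking $\xi=1$ gives the average size $1+2=3$ of $\ker(B)$.

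The argument is essentially bookkeeping once Proposition \ref{p:prob-ordinary} is available; the only point demanding care is matching the normalizing constant $\prod_{j=0}^\infty(1+2^{-j})$ of $F_0$ with the constant $\prod_{i=1}^\infty(1+2^{-i})^{-1}$ in Proposition \ref{p:prob-ordinary} — that is, correctly tracking the $j=0$ factor, which is also precisely what makes $F_0(-2^\xi)=0$ and hence collapses $F(2^\xi)$ to $F_0(2^\xi)$.
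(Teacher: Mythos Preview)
Your proof is correct and complete. The paper does not supply its own proof of this corollary beyond the citations to \cite{Kane13} and \cite{HB94}, so your argument---reading off the Maclaurin coefficients of $F_0$ via the functional equation $F_0(X)=(1+X)F_0(X/2)$, matching them against Proposition \ref{p:prob-ordinary}, and then exploiting the vanishing $F_0(-2^\xi)=0$ from the factor $1-2^{\xi-\xi}$---is exactly the standard route and fills in all the details cleanly.
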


\subsubsection{Type (B)}
\label{s:1-trivial}

In this case
the transition probability
$P_{\mathbf m\to\mathbf m_\new}^\Alt$
is as follows.
It is zero unless listed in the table,
here for the simplicity of notation, denote
$\tau:=2^{t_1}$,
$x:=2^{m_1'-m}$, $y:=2^{-m_1'}$,
so that $xy=2^{-m}$.

\begin{center}
%\resizebox{0.99\textwidth}{!}
{\begin{tabular}{|c|c|c|c|c|}
\hline
\multirow{2}{*}{$m'_{1,\new}$} & \multicolumn{4}{c|}{$m_\new$} \\
\cline{2-5}
& $m-2$ & $m$ & $m+2$ & $*$ \\
\hline
$m_1'+1$ & $0$
& $\tau (1-x)y^2/2$ & $\tau xy^2/2$ & $\tau y^2/2$ \\
\hline
$m_1'$ & $y(1-x)(1-2x)$
& $y(\tau-3\tau y/2+3x-5x^2/2+\tau xy/2)$
& $xy(x-\tau y)/2$ & $y(1+\tau-3\tau y/2)$ \\
\hline
$m_1'-1$ & $(1-x)(1-y)$
& $(x-\tau y)(1-y)$ & $0$ & $(1-y)(1-\tau y)$ \\
\hline
$*$ & $(1-x)(1-2xy)$ & $x(2y+1-5xy/2)$ & $x^2y/2$ & $1$ \\
\hline
\end{tabular}}
\end{center}

Let $I$ be the set of $\mathbf m=(m,m_1')$ satisfying $0\leq m_1'\leq m\leq 2m_1'-t_1$
and $m_1'\geq\max\{t_1,0\}$
and $m\equiv t\pmod 2$.
It's easy to check that if $\mathbf m\in I$, then all the entries in the above table
are non-negative, moreover, the entry is non-zero
if and only if $\mathbf m_\new\in I$
and is not equal to $(m+2,m_1'-1)$ and $(m-2,m_1'+1)$.
Hence when we only consider such $\mathbf m$, it's easy to see that the obtaining
Markov chain is irreducible and aperiodic,
by Theorem \ref{p:MC}
we know that the $P_{t,\mathbf t}^\Alt$
must be the unique equilibrium distribution.

Note that the $P_{(m,m_1')\to(*,m_{1,\new}')}^\Alt$ only depends on $m_1'$,
and in fact it's clear that
$$
P_{t,\mathbf t}^\Alt(*,m_1')=P_{t_1}^\Mat(m_1')
:=\lim_{k\to\infty}\BP\left(\corank(B)=m_1'\mid B\in M_{(k-t_1)\times k}(\BF_2)\right).
$$
From $P_{t,\mathbf t}^\Alt(*,m_1')$ we construct
$(P_{t,\mathbf t}^\Alt(m,m_1'))_{\mathbf m=(m,m_1')\in I}$ as follows.
It's straightforward to check it is indeed an equilibrium distribution,
which is automatically unique.

\begin{prop}
\label{p:1-trivial,m,m'}
For $\ell\geq 0$ even and $m_1'\geq\max\{t_1+\ell,0\}$, the following formula
$$
P_{t,\mathbf t}^\Alt(2m_1'-t_1-\ell,m_1')=P_{t_1}^\Mat(m_1')\cdot
2^{-(m_1'-t_1)(m_1'-t_1-1)/2}
\prod_{i=0}^{\ell-1}(2^{m_1'-t_1-i}-1)
\prod_{i=1}^{\ell/2}\frac{4^{i-1}}{4^i-1}
$$
gives an equilibrium distribution.
\end{prop}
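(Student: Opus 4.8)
The plan is to reduce the statement to a finite, elementary verification, invoking the ergodic theorem for Markov chains for uniqueness. By the $d=2$, $s=1$ case of Theorem \ref{model main thm}, $\{Y_k\}$ is a genuine Markov chain on $\Omega=\varprojlim_k\Omega_k$ whose transition matrix $P^\Alt_{\mathbf m\to\mathbf m_\new}$ is the table of \S\ref{s:1-trivial}; restricted to the state space $I=\{(m,m_1')\mid 0\le m_1'\le m\le 2m_1'-t_1,\ m_1'\ge\max\{t_1,0\},\ m\equiv t\pmod 2\}$ this chain is irreducible and aperiodic, so by Theorem \ref{p:MC}(i) it has a \emph{unique} equilibrium distribution, which is forced to be $P_{t,\mathbf t}^\Alt=\lim_{k\to\infty}\BP(Y_k=\cdot)$. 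Hence it suffices to check that the displayed formula defines an equilibrium distribution on $I$. Writing $\ell:=2m_1'-t_1-m$ (even, $\ge 0$) and $n:=m_1'-t_1\ (\ge\ell)$, I would factor it as
\[
p(m,m_1')=P_{t_1}^\Mat(m_1')\cdot Q(m,m_1'),\qquad
Q(m,m_1'):=2^{-n(n-1)/2}\prod_{i=0}^{\ell-1}(2^{n-i}-1)\prod_{i=1}^{\ell/2}\frac{4^{i-1}}{4^i-1},
\]
observe that $p\ge 0$ on $I$ (all factors of $Q$ are nonnegative since $0\le\ell\le n$), and that $Q(\cdot,m_1')$ is supported exactly on the fibre of $I$ over $m_1'$.

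\textbf{Normalization.} I would write $\sum_{(m,m_1')\in I}p=\sum_{m_1'}P_{t_1}^\Mat(m_1')\big(\sum_{\ell}Q(m,m_1')\big)$. Since $\sum_{m_1'}P_{t_1}^\Mat(m_1')=1$ (the rectangular matrix model $M_{(k-t_1)\times k}(\BF_2)$, recalled in \S\ref{s:model}), normalization reduces to the combinatorial identity
\[
\sum_{\ell\ge 0,\ \ell\ \mathrm{even}}\ \prod_{i=0}^{\ell-1}(2^{n-i}-1)\ \prod_{i=1}^{\ell/2}\frac{4^{i-1}}{4^i-1}\ =\ 2^{\,n(n-1)/2}\qquad(n\ge 0),
\]
a finite sum (the $\ell$-term vanishes for $\ell>n$). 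This is the rank decomposition of the $2^{n(n-1)/2}$ alternating $n\times n$ matrices over $\BF_2$, and I would prove it by induction on $n$ (or cite the standard enumeration of alternating matrices over a finite field).

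\textbf{Balance equations.} The core is $\sum_{\mathbf m}P^\Alt_{\mathbf m\to\mathbf m_\new}\,p(\mathbf m)=p(\mathbf m_\new)$ for every $\mathbf m_\new=(m_\new,c)\in I$; only $\mathbf m=(m,m_1')$ with $m_1'\in\{c-1,c,c+1\}$, $m\in\{m_\new-2,m_\new,m_\new+2\}$ contribute. The key simplification, already noted in \S\ref{s:1-trivial}, is that summing the table entries over $m_\new$ gives the ``$*$''-column, which depends only on $(m_1',c)$ and is the transition matrix of $M_{(k-t_1)\times k}(\BF_2)$; hence summing the desired identity over $m_\new$ and using $\sum_\ell Q(m,m_1')=1$ reduces it to the equilibrium equation for $P_{t_1}^\Mat$, automatically true. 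So only the ``transverse'' content remains: after dividing by $P_{t_1}^\Mat(c)$ and substituting the ratios $Q(m-2,m_1')/Q(m,m_1')$, $Q(m,m_1'\pm 1)/Q(m,m_1')$ and $P_{t_1}^\Mat(c\pm 1)/P_{t_1}^\Mat(c)$ — all monomials or simple rational functions of $2^{-c}$, $2^{-t_1}$ read off from the product formulas — each balance equation becomes a single polynomial identity in the table variables $x=2^{\,m_1'-m}$, $y=2^{-m_1'}$, $\tau=2^{t_1}$, to be checked for each of the finitely many shapes of $\mathbf m_\new$. This is the ``straightforward check'' alluded to after the statement.

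\textbf{Main obstacle.} I expect the only genuine difficulty to be bookkeeping: assembling the nine (in principle nonzero) transition entries with the three neighbouring $Q$-ratios and two $P_{t_1}^\Mat$-ratios so that the resulting $x,y,\tau$-identity closes. A secondary subtlety is the boundary of $I$ (states with $\ell=0$, i.e. $m=m_1'$, or $m_1'=\max\{t_1,0\}$): there some incoming transitions are forced to vanish by the zero entries in the table, and one must check that the ``missing'' terms correspond precisely to states outside $I$, on which $Q$ or $P_{t_1}^\Mat$ already vanishes, so the equation continues to balance. No analytic input beyond Theorem \ref{p:MC}(i) and the normalization identity is needed.
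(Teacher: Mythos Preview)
Your proposal is correct and matches the paper's approach. The paper gives no proof beyond the sentence preceding the proposition (``It's straightforward to check it is indeed an equilibrium distribution, which is automatically unique''), and you have fleshed out precisely that check: nonnegativity and support on $I$, the normalization via the alternating-matrix rank-count identity, the balance equations reduced to finite polynomial identities in $x,y,\tau$ using the $P_{t_1}^\Mat$-marginal, and uniqueness via Theorem~\ref{p:MC}(i).
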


For $m\geq\max\{t_1,0\}$ such that $m\equiv t\pmod 2$,
the $P_{t,\mathbf t}^\Alt(m)$ is the sum of
$P_{t,\mathbf t}^\Alt(m,m_1')$ as $m_1'$ varies:
$$
P_{t,\mathbf t}^\Alt(m)=\sum_{m_1'=\max\{(m+t_1)/2,0\}}^m
P_{t,\mathbf t}^\Alt(m,m_1').
$$
For other $m$, we have $P_{t,\mathbf t}^\Alt(m)=0$, in fact,
in this case there is no $B\in\Omega_k$ such that $\corank(B)=m$.
As a special case, if $t_1\leq 0$ is even, then
the probability of $\corank(B)$ being minimal is
$$
P_{t,\mathbf t}^\Alt(0)
=P_{t,\mathbf t}^\Alt(0,0)
=\prod_{i=1}^{-t_1/2}(1-4^{-i})^{-1}
\prod_{i=1}^\infty
(1-2^{-i}).
$$
Clearly it is different from the type (A) in Proposition \ref{p:prob-ordinary},
for example, the average size of $\ker(B)$ for these two types are different.

\begin{cor}
\label{p:moment-type-B}
Define the formal power series
$$
F_0(X):=\frac{\prod_{j=0}^\infty(1+2^{-j}X)}
{\prod_{j=0}^\infty(1+2^{-j})}
\sum_{i=0}^\infty 2^{it_1}\prod_{j=1}^i\frac{2^{1-j}X-1}{2^{2j}-1}\in\BR[[X]]
$$
and $F(X):=F_0(X)+(-1)^tF_0(-X)\in\BR[[X]]$,
Then $F(X)$ is the generating function of $P_{t,\mathbf t}^\Alt$:
$$
\sum_mP_{t,\mathbf t}^\Alt(m)X^m=F(X).
$$
It converges on the whole complex plane.
In particular, for $\xi\in\BZ_{\geq 0}$,
the $\xi$-th moment of the size of $\ker(B)$ as $B\in\Omega_k$, $k\to\infty$ is
$$
\sum_mP_{\infty;t,\mathbf t}^\Alt(m)2^{\xi\cdot m}
=F(2^\xi)=F_0(2^\xi)
=\sum_{i=0}^\xi 2^{it_1}\prod_{j=1}^{\xi-i}\frac{2^{2(i+j)}-1}{2^j-1}.
$$
Taking $\xi=1$, the average size of $\ker(B)$ as $B\in\Omega_k$, $k\to\infty$ is $3+2^{t_1}$.
\end{cor}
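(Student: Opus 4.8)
The plan is to verify that the proposed power series $F(X)$ reproduces the equilibrium distribution $P_{t,\mathbf t}^\Alt$ computed in Proposition \ref{p:1-trivial,m,m'}, and then extract the moment formula by evaluation at $X = 2^\xi$. The starting point is the two-variable decomposition $P_{t,\mathbf t}^\Alt(m) = \sum_{m_1'} P_{t,\mathbf t}^\Alt(m,m_1')$ together with the closed form for $P_{t,\mathbf t}^\Alt(2m_1'-t_1-\ell, m_1')$ in terms of $P_{t_1}^\Mat(m_1')$ and the factors $2^{-(m_1'-t_1)(m_1'-t_1-1)/2}\prod_{i=0}^{\ell-1}(2^{m_1'-t_1-i}-1)\prod_{i=1}^{\ell/2}\frac{4^{i-1}}{4^i-1}$. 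First I would substitute this into $\sum_m P_{t,\mathbf t}^\Alt(m) X^m$, reorganizing the double sum so that the outer index is $i := m_1' - \max\{t_1,0\}$ (or, more uniformly, feeding in the known generating function of $P_{t_1}^\Mat$, which is the Gerth-type matrix model) and the inner index is $\ell$, which after the shift $m = 2m_1' - t_1 - \ell$ contributes a polynomial-type Gaussian-binomial factor in $X$.

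The key algebraic identity to isolate is that the inner sum over $\ell$ (even) of $X^{2m_1'-t_1-\ell}\prod_{i=0}^{\ell-1}(2^{m_1'-t_1-i}-1)\prod_{i=1}^{\ell/2}\frac{4^{i-1}}{4^i-1}$ telescopes, via the standard $q$-binomial theorem with $q = 4$, into $\prod_{j=1}^{m_1'-t_1}(1 + 2^{-j} \cdot (\text{something involving } X))$ up to normalization — this is the mechanism that produces the factor $\prod_{j=0}^\infty(1+2^{-j}X)/\prod_{j=0}^\infty(1+2^{-j})$ and the remaining sum $\sum_{i\ge 0} 2^{it_1}\prod_{j=1}^i\frac{2^{1-j}X-1}{2^{2j}-1}$ in the stated $F_0(X)$. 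I would carry this out by first checking the $t_1 \to -\infty$ degeneration recovers Corollary \ref{p:moment-ordinary} (type (A)), which pins down the normalization, and then tracking the $2^{it_1}$ weights. The parity twist $F(X) = F_0(X) + (-1)^t F_0(-X)$ is forced by the constraint $m \equiv t \pmod 2$, exactly as in type (A); this part is immediate. Convergence on all of $\mathbb{C}$ follows because the general term of $F_0$ has a $\prod_{j=1}^i (2^{2j}-1)^{-1}$ denominator, which is super-exponentially decaying in $i$ and dominates any fixed power of $|X|$; I would just invoke the ratio test.

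For the moment statement, evaluating at $X = 2^\xi$ with $\xi \in \mathbb{Z}_{\ge 0}$ makes the numerator $\prod_{j=1}^i(2^{1-j}X - 1) = \prod_{j=1}^i(2^{\xi+1-j}-1)$ vanish once $j > \xi+1$, i.e.\ once $i > \xi$ the factor with $j = \xi+1$ gives $2^{0}-1 = 0$; hence the sum truncates at $i \le \xi$. Then $F_0(2^\xi)$ collapses — using $\prod_{j=0}^\infty(1+2^{-j}\cdot 2^\xi)/\prod_{j=0}^\infty(1+2^{-j}) = \prod_{j=1}^\xi(1+2^j)$ as in Corollary \ref{p:moment-ordinary} and recombining with the truncated sum — to $\sum_{i=0}^\xi 2^{it_1}\prod_{j=1}^{\xi-i}\frac{2^{2(i+j)}-1}{2^j-1}$; this is a finite rearrangement. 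Note the typo-level point that the displayed ``$P_{\infty;t,\mathbf t}^\Alt$'' should read $P_{t,\mathbf t}^\Alt$. Finally $\xi = 1$ gives $2^{0}\cdot\frac{2^2-1}{2-1} + 2^{t_1}\cdot 1 = 3 + 2^{t_1}$; I would also cross-check this against the direct $\xi=1$ value obtainable by summing $m \cdot$ (nothing — rather $2^m$ times) the explicit $P_{t,\mathbf t}^\Alt(0)$ family, as a sanity check consistent with Theorem \ref{main average}'s $3 + \sum_i 2^{t_i}$.

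The main obstacle I anticipate is the $q$-binomial telescoping identity in the middle step: getting the indexing shifts, the $q = 4$ versus $q = 2$ bookkeeping, and the interaction between the $\ell$-sum and the $P_{t_1}^\Mat$ generating function exactly right is fiddly, and it is easy to be off by a power of $2$ or by a normalization constant. Everything downstream (parity twist, convergence, evaluation at $2^\xi$, the $\xi=1$ specialization) is routine once that identity is nailed. A secondary, purely organizational obstacle is handling the cases $t_1 \le 0$ versus $t_1 > 0$ uniformly in the range of summation $m_1' \ge \max\{t_1, 0\}$; I would phrase the argument in terms of $i = m_1' - \max\{t_1,0\} \ge 0$ throughout to avoid splitting into cases.
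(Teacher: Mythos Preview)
Your plan is correct and is exactly the route the paper implicitly intends: the corollary is stated immediately after Proposition~\ref{p:1-trivial,m,m'} with no separate proof, so deriving the generating function by summing the explicit $P_{t,\mathbf t}^\Alt(2m_1'-t_1-\ell,m_1')$ over $(\ell,m_1')$ and collapsing via $q$-binomial identities is precisely what is being left to the reader. Your observations about the parity twist, the truncation at $i\le\xi$ when $X=2^\xi$, the rearrangement to $\sum_{i=0}^\xi 2^{it_1}\prod_{j=1}^{\xi-i}\frac{2^{2(i+j)}-1}{2^j-1}$, and the typo $P_{\infty;t,\mathbf t}^\Alt\to P_{t,\mathbf t}^\Alt$ are all on point.

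One remark worth noting: for the final assertion ($\xi=1$ giving $3+2^{t_1}$) the paper \emph{does} supply an independent derivation in Appendix~\S\ref{average order matrix model}, not by evaluating $F_0$ but by applying Heath-Brown's unlinked-subset count directly to the model $M_{2k+t,(t_1)}^\Alt(\BF_2)$; this gives $\BE(2^{\corank B})=3+2^{t_1}+O((3/4)^k)$ in a few lines without ever touching the generating function. So your $\xi=1$ cross-check is already carried out in the paper by a different method, which also serves as a useful sanity check on the $q$-binomial bookkeeping you flag as the main obstacle.
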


But this result suggests that type (A) is the limit of type (B)
as $t_1\to-\infty$. In fact we have the following result.

\begin{cor}
For any fixed $m\geq 0$, let $t_1:=t-\rho$ varies, then we have the limit
$$
\lim_{\substack{t_1\to-\infty\\
t_1\equiv m\pmod 2}}P_{t,(t_1)}^\Alt(m)
=\prod_{i=1}^m\frac{2}{2^i-1}\prod_{i=1}^\infty(1+2^{-i})^{-1},
$$
which equals the type (A) $P_t^\Alt(m)$
in Proposition \ref{p:prob-ordinary}.
\end{cor}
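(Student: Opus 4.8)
The plan is to deduce the convergence of the individual probabilities $P_{t,(t_1)}^\Alt(m)$ from the convergence of their generating functions, which are available in closed form by Corollaries~\ref{p:moment-ordinary} and~\ref{p:moment-type-B}. Write $F_t^{(A)}(X)=F_0^{(A)}(X)+(-1)^tF_0^{(A)}(-X)$ with $F_0^{(A)}(X)=\prod_{j\ge 0}(1+2^{-j}X)\big/\prod_{j\ge 0}(1+2^{-j})$ for the type (A) generating function, so that $\sum_mP_t^\Alt(m)X^m=F_t^{(A)}(X)$; and write $F^{(B)}_{t,(t_1)}(X)=F_0^{(B)}(X)+(-1)^tF_0^{(B)}(-X)$ with
$$
F_0^{(B)}(X)=\frac{\prod_{j\ge 0}(1+2^{-j}X)}{\prod_{j\ge 0}(1+2^{-j})}\sum_{i\ge 0}2^{it_1}c_i(X),\qquad c_i(X):=\prod_{j=1}^i\frac{2^{1-j}X-1}{2^{2j}-1},
$$
for the type (B) generating function, so that $\sum_mP_{t,(t_1)}^\Alt(m)X^m=F^{(B)}_{t,(t_1)}(X)$. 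Both are entire, and for every $m$ the quantity $P_t^\Alt(m)$ (resp.\ $P_{t,(t_1)}^\Alt(m)$) is the $m$-th Taylor coefficient of $F_t^{(A)}$ (resp.\ of $F^{(B)}_{t,(t_1)}$).

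First I would show $F^{(B)}_{t,(t_1)}\to F_t^{(A)}$ uniformly on compact subsets of $\BC$ as $t_1\to-\infty$. Since $\prod_{j=1}^i(2^{2j}-1)\ge\prod_{j=1}^i2^{2j-1}=2^{i^2}$ while $\prod_{j\ge 1}(|2^{1-j}X|+1)$ converges, the series $\sum_{i\ge 1}|c_i(X)|$ converges uniformly on $\{|X|\le R\}$ for each $R$; hence for $t_1\le 0$ and $|X|\le R$,
$$
\Big|\sum_{i\ge 0}2^{it_1}c_i(X)-1\Big|\le 2^{t_1}\sum_{i\ge 1}|c_i(X)|\longrightarrow 0\qquad(t_1\to-\infty),
$$
uniformly in $X$. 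This gives $F_0^{(B)}\to F_0^{(A)}$ and therefore $F^{(B)}_{t,(t_1)}\to F_t^{(A)}$, uniformly on compacta. Extracting the $m$-th coefficient by Cauchy's formula on the unit circle and using uniform convergence there,
$$
P_{t,(t_1)}^\Alt(m)=\frac1{2\pi i}\oint_{|X|=1}\frac{F^{(B)}_{t,(t_1)}(X)}{X^{m+1}}\,dX\longrightarrow\frac1{2\pi i}\oint_{|X|=1}\frac{F_t^{(A)}(X)}{X^{m+1}}\,dX=P_t^\Alt(m),
$$
and $P_t^\Alt(m)=\prod_{i=1}^m\frac{2}{2^i-1}\prod_{i=1}^\infty(1+2^{-i})^{-1}$ by Proposition~\ref{p:prob-ordinary} (here $m\equiv t\pmod 2$, since the hypotheses $t_1\equiv m\pmod 2$ and $t_1\equiv t\pmod 2$ force this; in the contrary case both sides vanish). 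This finishes the argument.

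An alternative, more hands-on route avoids generating functions entirely: for fixed $m$ and $t_1\le -m$ one has $P_{t,(t_1)}^\Alt(m)=\sum_{m_1'=0}^m P_{t,(t_1)}^\Alt(m,m_1')$, a finite sum; each term with $m_1'\ge 1$ is at most $P_{t,(t_1)}^\Alt(*,m_1')=P_{t_1}^\Mat(m_1')\to P_{-\infty}^\Mat(m_1')=0$, while for $m_1'=0$ one substitutes $\ell=-t_1-m$ into the explicit formula of Proposition~\ref{p:1-trivial,m,m'}, lets each (finite or convergent infinite) product factor tend to its limit as $t_1\to-\infty$, and simplifies using $1-4^{-i}=(1-2^{-i})(1+2^{-i})$. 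The only genuinely delicate point in either route is the passage to the limit past the coefficient extraction --- respectively past the infinite products --- but the bound $\prod_{j=1}^i(2^{2j}-1)\ge 2^{i^2}$, which makes the $i$-series tail negligible, together with the inequality $P_{t,(t_1)}^\Alt(m,m_1')\le P_{t_1}^\Mat(m_1')$, takes care of it, so I expect no real obstacle beyond this bookkeeping.
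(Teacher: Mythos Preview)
The paper states this corollary without proof, so there is no paper argument to compare against. Your generating-function route is correct: the bound $\prod_{j=1}^i(2^{2j}-1)\ge 2^{i^2}$ makes $\sum_{i\ge 1}|c_i(X)|$ converge uniformly on discs, whence $F_0^{(B)}\to F_0^{(A)}$ uniformly on compacta as $t_1\to-\infty$, and Cauchy's formula extracts the coefficient limit; the parity check $t_1\equiv t\pmod 2$ combined with the hypothesis $t_1\equiv m\pmod 2$ indeed forces $m\equiv t\pmod 2$, so Proposition~\ref{p:prob-ordinary} applies. Your alternative route via Proposition~\ref{p:1-trivial,m,m'} is also sound and is likely closer to what the authors had in mind given the placement of the corollary immediately after that proposition and Corollary~\ref{p:moment-type-B}.
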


\subsubsection{Type (C)}
\label{s:2-trivial}

Denote $\tau_i:=2^{t_i}$,
$y_i:=2^{-m_i'}$.
Then for $P_{(*,m_1',m_2')\to(*,m_{1,\new}',m_{2,\new}')}^\Alt$
we have the following table.

\begin{center}
\resizebox{0.99\textwidth}{!}
{\begin{tabular}{|c|c|c|c|c|}
\hline
\multirow{2}{*}{$m'_{2,\new}$} & \multicolumn{4}{c|}{$m'_{1,\new}$} \\
\cline{2-5}
& $m_1'-1$ & $m_1'$ & $m_1'+1$ & $*$ \\
\hline
$m_2'+1$ & $0$
& $\tau_2y_2(y_2-\tau_1y_1)/2$ & $\tau_1\tau_2y_1y_2/2$ & $\tau_2y_2^2/2$ \\
\hline
$m_2'$ & $(1-y_1)(y_2-\tau_1y_1)$
& $\begin{array}{c}
\tau_1y_1+\tau_2y_2+y_1y_2+\tau_1\tau_2y_1y_2/2 \\
{}-3(\tau_1y_1^2+\tau_2y_2^2)/2
\end{array}$
& $\tau_1y_1(y_1-\tau_2y_2)/2$ & $y_2(1+\tau_2-3\tau_2y_2/2)$ \\
\hline
$m_2'-1$ & $(1-y_1)(1-y_2)$
& $(1-y_2)(y_1-\tau_2y_2)$ & $0$ & $(1-y_2)(1-\tau_2y_2)$ \\
\hline
$*$ & $(1-y_1)(1-\tau_1y_1)$ & $y_1(1+\tau_1-3\tau_1y_1/2)$ & $\tau_1y_1^2/2$ & $1$ \\
\hline
\end{tabular}}
\end{center}
Since $m_1'-m_2'\geq t_1$ and $m_2'-m_1'\geq t_2$,
we have $y_2-\tau_1y_1\geq 0$ and $y_1-\tau_2y_2\geq 0$.
It's easy to see that all the entries in the above table
are non-negative.

Again, we have
\begin{align*}
P_{t,\mathbf t}^\Alt(*,m_1',*)
&=P_{t_1}^\Mat(m_1')
=2^{-m_1'(m_1'-t_1)}
\prod_{i=1}^{m_1'}
(1-2^{-i})^{-1}
\prod_{i=1}^{m_1'-t_1}
(1-2^{-i})^{-1}
\prod_{i=1}^\infty
(1-2^{-i}), \\
P_{t,\mathbf t}^\Alt(*,*,m_2')
&=P_{t_2}^\Mat(m_2')
=2^{-m_2'(m_2'-t_2)}
\prod_{i=1}^{m_2'}
(1-2^{-i})^{-1}
\prod_{i=1}^{m_2'-t_2}
(1-2^{-i})^{-1}
\prod_{i=1}^\infty
(1-2^{-i}).
\end{align*}
For $P_{t,\mathbf t}^\Alt(*,m_1',m_2')$, it's easy to check
that $P_{(*,m_1',m_2')\to(*,m_{1,\new}',m_{2,\new}')}^\Alt$
in the above table
is irreducible and aperiodic, and we have the following result.

\begin{prop}
\label{p:2-trivial,*,m1',m2'}
For $m_1',m_2'\in\BZ_{\geq 0}$ such that
$m_1'-m_2'\geq t_1$ and $m_2'-m_1'\geq t_2$,
the following formula
$$
P_{t,\mathbf t}^\Alt(*,m_1',m_2')=
\frac{\displaystyle
2^{m_1'm_2'-m_1'(m_1'-t_1)-m_2'(m_2'-t_2)}
\prod_{i=1}^{-t_1-t_2}(1-2^{-i})
}{\displaystyle
\prod_{i=1}^{m_1'}
(1-2^{-i})
\prod_{i=1}^{m_1'-m_2'-t_1}
(1-2^{-i})
\prod_{i=1}^{m_2'}
(1-2^{-i})
\prod_{i=1}^{m_2'-m_1'-t_2}
(1-2^{-i})
}
\prod_{i=1}^\infty
(1-2^{-i})
$$
gives an equilibrium distribution.
\end{prop}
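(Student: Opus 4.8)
The plan is to establish the Proposition by verifying directly that the displayed array $x:=\bigl(P_{t,\mathbf t}^\Alt(*,m_1',m_2')\bigr)_{(m_1',m_2')}$ is a probability vector on the relevant state space and is fixed by the transition operator $P^\Alt_{(*,m_1',m_2')\to(*,\cdot,\cdot)}$ of the marginal chain recorded in the preceding table (obtained from Theorem~\ref{model main thm}), and then to upgrade ``an equilibrium distribution'' to \emph{the} equilibrium distribution by appealing to Theorem~\ref{p:MC}(i), since that table is (as noted there) irreducible and aperiodic; this last step simultaneously identifies $x$ with $\lim_{k\to\infty}\BP(Y_k=(*,m_1',m_2'))$ and shows its marginals are the rectangular limits $P_{t_i}^\Mat$.

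First I would fix the state space to be
\[
I=\bigl\{(m_1',m_2')\in\BZ_{\ge0}^2 : t_1\le m_1'-m_2'\le -t_2\bigr\},
\]
which is nonempty because $t_1+t_2\le0$ and which, crucially, has bounded width $-(t_1+t_2)$ in the direction $m_1'-m_2'$, so that $I$ is quasi one-dimensional. On $I$ every index of every factor $\prod_{i=1}^{n}(1-2^{-i})$ appearing in the formula is $\ge0$, so $x\ge0$ is immediate, and since the exponent $m_1'm_2'-m_1'(m_1'-t_1)-m_2'(m_2'-t_2)$ is, for fixed value of $m_1'-m_2'$, a downward parabola in $m_1'+m_2'$, we get $x\in\ell^1(I)$ at once. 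For the normalization I would isolate the single auxiliary identity
\[
\sum_{m_2'}P_{t,\mathbf t}^\Alt(*,m_1',m_2')=P_{t_1}^\Mat(m_1'),
\]
a \emph{finite} sum over $\max\{0,m_1'+t_2\}\le m_2'\le m_1'-t_1$; written with $q=\tfrac12$ it is a terminating $q$-series identity of $q$-Chu--Vandermonde type, proved by induction on the number of summands (or by quoting the $q$-binomial theorem). Summing it over $m_1'$ and invoking the classical fact $\sum_{m_1'}P_{t_1}^\Mat(m_1')=1$ (see \cite{Ger84}, already used in the type~(B) discussion) gives $\|x\|=1$, and the $(m_1',t_1)\leftrightarrow(m_2',t_2)$ symmetry of the formula then also yields the marginal $P_{t_2}^\Mat$.

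The substantive step is the invariance $Px=x$: for every $(m_1',m_2')\in I$,
\[
P_{t,\mathbf t}^\Alt(*,m_1',m_2')=\sum_{(p,q)}P^\Alt_{(p,q)\to(m_1',m_2')}\cdot P_{t,\mathbf t}^\Alt(*,p,q),
\]
where, since the table annihilates the two anti-diagonal moves, the predecessors $(p,q)$ are the seven neighbours $(m_1',m_2')$, $(m_1'\pm1,m_2')$, $(m_1',m_2'\pm1)$ and $(m_1'\pm1,m_2'\pm1)$ (equal signs in the last pair). I would divide through by $P_{t,\mathbf t}^\Alt(*,m_1',m_2')$ and observe that each ratio $P_{t,\mathbf t}^\Alt(*,p,q)/P_{t,\mathbf t}^\Alt(*,m_1',m_2')$ collapses, via $\prod_{i=1}^{n+1}(1-2^{-i})/\prod_{i=1}^{n}(1-2^{-i})=1-2^{-(n+1)}$, into an explicit rational function of $y_1=2^{-m_1'}$, $y_2=2^{-m_2'}$, $\tau_1=2^{t_1}$, $\tau_2=2^{t_2}$, while each transition probability is already displayed as such a rational function (after the substitution $y_i\mapsto 2y_i$ appropriate to whichever predecessors moved $m_i'$ upward). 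Clearing denominators turns each balance equation into a polynomial identity in $y_1,y_2,\tau_1,\tau_2$, to be checked by expansion; boundary states of $I$ need no separate treatment, because the transition probabilities that would leave $I$ carry exactly the factors $y_2-\tau_1y_1$ or $y_1-\tau_2y_2$ that vanish there. The main obstacle is entirely one of organization and size: the balance equations split into several families according to which of $m_1'-t_1$, $m_2'-t_2$, $m_1'-m_2'-t_1$, $m_2'-m_1'-t_2$ are being incremented or decremented, and each expands into a long (if mechanical) identity; exploiting the $1\leftrightarrow2$ symmetry and the fact that $x$ depends on $(m_1',m_2')$ essentially through $m_1'+m_2'$ keeps the bookkeeping manageable. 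With $Px=x$ and $\|x\|=1$ verified, Theorem~\ref{p:MC}(i) applied to the tabulated chain completes the proof.
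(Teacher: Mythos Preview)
Your proposal is correct and follows the direct-verification route that the paper itself leaves implicit (the paper states the proposition with no proof beyond the remark that the marginal chain is irreducible and aperiodic). One point of imprecision is worth flagging: your justification for why ``boundary states of $I$ need no separate treatment'' cites the vanishing of the \emph{outgoing} transition factors $y_2-\tau_1y_1$ and $y_1-\tau_2y_2$, but the balance equation at a boundary state is a sum over \emph{incoming} transitions from predecessors, some of which lie outside $I$. What actually makes your polynomial-identity approach work at the oblique boundary $m_1'-m_2'=t_1$ (and symmetrically) is that the telescoping ratio $x(m_1'-1,m_2')/x(m_1',m_2')$ picks up the factor $1-2^{-(m_1'-m_2'-t_1)}$ from the product $\prod_{i=1}^{m_1'-m_2'-t_1}(1-2^{-i})$ sitting in the denominator of $x$, and this factor vanishes there; the analogous factors $1-y_i$ handle the axes $m_i'=0$. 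So the predecessor terms from outside $I$ drop out of the polynomial identity for the right reason, just not the one you stated. With that correction the argument is complete.
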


Denote $z:=2^{-m}$, then
for $P_{\mathbf m\to\mathbf m_\new}^\Alt$
we have the following tables.

\begin{center}
$m_\new=m+2$\nopagebreak\\
\begin{tabular}{|c|c|c|c|c|}
\hline
\multirow{2}{*}{$m'_{2,\new}$} & \multicolumn{4}{c|}{$m'_{1,\new}$} \\
\cline{2-5}
& $m_1'-1$ & $m_1'$ & $m_1'+1$ & $*$ \\
\hline
$m_2'+1$ & $0$
& $\tau_2y_2(z-\tau_1y_1^2)/2y_1$ & $\tau_1\tau_2y_1y_2/2$ & $\tau_2y_2z/2y_1$ \\
\hline
$m_2'$ & $0$
& $(z-\tau_1y_1^2)(z-\tau_2y_2^2)/2y_1y_2$
& $\tau_1y_1(z-\tau_2y_2^2)/2y_2$ & $z(z-\tau_2y_2^2)/2y_1y_2$ \\
\hline
$m_2'-1$ & $0$
& $0$ & $0$ & $0$ \\
\hline
$*$ & $0$ & $z(z-\tau_1y_1^2)/2y_1y_2$ & $\tau_1y_1z/2y_2$ & $z^2/2y_1y_2$ \\
\hline
\end{tabular}

\medskip

$m_\new=m-2$\nopagebreak\\
%\resizebox{0.99\textwidth}{!}
{\begin{tabular}{|c|c|c|c|c|}
\hline
\multirow{2}{*}{$m'_{2,\new}$} & \multicolumn{4}{c|}{$m'_{1,\new}$} \\
\cline{2-5}
& $m_1'-1$ & $m_1'$ & $m_1'+1$ & $*$ \\
\hline
$m_2'+1$ & $0$
& $0$ & $0$ & $0$ \\
\hline
$m_2'$ & $(1-y_1)(y_2-z/y_1)$
& $(y_1y_2-z)(y_1y_2-2z)/y_1y_2$
& $0$ & $(y_2-z/y_1)(1-2z/y_2)$ \\
\hline
$m_2'-1$ & $(1-y_1)(1-y_2)$
& $(1-y_2)(y_1-z/y_2)$ & $0$ & $(1-y_2)(1-z/y_2)$ \\
\hline
$*$ & $(1-y_1)(1-z/y_1)$ & $(y_1-z/y_2)(1-2z/y_1)$ & $0$ & $1-z/y_1-z/y_2-z+2z^2/y_1y_2$ \\
\hline
\end{tabular}}

\medskip

$m_\new=m$\nopagebreak\\
\resizebox{0.99\textwidth}{!}
{\begin{tabular}{|c|c|c|c|c|}
\hline
\multirow{2}{*}{$m'_{2,\new}$} & \multicolumn{4}{c|}{$m'_{1,\new}$} \\
\cline{2-5}
& $m_1'-1$ & $m_1'$ & $m_1'+1$ & $*$ \\
\hline
$m_2'+1$ & $0$
& $\tau_2y_2(y_1y_2-z)/2y_1$ & $0$ & $\tau_2y_2(y_1y_2-z)/2y_1$ \\
\hline
$m_2'$ & $(1-y_1)(z-\tau_1y_1^2)/y_1$
& $\begin{array}{c}
\tau_1y_1+\tau_2y_2
-3(\tau_1y_1^2+\tau_2y_2^2)/2 \\
{}+z(\tau_1y_1^2+\tau_2y_2^2-5z)/2y_1y_2+3z
\end{array}$
& $\tau_1y_1(y_1y_2-z)/2y_2$ & $\begin{array}{c}
\tau_2y_2-3\tau_2y_2^2/2
-5z^2/2y_1y_2 \\
{}+\tau_2y_2z/2y_1+z/y_1+2z
\end{array}$ \\
\hline
$m_2'-1$ & $0$
& $(1-y_2)(z-\tau_2y_2^2)/y_2$ & $0$ & $(1-y_2)(z-\tau_2y_2^2)/y_2$ \\
\hline
$*$ & $(1-y_1)(z-\tau_1y_1^2)/y_1$ &
$\begin{array}{c}
\tau_1y_1-3\tau_1y_1^2/2
-5z^2/2y_1y_2 \\
{}+\tau_1y_1z/2y_2+z/y_2+2z
\end{array}$ & $\tau_1y_1(y_1y_2-z)/2y_2$ & $z/y_1+z/y_2+z-5z^2/2y_1y_2$ \\
\hline
\end{tabular}}
\end{center}
From the proof of Theorem \ref{model main thm}
(that is, divide $B\to B_\new$ as two steps)
we know that all the entries in the above table are non-negative.
Now it's easy to check
that $P_{\mathbf m\to\mathbf m_\new}^\Alt$
is irreducible and aperiodic, and we have the following result.

\begin{prop}
The
$$
P_{t,\mathbf t}^\Alt(m,m_1',m_2')=P_{t,\mathbf t}^\Alt(*,m_1',m_2')
\cdot\lambda_{T,A,B}
$$
gives an equilibrium distribution,
where $T:=-(t_1+t_2)/2\in\BZ$, $A:=m_2'-m_1'-t_2\in\BZ$,
$B:=m-m_1'-m_2'\in\BZ$,
which satisfy $T\geq 0$, $0\leq A\leq 2T$, $0\leq B\leq\min\{A,2T-A\}$,
$B\equiv A\pmod 2$,
and
$$
\lambda_{T,A,B}=
2^{(A-B)(2T-A-B)/2}
\frac{\prod_{i=1}^{A-B}(2^{B+i}-1)}
{\prod_{i=1}^{(A-B)/2}(2^{2i}-1)}
\cdot\frac{\prod_{i=(A-B)/2}^{\lfloor A/2\rfloor-1}(2^{2T+2-2A+2i}-1)}
{\prod_{i=0}^{\lfloor A/2\rfloor-1}(2^{2T-1-2i}-1)},
$$
which satisfies $\lambda_{T,A,B}=\lambda_{T,2T-A,B}$.

In particular, if $t_1\leq 0$, $t_2\leq 0$ are both even, then
$$
P_{t,\mathbf t}^\Alt(0)
=P_{t,\mathbf t}^\Alt(0,0,0)
=\prod_{i=1}^{-(t_1+t_2)/2}(1-4^{-i})
\prod_{i=1}^{-t_1/2}(1-4^{-i})^{-1}\prod_{i=1}^{-t_2/2}(1-4^{-i})^{-1}
\prod_{i=1}^\infty
(1-2^{-i}).
$$
\end{prop}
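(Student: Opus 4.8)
The plan is to verify directly that the proposed formula defines an equilibrium distribution of the transition matrix $P_{\mathbf m\to\mathbf m_\new}^\Alt$ recorded in the three tables above, after which uniqueness is automatic by Theorem \ref{p:MC}: on the state space of triples $\mathbf m=(m,m_1',m_2')$ subject to $m_1'-m_2'\geq t_1$, $m_2'-m_1'\geq t_2$, $m_1',m_2'\geq 0$, $m\equiv t\pmod 2$ and the Selmer-type bounds of Proposition \ref{selmer lower bound}, the chain $\{Y_k\}$ is irreducible and aperiodic, as noted just before the statement, and all entries of the tables are nonnegative (this was already observed in the proof of Theorem \ref{model main thm} via the two-step decomposition of $B\to B_\new$). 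So it suffices to check the balance equations $\sum_{\mathbf m}P_{\mathbf m\to\mathbf m_\new}^\Alt\,P_{t,\mathbf t}^\Alt(\mathbf m)=P_{t,\mathbf t}^\Alt(\mathbf m_\new)$ for every admissible $\mathbf m_\new$. First I would pass to the factorization $P_{t,\mathbf t}^\Alt(m,m_1',m_2')=P_{t,\mathbf t}^\Alt(*,m_1',m_2')\cdot\lambda_{T,A,B}$ and use that $P_{t,\mathbf t}^\Alt(*,m_1',m_2')$ is already the unique equilibrium distribution of the marginal $(m_1',m_2')$-chain by Proposition \ref{p:2-trivial,*,m1',m2'}. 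Dividing the balance equation at $\mathbf m_\new$ through by $P_{t,\mathbf t}^\Alt(*,m_{1,\new}',m_{2,\new}')$ and substituting the explicit ratios $P_{t,\mathbf t}^\Alt(*,m_1',m_2')/P_{t,\mathbf t}^\Alt(*,m_{1,\new}',m_{2,\new}')$, which are powers of $2$ times ratios of the $(1-2^{-i})$-products, converts the whole system into a closed recursion among the $\lambda_{T,A,B}$ alone.

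The change of variables $T=-(t_1+t_2)/2$, $A=m_2'-m_1'-t_2$, $B=m-m_1'-m_2'$ is the key to this reduction: $T$ is constant along the chain, the admissible region becomes $0\leq A\leq 2T$, $0\leq B\leq\min\{A,2T-A\}$, $B\equiv A\pmod 2$, and the recursion acquires a triangular shape amenable to induction, say on $B$ for fixed $A$, then on $A$. Two structural observations halve the verification. The symmetry $\lambda_{T,A,B}=\lambda_{T,2T-A,B}$, visible from the closed form and corresponding to swapping the two holes $m_1'\leftrightarrow m_2'$, $t_1\leftrightarrow t_2$, pairs the $A<T$ cases with the $A>T$ cases; and the $m_\new=m+2$ and $m_\new=m-2$ tables are interchanged by this reflection together with the step-doubling device of Theorem \ref{model main thm}, so it is enough to treat, say, the $m_\new=m$ and $m_\new=m-2$ rows. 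One also needs $\sum_{B}\lambda_{T,A,B}=1$ for each admissible $A$, so that the product is a genuine probability distribution with the correct $(m_1',m_2')$-marginal; this is a finite product identity of $2$-binomial type that falls out of the same induction, or alternatively from the $X=1$ specialization of the generating function $F_{\mathbf t}^\Alt$ recalled in the introduction.

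Finally, the displayed value of $P_{t,\mathbf t}^\Alt(0)$ is a specialization: when $t_1,t_2$ are even, $m=m_1'=m_2'=0$ forces $A=-t_2$, $B=0$, the block $\prod_{i=(A-B)/2}^{\lfloor A/2\rfloor-1}(\cdots)$ in the formula is empty since $A$ is even, and multiplying the resulting $\lambda_{T,-t_2,0}$ by $P_{t,\mathbf t}^\Alt(*,0,0)$ from Proposition \ref{p:2-trivial,*,m1',m2'} and regrouping $\prod_{i=1}^{2n}(1-2^{-i})$ into $4$-adic blocks gives the claimed expression. I expect the main obstacle to be the bookkeeping in the first step, namely collapsing the full balance equations — whose $m_\new=m$ table has entries with many terms — into the clean triangular $\lambda$-recursion; once that recursion is in hand the closed form is essentially forced, so the difficulty is in the reduction rather than in guessing the answer. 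A cleaner but more ambitious route would be to identify $\lambda_{T,A,B}$ with the limiting conditional law of $\corank(B)$ given $\corank(B_1')$ and $\corank(B_2')$ in the matrix model $M_{2k+t,(t_1,t_2)}^\Alt(\BF_2)$ directly, by counting alternating completions of a fixed pair $(B_1',B_2')$; if that count can be organized through enumeration of isotropic subspaces it would yield the formula with no recursion at all.
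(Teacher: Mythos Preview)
Your proposal is correct and matches the paper's approach: the paper gives no detailed proof at all, merely asserting (as for the type (B) analogue Proposition \ref{p:1-trivial,m,m'}) that one checks directly that the displayed formula is an equilibrium distribution, with uniqueness then following from irreducibility and aperiodicity via Theorem \ref{p:MC}. Your plan --- factor through the already-established marginal $P_{t,\mathbf t}^\Alt(*,m_1',m_2')$ of Proposition \ref{p:2-trivial,*,m1',m2'}, reduce the balance equations to a recursion in $\lambda_{T,A,B}$ via the $(T,A,B)$ change of variables, and exploit the $A\leftrightarrow 2T-A$ symmetry --- is precisely the natural way to organize the ``straightforward check'' the paper leaves to the reader.
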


\begin{cor}
For $m\geq\max\{t_1,t_2,0\}$ such that $m\equiv t\pmod 2$,
the $P_{t,\mathbf t}^\Alt(m)$ is positive, equal to the sum of
$P_{t,\mathbf t}^\Alt(m,m_1',m_2')$ over all possible $m_1'$ and $m_2'$.
For other $m$, we have $P_{t,\mathbf t}^\Alt(m)=0$, in fact,
in this case there is no $\omega\in\Omega_k$ such that $\corank(B(\omega))=m$.
\end{cor}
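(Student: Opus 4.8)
The plan is to deduce the statement from three ingredients already in place: the Markov property of $\{Y_k\}$ together with the explicit equilibrium distribution of the preceding proposition, the fact that $Y_k$ refines $X_k$, and a direct linear-algebra bound on the coranks of matrices in $\Omega_k$.

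First I would record the marginalisation. Since $Y_k=(X_k,\corank(B_1'),\corank(B_2'))$ refines $X_k$, for every $k$ one has $\BP(X_k=m)=\sum_{m_1',m_2'\ge 0}\BP\big(Y_k=(m,m_1',m_2')\big)$, a sum in which only the terms with $0\le m_1',m_2'\le m$ can be nonzero. By Theorem \ref{model main thm} the sequence $\{Y_k\}$ is a Markov chain whose transition matrix $P^\Alt_{\mathbf m\to\mathbf m_\new}$ is irreducible and aperiodic on its state space and admits an equilibrium distribution, namely the explicit $P^\Alt_{t,\mathbf t}(m,m_1',m_2')=P^\Alt_{t,\mathbf t}(*,m_1',m_2')\cdot\lambda_{T,A,B}$ of the preceding proposition; hence Theorem \ref{p:MC}(i) gives $\BP(Y_k=\mathbf m)\to P^\Alt_{t,\mathbf t}(\mathbf m)$ and $\BP(X_k=m)\to P^\Alt_{t,\mathbf t}(m)$, and this limiting distribution, being the unique equilibrium, coincides with that explicit formula. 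Passing to the limit in the finite sum yields $P^\Alt_{t,\mathbf t}(m)=\sum_{m_1',m_2'}P^\Alt_{t,\mathbf t}(m,m_1',m_2')$, with all but finitely many terms zero. Moreover, on the set of $(m,m_1',m_2')$ for which that formula is defined — namely $m_1',m_2'\ge 0$, $0\le A\le 2T$, $0\le B\le\min\{A,2T-A\}$, $B\equiv A\pmod 2$, with $T=-(t_1+t_2)/2$, $A=m_2'-m_1'-t_2$, $B=m-m_1'-m_2'$ — the factor $\lambda_{T,A,B}$ is a product of strictly positive quantities, and since $0\le A\le 2T$ is exactly $t_1\le m_1'-m_2'\le -t_2$, Proposition \ref{p:2-trivial,*,m1',m2'} applies and $P^\Alt_{t,\mathbf t}(*,m_1',m_2')>0$ there as well; outside this set the equilibrium vanishes.

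It therefore remains to exhibit, for each $m\ge\max\{t_1,t_2,0\}$ with $m\equiv t\pmod 2$, one admissible triple $(m,m_1',m_2')$, which gives $P^\Alt_{t,\mathbf t}(m)\ge P^\Alt_{t,\mathbf t}(m,m_1',m_2')>0$. This is an elementary case distinction on the signs of $t_1,t_2$ (recall $t_1+t_2\le 0$, so at most one is positive). If $t_1>0$, then $\max\{t_1,t_2,0\}=t_1$; set $m_1'=(m+t_1)/2$ and $m_2'=(m-t_1)/2$, which are nonnegative integers because $m\ge t_1$ and $m\equiv t\equiv -t_1\pmod 2$, and which give $A=2T$, $B=0$, an admissible triple. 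If $t_2>0$ the mirror choice $m_1'=(m-t_2)/2$, $m_2'=(m+t_2)/2$ works. If $t_1,t_2\le 0$, take $m_1'=m_2'=\lfloor m/2\rfloor$: then $A=-t_2\in[0,2T]$ and $B\in\{0,1\}$ equals the parity of $m$, which lies in $[0,\min\{-t_1,-t_2\}]$ and has the required parity because $m\equiv t$ forces $t_1,t_2$ odd precisely when $m$ is odd. In all three cases the triple is admissible, proving positivity for the claimed range of $m$.

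Finally I would settle the remaining values of $m$ by working directly with the block shape, which yields simultaneously that $P^\Alt_{t,\mathbf t}(m)=0$ and that no $B\in\Omega_k=M^\Alt_{2k+t,\mathbf t}(\BF_2)$ has corank $m$. Write $B=(B_{ij})_{1\le i,j\le 3}$ with $B_{11}=B_{22}=0$ and block sizes $k_1=k+\tfrac{t_1+t}{2}$, $k_2=k+\tfrac{t_2+t}{2}$, $k_3=-\tfrac{t_1+t_2}{2}$ along each side. If $m\not\equiv t\pmod 2$: $B$ is alternating of size $2k+t$, so $\rank(B)$ is even and $\corank(B)=2k+t-\rank(B)\equiv t\pmod 2\ne m$. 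If $0\le m<\max\{t_1,t_2,0\}$, say $m<t_j$ with $t_j=\max\{t_1,t_2,0\}>0$: because $B_{jj}=0$, any $v$ in the kernel of $B_j'=(B_{ij})_{1\le i\le 3}$ — equivalently of the matrix obtained by deleting its zero block-row, which has $k_j$ columns and $2k+t-k_j$ rows — gives, placed in the $j$-th block of coordinates and zero elsewhere, an element of $\ker(B)$; hence $\corank(B)\ge\corank(B_j')\ge k_j-(2k+t-k_j)=t_j>m$. In either case $\BP(X_k=m)=0$ for every $k$, whence $P^\Alt_{t,\mathbf t}(m)=0$. The only nonroutine part is the sign bookkeeping for the admissible triple in the third paragraph, which must cover the three cases $t_1>0$, $t_2>0$ and $t_1,t_2\le 0$; everything else is quoted from the preceding results or is a one-line verification.
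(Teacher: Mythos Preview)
Your proof is correct. The paper states this corollary without proof, treating it as a routine consequence of the preceding proposition, so there is no alternative argument to compare against; your approach—marginalising the explicit equilibrium distribution, exhibiting an admissible triple in each parity/sign case, and using the block structure $B_{jj}=0$ to get the lower bound $\corank(B)\ge t_j$—is exactly the natural way to unpack it. The case analysis in the third paragraph and the embedding $\ker(B_j')\hookrightarrow\ker(B)$ in the last paragraph are both sound.
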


\begin{cor}
\label{p:moment-type-C}
If $t_1+t_2\leq 0$, define the formal power series
$$
F_0(X):=\frac{\prod_{j=0}^\infty(1+2^{-j}X)}
{\prod_{j=0}^\infty(1+2^{-j})}
\sum_{i_1,i_2\geq 0}2^{i_1t_1+i_2t_2}
\cdot\frac{2^{2i_1i_2}\prod_{j=1}^{i_1+i_2}(2^{1-j}X-1)}
{\prod_{j=1}^{i_1}(2^{2j}-1)\prod_{j=1}^{i_2}(2^{2j}-1)}\in\BR[[X]]
$$
and $F(X):=F_0(X)+(-1)^tF_0(-X)\in\BR[[X]]$.
Then $F(X)$ is the generating function of $P_{t,\mathbf t}^\Alt$:
$$
\sum_mP_{t,\mathbf t}^\Alt(m)X^m=F(X).
$$
It converges on the whole complex plane.
In particular, for $\xi\in\BZ_{\geq 0}$,
the $\xi$-th moment of the size of $\ker(B)$ as $B\in\Omega_k$, $k\to\infty$ is
$$
\sum_mP_{t,\mathbf t}^\Alt(m)2^{\xi\cdot m}
=F(2^\xi)=F_0(2^\xi)
=\sum_{\substack{i_1,i_2\geq 0\\
i_1+i_2\leq\xi}}2^{i_1t_1+i_2t_2}
\cdot\frac{\displaystyle 2^{2i_1i_2}\prod_{j=1}^\xi(2^{2j}-1)}
{\displaystyle\prod_{j=1}^{i_1}(2^{2j}-1)\prod_{j=1}^{i_2}(2^{2j}-1)
\prod_{j=1}^{\xi-i_1-i_2}(2^j-1)}.
$$
Taking $\xi=1$, the average size of $\ker(B)$ as $B\in\Omega_k$, $k\to\infty$ is $3+2^{t_1}+2^{t_2}$.
\end{cor}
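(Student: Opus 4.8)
The plan is to deduce the generating-function identity $\sum_m P_{t,\mathbf t}^\Alt(m)X^m=F_0(X)+(-1)^tF_0(-X)$ directly from the closed forms already in hand: Proposition~\ref{p:2-trivial,*,m1',m2'} for the joint marginal $P_{t,\mathbf t}^\Alt(*,m_1',m_2')$, the Proposition immediately preceding this corollary for $P_{t,\mathbf t}^\Alt(m,m_1',m_2')=P_{t,\mathbf t}^\Alt(*,m_1',m_2')\cdot\lambda_{T,A,B}$, and the corollary $P_{t,\mathbf t}^\Alt(m)=\sum_{m_1',m_2'}P_{t,\mathbf t}^\Alt(m,m_1',m_2')$. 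Substituting these and reindexing the triple sum over $(m,m_1',m_2')$ by $(m_1',A,B)$, where $A=m_2'-m_1'-t_2$ and $B=m-m_1'-m_2'$ are exactly the indices of $\lambda_{T,A,B}$ (with $T=-(t_1+t_2)/2$), I would use the crucial fact that $\lambda_{T,A,B}$ does not depend on $m_1'$, so the $m_1'$-sum factors off as a $q$-series; I expect to evaluate it in closed product form by a $q$-hypergeometric summation, the $q$-binomial theorem $\prod_{j\ge0}(1+2^{-j}Y)=\sum_{n\ge0}(2Y)^n/\prod_{j=1}^n(2^j-1)$ at $q=\tfrac12$ (as for type (A) in Corollary~\ref{p:moment-ordinary}) being the simplest instance. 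The residual finite sum over $(A,B)$ then re-indexes to the double sum over $(i_1,i_2)$ in $F_0$, and the constraint $m\equiv t\pmod2$ — forced since the rank of an alternating matrix over $\BF_2$ is even — accounts for the symmetrization $F_0(X)+(-1)^tF_0(-X)$. Along the way I would check that the already-proved type (B) Corollary~\ref{p:moment-type-B} is recovered by formally letting $t_2\to-\infty$, which annihilates all terms with $i_2>0$, as a consistency test on the bookkeeping.

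For the analytic statement, $\prod_{j\ge0}(1+2^{-j}X)$ is entire since $\sum_j2^{-j}|X|<\infty$. Using $\prod_{j=1}^{i}(2^{2j}-1)\ge c\cdot 2^{i(i+1)}$ with $c=\prod_{j\ge1}(1-2^{-2j})>0$, and $|\prod_{j=1}^{i_1+i_2}(2^{1-j}X-1)|\le\prod_{j\ge1}(1+2^{1-j}|X|)<\infty$ locally uniformly in $X$, the $(i_1,i_2)$-summand of the double sum defining $F_0$ is $O(2^{i_1t_1+i_2t_2-(i_1-i_2)^2-i_1-i_2})$. Along any ray $(i_1,i_2)=(su,sv)$ the exponent tends to $-\infty$: off the diagonal by the term $-s^2(u-v)^2$, and on the diagonal $i_1=i_2$ because $i_1t_1+i_2t_2-i_1-i_2=i_1(t_1+t_2)-2i_1\le-2i_1$ by the hypothesis $t_1+t_2\le0$. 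Hence the double sum converges locally uniformly, $F_0$ and $F$ are entire, and once the power-series identity above is established, $\sum_m P_{t,\mathbf t}^\Alt(m)X^m$ is the entire function $F$.

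For the moments, set $X=2^\xi$ in the identity. First, $F_0(-2^\xi)=0$, because the factor $j=\xi$ of $\prod_{j\ge0}(1+2^{-j}(-2^\xi))=\prod_{j\ge0}(1-2^{\xi-j})$ vanishes; hence $\sum_m P_{t,\mathbf t}^\Alt(m)2^{\xi m}=F(2^\xi)=F_0(2^\xi)$. Second, the truncation $i_1+i_2\le\xi$ is automatic, since $\prod_{j=1}^{i_1+i_2}(2^{1-j}2^\xi-1)=\prod_{j=1}^{i_1+i_2}(2^{\xi+1-j}-1)$ contains the zero factor $j=\xi+1$ whenever $i_1+i_2>\xi$. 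Third, $\prod_{j\ge0}(1+2^{\xi-j})/\prod_{j\ge0}(1+2^{-j})=\prod_{i=1}^{\xi}(1+2^i)$, and combining this with $\prod_{j=1}^{i_1+i_2}(2^{\xi+1-j}-1)=\prod_{m=\xi-i_1-i_2+1}^{\xi}(2^m-1)$ and $\prod_{j=1}^\xi(2^{2j}-1)=\prod_{j=1}^\xi(2^j-1)\prod_{j=1}^\xi(2^j+1)$ yields the stated closed form after cancellation. Taking $\xi=1$ leaves only $(i_1,i_2)\in\{(0,0),(1,0),(0,1)\}$, contributing $3$, $2^{t_1}$, $2^{t_2}$ respectively, hence the average order $3+2^{t_1}+2^{t_2}$.

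The main obstacle is the first step: the $q$-hypergeometric summation that collapses the $m_1'$-sum to a product and the term-by-term matching of the remaining $(A,B)$-sum with the $(i_1,i_2)$-sum of $F_0$; the convergence estimates, the $q$-binomial input, and the moment specialization are routine. I anticipate carrying out the $q$-series computation uniformly in $t_1,t_2$ (subject to $t_1+t_2\le0$), the subcase $t_1,t_2\le0$ serving as a transparent check since every term is then a genuine probability. Should the direct summation prove cumbersome, an alternative is to verify that $F_0(X)+(-1)^tF_0(-X)$ is the generating function of the unique equilibrium distribution of the refined Markov chain of \S\ref{s:2-trivial}, by checking it satisfies the balance relations recorded in the transition tables there and invoking Theorem~\ref{p:MC} for uniqueness.
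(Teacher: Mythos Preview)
The paper states this corollary without proof, leaving it as a consequence of the explicit equilibrium formulas in Proposition~\ref{p:2-trivial,*,m1',m2'} and the proposition giving $P_{t,\mathbf t}^\Alt(m,m_1',m_2')=P_{t,\mathbf t}^\Alt(*,m_1',m_2')\cdot\lambda_{T,A,B}$. Your proposed route---summing the closed forms over $(m_1',A,B)$, exploiting that $\lambda_{T,A,B}$ is independent of $m_1'$, and matching against the double sum in $F_0$---is exactly the computation the paper's structure implies but omits. Your convergence bound and moment specialization (vanishing of $F_0(-2^\xi)$, truncation at $i_1+i_2\le\xi$, telescoping of the infinite products) are correct and cleanly argued.

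One point worth flagging for the $q$-series step: the lower bound on $m_1'$ in the inner sum is $m_1'\ge\max\{0,-A-t_2\}$ (from $m_2'\ge 0$), which depends on $A$, so the ``$m_1'$-sum factors off'' claim needs a little care in the bookkeeping; this is presumably absorbed by the reindexing to $(i_1,i_2)$ but deserves a sentence. Your fallback---verifying that $F$ satisfies the balance relations of the transition tables in \S\ref{s:2-trivial} and invoking uniqueness from Theorem~\ref{p:MC}---is sound and may in fact be cleaner than the direct summation.

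Note also that the paper independently verifies the $\xi=1$ conclusion $3+2^{t_1}+2^{t_2}$ by a completely different method in Appendix~\S\ref{average order matrix model} (Heath-Brown's character-sum computation applied directly to the matrix model), which furnishes the cross-check you were looking for.
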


\subsection{Average order of kernel of matrix models}
\label{average order matrix model}

We apply Heath-Brown's method \cite{HB93}, \cite{HB94} (see also
Proposition \ref{p:high-rank-moment-limit})
to the matrix model $M_{2k+t,\mathbf t}^\Alt(\BF_2)$,
to compute the $\omega$-average size of the kernel.

\subsubsection{Type (A) model}

For $B=(b_{ij})\in M_{2k+t}^\Alt(\BF_2)$,
let $r:=2k+t$
and $V=W=\BF_2^r$.
For $v\in V$ and $w\in W$, define $\iota:[r]\to\BF_2^2$,
$i\mapsto(v_i;w_i)$.
For $\lambda\in\BF_2^2$, let $[r]_\lambda$ be the preimage of $\lambda$ under $\iota$.
Then $w^\RT Bv=\sum_{(i,j)\in S}b_{ij}$
where $S=([r]_{01}\times[r]_{10})\sqcup([r]_{01}\times[r]_{11})
\sqcup([r]_{10}\times[r]_{11})$,
hence
$$
\sum_{B\in M_{2k+t}^\Alt(\BF_2)}(-1)^{w^\RT Bv}
=\begin{cases}
\# M_{2k+t}^\Alt(\BF_2),&\text{if }S=\varnothing, \\
0,&\text{otherwise}.
\end{cases}
$$
Note that $S$ being empty or not depends only on $\Lambda:=\Im(\iota)$.
When $\Lambda$ is fixed, the number of $\iota$
such that $\Im(\iota)=\Lambda$ is equal to
$\#\{[r]\twoheadrightarrow[\#\Lambda]\}=(\#\Lambda)^r+O((\#\Lambda-1)^r)$,
hence only the $\Lambda$ with $\#\Lambda\geq 2$
and $S=\varnothing$
have non-zero contributions
when $k\to\infty$.
Such $\Lambda$ are $\{00,01\}$, $\{00,10\}$ and $\{00,11\}$.
Therefore
\begin{align*}
\BE\big(2^{\corank(B)}\mid B\in M_{2k+t}^\Alt(\BF_2)\big)
&=\frac{1}{\#M_{2k+t}^\Alt(\BF_2)\cdot\#W}
\sum_{v\in V,w\in W}\sum_{B\in M_{2k+t}^\Alt(\BF_2)}(-1)^{w^\RT Bv} \\
&=\frac{1}{2^r}
\sum_{\substack{\Lambda\text{ such that}\\
\#\Lambda\geq 2\text{ and }S=\varnothing}}
\#\{[r]\twoheadrightarrow[\#\Lambda]\}+O((1/2)^r)
=3+O((1/4)^k).
\end{align*}

\subsubsection{Type (B) model}

Similar to type (A),
the map $\iota:[r]\to\BF_2^2$ restricts to
$\iota_1:[k_1]\to\BF_2^2$ and $\iota_2:[k_1+1\lddot r]\to\BF_2^2$.
For $\ell\in\{1,2\}$ and $\lambda\in\BF_2^2$ let $[r]_\lambda^{(\ell)}$
be the preimage of $\lambda$ under $\iota_\ell$, then
$w^\RT Bv=\sum_{(i,j)\in S}b_{ij}$
where
$$
S=\left(\bigsqcup_{\lambda,\lambda'\in\BF_2^2\setminus\{0\},\lambda\neq\lambda'}
[r]_\lambda^{(1)}\times[r]_{\lambda'}^{(2)}
\right)
\sqcup([r]_{01}^{(2)}\times[r]_{10}^{(2)})
\sqcup([r]_{01}^{(2)}\times[r]_{11}^{(2)})
\sqcup([r]_{10}^{(2)}\times[r]_{11}^{(2)}).
$$
The $S$ being empty or not depends only on $\Lambda_1:=\Im(\iota_1)$
and $\Lambda_2:=\Im(\iota_2)$,
and only the $\Lambda_1,\Lambda_2$ with $\#\Lambda_1\cdot\#\Lambda_2\geq 4$
have non-zero contributions when $k\to\infty$.
Such $\Lambda_1,\Lambda_2$ are
$\Lambda_1=\Lambda_2=\{00,01\}$, $\{00,10\}$ and $\{00,11\}$;
and $\Lambda_1=\BF_2^2$, $\Lambda_2=\{00\}$.
Therefore
\begin{align*}
\BE\big(2^{\corank(B)}\mid B\in M_{2k+t,(t_1)}^\Alt(\BF_2)\big)
&=\frac{1}{\#M_{2k+t,(t_1)}^\Alt(\BF_2)\cdot\#W}
\sum_{v\in V,w\in W}\sum_{B\in M_{2k+t,(t_1)}^\Alt(\BF_2)}(-1)^{w^\RT Bv} \\
&=\frac{3\cdot 2^{k_1}\cdot 2^{k_2}+4^{k_1}}{2^r}
+O((3/4)^k)
=3+2^{t_1}+O((3/4)^k).
\end{align*}

\subsubsection{Type (C) model}

Similar to type (A),
the map $\iota:[r]\to\BF_2^2$ restricts to
$\iota_1:[k_1]\to\BF_2^2$,
$\iota_2:[k_1+1\lddot k_1+k_2]\to\BF_2^2$,
and $\iota_3:[k_1+k_2+1\lddot r]\to\BF_2^2$.
For $\ell\in\{1,2,3\}$ and $\lambda\in\BF_2^2$ let $[r]_\lambda^{(\ell)}$
be the preimage of $\lambda$ under $\iota_\ell$, then
$w^\RT Bv=\sum_{(i,j)\in S}b_{ij}$
where
$$
S=\left(\bigsqcup_{\substack{
1\leq\ell<\ell'\leq 3 \\
\lambda,\lambda'\in\BF_2^2\setminus\{0\},\lambda\neq\lambda'}}
[r]_\lambda^{(\ell)}\times[r]_{\lambda'}^{(\ell')}
\right)
\sqcup([r]_{01}^{(3)}\times[r]_{10}^{(3)})
\sqcup([r]_{01}^{(3)}\times[r]_{11}^{(3)})
\sqcup([r]_{10}^{(3)}\times[r]_{11}^{(3)}).
$$
The $S$ being empty or not depends only on $\Lambda_\ell:=\Im(\iota_\ell)$,
$\ell\in\{1,2,3\}$,
and only the $(\Lambda_\ell)$ with $\#\Lambda_1\cdot\#\Lambda_2\geq 4$
have non-zero contributions when $k\to\infty$.
Such $(\Lambda_\ell)$ are
$\Lambda_3\subset\Lambda_1=\Lambda_2=\{00,01\}$, $\{00,10\}$ and $\{00,11\}$;
$\Lambda_1=\BF_2^2$, $\Lambda_3\subset\Lambda_2=\{00\}$;
and $\Lambda_2=\BF_2^2$, $\Lambda_3\subset\Lambda_1=\{00\}$.
Therefore
$$
\BE\big(2^{\corank(B)}\mid B\in M_{2k+t,(t_1,t_2)}^\Alt(\BF_2)\big)
=\frac{3\cdot 2^{k_1}\cdot 2^{k_2}\cdot 2^{k_3}+4^{k_1}+4^{k_2}}{2^r}
+O((3/4)^k)
=3+2^{t_1}+2^{t_2}+O((3/4)^k).
$$

\end{document}